\DeclareSymbolFont{cyrletters}{OT2}{wncyr}{m}{n}
\DeclareMathSymbol{\Sha}{\mathalpha}{cyrletters}{"58}
 \numberwithin{equation}{section}
\newcommand{\nc}{\newcommand}
\nc{\nt}{\newtheorem}
\nc{\dmo}{\DeclareMathOperator}
\nc{\enm}{\ensuremath}
\newtheorem{thm}{Theorem}
\newtheorem{prop}{Proposition}
\newtheorem{lemma}{Lemma}
\newtheorem{remark}{Remark}
\newtheorem{cor}{Corollary}
\dmo{\Ind}{Ind}
\dmo{\cInd}{c-Ind}
\dmo{\Adj}{Ad}
\dmo{\PGL}{PGL}
\dmo{\SO}{SO}
\dmo{\Lie}{Lie}
\dmo{\Alt}{Alt}
\dmo{\reg}{reg}
\dmo{\sing}{sing}
\dmo{\supp}{supp}
\dmo{\tr}{tr}
\dmo{\Sym}{Sym}
\dmo{\Hom}{Hom}
\dmo{\Tor}{Tor}
\dmo{\Out}{Out}
\dmo{\Ht}{ht}
\dmo{\End}{End}
\dmo{\Mat}{Mat}
\dmo{\Tr}{Tr}
\dmo{\Isom}{Isom}
\dmo{\Span}{Span}
\dmo{\id}{id}
\dmo{\SL}{SL} \dmo{\sgn}{sgn} \dmo{\GL}{GL}  \dmo{\Mod}{mod} \dmo{\geo}{geo} \dmo{\re}{Re} \dmo{\Spec}{Spec}
\dmo{\Fr}{Fr} \dmo{\vol}{vol} \dmo{\Sets}{Sets} \dmo{\im}{im} \dmo{\diag}{diag} \dmo{\Ker}{ker} \dmo{\val}{val} \dmo{\ord}{ord}
\dmo{\Stab}{Stab} \dmo{\Ad}{Ad} \dmo{\rank}{rank} \dmo{\Symp}{Sp} \dmo{\Nm}{Nm} \dmo{\Norm}{Norm}\dmo{\Int}{Int}
\nc{\bb}{\mathbf}
\nc{\mb}{\mathbb}
\nc{\mf}{\mathfrak}
\nc{\bG}{\enm{{\mathbf G}}}
\nc{\hy}{H_{Y}}
\nc{\hyc}{H_{Y^{\perp}}}
\nc{\Aff}{\mathbb{A}}
\nc{\eps}{\varepsilon}
\nc{\ups}{\upsilon}
\nc{\GS}{\mathcal G \mathcal S}
\nc{\bks}{\enm{{\backslash}}}
 \nc{\isom}{\enm{{\overset{~}{\rightarrow}}}}
 \nc{\Z}{\enm{{\mathbb Z}}}
\nc{\Zp}{\enm{{\mathbb Z_p}}}
\nc{\Gm}{\enm{{\mathbb G_m}}}
\nc{\F}{\enm{{\mathbb F}}}
\nc{\Fp}{\enm{{\mathbb F}_p}}
\nc{\Fq}{\enm{{\mathbb F}_q}}
\nc{\Q}{\enm{{\mathbb Q}}}
\nc{\Qp}{\enm{{\mathbb Q_p}}}
\nc{\R}{\enm{{\mathbb R}}}
\nc{\N}{\enm{{\mathbb N}}}
\nc{\C}{\enm{{\mathbb C}}}
\nc{\CC}{\enm{{\mathcal C}}}
\nc{\half}{\enm{{\frac{1}{2}}}}
\nc{\BB}{\enm{{\mathcal B}}}
\nc{\flip}{\tilde{\eps}}
\nc{\ii}{\enm{{\mathcal I}}}
\nc{\jj}{\enm{{\mathcal J}}}
\nc{\OO}{\enm{{\mathcal O}}}
\nc{\f}{\enm{{\Sha}}}
\nc{\GGl}{\enm{{\mathfrak gl}}}
\nc{\GG}{\enm{{\mathfrak g}}}
\nc{\gd}{\enm{{\hat{\mathfrak g}}}}
\nc{\gm}{\enm{{\gamma}}}
\nc{\hh}{\enm{{\mathfrak h}}}
\nc{\II}{\enm{{\mathfrak a}}}
\nc{\LL}{\enm{{\mathfrak l}}}
\nc{\mm}{\enm{{\mathfrak m}}}
\nc{\pp}{\enm{{\mathfrak p}}}
\nc{\TT}{\enm{{\mathfrak t}}}
\nc{\Nc}{\enm{{\mathcal N}}}
\nc{\Cc}{\enm{{\mathcal C}}}
\nc{\HH}{\enm{{\mathfrak h}}}
\nc{\LS}{\enm{{\mathfrak s}}}
\nc{\iso}{\tilde{\rightarrow}}
\dmo{\res}{res}
 \nc{\Gd}{\enm{{\hat{G}}}}
  \nc{\Hd}{\enm{{\hat{H}}}}
\nc{\vt}{\enm{\vartheta}}
\nc{\lra}{\enm{\longrightarrow}}
\nc{\ra}{\enm{\rightarrow}}
\nc{\lip}{\enm{\langle}}
\nc{\rip}{\enm{\rangle}}
\nc{\nn}{\enm{\mathfrak n}}
\nc{\bsk}{\bigskip}
 \nc{\ol}{\overline}
\nc{\ul}{\underline}
\nc{\bH}{{\bf H}}
\nc{\bS}{{\bf S}}
\nc{\bT}{{\bf T}}
\nc{\bB}{{\bf B}}
\nc{\bA}{{\bf A}}
\nc{\bU}{{\bf U}}
\nc{\bN}{{\bf N}}
\nc{\bE}{{\bf E}}
\nc{\bX}{{\bf X}}
\nc{\bY}{{\bf Y}}
\nc{\bM}{{\bf M}}
\nc{\bW}{{\bf W}}
\nc{\beq}{\begin{equation*}}
\nc{\eeq}{\end{equation*}}
\nc{\mc}{\mathcal}
\nc{\xx}{\sts{\bowtie}}
\dmo{\elliptic}{ell}
\dmo{\disc}{disc}
 \long\def\sts#1{{{\color{magenta}#1}}}
\long\def\delete#1{}
\def\Ddots{\mathinner{\mkern1mu\raise\p@
\vbox{\kern7\p@\hbox{.}}\mkern2mu
\raise4\p@\hbox{.}\mkern2mu\raise7\p@\hbox{.}\mkern1mu}}
\author{Arnab Mitra}
\author{Steven Spallone}
\address{Technion-Israel Institute of Technology, Haifa-3200003, Israel}
\email{00.arnab.mitra@gmail.com}
\address{Indian Institute of Science Education and Research, Pune-411021, India}
\email{sspallone@gmail.com}
\date{\today}
\keywords{Integration formula, maximal parabolic, unipotent radical, Langlands-Shahidi method, intertwining operator}
\subjclass[2010]{Primary 22E35, Secondary 22E50}
\begin{document}

\title{Towards a Goldberg-Shahidi pairing for classical groups}

\begin{abstract}
Let $G^{1}$ be an orthogonal, symplectic or unitary group over a local field and let $P=MN$ be a maximal parabolic subgroup. Then the Levi subgroup $M$ is the product of a group of the same type as $G^{1}$ and a general linear group, acting on vector spaces $X$ and $W$, respectively.  In this paper we decompose the unipotent radical $N$ of $P$ under the adjoint action of $M$, assuming $\dim W \leq \dim X$, excluding only the symplectic case with $\dim W$ odd.  The result is a Weyl-type integration formula for $N$ with applications to the theory of intertwining operators for parabolically induced representations of $G^1$.  Namely, one obtains a bilinear pairing on matrix coefficients, in the spirit of Goldberg-Shahidi, which detects the presence of poles of these operators at $0$.  
\end{abstract}

\maketitle
\tableofcontents

\section{Introduction}   
Let $G^1$ be a reductive group over a local field $F$, and $P=MN$ a maximal parabolic subgroup.  The Langlands-Shahidi method of defining $L$-functions draws our attention to the action of $M$ on $N$.  If $G^1$ is a classical group, to wit orthogonal, symplectic, or unitary, then $M$ is a product $G \times H$ of linear groups, with $G$ general linear and $H$ classical of the same kind as $G^1$. The interaction of $G$ and $H$ through the adjoint action of $M$ on $N$ provides information about representations of $G^1$ induced from $G \times H$.  In \cite{GS98} Goldberg and Shahidi define a pairing for matrix coefficients on $G$ and $H$, which detects the presence of a pole for the intertwining operator at $0$, in the case where $G$ and $H$ are matrix groups of the same size (i.e., they have the same number of rows and columns).  They then give an endoscopic interpretation of this pairing.  This work extends their theory to the case where the size of $H$ is greater than or equal to the size of $G$.  

Let $V$ be a finite-dimensional vector space over a field $F$ (with characteristic not equal to two), or over a quadratic extension $E$ of $F$, with respect to a nondegenerate symmetric, antisymmetric, or Hermitian form. Let us write $G^1$ for the isometry group of $V$.  A maximal parabolic subgroup $P$ of $G^1$ corresponds to an isotropic subspace $W$ of $V$.   We may decompose $P=MN$, where $N$ is the unipotent radical of $P$, and $M$ is a Levi component of $P$.  Then $M$ is isomorphic to a product $G \times H$ of groups, where $G=\GL(W)$ and $H$ is the isometry group of a nondegenerate subspace $X$ of $V$. Roughly speaking, our first goal is to provide a measure decomposition of $N$, or rather of a dense open subset $N_{\reg}$, under the adjoint action of $M$.   In this paper we assume that $\dim W=k \leq \dim X$, and that $k$ is even in case $V$ is symplectic.  
 
Goldberg and Shahidi define in \cite{GS98} a map which we write as $\Norm: N' \to H$, with $N'$ a certain open subset of $N$.  This map sends $\Int(M)$-orbits in $N'$ to conjugacy classes in $H$ and has properties that relate it to the Norm correspondence of Kottwitz and Shelstad \cite{KS}.  (Originally Shahidi \cite{S95} introduced this map in the even orthogonal case.)  The image is contained in the subset 
\beq
H_k= \{ h \in H \mid \rank(h-1; X) \leq k \}.
\eeq
  
To simplify this introduction, let us assume that $V$ is symplectic or orthogonal, and also that $k$ is even. 
Fix a nondegenerate subspace $Y$ of $X$ linearly isomorphic to $W$.  This is possible because we exclude the symplectic case with $k$ odd. Let $H_{Y}$ be the subgroup of elements of $H$ which fix the perpendicular space $Y^\perp$ pointwise.   By choosing an isomorphism $\xi$ from $Y$ to $W$, we can identify $H_{Y}$ with the fixed points $G^\theta$ of an involution $\theta$.   Further let $T$ be a maximal torus in $H_{Y}$. We write $T_{\reg}$ for the regular elements of $T$, and $H^{T}$ for the set $\Int(H)(T_{\reg})$. In Section \ref{densarg}, it is shown that $H^{T}$ is a nonempty Zariski open subset of $H_{k}$.
 
Given $\gm \in T_{\reg}$, we `match' it to a specified semisimple element $\gm_G \in G$ with the property that $\gm_G \cdot \theta(\gm_G) \in G^\theta$ (c.f. \cite{KS}).  A modification of $\gm_{G}$ gives an element $n_Y(\gm) \in N'$ so that $\Norm(n_Y(\gm))=\gm$.  The reader will find typical $n_Y(\gm)$ written out in matrix form in Section \ref{MatrixSection}.

We conjugate by $M$  and obtain a Zariski dense open subset
\begin{equation} \label{first_odd}
N_{\reg}= \bigcup_{Y,T} \{ \Int(m) n_{Y}(\gm) \mid m \in M, \gm \in T_{\reg} \}
\end{equation}
of $N$.  Here $Y$ runs over $H$-orbits of nondegenerate subspaces of $X$ which are linearly isomorphic to $W$, and $T$ runs over conjugacy classes of maximal tori in $H_{Y}$.

Next, suppose $F$ is a local field.  In Section \ref{Ad-invt}, we fix a standard $\Int(M)$-invariant measure $d_{M}n$ on $N$.  
Write $\Delta_{T}$ for the stabilizer in $M$ of $n_Y(\gm)$ for $\gm \in T_{\reg}$.  Computing the Jacobian of ``Shahidi's covering map''
\begin{equation*} 
\Sha_{T}: \left(M/ \Delta_{T} \right) \times T_{\reg}\to N
\end{equation*}
given by  $\Sha_{T}((m,\gm))=\Int(m)n_{Y}(\gm)$, we obtain our result:

\begin{thm} \label{intro_thm}
Suppose that $\dim W \leq \dim X$, that $V$ is symplectic, orthogonal, or unitary.  In the symplectic or orthogonal case, suppose that $\dim W$ is even.  Let $f \in L^{1}(N,d_Mn)$. Then we have:
\begin{equation*}
\int_N f(n) d_Mn=\sum_{Y,T}|W_{H_{Y}}(T)|^{-1}\int_{T} J_{T}(\gm)\int_{M/\Delta_{T}}f(\Int(m)n_{Y}(\gamma))\frac{dm}{dz}d\gamma.
\end{equation*}
Here $J_{T}(\gm)$ is a product of discriminant factors and a function
\beq
j_{T}(\gm)=|{\rm det}(\gm - 1;Y)|^{\half (\dim X-\dim W)}.
\eeq
\end{thm}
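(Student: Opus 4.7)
My strategy is to adapt the classical Weyl integration formula, with Shahidi's covering map $\Sha_T$ playing the role of the conjugation map. Since $N \setminus N_\reg$ is a proper Zariski closed subset of $N$, it has $d_M n$-measure zero, so it suffices to integrate over $N_\reg$. The decomposition (\ref{first_odd}) writes $N_\reg$ as a union indexed by pairs $(Y,T)$, and one needs to verify that this union is essentially disjoint. This should follow from the $\Int(M)$-invariance of $\Norm$: if $n_Y(\gamma)$ and $n_{Y'}(\gamma')$ correspond to different $(Y,T)$ data, then $\Norm$ sends them to distinct $H$-conjugacy classes in $H_k$, so they lie in different $M$-orbits. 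The integral over $N$ then splits as a sum over $(Y,T)$.

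For each $(Y,T)$, I apply the change-of-variables formula to the smooth map
\[
\Sha_T\colon (M/\Delta_T) \times T_\reg \to N_\reg, \qquad (m,\gamma)\mapsto \Int(m) n_Y(\gamma).
\]
This map fails to be injective by a Weyl-group factor: for any $w \in W_{H_Y}(T) = N_{H_Y}(T)/T$, choosing a lift $\dot w \in H_Y \subset M$ gives $\Int(\dot w)(n_Y(\gamma)) = n_Y(w \cdot \gamma)$, possibly up to an element of $\Delta_T$, so the generic fiber has cardinality $|W_{H_Y}(T)|$. This produces the $|W_{H_Y}(T)|^{-1}$ weight in the formula.

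The main technical step, and what I expect to be the chief obstacle, is the Jacobian calculation. I would linearize $\Sha_T$ at a point $(1,\gamma)$ to obtain
\[
d\Sha_T\colon \mm/\Lie(\Delta_T)\oplus \TT \to \nn,
\]
and evaluate the absolute value of its determinant with respect to the chosen Haar measures. I expect the Jacobian to factor as a product of two contributions. The first arises from the orbit map $m \mapsto \Int(m)n_Y(\gamma)$ restricted to the ``square'' directions, and by the usual Weyl-integration-formula calculation carried out in the $\theta$-twisted setting it contributes discriminants of the type $|\det(\Ad(\gamma)-1;\hh_Y/\TT)|$ together with companion factors on the remaining directions in $\mm/\LL_T$; these assemble into the discriminant part of $J_T(\gamma)$. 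The second contribution is genuinely new and is present only because $\dim W < \dim X$: the matrix form of $n_Y(\gamma)$ contains an extra block of entries parameterizing the discrepancy between $X$ and $Y$, and differentiating with respect to this block yields the determinant $\det(\gamma-1;Y)$ raised to the power $\half(\dim X - \dim W)$, i.e.\ the factor $j_T(\gamma)$. The matrix-form description in Section \ref{MatrixSection} should make the block decomposition transparent and reduce the calculation to block-wise linear-algebra identities. Combining the two Jacobian pieces into $J_T(\gamma)$, summing over $(Y,T)$, and applying Fubini to separate the integrations over $\gamma$, over $m \in M/\Delta_T$, and through the Weyl quotient then yields the stated formula.
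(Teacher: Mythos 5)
Your overall route --- discard the measure-zero complement of $N_{\reg}$, decompose by $(Y,T)$, change variables through $\Sha_T$ picking up the fibre count $|W_{H_Y}(T)|$, and compute a Jacobian at $(1,\gm)$ --- is exactly the paper's. The disjointness and fibre statements you sketch are Proposition \ref{decomny} and the fibre computation for $\Sha_T$, and are fine modulo the usual caveat that one must also prove the fibre is \emph{no larger} than a Weyl orbit. The genuine gap is in the Jacobian step, in two related places.

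First, you compute ``with respect to the chosen Haar measures,'' but $d_Mn$ is not a Haar measure on $N$: it is $\phi(n)\,dn$ with $\phi(n(\xi,\eta))=|\delta_N(m(\eta\ups_0,1))|^{-\half}$. With respect to an actual Haar measure $dn$ the pullback is $\delta_N(m)\,\delta_T(\gm)\,\omega_{M/\Delta_T}\wedge\omega_T$ (Proposition \ref{deltas}), whose $m$-dependence is incompatible with the unweighted inner integral $\int_{M/\Delta_T}f(\Int(m)n_Y(\gm))\frac{dm}{dz}$ in the statement; the factor $\phi$ is precisely what cancels $\delta_N(m)$ and must be carried along. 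Second, your proposed origin for $j_T(\gm)$ is wrong. The ``extra block'' $\Hom(W',Y^{\perp})$ (the image of $\tilde\psi$ inside $\nn$, spanned by the $u(\beta_j^*,0)$) is \emph{fixed} by the linearized map $L(\gm)$ and contributes determinant $1$; no $\gm$-dependent factor comes from differentiating in those directions. What actually happens is that the Haar-measure Jacobian equals $(\det\gm_G)^{\dim W\pm 1}D_{H_Y}(\gm)$, and multiplying by $\phi(n_Y(\gm))=|\det\gm_G|^{-\half(\dim W+\dim X\pm 1)}$ leaves the exponent $\half(\dim W-\dim X)$ on $|\det\gm_G|$; since $\gm_G=\Xi(\gm-1)^{-1}$ this is $|\det(\gm-1;Y)|^{\half(\dim X-\dim W)}=j_T(\gm)$, while the remaining $|\det\gm_G|^{\pm\half}|D_{H_Y}(\gm)|$ regroups as $|D_{H_Y}(\gm)|^{\half}|D^\theta_G(\gm_G)|^{\half}$. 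So $j_T$ is an artifact of the normalization of the $\Int(M)$-invariant measure evaluated at $n_Y(\gm)$, not of an extra block of the derivative; as written, your plan would produce a spurious $j_T$-type factor from the Jacobian and then miss the one coming from $\phi$, and it gives no mechanism for the cancellation of $\delta_N(m)$.
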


The function $J_{T}$ is defined precisely in Section \ref{finalform}.  As usual, $W_{H_{Y}}(T)$ denotes the Weyl group of $T$ in $H_{Y}$. The sum over $Y,T$ is the same as in Equation (\ref{first_odd}). There is a similar theorem in the case of $V$ orthogonal and $\dim W$ odd. (See Section \ref{finalform}). 

In Section \ref{Residues.Section} we explain the role of these formulas in the theory of intertwining operators.  The theory involves a choice of matrix coefficients $f_G$ and $f_{H}$.   A `pairing' of the form
 
\beq
\GS(f_G,f_H) =  \sum_Y \sum_{ T_c \leq H_Y} |W_{H_Y}(T_c)|^{-1}\int_{T_c} j_{T_c}(\gm) I_{\gm}^H(f_H) I_{\gm_G}^G(f_G) d \gm,			 
\eeq
where $Y,T_c$ are as above, but with $T_c$ {\it elliptic}, should vanish if and only if the intertwining operator is holomorphic at $s=0$.  Here the distributions  $I_{\gm_G}^G$ and $I_{\gm}^H$ are orbital integrals.   

Our formulas generalize those of the second author in \cite{IFS} in the case of $\dim W=\dim X$.  There are obstacles not found in the equal-size case, since here the image of $\Norm$ lies in a subset of $H$ of measure $0$. The inspirational debt to Goldberg-Shahidi  (\cite{GS98}, \cite{GSIII}) should be obvious.

The theory developed in this paper is central to an ongoing project to study intertwining operators, which are given by integrals over unipotent radicals.  Here $F$ is $p$-adic. These intertwining operators are used to define Langlands-Shahidi $L$-functions, and the project connects the theory of $L$-functions to functoriality.  We refer the reader to the papers (\cite{S92},  \cite{S95}, \cite{GS98}, \cite{GS01}, \cite{GSIII}, \cite{Spallone},  \cite{Comp}, \cite{Cai-Xu}, \cite{Var}, \cite{WWL}, \cite{Yu1}, \cite{Yu2})   of Goldberg, Shahidi, Wen-Wei Li, Li Cai, Bin Xu, Xiaoxiang Yu, Varma and the second author for details and progress.

We emphasize that the best results thus far are only in the case of $\dim X=0$ (\cite{S92}) or $\dim W=\dim X$ (\cite{GS98}, \cite{Spallone}, \cite{Comp}, \cite{WWL}), but the results of this paper will open up many interesting cases with $\dim W < \dim X$.  The explicit Jacobian calculations at the end of this paper (as in \cite{IFS}) are crucial, because they allow for the application of endoscopic transfer to this project, as in \cite{WWL}.
 
For the case of quasisplit orthogonal and symplectic groups, the results in this paper up to Section \ref{densarg} are essentially in the work \cite{Yu2} of Yu and Wang.

We now delineate the sections of this paper.  After the preliminaries in Section \ref{Preliminaries}, we review the Goldberg-Shahidi Norm map in Section \ref{Norm_Corr}.  For a fixed nondegenerate subspace $Y$ of $X$ of dimension equal to that of $W$ (and an isomorphism $\xi_{Y}:Y \to W$), we define the group $H_{Y}$ and the map $\Xi$. Using these we obtain a section of the $\Norm$ map over $H_{Y}$. The map $\Sha$ is defined next in Section \ref{algebraic_th} and we compute its fibres and image. In Section \ref{MatrixSection} we display these objects explicitly as matrices when the reductive group $G^{1}$ is a split orthogonal, symplectic or a quasisplit unitary group. In Section \ref{densarg} we obtain a density argument for suitably regular elements, so as to obtain the full measure on $N$.  In Section \ref{Lie} we  provide a useful decomposition of the tangent space of $H$. 

We study the derivative of $\Sha$ in Section  \ref{local_now}, and calibrate differential forms on all the pieces.  Next in Section \ref{routes} we compute the Jacobian in the symplectic/orthogonal case, and in Section \ref{routes_2} we do the same in the unitary case. In Section \ref{LastSection} we consolidate our work, deduce the integration formula, and work out a couple of small rank cases.  

 Finally in Section \ref{Residues.Section} we exhibit a pairing of matrix coefficients expected to detect the presence of a pole for the intertwining operator at $0$.
 
{\bf Acknowledgments:} The authors would like to thank Sandeep Varma, Vivek Mallick, Amit Hogadi, Krishna Kaipa, and Freydoon Shahidi for useful conversations. Part of the work was done during the first author's Ph.D. and appears in his thesis. He would like to thank his advisor Dipendra Prasad for constant encouragement and support. This work was initiated during the second author's visit to the Tata Institute of Fundamental Research in Mumbai, and it is a pleasure to thank the institute for its support. During the second half of the project, the first author was supported by a postdoctoral fellowship funded by the Skirball Foundation via the Center for Advanced Studies in Mathematics at Ben-Gurion University of the Negev, and wishes to thank the foundation for its financial support. 

The authors would also like to thank the anonymous referee for a multitude of suggestions which greatly improved the paper.

{\bf Addendum:} During the review process of this manuscript, the independent paper \cite{Yu} of Xiaoxiang Yu appeared, which has some overlap with our work.  The overlap is an equivalent formula to our Theorem \ref{intro_thm}, for the case of quasisplit orthogonal and symplectic groups.

\section{Preliminaries} \label{Preliminaries}
\subsection{Notation}
Throughout this paper, we study orthogonal, symplectic, and unitary groups over a field $F$ such that the characteristic of $F$ is different from two.  From Section \ref{Norm_Corr} to Section \ref{Lie}, we will have no other restrictions on the field $F$. To set up the orthogonal and symplectic cases, let $V$ be a finite-dimensional $F$-vector space with a nondegenerate bilinear form $\Phi$, either symmetric or antisymmetric. To set up the unitary cases pick a quadratic extension $E$ of $F$, with nontrivial Galois automorphism $\sigma$.  Fix a nonzero element $\iota \in E$ of trace zero.  Let $V$ be a finite-dimensional vector space over $E$, and $\Phi$ a nondegenerate Hermitian form on $V$.  Thus $\Phi(a x,y)=\sigma(a) \Phi(x,y)=\Phi(x,\sigma(a)y)$ and $\Phi(y,x)=\sigma(\Phi(x,y))$ for $x,y \in V$, and $a \in E$. We will sometimes also write $\overline{x}$ for $\sigma(x)$. 
 
For a nonarchimedean local field $F$, $|\cdot|_{F}$ will denote the usual absolute value, i.e. normalized so that $|\varpi_F|_F=\frac{1}{q_F}$, with $\varpi_{F}$ a fixed uniformizer and $q_F$ the cardinality of the residue field. In particular, when $E/F$ is a quadratic Galois extension of local fields, $|x|_{E}= |N_{E/F}(x)|_{F}$ for an element $x\in E^{\times}$, and for $x \in F^{\times}$, $|x|_{F}$ is the positive square root of $|x|_{E}$. We may write $|\cdot|$ to mean $|\cdot|_{F}$ if no confusion is possible.


Write
\begin{equation*}
\Isom(V)=\{ g\in \GL(V)\ |\ \Phi(gv_{1},gv_{2})=\Phi(v_{1},v_{2})\ \forall v_{1},v_{2}\in V \}
\end{equation*}
for the group of linear isometries of $V$.

If $A$ is a subspace of $V$ we write $A^\perp$ for the set of vectors perpendicular to $A$.  We say that $A$ is nondegenerate if the restriction of $\Phi$ to $A$ is nondegenerate; in this case $A^\perp$ is also nondegenerate.

If $G$ is a group and $S$ is a subgroup, we write $Z_G(S)$ for the centralizer in $G$ of $S$, and $N_G(S)$ for the normalizer in $G$ of $S$. Given $g_0 \in G$, write $\Int(g_0)$ for the automorphism of $G$ given by $g \mapsto g_0 g g_0^{-1}$.

Our varieties are usually defined over $F$.  We use normal script (i.e. ``$G$''), resp. bold script (i.e. ``$\bG$'') for the $F$-points, resp. for the $\ol F$-points of these varieties.
If $\bG$ is an algebraic group we write $\bG^\circ$ for the identity component of $\bG$ in the Zariski topology.  

As usual, we use the Fraktur analogue of the Latin font to denote the Lie algebra of a given group. Thus $\GG$, $\hh$, etc. will denote the Lie algebras of $G$, $H$, etc. respectively.

\subsection{Differential forms and measures}\label{measnforms}
 
We recall here the definition of differential forms, their associated measures and some well known facts about them, when $F$ is a local field.  By ``manifold'' we mean an analytic finite-dimensional $F$-manifold in the sense of \cite{BourbDiff} or \cite{Serre}.  However, all the manifolds of interest to this paper will also be $F$-points of algebraic varieties, and the morphisms of interest to this paper are ``regular'' in the sense of algebraic geometry. If $X$ is a manifold and $p\in X$, we write $T_pX$ for the tangent space to $X$ at $p$.
 
\begin{defn} 
If $V$ is a vector space over $F$, write $\Alt^r(V)$ for the space of alternating forms on $r$-tuples of vectors in $V$.
\end{defn}
One denotes by $\underline{v}$ an $r$-tuple $(v_{1},...,v_{r}) \in V^r$.  
We will later employ the following construction from Chapter IX, Section 6 of \cite{BourbLie}:
\begin{defn}
Let
$$0\to U'\overset{i}\to U \overset{p}\to U'' \to 0$$
be an exact sequence of vector spaces, of dimensions $s$,$s+t$, and $t$, respectively. Let $\alpha''\in \Alt^{t}(U'')$ and $\alpha'\in\Alt^{s}(U')$. Then there is an alternating form $\alpha''\cap \alpha'\in\Alt^{s+t}(U)$, which is characterized by the following property: If $\underline{v}\in U^{t}$ and $\underline{v'}\in (U')^{s}$, then 
$$\alpha''\cap \alpha'(\underline{v},i(\underline{v'}))= \alpha''(p(\underline{v}))\alpha'(\underline{v'}).$$   
\end{defn}

\begin{defn}
Let $X$ be a manifold.  A (differential) $n$-form $\omega$ on $X$ is an analytic choice of alternating forms $\omega(p)\in \Alt^{n}(T_{p}X)$ for each point $p\in X$.
If $n=\dim X$, then an $n$-form is called a top form.
\end{defn}

Now suppose that $\omega$ is a top form, and put $n=\dim X$.  If $u^1,\ldots, u^n$ are coordinates on an open subset $U$ of $X$, then there is an analytic function $f$ on $U$ so that $\omega |_U$ is the form $f du^1 \wedge \cdots \wedge du^n$.  Then a real-valued measure $|\omega|$ on $X$ may be assembled by combining $|f|$ with the product of fixed Haar measures on the additive group of $F$ via the $u^i$.  (For details see \cite{BourbDiff}.)  In this case $|\omega|$ is called the measure associated to $\omega$.  If $G$ is a Lie group, this gives a one-to-one correspondence between left-invariant differential forms $\omega_G$ on $G$ (up to a nonzero constant in $F$) and left Haar measures $dg=|\omega_G|$ on $G$ (up to a positive constant in $\R$).
 
\begin{prop}\label{difffibre}  
Now let $X,Y$ be manifolds, and $\pi: X \to Y$ a proper surjective local diffeomorphism.  Let $\omega$ be a top form on $Y$ and $\pi^* (\omega)$ the pullback form on $X$. Suppose that $d\pi$ does not vanish at any point of $X$, and that the preimage of each point of $Y$ has precisely $d$ points. 
Let $f \in L^1(Y,|\omega|)$.  Then $f \circ \pi \in L^1(X,|\pi^{*}(\omega)|)$ and we have the identity
$$\int_{Y}f(y)|\omega|=\frac{1}{d}\int_{X}f(\pi(x))|\pi^{*}(\omega)|.$$
\end{prop}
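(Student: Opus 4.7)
The plan is to reduce the identity to the standard change-of-variables formula for a diffeomorphism, and then patch using the covering space structure of $\pi$.

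First I would observe that, because $\pi$ is a proper surjective local diffeomorphism with finite fibres of constant cardinality $d$, it is a $d$-sheeted covering map. Concretely, for each $y\in Y$ we may choose an open neighbourhood $U$ of $y$ such that $\pi^{-1}(U)$ decomposes as a disjoint union $\bigsqcup_{j=1}^{d} V_{j}$ with each $\pi|_{V_{j}}: V_{j}\to U$ an analytic diffeomorphism. This uses that $\pi$ is proper (so the fibre is closed and discrete, hence finite), that $d\pi$ is everywhere nondegenerate (so $\pi$ is open and locally invertible), and that the fibre cardinality is constant (so no sheets are lost at $y$). The argument works verbatim for analytic $F$-manifolds in the sense of \cite{BourbDiff}.

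Next I would use a partition of unity on $Y$ subordinate to such trivializing neighbourhoods to reduce the claim to the case where $Y=U$ is a single trivializing open set and $X=\bigsqcup_{j=1}^{d}V_{j}$. On each $V_{j}$, the restriction $\pi|_{V_{j}}$ is a diffeomorphism, so the usual change-of-variables formula (the transfer-of-measures result for analytic diffeomorphisms of $F$-manifolds, Chapter III, Section 10 of \cite{BourbDiff}) gives
\begin{equation*}
\int_{V_{j}} f(\pi(x))\, |\pi^{*}(\omega)| = \int_{U} f(y)\, |\omega|,
\end{equation*}
since $\pi^{*}(\omega)$ pulls back to the form which expresses $\pi|_{V_{j}}$ as a measure-preserving diffeomorphism (up to the Jacobian absorbed into $|\cdot|$). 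Summing over the $d$ sheets yields
\begin{equation*}
\int_{\pi^{-1}(U)} f(\pi(x))\, |\pi^{*}(\omega)| = d\int_{U} f(y)\, |\omega|,
\end{equation*}
which is exactly the claim on $U$. Reassembling via the partition of unity gives the global identity. Integrability of $f\circ \pi$ follows from the same calculation applied to $|f|$.

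The only subtle point — and the one I would treat with most care — is justifying the covering-space decomposition globally, i.e.\ checking that properness plus local diffeomorphism plus constant finite fibre really does produce evenly-covered neighbourhoods in the $F$-analytic category. The key input is that $\pi$ is a closed map with finite fibres, so one can separate the finitely many preimages of $y$ by disjoint open neighbourhoods on which $\pi$ restricts to a diffeomorphism, and then shrink $U$ so that $\pi^{-1}(U)$ lies in their union. Once this is in hand, the rest is a routine application of the single-chart change of variables formula and additivity of the integral over the disjoint sheets.
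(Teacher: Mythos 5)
Your argument is correct, but it is genuinely different from what the paper does: the paper's entire proof is a citation of Proposition 11 of Chapter V, Section 6 of \cite{Bourbint}, which is a general transfer-of-measure statement for proper maps, whereas you unwind the statement from scratch via the covering-space structure. Your route is the standard one: a proper local homeomorphism between locally compact Hausdorff spaces with fibres of constant cardinality $d$ is a $d$-sheeted covering (your separation argument using closedness of $\pi$ and finiteness of the fibre is the right way to produce evenly covered neighbourhoods), and on each sheet the single-chart change-of-variables identity $\int_{V_j}(f\circ\pi)\,|\pi^*\omega|=\int_U f\,|\omega|$ holds because $|\cdot|$ absorbs the Jacobian; summing over sheets and patching gives the factor $d$, and applying the same computation to $|f|$ gives the integrability claim. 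Two small points worth making explicit in the $F$-analytic setting: for nonarchimedean $F$ the ``partition of unity'' step is actually easier than in the smooth category, since the base is totally disconnected and $\sigma$-compact, so one can decompose $Y$ (up to a null set) into countably many disjoint open evenly covered pieces and invoke countable additivity rather than a genuine partition of unity; and the constancy of the fibre cardinality is an explicit hypothesis of the proposition, so you need not derive it from connectedness. What your approach buys is self-containedness and transparency about where each hypothesis (properness, nonvanishing of $d\pi$, constant fibre count) is used; what the paper's citation buys is brevity and delegation of the measure-theoretic bookkeeping to Bourbaki.
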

 
\begin{proof} See Proposition 11 of Chapter V, Section 6 of \cite{Bourbint}.
 \end{proof}

\section{The Goldberg-Shahidi Norm}  \label{Norm_Corr}
\subsection{The unipotent radical}\label{pre_ur} 

Having fixed our ambient vector space $V$ equipped with a bilinear or Hermitian form $\Phi$ as above, we set $G^1=\Isom(V)$. Let $W$ be a totally isotropic subspace of $V$, and $P$ the stabilizer of $W$ in $G^1$.  Then $P$ is a maximal parabolic subgroup of $G^1$. Pick a subspace $W'$ of $V$ so that $W+W'$ is direct and nondegenerate. Let $X=(W+W')^{\perp}$. Let $M$ be the subgroup of $G^1$ that preserves $W$ and $W'$; it is a Levi subgroup of $P$.   Let $G=\GL(W)$ if $G^{1}$ is orthogonal or symplectic and $G={\rm Res}_{E/F}{\rm GL}_{E}(W)$ if $G^{1}$ is unitary. Let $H=\Isom(X)$. Note that when the form $\Phi$ is symmetric, the groups $G^{1}$ and $H$ are orthogonal groups and hence disconnected. Given $g \in G$ and $h \in H$, write $m=m(g,h)$ for the element in $M$ whose restriction to $W$ is $g$ and whose restriction to $X$ is $h$.  Then $m( \cdot ,\cdot )$ is an isomorphism from $G \times H$ to $M$.
 
Let $N$ be the unipotent radical of $P$.  An element $n\in N$ is determined by linear maps
$$\xi:X\to W, \ \xi':W'\to X, \ \eta:W'\to W$$
such that $n|_{W}={\rm Id}_{W}$,  $n|_{X}={\rm Id}_{X}+\xi$, and $n|_{W'}={\rm Id}_{W'}+\xi'+\eta$. Let us emphasize here that the maps $\xi$ and $\eta$ are $E$-linear in the case when $G^{1}$ is unitary. 
 
Define $\xi^{*}:W'\to X$ by $$\Phi(\xi^{*}(w'),x)=\Phi(w',\xi(x)),$$ for $x\in X$ and $w'\in W'$. Similarly define $\eta^{*}:W'\to W$, the adjoint of $\eta$, by $$\Phi(\eta^{*}(w_1'),w_2')=\Phi(w_1',\eta(w_2')),$$ for $w_1',w_2'\in W'$.   

The linear map $n$ determined by $\xi$, $\xi'$ and $\eta$ as above lies in $G^1$ if and only if both the following conditions are satisfied: 
\begin{enumerate}
\item $\xi^{*}+\xi'=0$
\item $\eta^{*}+\eta=\xi\xi'$. 
\end{enumerate}
Thus $\xi'$ is determined by $\xi$. Since $n$ is determined by $\xi$ and $\eta$, we write $n=n(\xi,\eta)$.  Thus for $n(\xi,\eta) \in N$ we have 
\begin{align}\label{defn}
\eta+\eta^{*}+\xi\xi^{*}=0.
\end{align}

Some useful calculations in what follows are $$n(\xi_{1},\eta_{1})n(\xi_{2},\eta_{2})=n(\xi_{1}+\xi_{2},\eta_{1}+\eta_{2}-\xi_{1}\xi_{2}^{*}), n(\xi,\eta)^{-1}=n(-\xi,\eta^*),$$ and $$\Int(m(g,h))(n)=mnm^{-1}=n(g\xi h^{-1}, g\eta g^{*}).$$

\subsection{Norm correspondence} \label{w_0.here}
Write $N'$ for the variety of matrices $n(\xi, \eta) \in N$ with $\eta$ invertible. 
\begin{defn}
Suppose that $n=n(\xi,\eta) \in N'$.  Let $\Norm(n):X\to X$ be the linear transformation given by 
\begin{equation*}
\Norm(n)=1+\xi^{*}\eta^{-1}\xi.
\end{equation*}
\end{defn}

The above definition originates from Shahidi and Goldberg-Shahidi who studied this map in connection with the theory of intertwining operators (\cite{S95}, \cite{GS98}). To explain, 
set $\varepsilon$ to be $(-1)^{\dim_{F}(W)}$ in the symplectic and orthogonal cases and to be $(-1)^{\dim_{E}(W)}$ in the unitary case.
Also fix a isomorphism $\ups_{0}: W \to W'$ with $\ups_{0}^*=\varepsilon \ups_{0}$; this amounts to sending a basis for $W$ to $\varepsilon$ times its dual basis in $W'$.  Let $w_0$ be the transformation of $V$ which is multiplication by $\varepsilon$ on $X$, given by $\ups_{0}^*$ on $W$, and by $\ups_{0}^{-1}$ on $W'$.  Note that $w_0 \in G^1$, and $w_0^2 \in Z(M)$.

Let $N^-$ be the subgroup of elements of $G^1$ which restrict to the identity on the three spaces $W'$, $(X+W')/W'$, and $V/ (X+W')$.  This is the unipotent radical of the parabolic subgroup with Levi subgroup $M$ opposite to $P$.  As with $n \in N$, an element $n^- \in N^-$ is determined by maps $\xi^-: X \to W'$ and $ \eta^-: W \to W'$ satisfying $\eta^- + (\eta^-)^* + \xi^- (\xi^-)^*=0$.  We then write $n^-=n(\xi^-, \eta^-)$ in this case. 

Intertwining operators in this context are, briefly, operators of the form 
\begin{equation} \label{int.op}
f \mapsto \int_N f(w_0^{-1}n -) dn
\end{equation}
for functions $f$ on $G^1$ with a certain left $P$-invariance.  One rewrites the argument of $f$ to take advantage of this invariance, with the following lemma:
 
\begin{lemma} \label{ralph}
Let $n=n(\xi,\eta)$.  Then $w_0^{-1} n \in P N^{-}$ if and only if $n \in N'$, in which case 
\begin{equation} \label{etaisom}
w_0^{-1} n=  m(\varepsilon \ups_{0}^{-1} \eta^{-*} , \varepsilon \cdot \Norm(n))  \cdot   n(-\eta^* \eta^{-1} \xi, \eta^*) \cdot  n^-(\eta^{-1} \xi,  \eta^{-1}).
\end{equation} 
\end{lemma}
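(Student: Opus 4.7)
The plan is to prove the factorization (\ref{etaisom}) by a direct block matrix calculation with respect to the decomposition $V=W\oplus X\oplus W'$, and to read off the equivalence from it.

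First I would write $w_0^{-1}$ in block form.  From the definitions of $w_0$ together with $\ups_0^*=\varepsilon\ups_0$ and $\varepsilon^2=1$, one computes
\[
w_0^{-1}=\begin{pmatrix} 0 & 0 & \varepsilon\ups_0^{-1} \\ 0 & \varepsilon\mathrm{Id}_X & 0 \\ \ups_0 & 0 & 0 \end{pmatrix}.
\]
With $n=n(\xi,\eta)$ block upper triangular (with $(W,X)$-entry $\xi$, $(X,W')$-entry $-\xi^*$, and $(W,W')$-entry $\eta$), a short multiplication gives $w_0^{-1}n$; in particular its $(W',W')$-block is $\ups_0\eta$.

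For the ``only if'' direction, suppose $w_0^{-1}n=p\cdot n^-$ with $p\in P$ and $n^-\in N^-$.  Since $P$ stabilizes the flag $W\subset W^\perp=W+X$, the matrix $p$ is block upper triangular; and since $n^-\in N^-$ is opposite, it is block lower unitriangular.  A glance at the block multiplication shows $(pn^-)_{(W',W')}=p_{(W',W')}$, which is invertible because $p\in G^1$ has invertible diagonal blocks.  Comparing $(W',W')$-blocks forces $\ups_0\eta$, and hence $\eta$, to be invertible, so $n\in N'$.

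For the factorization (and the ``if'' direction), assume $n\in N'$ and multiply out the three factors
\[
m=m(\varepsilon\ups_0^{-1}\eta^{-*},\varepsilon\Norm(n)),\quad n_0=n(-\eta^*\eta^{-1}\xi,\eta^*),\quad n_0^-=n^-(\eta^{-1}\xi,\eta^{-1})
\]
in block form.  Comparison with $w_0^{-1}n$ produces nine identities; most cancellations are immediate and two require genuine input.  First, vanishing of the $(W,W)$-block reduces, after clearing $\ups_0^{-1}$ on the left and $\eta^*$ on the right, to the defining relation $\eta+\eta^*+\xi\xi^*=0$ of (\ref{defn}).  Second, the entries in the $X$-row all hinge on the auxiliary identity
\[
\Norm(n)^{-1}=1+\xi^*\eta^{-*}\xi,
\]
which I would verify by expanding $(1+\xi^*\eta^{-*}\xi)(1+\xi^*\eta^{-1}\xi)$ and substituting $\xi\xi^*=-(\eta+\eta^*)$ to collapse the cross-term.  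A companion identity $\Norm(n)\xi^*=-\xi^*\eta^{-1}\eta^*$ follows the same way and handles the $(X,W)$-block.  Together these turn the $(X,X)$-block into $\varepsilon\mathrm{Id}_X$ and the $(X,W')$-block into $-\varepsilon\xi^*$, matching $w_0^{-1}n$; the ``if'' direction then drops out, since the factorization displays $w_0^{-1}n\in M\cdot N\cdot N^-\subset PN^-$.

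The main obstacle is the identity $\Norm(n)^{-1}=1+\xi^*\eta^{-*}\xi$: it is the ``adjoint twin'' of $\Norm(n)$, not visible on inspection, and it is what makes the entire $X$-row collapse.  Once it is available, the rest is routine tracking of adjoints together with repeated applications of the single defining relation $\eta+\eta^*+\xi\xi^*=0$.
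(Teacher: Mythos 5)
Your proof is correct and follows essentially the same route as the paper's: the paper likewise deduces the invertibility of $\eta$ from the $(W',W')$-block $\ups_{0}\eta$ of $w_0^{-1}n$, and establishes (\ref{etaisom}) by explicitly multiplying out both sides. Your isolation of the auxiliary identity $\Norm(n)^{-1}=1+\xi^{*}\eta^{-*}\xi$ simply fills in the computation the paper leaves to the reader.
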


\begin{proof}
Suppose $w_0^{-1} n(\xi,\eta) =p n^-$, with $p=mn \in MN$ and $ n^- \in  N^-$.  Then on $W'$ we have $\ups_{0} \eta =m|_{W'}$, thus $\eta$ is an isomorphism. Conversely, if $\eta$ is an isomorphism, then one explicitly multiplies out either side of (\ref{etaisom}) and verifies their equality.
\end{proof}

Thus the origin of $\Norm(n)$.  While we are here, let us extract a simple corollary from this calculation:
\begin{cor} 
If $G^{1}$ is a symplectic or an orthogonal group, then we have $\det(\Norm(n)) = \varepsilon$.
\qed
\end{cor}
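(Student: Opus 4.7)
The plan is to apply Sylvester's determinant identity to transfer the computation from $X$ (possibly large) to $W$ (of dimension $k$), where the defining relation \eqref{defn} immediately collapses things. With $A = \xi^{*}\eta^{-1}: W \to X$ and $B = \xi: X \to W$, Sylvester gives
\[\det(\Norm(n)) = \det_{X}(I + AB) = \det_{W}(I_{W} + \xi\xi^{*}\eta^{-1}).\]
Substituting $\xi\xi^{*} = -\eta - \eta^{*}$ from \eqref{defn} yields $I_{W} + \xi\xi^{*}\eta^{-1} = -\eta^{*}\eta^{-1}$, and therefore
\[\det(\Norm(n)) = (-1)^{k}\det(\eta^{*}\eta^{-1}) = \varepsilon\cdot\det(\eta^{*})/\det(\eta).\]

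Next I would compute $\det(\eta^{*})/\det(\eta)$. Fix dual bases $\{e_{i}\}$ of $W$ and $\{e_{j}'\}$ of $W'$ with $\Phi(e_{i}, e_{j}') = \delta_{ij}$, and let $\tau \in \{\pm 1\}$ denote the symmetry sign of $\Phi$, so that $\Phi(a, b) = \tau\,\Phi(b, a)$ (with $\tau = +1$ in the orthogonal case and $\tau = -1$ in the symplectic case). Unwinding the defining relation $\Phi(\eta^{*}(w_{1}'), w_{2}') = \Phi(w_{1}', \eta(w_{2}'))$ in these bases shows at once that the matrix of $\eta^{*}$ equals $\tau$ times the transpose of the matrix of $\eta$. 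Hence $\det(\eta^{*})/\det(\eta) = \tau^{k}$, and so $\det(\Norm(n)) = \varepsilon\,\tau^{k}$.

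In the orthogonal case $\tau = 1$, giving $\det(\Norm(n)) = \varepsilon$ immediately. In the symplectic case, the standing hypothesis that $k$ is even forces $\tau^{k} = (-1)^{k} = 1$, so again $\det(\Norm(n)) = \varepsilon$. No step is especially delicate; the only real choice is the opening move to apply Sylvester so that the determinant is rewritten on $W$, where \eqref{defn} is exactly the right identity to simplify $I_{W} + \xi\xi^{*}\eta^{-1}$. (An alternative route is to take determinants in the factorization of Lemma \ref{ralph} directly, computing $\det(w_{0})$ and $\det(m)$ block by block on $V = W \oplus X \oplus W'$; this works but requires more bookkeeping because one should resist the false simplification $\det(g)\det(m|_{W'}) = 1$.)
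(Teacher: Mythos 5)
Your argument is correct, and it takes a genuinely different route from the paper's. The paper offers no written proof: the corollary is presented as an immediate consequence of the factorization in Lemma \ref{ralph}, i.e.\ one is meant to take determinants across $w_0^{-1}n = m(\varepsilon\ups_0^{-1}\eta^{-*},\varepsilon\Norm(n))\cdot n(\cdot,\cdot)\cdot n^-(\cdot,\cdot)$, note that the unipotent factors contribute $1$ and that $\det m(g,h)=\det h$ on $V$, and then evaluate $\det w_0$. Your proof instead works directly with $\Norm(n)=1+\xi^*\eta^{-1}\xi$, using Sylvester's identity to move the determinant from $X$ to $W$ and then the structural relation (\ref{defn}) to collapse $I_W+\xi\xi^*\eta^{-1}$ to $-\eta^*\eta^{-1}$. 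This is self-contained, avoids all the sign bookkeeping attached to $w_0$ and $\ups_0$ (which is genuinely delicate in this paper), and has the virtue of making visible exactly where the hypothesis ``$\dim W$ even in the symplectic case'' is needed: your factor $\tau^k$ really is $-1$ for symplectic $k$ odd, and indeed the corollary as literally stated fails there (e.g.\ for $k=1$ symplectic one finds $\Norm(n)$ unipotent, so $\det=1\neq\varepsilon$), which is consistent with the paper's standing exclusion of that case even though that assumption is only formally imposed in Section \ref{defy}, after the corollary. Two small remarks: the computation $\det(\eta^*)/\det(\eta)=\tau^k$ is correct as you set it up (the matrix of $\eta^*$ in dual bases is $\tau$ times the transpose of that of $\eta$); but your closing parenthetical is off --- for $m\in M$ the identity $\det(m|_W)\det(m|_{W'})=1$ is in fact \emph{true}, since $m|_{W'}$ is the inverse adjoint of $m|_W$ for the nondegenerate pairing $\Phi|_{W\times W'}$, so the alternative route via Lemma \ref{ralph} is less treacherous than you suggest (the only real work there is $\det w_0$).
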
 

Here are some straightforward properties of $\Norm$:

\begin {lemma}\label{basicnorm}
\begin{enumerate}
\item $\Norm(n)\in H$, and $\rank(\Norm(n)-1; X) \leq \dim W$. (Equality is attained if and only if $\xi$ is surjective.) 
\item If $n \in N'$ and $m(g,h) \in M$, then $\Norm(\Int(m(g,h))n)=\Int(h)\Norm(n)$.
\item If $g\in G$, then the pair $(g\xi,g\eta g^{*})$ also satisfies (\ref{defn}) and 
$$\Norm (n(\xi,\eta))=\Norm(n(g\xi,g\eta g^{*})).$$ 
\end{enumerate}
\qed
\end{lemma}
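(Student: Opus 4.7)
The plan is to verify all three assertions by direct algebraic manipulation of the defining formula $\Norm(n(\xi,\eta)) = 1 + \xi^{*}\eta^{-1}\xi$, using two standing tools: the reversal rule $(AB)^{*} = B^{*}A^{*}$ for adjoints, and the relation $\eta + \eta^{*} + \xi\xi^{*} = 0$ from (\ref{defn}). I expect parts (ii) and (iii) to fall out formally, while part (i) requires a small but genuine use of (\ref{defn}).

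For (ii) I would apply the explicit formula $\Int(m(g,h))n(\xi,\eta) = n(g\xi h^{-1}, g\eta g^{*})$ recorded at the end of Section 3.1 and compute
\[
\Norm(\Int(m(g,h))n) = 1 + (g\xi h^{-1})^{*}(g\eta g^{*})^{-1}(g\xi h^{-1}).
\]
Expanding $(g\xi h^{-1})^{*} = (h^{-1})^{*}\xi^{*}g^{*}$ and using $h \in H$, i.e.\ $h^{*} = h^{-1}$ so that $(h^{-1})^{*} = h$, together with $(g\eta g^{*})^{-1} = (g^{*})^{-1}\eta^{-1}g^{-1}$, the $g$'s and $g^{*}$'s telescope and the expression collapses to $1 + h\xi^{*}\eta^{-1}\xi h^{-1} = h\,\Norm(n)\,h^{-1} = \Int(h)\Norm(n)$. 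The same telescoping dispatches (iii): first, $(g\eta g^{*})^{*} + g\eta g^{*} + (g\xi)(g\xi)^{*} = g(\eta^{*} + \eta + \xi\xi^{*})g^{*} = 0$, so the pair $(g\xi, g\eta g^{*})$ satisfies (\ref{defn}); and second, $(g\xi)^{*}(g\eta g^{*})^{-1}(g\xi) = \xi^{*}g^{*}(g^{*})^{-1}\eta^{-1}g^{-1}g\xi = \xi^{*}\eta^{-1}\xi$ gives the equality of the two Norms.

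For (i), the rank bound is immediate from the factorization $\Norm(n) - 1 = \xi^{*}\eta^{-1}\xi \colon X \to W \to W' \to X$: since $\eta^{-1}$ is an isomorphism and the nondegeneracy of $\Phi$ on $W+W'$ forces $\rank \xi^{*} = \rank \xi$, the rank of the composition equals $\rank \xi \le \dim W$, with equality exactly when $\xi$ is surjective. For the assertion $\Norm(n) \in H$, I would expand
\[
\Phi((1 + \xi^{*}\eta^{-1}\xi)x_{1}, (1 + \xi^{*}\eta^{-1}\xi)x_{2}) - \Phi(x_{1}, x_{2})
\]
into two cross terms and a quadratic term, and repeatedly apply the adjoint identities $\Phi(\xi^{*}w', x) = \Phi(w', \xi x)$ and $\Phi(w_{1}', \eta^{*}w_{2}') = \Phi(\eta w_{1}', w_{2}')$ to move everything onto the pairing between $W$ and $W'$. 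The quadratic term becomes $\Phi(\eta^{-1}\xi x_{1}, \xi\xi^{*}\eta^{-1}\xi x_{2})$, and substituting $\xi\xi^{*} = -\eta - \eta^{*}$ from (\ref{defn}) splits it into two pieces that cancel the two cross terms exactly. The main obstacle I expect here is purely bookkeeping: keeping the adjoints and arguments on the correct sides while working uniformly across the symmetric, antisymmetric and Hermitian cases. Once the adjoint swap $\Phi(w_{1}', \eta^{*}w_{2}') = \Phi(\eta w_{1}', w_{2}')$ is checked in a form-type-independent way (each case invokes its own symmetry of $\Phi$), the computation is routine.
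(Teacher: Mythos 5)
Your verification is correct, and since the paper states this lemma with no proof at all (it is introduced as a list of straightforward properties and closed with a \qed), your computations are precisely the routine checks the authors leave to the reader: the telescoping of $g$, $g^{*}$, $h$ in parts (ii) and (iii), and, for the isometry claim in (i), the substitution $\xi\xi^{*}=-\eta-\eta^{*}$ which splits the quadratic term so as to cancel the two cross terms in $\Phi(\Norm(n)x_{1},\Norm(n)x_{2})$. One small caveat in (i): your intermediate assertion that $\rank(\xi^{*}\eta^{-1}\xi)$ \emph{equals} $\rank\xi$ is more than you need and is not immediate when $\xi$ fails to be surjective, since $\eta^{-1}(\im\xi)$ could a priori meet $\ker\xi^{*}$ (the annihilator of $\im\xi$ in $W'$ under the perfect pairing of $W$ with $W'$). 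But the lemma only requires the two directions of the equality criterion, and each follows from what you do establish: $\rank(\xi^{*}\eta^{-1}\xi)\le\rank\xi\le\dim W$ gives the bound and the ``only if'' direction, while surjectivity of $\xi$ forces $\xi^{*}$ to be injective and hence the composite to have rank $\dim W$, giving the ``if'' direction.
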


\subsection{Sections of $\Norm$}\label{sect_norm}

The $\Norm$ map is generally far from being surjective.  If $\dim W < \dim X$, then any $\xi$ as above will have a nontrivial kernel, and therefore any $h=\Norm(n)$ will have a fixed-point space $X^h$ of dimension at least $\dim X-\dim W$. The restrictions of $h$ to $X^h$ and $(X^h)^\perp$ are isometries.

Conversely, let $Y$ be a nondegenerate subspace of $X$ linearly isomorphic to $W$. Next we will define a section of the $\Norm$ map on an open subset of the isometry group of $Y$.  (This subset will be nonempty except in the symplectic case when $\dim W$ is odd.)  As earlier let $\dim X=m$ and $\dim W=k$.

\subsection{Nondegenerate subspaces of $X$}\label{defy}

 We begin with the following standard result whose proof we leave to the reader.
 \begin{lemma} \label{iamalemma}
Let $(X,\Phi)$ be a vector space with a nondegenerate bilinear (or sesquilinear) form and $\gm$ a semisimple isometry.  Then the fixed point space $X^\gm$ is nondegenerate.
\qed
\end{lemma}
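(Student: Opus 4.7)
The plan is to reduce the statement to a simple orthogonality computation by exploiting semisimplicity. Write $Y = X^\gm$ and $Z = \text{image}(\gm - 1)$. The two main ingredients will be

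\medskip
\noindent\textbf{Step 1 (decomposition).} I would first show that $X = Y \oplus Z$. Since $\gm$ is semisimple, so is $\gm - 1$, and any semisimple endomorphism $A$ of a finite-dimensional space satisfies $X = \ker A \oplus \text{image}\, A$ (for instance, by writing the minimal polynomial as a product of distinct irreducibles and applying the Chinese remainder theorem, or equivalently, by diagonalizing $\gm$ over $\ol F$ and descending). Here $\ker(\gm-1) = Y$.

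\medskip
\noindent\textbf{Step 2 (orthogonality).} Next I would show $Y \perp Z$. Let $y \in Y$ and $z = \gm w - w$ for some $w \in X$. Using that $\gm$ is an isometry and that $\gm y = y$, one computes
\[
\Phi(y,z) = \Phi(y, \gm w) - \Phi(y, w) = \Phi(\gm^{-1}y, w) - \Phi(y, w) = 0.
\]
(In the Hermitian case the calculation is identical since the sesquilinearity is in the same slot on both sides.)

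\medskip
\noindent\textbf{Step 3 (nondegeneracy).} Finally, to check that $\Phi|_Y$ is nondegenerate, suppose $y \in Y$ satisfies $\Phi(y, Y) = 0$. For arbitrary $x \in X$, decompose $x = y' + z$ with $y' \in Y$, $z \in Z$ using Step 1. Then
\[
\Phi(y,x) = \Phi(y,y') + \Phi(y,z) = 0,
\]
by hypothesis on $y$ and by Step 2. The nondegeneracy of $\Phi$ on $X$ then forces $y = 0$, as desired.

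\medskip
There is no real obstacle here — the only subtle point is ensuring the decomposition $X = Y \oplus Z$ is available over the ground field (rather than merely over $\ol F$), but this is automatic since both summands are defined as kernel and image of an $F$-linear map. The Hermitian case requires no modification beyond noting the isometry identity $\Phi(\gm^{-1} y, w) = \Phi(y, \gm w)$ holds verbatim.
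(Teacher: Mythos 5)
Your proof is correct. The paper itself gives no argument for this lemma --- it is stated as ``a standard result whose proof we leave to the reader'' --- so there is nothing to compare against; the route you take (the semisimplicity-forced decomposition $X=\ker(\gamma-1)\oplus\operatorname{im}(\gamma-1)$, the orthogonality $X^\gamma\perp\operatorname{im}(\gamma-1)$ from the isometry property, and the resulting nondegeneracy of $\Phi|_{X^\gamma}$) is exactly the standard one the authors have in mind, and each step as you state it is valid, including the descent of the kernel--image decomposition to the ground field and the unchanged computation in the Hermitian case.
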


\begin{prop}\label{scrsk} 
$H$ naturally acts  on the set of nondegenerate subspaces of $X$ of dimension $k$.
Write
\beq
\mc Y_k= H \bks \{Y\subseteq X\mid \dim Y=k \textrm{ and }Y\ \textrm{is nondegenerate}\} 
\eeq
for the set of orbits of this $H$-action.
 \begin{enumerate}
\item The set $\mc Y_k$ is empty if and only if $X$ is symplectic and $k$ is odd.
\item If $F$ is algebraically closed, and $\mc Y_k \neq \emptyset$, then $\mc Y_k$ is a singleton.
\end{enumerate}
\end{prop}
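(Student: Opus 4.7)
The plan is to prove the two assertions separately. The naturality of the action is immediate: any isometry $h \in H$ preserves $\Phi$, so $h(Y)$ is again a $k$-dimensional nondegenerate subspace whenever $Y$ is. Thus $H$ acts on the indicated set, and the quotient $\mc Y_k$ is well-defined.

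For part (i), I would first dispatch the ``only if'' direction by the standard observation that a nondegenerate antisymmetric form can only exist on an even-dimensional space; consequently, no $k$-dimensional nondegenerate subspace of $X$ can exist in the symplectic case with $k$ odd. For the converse, I would construct a nondegenerate $Y$ in each of the remaining cases. In the orthogonal or Hermitian case, since $\mathrm{char}\, F \neq 2$, the form admits an orthogonal basis $e_1,\ldots,e_m$ of $X$ with $\Phi(e_i,e_i)\neq 0$, and $\Span(e_1,\ldots,e_k)$ is then nondegenerate. In the symplectic case with $k$ even, a symplectic basis decomposes $X$ into orthogonal hyperbolic planes, and the direct sum of the first $k/2$ of them furnishes a nondegenerate subspace of dimension $k$.

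For part (ii), I would combine the classification of nondegenerate forms over an algebraically closed field with Witt's extension theorem. Given two nondegenerate $k$-dimensional subspaces $Y_1, Y_2 \subseteq X$, the restrictions of $\Phi$ are nondegenerate forms of the same type (symmetric, antisymmetric, or Hermitian) on $k$-dimensional spaces, hence isometric over an algebraically closed base; this yields a linear isometry $\phi: Y_1 \iso Y_2$. Witt's theorem then extends $\phi$ to an element $\tilde\phi \in H$, whence $\tilde\phi \cdot Y_1 = Y_2$ and $Y_1, Y_2$ represent the same $H$-orbit, giving $|\mc Y_k|=1$.

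I do not anticipate any substantive obstacle: both parts are direct applications of standard structural facts about bilinear and sesquilinear forms. The only minor subtlety is that the extension in part (ii) must land in the full isometry group $H$ rather than its identity component $H^\circ$ (relevant in the orthogonal case where $H$ is disconnected), but Witt's theorem as usually formulated produces an element of the full isometry group, so no additional argument is needed there.
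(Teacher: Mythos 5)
Your argument is correct and follows essentially the same route as the paper: existence of an orthogonal basis (equivalently, a nonisotropic vector) to build $Y$ in part (i), and equivalence of $k$-dimensional nondegenerate forms over $\ol F$ combined with Witt extension in part (ii), the latter step being left implicit in the paper. The only cosmetic difference is that the paper dismisses the unitary case of (ii) as vacuous (an algebraically closed field admits no quadratic extension), whereas you run the Hermitian case through the same argument; this does not affect correctness.
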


\begin{proof}  
The first statement is clear if $X$ is symplectic.  If $X$ is not symplectic, then via Lemma 1 in Section 6, no. 1 of \cite{BourbQuad}, there is a nonisotropic vector in $X$, and one can thus build up the required $Y$.  

The second statement is proved on a case-by-case basis.  In the symplectic case, the equivalence class of nondegenerate subspaces under the standard $H$-action is determined by its dimension. In the orthogonal case, since the ground field is algebraically closed, any two symmetric bilinear forms are equivalent. Hence such a statement holds true in this case as well. The statement is vacuous in the unitary case.
\end{proof}

\begin{remark}
Of course, if $X$ is symplectic and $k$ is even, then $\mc Y_k$ is a singleton.  Theorem 63:20 of \cite{O'Meara} and Theorem 1.1 (ii) of \cite{Scharlau} classify quadratic and hermitian forms of a given dimension when $F$ is a local field.   Calculation of $\mc Y_k$ is then essentially an application of Witt's theorem, and the classical invariant theory of these forms.  
\end{remark}
 
Let $h=\Norm(n)$ be semisimple with $\dim X^{h}=m-k$. In Section \ref{densarg} we will see that this is the case for most $n$, assuming that we exclude the case where $k$ is odd and $\Phi$ is symplectic. Henceforth we will always assume that $k$ is even if $\Phi$ is symplectic. Define $Y=(X^{h})^{\perp}$. By Lemma \ref{iamalemma} above, this space is nondegenerate of dimension $k$. 

\begin{defn} For a nondegenerate subspace $Y$ of $X$, put
\beq
H_Y=\left \{ h \in H; h|_{Y^\perp}=\id_{Y^\perp} \right \}.
\eeq
\end{defn}
Note that $\Isom(Y)$ is isomorphic to $H_Y$; one extends an isometry of $Y$ to one of $X$ by requiring it to be the identity on $Y^\perp$.   We have $\hy \cap H_{Y^{\perp}}=\{\id_X \}$ and $\Stab_{H}(Y)=\Stab_{H}(Y^{\perp})=\hy \cdot H_{Y^{\perp}}$.  This is a direct product in $H$, which we will write as $\hy \times H_{Y^{\perp}}$.

\subsection{Relating $X$ to $W$}\label{relxw}
Fix an isomorphism $\xi=\xi_{Y}:Y \xrightarrow{\sim }W$. (We emphasize that in the case when $H$ is a unitary group we require $\xi$ to be an $E$-isomorphism.) Extend it to $X$ by defining it to be 0 over $Y^{\perp}$. We collect some properties of $\xi$.  

\begin{lemma}\label{kerxit} \label{invert}
\begin{enumerate}
\item $\xi^*: W' \to X$ is injective with image $Y$.
\item $\xi \xi^*: W' \to W$ is an isomorphism.
\end{enumerate} 
\end{lemma}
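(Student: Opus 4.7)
The plan is to exploit the decomposition $X = Y \oplus Y^{\perp}$, valid because $Y$ is a nondegenerate subspace of $X$. Set $\xi_{Y} := \xi|_{Y} : Y \isom W$; then by construction $\xi = \xi_{Y} \circ \pi_{Y}$, where $\pi_{Y} : X \to Y$ is the projection along $Y^{\perp}$.

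First, I would show that $\xi^{*}(W') \subseteq Y$. For any $w' \in W'$ and $x \in Y^{\perp}$ one has $\xi(x) = 0$, so the defining identity yields
\[
\Phi(\xi^{*}(w'), x) \;=\; \Phi(w', \xi(x)) \;=\; 0.
\]
Thus $\xi^{*}(w') \in X$ is perpendicular, inside $X$, to every element of $Y^{\perp}$. Since $Y$ is nondegenerate in $X$ we have $(Y^{\perp})^{\perp} = Y$, and hence $\xi^{*}(w') \in Y$.

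Next, because $\dim W' = \dim Y = \dim W = k$, both (i) and (ii) will follow as soon as the composite $\xi\xi^{*} : W' \to W$ is shown to be an isomorphism. Transport the nondegenerate form $\Phi|_{Y}$ on $Y$ to a nondegenerate form $B$ on $W$ via $\xi_{Y}$, by $B(\xi_{Y}(y_{1}), \xi_{Y}(y_{2})) := \Phi(y_{1}, y_{2})$. Restricting the defining identity for $\xi^{*}$ to $x \in Y$ and setting $w := \xi_{Y}(x)$, one obtains
\[
B(\xi\xi^{*}(w'), w) \;=\; \Phi(w', w) \qquad \text{for all } w \in W.
\]
Now $B$ is nondegenerate on $W \times W$, and the pairing $(w',w) \mapsto \Phi(w', w)$ is nondegenerate on $W' \times W$ because $W + W'$ is a nondegenerate subspace of $V$ with $W$ and $W'$ both isotropic. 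These two nondegeneracies force the map $w' \mapsto \xi\xi^{*}(w')$ to be an isomorphism, proving (ii). Since $\xi\xi^{*}$ is injective, so is $\xi^{*}$; as $\xi^{*}$ lands in $Y$ and $\dim W' = \dim Y$, it is an isomorphism $W' \isom Y$, yielding (i).

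No serious obstacle is anticipated; the only point that warrants care is the unitary case, where $\Phi$ is sesquilinear. There one first verifies from the defining identity that $\xi^{*}$ is $E$-linear — this follows by evaluating the identity on $aw'$ for $a \in E$ and using nondegeneracy of $\Phi$ — and then the above reasoning goes through unchanged, with $B$ being a Hermitian form on $W$.
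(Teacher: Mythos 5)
Your proof is correct and follows essentially the same route as the paper's: the paper likewise observes that $\xi^*$ lands in $Y$ and that $\xi\xi^*=(\xi|_Y)(\xi|_Y)^*$ with $(\xi|_Y)^*\colon W'\to Y$ an isomorphism, which is exactly what your nondegeneracy argument with the transported form $B$ unpacks in detail.
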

\begin{proof}
 The first statement is immediate.  For the second, note that $(\xi|_{Y})^{*}\in \Hom(W',Y)$ is an isomorphism and that $\xi \xi^*=(\xi|_{Y}) (\xi|_{Y})^*$.

\end{proof}

\begin{defn} 
Define $\ups=\ups_{Y}=(\xi\xi^{*})^{-1}: W \to W'$ and $\xi^+= \xi^* \ups=\xi^{*}(\xi\xi^{*})^{-1}$.
\end{defn}

 Thus $\xi^{+}$ is a right inverse of $\xi$.  It is easy to see that $\xi^+ \xi=P_{Y}$ (where $P_{Y}$ is the projection to the space $Y$). 
 
\begin{defn}\label{xidef}
Define a map $\Xi: \End(X) \to \End(W)$ by $\Xi(A)=\xi A\xi^+$. 

\end{defn}
Note that $\Xi$ can equivalently be defined as the composite of 
\[
\End(X) \to \End(Y) \to \End(W),
\] 
where the first map is $A\mapsto P_{Y}AP_{Y}$ and the second is conjugation by $\xi|_{Y}$. We will occasionally write ${}^\xi A$ instead of $\Xi(A)$ for brevity. We now have the following simple proposition.

\begin{prop}\label{propstab}
\begin{enumerate}
\item Let $A,B \in \End(X)$.  If $A$ or $B$ commutes with $P_{Y}$, then $\Xi(AB)=\Xi(A)\Xi(B)$.
\item Let $A \in \End(X)$ so that $A$ commutes with $P_{Y}$ and $A|_{Y}$ is invertible.  Then $\Xi(A)$ is invertible.
\item $\Xi|_{\Stab_H(Y)}$ is a group homomorphism from $\Stab_H(Y)$ to $G$ with kernel $\hyc$. 
\item $\Xi$ restricts to an injection from $\hy$ into $G$.
\end{enumerate}
\qed
\end{prop}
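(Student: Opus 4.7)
The plan is to first establish two small identities linking $\xi$, $\xi^+$ and $P_Y$, and then bootstrap the four statements from them.

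The key identities are $\xi P_Y = \xi$ and $P_Y \xi^+ = \xi^+$. The first holds because $\xi$ vanishes on $Y^\perp$ by construction, so it factors through $P_Y$. The second follows from Lemma \ref{kerxit}(i), which says that $\xi^*$ maps $W'$ into $Y$; since $\xi^+ = \xi^*(\xi\xi^*)^{-1}$, the image of $\xi^+$ lies in $Y$, and $P_Y$ acts as the identity there. Together with the standing relations $\xi\xi^+ = \id_W$ and $\xi^+ \xi = P_Y$, these are all we need.

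For (i), I would compute
\[
\Xi(A)\Xi(B) \;=\; \xi A \xi^+ \xi B \xi^+ \;=\; \xi A \, P_Y\, B \, \xi^+,
\]
and then observe that if $A$ commutes with $P_Y$ one gets $\xi A P_Y B\xi^+ = \xi P_Y A B \xi^+ = \xi AB\xi^+ = \Xi(AB)$, using $\xi P_Y = \xi$; if instead $B$ commutes with $P_Y$, one gets $\xi A B P_Y \xi^+ = \xi AB \xi^+ = \Xi(AB)$, using $P_Y \xi^+ = \xi^+$. For (ii), since $A$ commuting with $P_Y$ means $A$ preserves both $Y$ and $Y^\perp$, I would define $B \in \End(X)$ by setting $B|_Y = (A|_Y)^{-1}$ and $B|_{Y^\perp} = 0$. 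Then $B$ commutes with $P_Y$, and a direct check gives $AB = BA = P_Y$. Part (i) applies, so $\Xi(A)\Xi(B) = \Xi(P_Y) = \xi P_Y \xi^+ = \xi\xi^+ = \id_W$, and likewise on the other side, producing a two-sided inverse for $\Xi(A)$.

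For (iii), elements of $\Stab_H(Y)$ automatically preserve $Y^\perp$ as well (since they are isometries stabilising $Y$), hence commute with $P_Y$; thus (i) promotes $\Xi|_{\Stab_H(Y)}$ to a homomorphism, and (ii) ensures the image lies in $G = \GL(W)$. To identify the kernel, I would note that on $\Stab_H(Y)$ the alternative description of $\Xi$ given in the paper realises it as the map $A \mapsto (\xi|_Y) \circ (A|_Y) \circ (\xi|_Y)^{-1}$, which sends $A$ to $\id_W$ precisely when $A|_Y = \id_Y$, i.e.\ when $A \in H_{Y^\perp}$. Part (iv) is then immediate: an element of $H_Y$ is determined by its restriction to $Y$ together with the requirement $A|_{Y^\perp} = \id$, and the restriction to $H_Y$ of the formula $\Xi(A) = (\xi|_Y)(A|_Y)(\xi|_Y)^{-1}$ is manifestly injective.

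There is no real obstacle; the only thing to be careful about is not to confuse ``commutes with $P_Y$'' with ``preserves $Y$'' (the former is stronger and is what is actually needed to push $P_Y$ across in the computation for (i)). Everything else is bookkeeping with the two identities $\xi P_Y = \xi$ and $P_Y\xi^+ = \xi^+$.
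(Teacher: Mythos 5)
Your proof is correct, and since the paper leaves this proposition as an immediate consequence of the definitions (it is stated with no written proof), your verification via the identities $\xi P_Y=\xi$, $P_Y\xi^+=\xi^+$, $\xi\xi^+=\id_W$, $\xi^+\xi=P_Y$ is exactly the intended routine argument. All four parts check out, including the correct identification of the kernel as $H_{Y^\perp}$ via the alternative description $\Xi(A)=(\xi|_Y)(A|_Y)(\xi|_Y)^{-1}$ on $\Stab_H(Y)$.
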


Next we define a nondegenerate bilinear form $\Psi_{W}$ on $W$ via
\begin{defn}
\begin{equation*}
\begin{split}
\Psi_{W}(w_1,w_{2}) &=\Phi(\xi^+ w_1,\xi^+w_2) \\	
					&= \Phi(w_1, \upsilon w_2),\\
\end{split}
\end{equation*}
for $w_1,w_2 \in W$.
\end{defn}

\begin{defn}
Write $\tau=\tau_Y$ for the adjoint map for the space $(W,\Psi_{W})$. Explicitly $\tau: \End(W) \to \End(W)$ is the antiinvolution such that
$$\tau(A)=\upsilon^{-1}A^{*}\upsilon.$$
Let $\theta=\theta_Y: G \to G$ be the involution $\theta(g)=\tau(g)^{-1}$.
\end{defn}
 
Let $G^{\theta}$ be the group of fixed points of $G$ under the involution $\theta$. The next lemma is straightforward from definitions.
\begin{lemma}\label{gtstruc}
\begin{enumerate}
\item For all $A \in \End(X)$, we have $\tau(\Xi(A))=\Xi(A^*)$.
\item The map $\Xi$ induces an isomorphism from $H_{Y}$ to $G^{\theta}$, which is equal to $\Isom(W,\Psi_W)$.
\end{enumerate}
\qed
\end{lemma}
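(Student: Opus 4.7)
The plan for (i) is a direct computation starting from $\tau(\Xi(A)) = \upsilon^{-1}(\xi A \xi^+)^*\upsilon$. First I would establish that $\xi\xi^*\colon W'\to W$ is self-adjoint with respect to $\Phi$: from $(AB)^* = B^*A^*$ and involutivity of the adjoint, one has $(\xi\xi^*)^* = (\xi^*)^*\xi^* = \xi\xi^*$. Taking adjoints in $\xi\xi^*\cdot\upsilon = \id_W$ then gives $\upsilon^*\cdot\xi\xi^* = \id_W$, so $\upsilon^* = \upsilon$, whence $(\xi^+)^* = (\xi^*\upsilon)^* = \upsilon\xi$. Expanding $(\xi A\xi^+)^* = (\xi^+)^*A^*\xi^* = \upsilon\xi A^*\xi^*$ and substituting back in, the $\upsilon$ and $\upsilon^{-1}$ cancel and one is left with $\tau(\Xi(A)) = \xi A^*\xi^+ = \Xi(A^*)$.

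For (ii), the first step is to identify $G^\theta$. Since $\theta(g) = \tau(g)^{-1}$, the condition $g\in G^\theta$ is equivalent to $\tau(g)g = \id_W$; by the defining property of $\tau$ as the $\Psi_W$-adjoint, this says exactly that $g$ preserves $\Psi_W$, so $G^\theta = \Isom(W,\Psi_W)$.

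To see $\Xi(H_Y)\subseteq G^\theta$, note that any $h\in H_Y$ fixes $Y^\perp$ pointwise and stabilizes $Y$, hence commutes with $P_Y$. Proposition \ref{propstab}(i) together with $\Xi(\id_X) = \xi\xi^+ = \id_W$ then gives $\Xi(h^{-1}) = \Xi(h)^{-1}$. Combining with $h^* = h^{-1}$ (since $h$ is an isometry) and part (i), one obtains $\tau(\Xi(h)) = \Xi(h^*) = \Xi(h^{-1}) = \Xi(h)^{-1}$, so $\Xi(h)\in G^\theta$. For surjectivity, observe that $\xi^+|_W = (\xi|_Y)^{-1}$, and hence $\Xi(h)|_W = \xi|_Y\circ h|_Y\circ (\xi|_Y)^{-1}$. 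Thus $\Xi|_{H_Y}$ factors as the natural restriction isomorphism $H_Y \to \Isom(Y,\Phi|_Y)$ followed by conjugation by $\xi|_Y$; since $\Psi_W$ was defined precisely as the pullback of $\Phi|_Y$ along $(\xi|_Y)^{-1}$, this conjugation identifies $\Isom(Y,\Phi|_Y)$ with $\Isom(W,\Psi_W) = G^\theta$ bijectively. Together with the injectivity already guaranteed by Proposition \ref{propstab}(iv), this yields the desired isomorphism.

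The main thing to keep track of is the source and target of each adjoint: since $\xi$, $\xi^*$, $\xi^+$, and $\upsilon$ live in different $\Hom$-spaces, the identity ``$\upsilon^* = \upsilon$'' has content only once one agrees that both sides are maps $W\to W'$ defined via the pairing between $W$ and $W'$ induced by $\Phi$. Beyond this bookkeeping, no substantive obstacle is anticipated—the proof is essentially an exercise in unwinding definitions.
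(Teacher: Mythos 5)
Your proof is correct, and since the paper itself offers no argument for this lemma (it is asserted to be ``straightforward from definitions''), your direct unwinding of the definitions is exactly the intended route: the self-adjointness of $\xi\xi^*$, hence of $\upsilon$, gives part (i), and part (i) together with $h^*=h^{-1}$ and the identification $\xi^+|_W=(\xi|_Y)^{-1}$ gives part (ii). The only slip is cosmetic: taking adjoints in $\xi\xi^*\cdot\upsilon=\id_W$ yields $\upsilon^*\cdot\xi\xi^*=\id_{W'}$ (not $\id_W$), which still forces $\upsilon^*=\upsilon$.
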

 
 Let $T$ be a maximal torus in $H_{Y}$. Then ${}^\xi T=\Xi(T)$ is a maximal torus in $G^{\theta}$.  We define $T_{G}=Z_{G}({}^\xi T)$.  Note that
\begin{equation*}
 {}^\xi T=(T_{G} \cap G^{\theta})^{\circ}.
\end{equation*}

\subsection{A section of $\Norm$ over $\hy$}

\begin{defn}  
Let $h \in \Stab_H(Y)$ with $h-1|_{Y}$ invertible.  
Define 
\beq
h_{G}=\left( \Xi(h-1) \right)^{-1} \in G.
\eeq
\end{defn}

\begin{prop} \label{About_h_G}
Let $h \in \Stab_H(Y)$ with $h-1|_{Y}$ invertible.  We have
\begin{enumerate}
\item $1+h_{G}+\tau(h_{G})=0$.
\item $h_{G} \cdot \theta(h_{G})=-\Xi(h^{-1})$.
\item If $h$ centralizes $T$, then $h_G \in T_G$.
\end{enumerate}
\end{prop}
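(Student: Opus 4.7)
The plan is to deduce all three parts directly from the definition $h_G = \Xi(h-1)^{-1}$, using the $\tau$-compatibility from Lemma \ref{gtstruc}(1) and the multiplicativity of $\Xi$ on operators commuting with $P_Y$ from Proposition \ref{propstab}(1). Throughout I would emphasize that $h \in \Stab_H(Y)$ preserves both $Y$ and $Y^\perp$, so $h$ and $h-1$ commute with $P_Y$; since $h$ is an isometry we have $h^* = h^{-1}$, and so $h^*$ and $(h-1)^*$ also commute with $P_Y$.

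For (i), first I would rewrite $\tau(h_G) = \tau(\Xi(h-1))^{-1} = \Xi((h-1)^*)^{-1}$ by Lemma \ref{gtstruc}(1). Multiplying the claimed identity $1 + h_G + \tau(h_G) = 0$ through by $\Xi(h-1)\Xi((h-1)^*)$, which by Proposition \ref{propstab}(1) equals $\Xi\bigl((h-1)(h-1)^*\bigr)$, the assertion reduces to the algebraic identity
\[
(h-1)(h-1)^* + (h-1)^* + (h-1) = 0
\]
in $\End(X)$. Expanding and using $hh^* = 1$ (from $h$ being an isometry) gives this at once.

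For (ii), part (i) gives $\tau(h_G) = -1 - h_G$, hence $\theta(h_G) = \tau(h_G)^{-1} = -(1 + h_G)^{-1}$. By linearity of $\Xi$, $1 + h_G = \Xi(h-1)^{-1}\Xi(h)$, and by Proposition \ref{propstab}(1) applied to $h \cdot h^{-1} = 1_X$, $\Xi(h)^{-1} = \Xi(h^{-1})$. Thus
\[
h_G\, \theta(h_G) = -\,\Xi(h-1)^{-1}\,\Xi(h^{-1})\,\Xi(h-1).
\]
Since $\Xi(h-1) = \Xi(h) - 1$ and $\Xi(h^{-1})$ are both polynomials in $\Xi(h)$, they commute, and the right-hand side collapses to $-\Xi(h^{-1})$.

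For (iii), if $h$ centralizes $T$, then in $\End(X)$ the element $h-1$ commutes with each $t \in T$; since both $t$ and $h-1$ commute with $P_Y$, Proposition \ref{propstab}(1) yields $\Xi(h-1)\Xi(t) = \Xi(t)\Xi(h-1)$, so $h_G = \Xi(h-1)^{-1}$ centralizes ${}^\xi T = \Xi(T)$, placing it in $Z_G({}^\xi T) = T_G$. The main obstacle is purely bookkeeping: ensuring at each step that the relevant operators commute with $P_Y$, so that multiplicativity of $\Xi$ is legitimate; once this is in place, everything reduces to the single isometry identity $hh^* = 1$ together with the linearity of $\Xi$.
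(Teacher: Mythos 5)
Your proposal is correct and follows essentially the same route as the paper: part (i) is exactly the paper's argument of clearing denominators via $\Xi(h-1)\Xi((h-1)^*)$ and invoking the multiplicativity of $\Xi$ on operators commuting with $P_Y$ (the paper writes $(h-1)^*$ as $h^{-1}-1$, using $h^*=h^{-1}$, but the computation is identical). Parts (ii) and (iii) are dismissed in the paper as ``straightforward from the definitions,'' and your filled-in details — the commutation of $\Xi(h-1)$ with $\Xi(h^{-1})$ and with $\Xi(t)$ for $t\in T$ — are the correct justifications.
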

\begin{proof}
For the first statement, multiply the expression
\begin{equation*}
1+(\Xi(h-1))^{-1}+(\Xi(h^{-1}-1))^{-1}=0
\end{equation*}
by $\Xi(h-1)\Xi(h^{-1}-1)$ to obtain
\begin{equation*}
\Xi(h-1)\Xi(h^{-1}-1)+\Xi(h^{-1}-1)+\Xi(h-1)=0.
\end{equation*}
The second and the third statements are straightforward from the definitions.
\end{proof}

\begin{remark}
This property in Proposition \ref{About_h_G} (ii) points to the usage of the term ``$\Norm$'' here.
\end{remark}

We can now construct the section that we want.

\begin{defn}
For $h \in \Stab_H(Y)$ with $h-1|_{Y}$ invertible, let
\begin{equation*}
\begin{split}
n_{Y}(h) &=n(\xi_{Y},h_G \ups^{-1} ) \\
               &=n(\xi_{Y}, \xi_{Y}(h|_{Y}-1)^{-1}\xi_{Y}^{*} ) \\
		&= n(\xi_Y, (\xi_{Y}(h-1)\xi_{Y}^+)^{-1} \xi_{Y} \xi_{Y}^{*}). \\
                 \end{split}
\end{equation*}
\end{defn}

Note that the pair $(\xi_{Y},h_G \ups^{-1})$ satisfies (\ref{defn}) by Proposition \ref{About_h_G}.

\begin{prop} \label{section}
Let $h \in \hy$ with $h-1|_{Y}$ invertible. Then $\Norm(n_{Y}(h))=h$.
\end{prop}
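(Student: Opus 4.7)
The plan is to unwind the definitions and directly compute $\Norm(n_Y(h)) = 1 + \xi^* \eta^{-1} \xi$ with $\xi = \xi_Y$ and $\eta = h_G \upsilon^{-1}$, reducing everything to the identity $h - 1$ on $X$.

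First I would invert $\eta$: since $\eta = h_G \upsilon^{-1}$, we have $\eta^{-1} = \upsilon \, h_G^{-1}$, and by the definition of $h_G$ in Section \ref{sect_norm}, $h_G^{-1} = \Xi(h-1) = \xi(h-1)\xi^+$. Thus
\[
\xi^* \eta^{-1} \xi = \xi^* \upsilon \, \xi (h-1) \xi^+ \xi.
\]
Using the definition $\xi^+ = \xi^* \upsilon$ (so that $\xi^* \upsilon = \xi^+$) together with the identity $\xi^+ \xi = P_Y$ recorded after Definition \ref{xidef}, this collapses to
\[
\xi^* \eta^{-1} \xi = \xi^+ \xi (h-1) \xi^+ \xi = P_Y (h-1) P_Y.
\]

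Next I would use the hypothesis $h \in H_Y$ in an essential way. Because $h$ fixes $Y^\perp$ pointwise, the map $h - 1$ vanishes on $Y^\perp = \ker P_Y$, so $(h-1) P_Y = h-1$ on all of $X$. Because $h \in H_Y \subseteq \Stab_H(Y)$, the subspace $Y$ is invariant under $h$, and therefore the image of $h - 1$ lies in $Y$; this gives $P_Y (h-1) = h-1$. Combining, $P_Y(h-1)P_Y = h-1$, and so $\Norm(n_Y(h)) = 1 + (h-1) = h$.

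The computation is essentially a one-line identity once the pieces are arranged, so there is no real obstacle. The only subtlety is making sure to use the full strength of $h \in H_Y$ rather than merely $h \in \Stab_H(Y)$: the projection $P_Y$ on the left absorbs into $h - 1$ via invariance of $Y$, while the projection on the right absorbs via pointwise fixing of $Y^\perp$. One should also remark that the hypothesis that $h - 1|_Y$ is invertible is what makes $n_Y(h)$ well-defined to begin with (so that $\eta$ is invertible and $n_Y(h) \in N'$), and is therefore implicit in applying the formula $\Norm(n) = 1 + \xi^* \eta^{-1} \xi$.
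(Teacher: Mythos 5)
Your proof is correct and follows the same route as the paper's: both compute $\xi^*\eta^{-1}\xi=P_Y(h-1)P_Y$ using $\xi^+=\xi^*\upsilon$ and $\xi^+\xi=P_Y$, and then use $h\in H_Y$ to collapse this to $h-1$. The paper merely records the intermediate step $1+P_YhP_Y-P_Y$ without spelling out the details you supply.
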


\begin{proof}
We compute
\begin{equation*}
\begin{split}
\Norm(n_Y(h)) &= 1+\xi_{Y}^* (h_G \ups^{-1})^{-1} \xi_{Y} \\
			&=1+ P_{Y} h P_{Y}-P_{Y} \\
			&= h, \\
\end{split}
\end{equation*}
as desired.  
\end{proof}

\section{Shahidi's covering map $\Sha_T$}\label{algebraic_th}
Proposition \ref{section} allows us to chose elements $n\in N'$ such that $\Norm(n)$ is semisimple. Fix one such $n$. Let $Y=X^{\Norm(n)}$. Suppose $\dim Y=\dim W$. As in Section \ref{relxw}, fix $\xi_{Y}:X\to W$ with kernel $Y^\perp$.  Let $T$ be a maximal torus in $H_{Y}$. 

Observe that if $w\in H$ normalizes $T$, then it stabilizes the space $Y$. The following is immediate:
\begin{lemma}\label{basic xi prop}
\begin{enumerate}
\item $N_{H}(T)=N_{H_{Y}}(T)\times H_{Y^{\perp}}$.
\item $Z_{H}(T)=Z_{H_Y}(T) \times H_{Y^{\perp}}$.
\end{enumerate}
\qed
\end{lemma}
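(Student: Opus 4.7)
Both statements have the same shape, and I would prove them in parallel, with (ii) amounting to the obvious restriction of the argument for (i). The key elementary ingredient is that elements of $H_{Y^{\perp}}$ act trivially on $Y$ while elements of $H_{Y}$ act trivially on $Y^{\perp}$, and $X = Y \oplus Y^{\perp}$ is an orthogonal direct sum of nondegenerate subspaces. Consequently $H_{Y^{\perp}}$ and $H_{Y}$ commute elementwise, so in particular $H_{Y^{\perp}}$ centralizes $T \subseteq H_{Y}$. This already gives $H_{Y^{\perp}} \subseteq Z_{H}(T) \subseteq N_{H}(T)$, while $N_{H_{Y}}(T) \subseteq N_{H}(T)$ and $Z_{H_{Y}}(T) \subseteq Z_{H}(T)$ are tautological. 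Moreover $N_{H_{Y}}(T) \cap H_{Y^{\perp}} \subseteq H_{Y} \cap H_{Y^{\perp}} = \{ \id_{X} \}$ by the final sentence of Section \ref{defy}, so the products $N_{H_{Y}}(T) \cdot H_{Y^{\perp}}$ and $Z_{H_{Y}}(T) \cdot H_{Y^{\perp}}$ are internal direct products inside $N_{H}(T)$ and $Z_{H}(T)$ respectively, giving the ``$\supseteq$'' halves of both identities.

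For the reverse inclusion in (i), take $w \in N_{H}(T)$. By the observation preceding the lemma, $w$ stabilizes $Y$, and since $w$ is an isometry it also stabilizes $Y^{\perp}$; hence $w \in \Stab_{H}(Y) = H_{Y} \times H_{Y^{\perp}}$, and we may factor $w = w_{1} w_{2}$ with $w_{1} \in H_{Y}$ and $w_{2} \in H_{Y^{\perp}}$. Since $w_{2}$ centralizes $T$, the equation $w T w^{-1} = T$ becomes $w_{1} T w_{1}^{-1} = T$, so $w_{1} \in N_{H_{Y}}(T)$ and $w \in N_{H_{Y}}(T) \cdot H_{Y^{\perp}}$. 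For (ii), the same factorization is carried out for $w \in Z_{H}(T)$ (noting $Z_{H}(T) \subseteq N_{H}(T)$), and then $w_{1} = w w_{2}^{-1}$ commutes with $T$ because both $w$ and $w_{2}$ do, so $w_{1} \in Z_{H_{Y}}(T)$.

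The only substantive point is the cited observation that normalizers of $T$ in $H$ preserve $Y$. The way I would justify it is to show $Y^{\perp} = X^{T}$, the $T$-fixed space in $X$. The inclusion $Y^{\perp} \subseteq X^{T}$ is immediate because $T \subseteq H_{Y}$. For the reverse, decompose any $v \in X^{T}$ as $v = v_{Y} + v_{Y^{\perp}}$; then $t(v_{Y}) = v_{Y}$ for every $t \in T$, and since $T$ is a maximal torus in $\Isom(Y)$, the restricted action has no nonzero fixed vectors (a standard fact over $\bar{F}$ for symplectic, unitary, and even-dimensional orthogonal groups, which descends to $F$), so $v_{Y} = 0$ and $v \in Y^{\perp}$. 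Once $Y^{\perp} = X^{T}$ is established, any $w \in N_{H}(T)$ permutes the weight spaces of $T$ on $X$ and hence preserves $X^{T} = Y^{\perp}$, and therefore also $Y = (Y^{\perp})^{\perp}$. The verification of the no-fixed-vector property in the various cases (in particular handling odd-dimensional orthogonal $Y$, which is excluded only in the symplectic ambient setting, and checking the unitary case) is the main technical obstacle, but it is classical and local to $\Isom(Y)$.
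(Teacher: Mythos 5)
Your overall strategy coincides with the paper's: the authors merely record the observation that any $w\in H$ normalizing $T$ stabilizes $Y$ and then declare the lemma immediate, and your factorization through $\Stab_{H}(Y)=H_{Y}\times H_{Y^{\perp}}$, together with the elementwise commutation of $H_{Y}$ and $H_{Y^{\perp}}$, is exactly the intended argument. For the symplectic, unitary, and even orthogonal types your justification of the key observation via $X^{T}=Y^{\perp}$ is correct: over $\ol F$ a maximal torus of $\Isom(Y)$ in those types has no nonzero fixed vectors in $Y$, so $N_{H}(T)$ preserves the zero weight space $Y^{\perp}$ and hence $Y$.

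The step you defer as ``classical,'' however, is not a missing citation in the remaining case --- it is false there. When $Y$ is orthogonal of odd dimension $2r+1$ (a case that is not excluded: only the symplectic case with $\dim W$ odd is ruled out, and the paper devotes separate subsections to odd orthogonal $H_{Y}$), every maximal torus of $\Isom(Y)$ has eigenvalues $t_{1},\dots,t_{r},t_{1}^{-1},\dots,t_{r}^{-1},1$ on $Y\otimes_{F}\ol F$ and so fixes a line $\ell\subseteq Y$. Thus $X^{T}=\ell\oplus Y^{\perp}\supsetneq Y^{\perp}$, and an element normalizing $T$ is only forced to preserve the sum of the nontrivial weight spaces (namely $\ell^{\perp}\cap Y$) and the fixed space $X^{T}$; it may mix $\ell$ with $Y^{\perp}$ inside $X^{T}$. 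Concretely, an isometry of $X$ that is the identity on $\ell^{\perp}\cap Y$ and an arbitrary isometry of the nondegenerate space $\ell\oplus Y^{\perp}$ centralizes $T$ without lying in $H_{Y}\times H_{Y^{\perp}}$, so your route breaks at the very first step in this case. Note that in the paper's subsequent odd orthogonal arguments the stabilization of $Y^{\perp}$ is extracted not from normalizing $T$ but from the elements $\epsilon_{Y}\gm$ with $\gm\in T_{\reg}$, whose fixed space in $X$ is exactly $Y^{\perp}$; you would need either to restrict the identity $X^{T}=Y^{\perp}$ to the types where it actually holds, or to supply a genuinely different argument (or an amended statement) for odd orthogonal $Y$.
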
  
\begin{defn}
 Define $\Delta: \Stab_H(Y) \to M$ via $\Delta(h)=m(\Xi( h), h)$.  Write $\Delta_{T}$ for the image of $Z_{H}(T)$ under $\Delta$.
\end{defn}

\begin{defn}
Define
\begin{equation}\label{nylower}
N_{Y}= \{ n \in N' \mid X^{\Norm(n)}= Y^{\perp} \}.
\end{equation}
\end{defn}
Now we delve deeper into the structure of $N'$. As in \cite{IFS}, we bifurcate into two cases.

\subsection{$H_{Y}$ is not odd orthogonal} \label{alg_theory}
 
\begin{defn}
Let $T_{\reg}$ be the set of those regular elements $\gm \in T$, for which $(\gm-1)|_Y$ is invertible.
\end{defn}

Thus $n_Y$ restricts to a section $n_Y: T_{\reg} \to N'$ of $\Norm$.

\begin{defn} We define subsets of $N'$ via
\beq
N_{Y,T}=\{n\in N' \mid  \ \Norm(n) {\rm \ is \ conjugate \ in}\ H_{Y}{\rm \ to \ an\  element\ of } \ T_{\rm reg}\ \},
\eeq
\beq 
N^{Y,T}=\Int(H)N_{Y,T}.
\eeq
\end{defn}
Recall that $H_{Y}$ is the $F$-points of the classical group given by the linear isometries of the form $\Phi|_{Y}$. Note that $N_{Y,T}\subseteq N_{Y}$. The above definition is motivated by the following proposition:
 \begin{prop}\label{decomny}
 Let $h_{1},h_{2}\in H$. Then $\Int(h_{1})(N_{Y,T})\cap \Int(h_{2})(N_{Y,T})$ is nonempty if and only if $h_{2}^{-1}h_{1}\in \Stab_{H}(Y)$, in which case 
$$\Int(h_{1})(N_{Y,T})= \Int(h_{2})(N_{Y,T}).$$
\end{prop}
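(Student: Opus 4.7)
The central tool is Lemma \ref{basicnorm}(ii), which gives the $H$-equivariance
\[
\Norm(\Int(h)\, n) = h \cdot \Norm(n) \cdot h^{-1}
\]
for $h \in H$ (viewed as $m(1,h) \in M$). Setting $h = h_2^{-1} h_1$ and conjugating, the problem reduces to showing: (a) if $\Int(h)(N_{Y,T}) \cap N_{Y,T} \neq \emptyset$ then $h \in \Stab_H(Y)$, and (b) if $h \in \Stab_H(Y)$ then $\Int(h)(N_{Y,T}) = N_{Y,T}$.

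For (a), suppose $\Int(h) n_1 = n_2$ with $n_1, n_2 \in N_{Y,T}$. Since $N_{Y,T} \subseteq N_Y$, the defining property $X^{\Norm(n_i)} = Y^\perp$ gives us two fixed-point-space identities. The equivariance above then yields
\[
Y^\perp \;=\; X^{\Norm(n_2)} \;=\; X^{h\,\Norm(n_1)\,h^{-1}} \;=\; h\bigl(X^{\Norm(n_1)}\bigr) \;=\; h(Y^\perp),
\]
so $h$ preserves $Y^\perp$. Since $Y$ is nondegenerate, $\Stab_H(Y^\perp) = \Stab_H(Y)$, and we conclude $h \in \Stab_H(Y)$.

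For (b), the main point is that whenever $n \in N_{Y,T}$, the element $\Norm(n)$ actually lies in $H_Y$: the condition $X^{\Norm(n)} = Y^\perp$ forces $\Norm(n)|_{Y^\perp} = \id_{Y^\perp}$. Writing $h = h_Y h_{Y^\perp}$ via the direct product decomposition $\Stab_H(Y) = H_Y \times H_{Y^\perp}$ from Section \ref{defy}, I note that $h_{Y^\perp} \in H_{Y^\perp}$ acts trivially on $Y$ and elements of $H_Y$ act trivially on $Y^\perp$, so $h_{Y^\perp}$ commutes with $\Norm(n) \in H_Y$. Hence
\[
\Norm(\Int(h)\, n) \;=\; h\,\Norm(n)\,h^{-1} \;=\; h_Y\,\Norm(n)\,h_Y^{-1}.
\]
If $\Norm(n) = h' \gamma h'^{-1}$ with $h' \in H_Y$ and $\gamma \in T_{\reg}$, then the right side equals $(h_Y h') \gamma (h_Y h')^{-1}$ with $h_Y h' \in H_Y$, so $\Int(h)n$ satisfies the defining condition of $N_{Y,T}$. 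Finally, $\Int(h)n$ remains in $N'$ because conjugation by $m(1,h)$ sends $n(\xi,\eta)$ to $n(\xi h^{-1},\eta)$ (cf.\ the formula in Section \ref{pre_ur}), preserving invertibility of $\eta$. Applying the same reasoning to $h^{-1}$ gives the reverse inclusion, yielding the equality $\Int(h_1)(N_{Y,T}) = \Int(h_2)(N_{Y,T})$.

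There is no serious obstacle: the argument is essentially bookkeeping with fixed-point spaces, provided one remembers that $N_{Y,T} \subseteq N_Y$ places $\Norm$ inside $H_Y$ rather than merely inside $H$. The only subtlety worth verifying carefully is that conjugation by the $H_{Y^\perp}$-component is harmless, which is what makes the direct product structure of $\Stab_H(Y)$ (Lemma \ref{basic xi prop}-style) the crucial input.
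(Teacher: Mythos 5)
Your proof is correct and takes essentially the same route as the paper's: the "only if" direction via the identity $X^{\Norm(\Int(h')n)}=h'(Y^{\perp})$ together with $\Stab_H(Y^\perp)=\Stab_H(Y)$, and the "if" direction via the $H$-equivariance of $\Norm$. Your explicit check that $\Int(h)$ preserves $N_{Y,T}$ for $h\in\Stab_H(Y)$ --- writing $h=h_Yh_{Y^\perp}$ and noting $h_{Y^\perp}$ commutes with $\Norm(n)\in H_Y$, while $\eta$ is untouched so $N'$ is preserved --- correctly supplies the details the paper compresses into ``a simple calculation using Lemma \ref{basicnorm}.''
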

\begin{proof}
Let $h_{1}, h_{2}\in H$ and $ h\in \Stab_{H}(Y)$. A simple calculation using Lemma \ref{basicnorm} shows that $\Int(h)$ preserves $N_{Y,T}$, and hence $\Int(h_{1})(N_{Y,T})=\Int(h_{2})(N_{Y,T})$ if $h_{2}^{-1}h_{1}\in \Stab_{H}(Y)$. To prove the converse, let $n_{1},n_{2}\in N_{Y,T}$ such that $\Int(h_{1})(n_{1})=\Int(h_{2})(n_{2})$. Thus $\Int(h_{2}^{-1}h_{1})(n_{1})=n_{2}$ and $X^{\Norm(\Int(h_{2}^{-1}h_{1})(n_{1}))}=X^{\Norm(n_{2})}=Y^{\perp}$. Note that for any $h'\in H$, $X^{\Norm(\Int(h')(n))}=h'(Y^{\perp})$ which implies that $h_{2}^{-1}h_{1}\in \Stab_{H}(Y^{\perp})=\Stab_{H}(Y)$.
\end{proof}

\begin{defn}
Further define the map $\Sha_{T}:  M \times T_{\rm reg}\to N'$ by
\begin{equation} \label{fake_Sha}
\begin{split}
\Sha_{T}((m(g,h),\gamma)) &= \Int(m(g,h))n_{Y}(\gamma)\\
							&=n(g \xi_{Y} h^{-1}, g \gamma_G \ups^{-1} g^{*}). \\
\end{split}
\end{equation}
\end{defn}

We occasionally drop the subscript ``$Y$'' from  $\xi_{Y}$ and $n_{Y}$ when there is no scope of confusion. Our $\Sha_T$ is a version of maps found in the work (\cite{S92} ,\cite{S95}, \cite{GS98}, \cite{GS01}, \cite{GSIII}) of Shahidi and Goldberg-Shahidi.

\begin{prop}\label{surjprop}  We have 
\begin{enumerate}
\item $\Norm(\Sha_{T}((m(g,h),\gamma)))=  h \gamma h^{-1}$.
\item The image of $\Sha_{T}((G\times H_{Y}) \times T_{\reg})$ equals $N_{Y,T}$.
\item The map $\Sha_T$ surjects onto $N^{Y,T}$.
\end{enumerate}
\end{prop}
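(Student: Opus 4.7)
For part (i), I would combine Lemma~\ref{basicnorm}(ii) (equivariance of $\Norm$ under $\Int(M)$) with Proposition~\ref{section} ($\Norm(n_Y(\gamma))=\gamma$): directly,
\[
\Norm(\Sha_T((m(g,h),\gamma))) = \Norm(\Int(m(g,h))n_Y(\gamma)) = \Int(h)\Norm(n_Y(\gamma)) = h\gamma h^{-1}.
\]
No further work is needed.

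For part (ii), one inclusion is easy: if $h\in H_Y$ and $\gamma\in T_{\reg}$, then $h\gamma h^{-1}$ is $H_Y$-conjugate to $\gamma$, so by part (i) the image lies in $N_{Y,T}$. The substantive content is the reverse inclusion. Given $n=n(\xi,\eta)\in N_{Y,T}$, write $\Norm(n)=h_0\gamma h_0^{-1}$ with $h_0\in H_Y$ and $\gamma\in T_{\reg}$, and set $n_1 = \Int(m(1,h_0))^{-1}n$; then $\Norm(n_1)=\gamma$, so it suffices to show $n_1 = \Int(m(g,1))\, n_Y(\gamma)$ for some $g\in G$. Writing $n_1=n(\xi_1,\eta_1)$, I would first argue that $\ker\xi_1 = Y^\perp$ and $\xi_1|_Y:Y\to W$ is an isomorphism. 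Indeed, $\rank(\Norm(n_1)-1;X)=\dim W$ since $(\gamma-1)|_Y$ is invertible, and $\Norm(n_1)-1 = \xi_1^*\eta_1^{-1}\xi_1$ factors through $\xi_1$, so $\xi_1$ has rank $\dim W$; since $\ker\xi_1\subseteq\ker(\Norm(n_1)-1)=Y^\perp$ and both subspaces have the same codimension, they coincide. Because $\xi_Y$ enjoys the same properties, there is a unique $g\in G=\GL(W)$ with $\xi_1 = g\,\xi_Y$. Then $\Sha_T((m(g,1),\gamma))$ and $n_1$ have identical ``$\xi$'' components and identical $\Norm$; using that $\xi_1$ is surjective and $\xi_1^*$ is injective (the latter by Lemma~\ref{kerxit}(i) applied to the isomorphism $\xi_1|_Y$), the identity $\xi_1^*\eta_1^{-1}\xi_1 = \xi_1^*(g\gamma_G\ups^{-1}g^*)^{-1}\xi_1$ forces $\eta_1 = g\gamma_G\ups^{-1}g^*$, and equality of the two elements of $N'$ follows. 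Undoing the conjugation by $h_0$ gives $n = \Sha_T((m(g,h_0),\gamma))$ with $h_0\in H_Y$, as required. I expect this uniqueness-of-$\eta$-from-$\xi$-and-$\Norm$ step to be the one requiring the most care.

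For part (iii), the inclusion ``image $\subseteq N^{Y,T}$'' is immediate from part (ii): factor $m(g,h) = m(1,h)\,m(g,1)$, apply part (ii) to the inner factor, and observe $\Int(m(1,h))=\Int(h)$ on $N$. Conversely, given $n\in N^{Y,T}$, write $n=\Int(h_0)\,n'$ with $h_0\in H$ and $n'\in N_{Y,T}$; by part (ii), $n' = \Sha_T((m(g,h_1),\gamma))$ with $h_1\in H_Y$, and then $n = \Sha_T((m(g,h_0 h_1),\gamma))$ since the inner automorphisms compose via $m(1,h_0)\cdot m(g,h_1) = m(g,h_0 h_1)$.
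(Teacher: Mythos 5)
Your proof is correct and follows essentially the same route as the paper: part (i) from the $\Int(M)$-equivariance of $\Norm$ together with Proposition \ref{section}, part (ii) by conjugating to reduce to the case $h=1$ and then recovering $g$ from the $\xi$-component of $n$, and part (iii) as a consequence of part (ii). You supply somewhat more detail than the paper does on why $\ker\xi_1=Y^{\perp}$ and why $\eta_1$ is then forced to equal $g\gamma_G\ups^{-1}g^{*}$, but the underlying argument is the same.
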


\begin{proof}
The first statement follows from Lemma \ref{basicnorm}(ii) and Proposition \ref{section}. The third statement follows from the second statement. 

We now prove the second statement. The fact that the image of $\Sha_{T}$ is contained in $N_{Y,T}$ follows from the first statement. For the other direction, let $n=n(\xi_{1},\eta_{1})\in N_{Y,T}$, so that we can choose elements $h\in H_{Y}$ and $\gm \in T_{\rm reg}$ be such that $h\Norm(n)h^{-1}=\gm$. Replacing $n$ by $\Int(m(1,h^{-1}))(n)$, we can suppose that $h=1$. Therefore we have
\[
\gm-1=\xi_{1}^{*}\eta_{1}^{-1}\xi_{1}.
\]
As $\gm \in T_{\reg}$, the map $\xi_{1}^{*}\eta_{1}^{-1}\xi_{1}|_{Y}$, and thus $\xi_{1}|_{Y}$, is invertible. Define $g\in G$ such that $g=\xi_{1}(\xi|_{Y})^{-1}$. Since we also have $\gamma=\Norm(\xi,\gm_G \ups^{-1})$, we obtain that $\eta_{1}=g\gm_{G}\ups^{-1}g^{*}$. Clearly $\Sha_{T}(m(g,1),\gm)=n(\xi_{1},\eta_{1})$, and therefore $N_{Y,T}$ is contained in $\Sha_{T}((G\times H_{Y})\times T_{\reg})$.  
\end{proof}

The following is a simple calculation:

\begin{prop} \label{Sha.invariance}  Let $h \in \Stab_H(Y)$, $m \in M$, and $\gm \in T_{\reg}$.  Then 
\beq
\Sha_{h^{-1}Th} ((m \Delta(h), h^{-1} \gm h))=\Sha_T((m , \gm)).
\eeq
\qed
\end{prop}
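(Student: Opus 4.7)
The plan is to unfold both sides through the formula $\Sha_T((m,\gm))=\Int(m)n_Y(\gm)$ and then verify the resulting identity inside $N$ coordinate by coordinate. Since $h\in\Stab_H(Y)$, conjugation by $h$ preserves $\hy$, so $h^{-1}Th$ is still a maximal torus of $\hy$; consequently $\Sha_{h^{-1}Th}$ is built from the same $Y$ and $\xi_Y$ as $\Sha_T$. Cancelling the outer $\Int(m)$ then reduces the claim to
\begin{equation*}
\Int(\Delta(h))\, n_Y(h^{-1}\gm h)=n_Y(\gm).
\end{equation*}

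The next step is to simplify $(h^{-1}\gm h)_G$. Because both $h$ and $h^{-1}$ lie in $\Stab_H(Y)$, they commute with $P_Y$, so Proposition \ref{propstab}(i) gives the factorization $\Xi(h^{-1}(\gm-1)h)=\Xi(h)^{-1}\Xi(\gm-1)\Xi(h)$; inverting yields $(h^{-1}\gm h)_G=\Xi(h)^{-1}\gm_G\Xi(h)$. Plugging this into the definition of $n_Y$ and applying the conjugation rule $\Int(m(g,h'))\,n(\xi,\eta)=n(g\xi h'^{-1},g\eta g^*)$ with $(g,h')=(\Xi(h),h)$ produces
\begin{equation*}
\Int(\Delta(h))\,n_Y(h^{-1}\gm h)=n\bigl(\Xi(h)\xi_Y h^{-1},\ \gm_G\,\Xi(h)\ups^{-1}\Xi(h)^{*}\bigr).
\end{equation*}

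It then remains to match each coordinate against $n_Y(\gm)=n(\xi_Y,\gm_G\ups^{-1})$. For the first coordinate, I will check $\Xi(h)\xi_Y=\xi_Y h$ by testing on $Y$ and on $Y^\perp$ separately: both sides annihilate $Y^\perp$ (since $h$ preserves $Y^\perp$ and $\xi_Y$ vanishes there), while on $Y$ both reduce to $\xi\circ h|_Y$ after using $\xi^+\xi=P_Y$. Invertibility of $h$ then converts this to $\Xi(h)\xi_Y h^{-1}=\xi_Y$. For the second coordinate, the crucial input is Lemma \ref{gtstruc}(ii), by which $\Xi(h)\in G^{\theta}$; unpacking $\tau(\Xi(h))=\Xi(h)^{-1}$ via the definition $\tau(A)=\ups^{-1}A^*\ups$ rearranges to $\Xi(h)\ups^{-1}\Xi(h)^{*}=\ups^{-1}$, exactly the identity needed.

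The main obstacle is purely bookkeeping: one must carefully keep track of the three distinct adjoint operations (on $\End(X)$, on $\End(W)$, and via $\tau$), the isomorphism $\ups\colon W\to W'$, and the fact that $\Xi$ is a homomorphism only when restricted to $\Stab_H(Y)$. Once the correct identities from Proposition \ref{propstab} and Lemma \ref{gtstruc} are assembled, no further ingredient is required.
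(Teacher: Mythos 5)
Your proof is correct and matches the paper's intent: the authors label this a ``simple calculation'' and omit the details, which are precisely the coordinate-by-coordinate verification you carry out (reduce to $\Int(\Delta(h))n_Y(h^{-1}\gm h)=n_Y(\gm)$, conjugate $\gm_G$ through $\Xi$, and match the two entries of $n(\cdot,\cdot)$). One cosmetic point: since $h$ ranges over $\Stab_H(Y)$ rather than $H_Y$, the membership $\Xi(h)\in G^{\theta}$ is not verbatim Lemma \ref{gtstruc}(ii); it follows either from Lemma \ref{gtstruc}(i) combined with $h^{*}=h^{-1}$ and the homomorphism property of $\Xi|_{\Stab_H(Y)}$, or from the decomposition $\Stab_H(Y)=H_Y\times H_{Y^{\perp}}$ together with $\Xi(H_{Y^{\perp}})=\{1\}$.
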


(Note that $h^{-1}Th$ is again a maximal torus in $H_Y$.)

Consider the action of $N_{H}(T)$ on $M \times T_{{\rm reg}}$ given by:
\begin{equation*}
w : ((m ,\gm)) \mapsto (m \Delta(w) , w^{-1} \gm w),
\end{equation*}
for $w \in N_{H}(T)$.  

\begin{prop} 
 Let $m \in M$ and $\gm \in T_{\rm reg}$. 
 Then the fibre of $\Sha_{T}$ containing $(m,\gm)$ is equal to
\begin{equation}\label{prop_fiber}
\{ (m \Delta( w) ,  w^{-1} \gm  w) \mid   w \in N_{H}(T) \}.
\end{equation}
\end{prop}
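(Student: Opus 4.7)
The plan is to prove the two inclusions separately.  The inclusion ``$\supseteq$'' is almost immediate from what has been established.  Indeed, Lemma \ref{basic xi prop}(i) shows $N_H(T)=N_{H_Y}(T)\times H_{Y^\perp}\subseteq \Stab_H(Y)$, so for any $w\in N_H(T)$ we may invoke Proposition \ref{Sha.invariance}; since $w$ normalizes $T$ we have $w^{-1}Tw=T$ and therefore $\Sha_T((m\Delta(w),w^{-1}\gamma w))=\Sha_T((m,\gamma))$.

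For the converse, I would start with a pair $(m',\gamma')$ in the fiber, write $m=m(g,h)$, $m'=m(g',h')$, and set $w=h^{-1}h'$.  Applying $\Norm$ to the identity $n_Y(\gamma')=\Int((m')^{-1}m)n_Y(\gamma)$ and using Lemma \ref{basicnorm}(ii) together with Proposition \ref{section}, one obtains $\gamma'=w^{-1}\gamma w$.

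The core step is to show $w\in N_H(T)$.  For this I would use the regularity of $\gamma$ and $\gamma'$ as follows.  Because $\gamma\in T_{\reg}$ has $(\gamma-1)|_Y$ invertible and acts as the identity on $Y^\perp$, the $1$-eigenspace of $\gamma$ on $X$ is precisely $Y^\perp$, and similarly for $\gamma'$.  The identity $w^{-1}\gamma w=\gamma'$ forces $w$ to send $Y^\perp$ to $Y^\perp$, so $w\in\Stab_H(Y)=H_Y\times H_{Y^\perp}$.  Write $w=w_Y w_{Y^\perp}$ with $w_Y\in H_Y$ and $w_{Y^\perp}\in H_{Y^\perp}$; since $w_{Y^\perp}$ commutes with everything supported on $Y$, $w_Y^{-1}\gamma w_Y=\gamma'$.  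Then $w_Y^{-1}Tw_Y$ is a (connected) maximal torus of $H_Y$ contained in $Z_{H_Y}(\gamma')$, and regularity of $\gamma'$ gives $Z_{H_Y}(\gamma')^\circ=T$, forcing $w_Y^{-1}Tw_Y=T$, i.e.\ $w_Y\in N_{H_Y}(T)$.  Combining with $H_{Y^\perp}\subseteq Z_H(T)$, I conclude $w\in N_H(T)$.

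Once $w\in N_H(T)$ is in hand, the equality $m'=m\Delta(w)$ falls out by comparing the $\xi$-components of $n_Y(\gamma')$ and $\Int(m(g^{-1}g',w))n_Y(\gamma)$: equating these forces $(g^{-1}g')\xi=\xi w$, and restricting to $Y$ (where $\xi|_Y$ is an isomorphism onto $W$) identifies $g^{-1}g'$ with $(\xi|_Y)\circ w|_Y\circ(\xi|_Y)^{-1}=\Xi(w)$, so that $m'=m\,m(\Xi(w),w)=m\Delta(w)$.  The $\eta$-component then matches automatically, consistently with Proposition \ref{Sha.invariance}.  The main obstacle is the centralizer argument of the previous paragraph; everything else is bookkeeping built on Lemma \ref{basic xi prop} and the properties of $\Xi$ collected in Proposition \ref{propstab}.
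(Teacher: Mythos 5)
Your proposal is correct and follows essentially the same route as the paper's proof: the inclusion $\supseteq$ via Proposition \ref{Sha.invariance}, and for the converse, applying $\Norm$ to get $\gamma'=w^{-1}\gamma w$, using regularity to see that $w$ stabilizes $Y^{\perp}$, decomposing $w=w_Yw_{Y^\perp}$ via Lemma \ref{basic xi prop}, and comparing $\xi$-components to identify $g^{-1}g'$ with $\Xi(w)$. The only difference is expository: you spell out the centralizer argument ($Z_{H_Y}(\gamma')^{\circ}=T$) behind the paper's terse ``it follows that $h_Y\in N_{H_Y}(T)$,'' which is a welcome addition.
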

  
\begin{proof}  
Using the $M$-equivariance of the $\Sha_{T}$, without loss of generality, we can assume that $m=1$. The fact that the elements in (\ref{prop_fiber}) are in the fibre follows from Proposition \ref{Sha.invariance}. Suppose now that there are $g\in G$, $h \in H$, and $\gm, \gm' \in T_{{\rm reg}}$ so that
\begin{equation} \label{labeled}
\Sha_{T}((m(g, h), \gm'))=\Sha_{T}((1 , \gm)).
\end{equation}
 Thus $n(g\xi h^{-1},g(\gm')_G \ups^{-1}g^{*})=n(\xi, \gm_G \ups^{-1} )$. Comparing the first coordinates we find $g \xi =\xi h$ and therefore $g=\Xi(h)$.

Taking $\Norm$ of both sides of (\ref{labeled}) gives
\begin{equation*}
h \gm' h^{-1}=\gm
\end{equation*}
by Proposition \ref{surjprop}.  It remains to show that $h\in N_{H}(T)$. Since $\gm, \gm' \in T_{{\rm reg}}$, noting $\ker (\gm-1)=\ker(\gm'-1)=Y^{\perp}$, it follows that $h$ stabilizes $Y^{\perp}$. Writing $h=h_{Y}h_{Y^{\perp}}$, where $h_{Y}\in H_{Y}$ and $h_{Y^{\perp}}\in H_{Y^{\perp}}$, it follows that $h_{Y}\in N_{H_{Y}}(T)$. The result now follows from Lemma \ref{basic xi prop}.
\end{proof} 
 
\begin{cor} 
The map $\Sha_{T}$ descends to a surjective map  
\begin{equation} \label{real_Fy}
\Sha_{T}: M/\Delta_{T} \times T_{{\rm reg}}\to N^{Y,T}.
\end{equation}
The fibres of $\Sha_{T}$ are the same as the $W_{H_{Y}}(T)$-orbits on $M/\Delta_{T} \times T_{\rm reg}$.  In other words, the fibre of $\Sha_{T}$ containing $(m, \gm)$ is equal to
\begin{equation*}
\{ (m \Delta( w) ,  w^{-1} \gm  w) \mid  w \in W_{H_{Y}}(T) \}.
\end{equation*}
\qed
\end{cor}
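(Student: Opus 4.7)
The claim has three parts: (a) that $\Sha_T$ descends through $\Delta_T$, (b) surjectivity onto $N^{Y,T}$, and (c) the identification of the fibres as $W_{H_Y}(T)$-orbits. The plan is to deduce each from the preceding proposition together with Lemma \ref{basic xi prop} and Proposition \ref{propstab}.

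For (a), I would apply Proposition \ref{Sha.invariance} in the special case $h \in Z_H(T)$, where $h^{-1}Th = T$ and $h^{-1}\gm h = \gm$. This yields $\Sha_T((m\Delta(h),\gm)) = \Sha_T((m,\gm))$ for every $h \in Z_H(T)$, which is exactly invariance under right multiplication by $\Delta_T = \Delta(Z_H(T))$. One should first note that $Z_H(T) \subseteq \Stab_H(Y)$ so that $\Delta$ is defined on it: this holds because any $\gm \in T_{\reg}$ has fixed space exactly $Y^\perp$, so its centralizer preserves $Y^\perp$ and hence $Y$. Statement (b) is immediate from Proposition \ref{surjprop}(iii).

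For (c), I would begin from the fibre description of the preceding proposition, namely $\{(m\Delta(w), w^{-1}\gm w) \mid w \in N_H(T)\}$, and cut it down by $\Delta_T$. Using Lemma \ref{basic xi prop}(i) write $w = w_Y w_{Y^\perp}$ with $w_Y \in N_{H_Y}(T)$ and $w_{Y^\perp} \in H_{Y^\perp}$. Since $H_{Y^\perp}$ and $H_Y$ commute elementwise we get $w^{-1}\gm w = w_Y^{-1}\gm w_Y$, and by Proposition \ref{propstab}(iii), $\Delta$ is a homomorphism on $\Stab_H(Y)$, so $\Delta(w) = \Delta(w_Y)\Delta(w_{Y^\perp})$ with $\Delta(w_{Y^\perp}) \in \Delta(H_{Y^\perp}) \subseteq \Delta(Z_H(T)) = \Delta_T$. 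Thus modulo $\Delta_T$ the fibre collapses to $\{(m\Delta(w_Y)\Delta_T,\ w_Y^{-1}\gm w_Y) \mid w_Y \in N_{H_Y}(T)\}$.

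To pass from $N_{H_Y}(T)$ to $W_{H_Y}(T) = N_{H_Y}(T)/Z_{H_Y}(T)$, observe that any $z \in Z_{H_Y}(T)$ centralizes $\gm$, and $\Delta(z) \in \Delta(Z_{H_Y}(T)) \subseteq \Delta(Z_H(T)) = \Delta_T$, so such $z$ acts trivially on $M/\Delta_T \times T_{\reg}$. Conversely, if $w_Y \in N_{H_Y}(T)$ satisfies $w_Y^{-1}\gm w_Y = \gm$ for $\gm \in T_{\reg}$, then regularity forces $w_Y \in Z_{H_Y}(T)$, so the fibre is faithfully described by $W_{H_Y}(T)$. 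The only real bookkeeping concern, i.e. the main (minor) obstacle, is that $\Delta_T$ is defined via the full centralizer $Z_H(T)$ rather than $Z_{H_Y}(T)$; the decomposition $Z_H(T) = Z_{H_Y}(T) \times H_{Y^\perp}$ from Lemma \ref{basic xi prop}(ii) is precisely what makes this reduction clean.
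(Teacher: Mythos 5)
Your proposal is correct and is exactly the argument the paper intends: the corollary is stated with no written proof, and your derivation assembles precisely the ingredients the authors set up for it (Proposition \ref{Sha.invariance} with $h\in Z_H(T)$ for the descent, Proposition \ref{surjprop} for surjectivity, and the decompositions $N_H(T)=N_{H_Y}(T)\times H_{Y^\perp}$, $Z_H(T)=Z_{H_Y}(T)\times H_{Y^\perp}$ of Lemma \ref{basic xi prop} together with Proposition \ref{propstab} to collapse the $N_H(T)$-fibre to $W_{H_Y}(T)$). The bookkeeping points you flag, in particular that $\Delta(H_{Y^\perp})\subseteq\Delta_T$ so the $H_{Y^\perp}$-factor dies modulo $\Delta_T$, are handled correctly.
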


\subsection{$H_{Y}$ is odd orthogonal}
The difficulty in the orthogonal case with $Y$ odd-dimensional is that every element $\gm \in T$ has $1$ as an eigenvalue, so $n_Y(\gm)$ is not well defined. We remedy this by first multiplying $\gm$ by $-1$ ``on $Y$'', and then applying $n_Y$.  Let $\epsilon_{Y}=P_{Y^{\perp}}-P_Y \in H_{Y}$.

\begin{defn}
Let $T_{\reg}$ be the set of regular elements $\gm \in T$ so that $(\epsilon_Y\gm -1)|_Y$ is invertible.
\end{defn}

Thus we have a section $n_Y: \epsilon_Y T_{\reg} \to N'$ of $\Norm$.

 Analogous to the previous case we define the following objects:
\begin{defn}
Define
$$N_{Y,T}=\{n\in N' \mid  \Norm(n) {\rm \ is \ conjugate \ in}\ H_{Y}{\rm \ to \ an\  element\ of } \ \epsilon_Y T_{{\rm reg}}   \},$$
$$N^{Y,T}= \Int(H)(N_{Y,T}).$$
\end{defn}

\begin{defn}
Define $\Sha_{T} : M \times T_{\rm reg}\to N'$ by
\begin{equation}
\Sha_{T}((m , \gamma)) =\Int(m) n_Y(\epsilon_Y \gm).
\end{equation} 
\end{defn}
As in Proposition \ref{surjprop} we have:
\begin{prop}\label{surjfyt_odd}
The map $\Sha_{T}$ surjects onto $N^{Y,T}$.
\qed
\end{prop}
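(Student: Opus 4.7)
The plan is to mirror the proof of Proposition \ref{surjprop}, systematically replacing $\gm$ by $\epsilon_Y\gm$. The key observations that make everything work are (a) Proposition \ref{section} applies to $\epsilon_Y\gm$ for $\gm \in T_{\reg}$ because $(\epsilon_Y\gm - 1)|_Y = -(1+\gm)|_Y$ is invertible by the very definition of $T_{\reg}$, and (b) $\epsilon_Y$ is central in $H_Y$ (since $\epsilon_Y$ acts as $\pm 1$ on $Y$ and $Y^\perp$) and commutes with every element of $T$, so conjugation by elements of $H_Y$ preserves $\epsilon_Y T_{\reg}$.

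First I would prove the intermediate claim that $\Sha_T$ carries $(G \times H_Y) \times T_{\reg}$ onto $N_{Y,T}$. For the inclusion, given $(m(g,h),\gm)$ with $h \in H_Y$, Lemma \ref{basicnorm}(ii) combined with $\Norm(n_Y(\epsilon_Y\gm)) = \epsilon_Y\gm$ (from Proposition \ref{section}) yields
\[
\Norm(\Sha_T((m(g,h),\gm))) = h\,\epsilon_Y\gm\,h^{-1},
\]
which is $H_Y$-conjugate to $\epsilon_Y\gm \in \epsilon_Y T_{\reg}$, so the image lies in $N_{Y,T}$. For the reverse inclusion, given $n = n(\xi_1,\eta_1) \in N_{Y,T}$, pick $h \in H_Y$ with $h^{-1}\Norm(n)h = \epsilon_Y\gm_0$ for some $\gm_0 \in T_{\reg}$ and replace $n$ by $\Int(m(1,h^{-1}))n$ to reduce to $\Norm(n) = \epsilon_Y\gm_0$. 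Then the defining relation $\xi_1^*\eta_1^{-1}\xi_1 = \epsilon_Y\gm_0 - 1$ together with the invertibility of $(\epsilon_Y\gm_0 - 1)|_Y$ forces $\xi_1|_Y$ to be an isomorphism (by a rank count identical to the one used in Proposition \ref{surjprop}(ii)). Setting $g = \xi_1 \cdot (\xi_Y|_Y)^{-1} \in G$, one checks directly that $n = \Int(m(g,1))\,n_Y(\epsilon_Y\gm_0) = \Sha_T((m(g,1),\gm_0))$.

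Second, I would deduce surjection onto $N^{Y,T} = \Int(H)(N_{Y,T})$ by absorbing the outer $H$-conjugation into the $H$-factor of $M$. Any element of $N^{Y,T}$ has the form $\Int(m(1,h'))\,\Sha_T((m(g,h''),\gm))$ with $h' \in H$, $h'' \in H_Y$, $\gm \in T_{\reg}$; since $m(1,h')\cdot m(g,h'') = m(g,h'h'')$, this equals $\Sha_T((m(g,h'h''),\gm))$ and hence is in the image of $\Sha_T$ (where now the $H$-component ranges over all of $H$).

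The only subtle point is the rank/invertibility analysis in the reverse inclusion, which requires confirming that $(\epsilon_Y\gm_0 - 1)|_Y$ being invertible and vanishing on $Y^\perp$ forces $\ker\xi_1 = Y^\perp$. Since $T_{\reg}$ is precisely defined so that $(\epsilon_Y\gm - 1)|_Y$ is invertible, this transfer of the even-case argument is essentially automatic, which is presumably why the authors indicate the proof with a \qed.
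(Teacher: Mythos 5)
Your proof is correct and follows exactly the route the paper intends: the authors omit the argument with the remark ``As in Proposition \ref{surjprop}'', and your write-up is precisely that proof with $\gm$ replaced by $\epsilon_Y\gm$, with the two enabling facts (invertibility of $(\epsilon_Y\gm-1)|_Y$ by the definition of $T_{\reg}$, and centrality of $\epsilon_Y$ in $H_Y$) correctly identified.
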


Consider the action of $N_{H}(T)$ on $M \times T_{\rm reg}$ given by:
\begin{equation*}
w : ((m, \gm)) \mapsto (m \Delta(w) , w^{-1} \gm w),
\end{equation*}
for $w \in N_{H}(T)$.  

\begin{prop} 
Let $m \in M$ and $\gm \in T_{\rm reg}$.  The fibre of $\Sha_{T}$ containing $(m , \gm)$ is equal to
\begin{equation*}  
\{( m \Delta( w) ,  w^{-1} \gm  w )\mid   w \in N_{H}(T) \}.
\end{equation*}
\qed
\end{prop}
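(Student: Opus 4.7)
The plan is to repeat the argument of the fibre computation from Section \ref{alg_theory} essentially verbatim, with the only new ingredient being that the element $\epsilon_Y = P_{Y^\perp} - P_Y$ is central in $\Stab_H(Y) = H_Y \times H_{Y^\perp}$. By $M$-equivariance of $\Sha_T$ in its first argument, I reduce to describing the fibre over $\Sha_T((1, \gm))$.

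For the containment ``$\supseteq$'', I would first prove the $\epsilon_Y$-twisted analog of Proposition \ref{Sha.invariance}: for every $w \in N_H(T)$, $\Int(\Delta(w))\, n_Y(\epsilon_Y w^{-1}\gm w) = n_Y(\epsilon_Y \gm)$. Centrality of $\epsilon_Y$ allows me to pull it past $w$, and then the two coordinate-checks go through as in the non-odd case: the $\xi$-coordinate collapses because $w$ stabilizes $Y$ and $\Xi$ is a homomorphism on $\Stab_H(Y)$, while the $\eta$-coordinate reduces to the identity $\Xi(w)\ups^{-1}\Xi(w)^* = \ups^{-1}$, which follows from $\Xi(w) \in G^\theta$ (by Lemma \ref{gtstruc}(ii)) and the fact that elements of $G^\theta$ satisfy $g^* = \ups g^{-1}\ups^{-1}$.

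Conversely, suppose $\Sha_T((m(g,h), \gm')) = \Sha_T((1, \gm))$; expanding both sides produces an equality $n(g\xi h^{-1}, g(\epsilon_Y\gm')_G\ups^{-1}g^*) = n(\xi, (\epsilon_Y \gm)_G \ups^{-1})$. Matching the first coordinates gives $g\xi = \xi h$; since $\ker\xi = Y^\perp$, this forces $h$ to stabilize $Y^\perp$ (hence $Y$), and then $g = \Xi(h)$. Applying $\Norm$ to both sides and using Lemma \ref{basicnorm}(ii) together with Proposition \ref{section} yields $h \epsilon_Y \gm' h^{-1} = \epsilon_Y \gm$; centrality of $\epsilon_Y$ lets me cancel to obtain $h\gm' h^{-1} = \gm$. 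Writing $h = h_Y h_{Y^\perp}$ via Lemma \ref{basic xi prop} and using regularity of $\gm, \gm'$ in $H_Y$, I conclude $h_Y T h_Y^{-1} = Z_{H_Y}(\gm) = T$, so $h \in N_{H_Y}(T) \times H_{Y^\perp} = N_H(T)$, again by Lemma \ref{basic xi prop}.

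There is no real obstacle here; the only step requiring any care is the verification that $\epsilon_Y$ is central in $\Stab_H(Y)$, which legitimates all the ``stripping off $\epsilon_Y$'' maneuvers above. With that observation in hand, the proof is a mechanical transcription of the argument in Section \ref{alg_theory}.
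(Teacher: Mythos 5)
Your proposal is correct and follows exactly the route the paper intends: the proposition is stated with a \qed precisely because its proof is the verbatim transcription of the argument in Section \ref{alg_theory}, with the $\epsilon_Y$-twist absorbed by the observation that $\epsilon_Y=P_{Y^\perp}-P_Y$ commutes with everything in $\Stab_H(Y)=H_Y\times H_{Y^\perp}$ (so that $\epsilon_Y$ can be cancelled after applying $\Norm$, and Proposition \ref{Sha.invariance} applies to the element $\epsilon_Y\gm$). The only cosmetic difference is that you deduce $h(Y^\perp)=Y^\perp$ from $g\xi=\xi h$ and $\ker\xi=Y^\perp$, whereas the paper reads it off from $\ker(\Norm(n)-1)$; both are valid and equally short.
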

  
\begin{cor} 
The map $\Sha_{T}$ descends to a surjective map  
\begin{equation} \label{real_Fy_odd}
\Sha_{T}: M/\Delta_T \times T_{\rm reg}\to N^{Y,T}.
\end{equation}
The fibres of $\Sha_{T}$ are the same as the $W_{H_{Y}}(T)$-orbits on $M/\Delta_{T} \times T_{\rm reg}$.  In other words, the fibre of $\Sha_{T}$ containing $(m,\gm)$ is equal to
\begin{equation*}
\{ (m \Delta( w) , w^{-1} \gm  w )\mid  w \in W_{H_{Y}}(T) \}.
\end{equation*}
\qed
\end{cor}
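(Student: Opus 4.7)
The plan is to deduce this corollary formally from the preceding proposition, which identifies the fibres of the unquotiented map $\Sha_T : M \times T_{\reg} \to N'$ as $N_H(T)$-orbits under the action $w \cdot (m,\gamma) = (m\Delta(w), w^{-1}\gamma w)$.

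First I would verify that $\Sha_T$ factors through $M/\Delta_T \times T_{\reg}$. Every $\delta \in \Delta_T$ is of the form $\delta = \Delta(w)$ for some $w \in Z_H(T)$, and since such a $w$ satisfies $w^{-1}\gamma w = \gamma$ for all $\gamma \in T$, the preceding proposition gives $\Sha_T((m\delta,\gamma)) = \Sha_T((m,\gamma))$. Surjectivity onto $N^{Y,T}$ is then immediate from Proposition \ref{surjfyt_odd}.

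The substantive step is the fibre computation. By the preceding proposition, two points $(m_1\Delta_T,\gamma_1)$ and $(m_2\Delta_T,\gamma_2)$ of $M/\Delta_T \times T_{\reg}$ have the same image under $\Sha_T$ precisely when there exist $w \in N_H(T)$ and $\delta_0 \in \Delta_T$ with $m_2 = m_1\Delta(w)\delta_0$ and $\gamma_2 = w^{-1}\gamma_1 w$. Using Lemma \ref{basic xi prop}, I would decompose $w = w_Y w_{Y^\perp}$ with $w_Y \in N_{H_Y}(T)$ and $w_{Y^\perp} \in H_{Y^\perp} \subseteq Z_H(T)$. Since $\Delta(w_{Y^\perp}) \in \Delta_T$ and $w_{Y^\perp}$ acts trivially on $\gamma_1$, I can absorb $w_{Y^\perp}$ into $\delta_0$, reducing the equivalence to $(m_2\Delta_T,\gamma_2) = (m_1\Delta(w_Y)\Delta_T,\, w_Y^{-1}\gamma_1 w_Y)$ for some $w_Y \in N_{H_Y}(T)$.

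Finally, I would check that the resulting action of $W_{H_Y}(T)$ on $M/\Delta_T \times T_{\reg}$ is well defined on cosets: if $w'_Y = w_Y z$ with $z \in Z_{H_Y}(T)$, then $\Delta(z) \in \Delta_T$ and $z$ fixes every $\gamma \in T$, so $w'_Y$ and $w_Y$ induce the same transformation on the quotient. I do not anticipate a serious obstacle here, as the proof is formal once the preceding $N_H(T)$-orbit description is granted; the one step that warrants genuine care is exactly this last well-definedness verification, which is the same mechanism that collapses the $N_H(T)$-orbit picture into a $W_{H_Y}(T)$-orbit picture after quotienting by $\Delta_T$.
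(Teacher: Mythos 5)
Your proposal is correct and is precisely the routine verification the paper omits: the corollary is stated with no written proof, being treated as a formal consequence of the preceding fibre description, and your argument (descent via $\Delta_T=\Delta(Z_H(T))$, absorption of the $H_{Y^\perp}$-factor of $w\in N_H(T)=N_{H_Y}(T)\times H_{Y^\perp}$ into $\Delta_T$ using Lemma \ref{basic xi prop}, and well-definedness modulo $Z_{H_Y}(T)$) is exactly the intended deduction.
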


The maps defined by (\ref{real_Fy}) and (\ref{real_Fy_odd}) are the ones we will refer to by $\Sha_{T}$ henceforth.

\section{The split classical case and quasisplit unitary case} \label{MatrixSection}

In this section we write everything out explicitly for a representative set of examples.    
 
 \subsection{Set-up}\label{MatrixSection_1}
Let $D,k,m$ be nonnegative integers, with $D=m+2k$ and $m \geq k$.  We will put $V=F^D$ in the orthogonal/symplectic cases, and $V=E^D$ in the unitary case.

In this section $e_i \in F^D$ denotes the standard $i$th basis vector as usual.
Let $W=\Span \{ e_1, \ldots, e_k\}$ and let $W'=\Span \{ e_{D+1-k}, \ldots, e_{D}\}$. Let $X=\Span \{ e_{k+1}, \ldots, e_{D-k}\}$, $Y=\Span \{ e_{k+1}, \ldots, e_{2k} \}$, and $Y^\perp=\Span \{ e_{2k+1}, \ldots, e_{D-k} \}$.
(Here `$\Span$' refers to the $F$-span in the orthogonal/symplectic cases, and to the $E$-span in the unitary case.) The reader can easily check that in each of the following cases below the subspace $Y$ will indeed be nondegenerate for the form $\Phi$ that we are going to specify, and moreover, $Y^{\perp}$ will be its orthogonal complement in $X$. Thus the maps $\xi_{Y}$, $\theta$ are as defined in the previous sections. We will now make these maps explicit.  
   
Define $\xi: X \to W$ via the matrix $\xi=\left(\begin{array}{ccc}
I_{k}&0
\end{array}\right)$, and $\xi^*: W' \to X$ via the matrix $\xi^{*}=\left(\begin{array}{c}
I_{k}\\
0
\end{array}\right)$.  Thus the isomorphism $\upsilon: W \to W'$ is represented by the identity matrix $I_k$.

For an integer $s \geq 1$, write $J_+(s)$ for the $s \times s$ symmetric matrix 
\begin{equation*}
J_{+}(s)= \left(\begin{array}{ccccc}
&&&&1\\
&&&1&  \\
&&\Ddots&&\\
&1&&&\\
1&&&&
\end{array}\right),
\end{equation*}

and, when $s$ is even, write $J_-(s)$ for the $s \times s$ matrix
\begin{equation*}
J_-(s)= \left(\begin{array}{ccccc}
&&&&1\\
&&&-1&  \\
&&\Ddots&&\\
&1&&&\\
-1&&&&
\end{array}\right).
\end{equation*}
 
Let $T_G \leq \GL(W)$ be the subgroup of diagonal matrices, and write $\chi_i \in X^*(T_G), 1 \leq i \leq k$ for the character of $T_G$ given by taking the $i$th diagonal entry.

\subsection{Symmetric case} \label{ping}
 
Put

\begin{equation*}
J_V=\left(\begin{array}{cccc}
&&&J_{+}(k)\\
&J_{+}(k)&&  \\
&&J_{+}(m-k)&\\
J_{+}(k)&&&
\end{array}\right).
\end{equation*}

Let $\Phi$ denote the bilinear form defined on $V$ by $J_V$, so that 
\begin{equation*}
\Phi(v,v')=\ ^{t} vJ_V v',\ \ v,v'\in V.
\end{equation*}
 
The automorphism $\theta: G \to G$ is given by $\theta(g)=J^{-1} g^{-t} J$, with $J=J_{+}(k)$.
 Write $T$ for the torus in $H_{Y}$ consisting of matrices of the form
\begin{equation*}
\gamma=\left(\begin{array}{cccccc}
t_{1}&&&&&\\
&t_{2}&&&& \\
&&\ddots&&&  \\
&&&t_{2}^{-1}&&\\ 
&&&&t_{1}^{-1}& \\
&&&&&I_{m-k}
\end{array}\right)_X,
\end{equation*}
 relative to the above basis of $X$.  Let us first treat the case of $k$ even. If $\gamma-1$ is invertible on $Y$, then we have
\begin{equation*}
\gamma_{G}=\left(\begin{array}{cccccc}
t_{1}-1&&&&\\
&t_{2}-1&&& \\
&&\ddots&& \\
&&&t_{2}^{-1}-1&\\ 
&&&&t_{1}^{-1}-1
\end{array}\right)^{-1}_W,
\end{equation*}
relative to the above basis of $W$.

If $k$ is odd, then we must form $n_Y(\epsilon_Y \gm)$.  Here $\epsilon_Y=\left(\begin{array}{cc}
-I_k & 0 \\
0 & I_{m-k} \\
\end{array} \right)_X$, and taking $\gm$ so that $\gm+1$ is invertible on $Y$, we similarly have
\begin{equation*}
(\epsilon_Y \gamma)_{G}=-\left(\begin{array}{cccccc}
t_{1}+1&&&&\\
& \ddots&&& \\
&&2&& \\
&&& \ddots &\\ 
&&&&t_{1}^{-1}+1
\end{array}\right)^{-1}_W.
\end{equation*}

In both cases ($k$ even/odd), the centralizer $Z_G({}^{\xi}T)=T_G$.  

\subsection{Antisymmetric case} \label{pong}

We only consider the antisymmetric case when $m$ and $k$ are even.  Put

\begin{equation*}
J_V=\left(\begin{array}{cccc}
&&&J_{-}(k)\\
&J_{-}(k)&&  \\
&&J_{-}(m-k)&\\
J_{-}(k)&&&
\end{array}\right).
\end{equation*}

Let $\Phi$ denote the bilinear form defined on $V$ by $J_V$:
\begin{equation*}
\Phi(v,v')=\ ^{t} vJ_V v',\ \ v,v'\in V.
\end{equation*}
 
The automorphism $\theta: G \to G$ is given by $\theta(g)=J^{-1} g^{-t} J$, with $J=J_-(k)$.  Write $T$ for the torus in $H_{Y}$ consisting of matrices of the form
\begin{equation*}
\gamma=\left(\begin{array}{cccccc}
t_{1}&&&&&\\
&t_{2}&&&& \\
&&\ddots&&&  \\
&&&t_{2}^{-1}&&\\ 
&&&&t_{1}^{-1}& \\
&&&&&I_{m-k}
\end{array}\right)_X,
\end{equation*}
 relative to the above basis of $X$.  When $\gamma-1$ is invertible on $Y$,
we have
\begin{equation*}
\gamma_{G}=\left(\begin{array}{cccccc}
(t_{1}-1)^{-1}&&&&&\\
&\ddots&&&&  \\
&&(t_{\frac{k}{2}}-1)^{-1}&&&\\ 
&&&(t_{\frac{k}{2}}^{-1}-1)^{-1}&&\\
&&&&\ddots&\\
&&&&&(t_{1}^{-1}-1)^{-1}
\end{array}\right)_W.
\end{equation*}

Again in this case, the centralizer $Z_G({}^{\xi}T)=T_G$.   

\subsection{Hermitian case} \label{Hermitian.Case}
Now we set up a ``standard'' quasisplit Hermitian case.  Let $r=\left[ \frac{k}{2} \right]$.  Put

\begin{equation*}
J_V=\left(\begin{array}{cccc}
&&&J_{+}(k)\\
&J_{+}(k)&&  \\
&&J_{+}(m-k)&\\
J_{+}(k)&&&
\end{array}\right),
\end{equation*}

and let $\Phi$ denote the sesquilinear form defined on $V$ by $J_V$: 
\begin{equation*}
\Phi(v,v')=\ ^{t}\ol vJ_V v',\ \ v,v'\in V.
\end{equation*}
 
The automorphism $\theta: G \to G$ is given by $\theta(g)=J^{-1} (\ol g)^{-t} J$, with $J=J_+(k)$.

 Next, consider the maximal torus $T$ in $H_{Y}$ consisting of matrices of the form
\begin{equation*}
\gamma=\left(\begin{array}{cccccc}
t_{1}&&&&&\\
&t_{2}&&&& \\
&&\ddots&&&  \\
&&&\ol {t_{2}}^{-1}&&\\ 
&&&&\ol {t_{1}}^{-1}& \\
&&&&&I_{m-k}
\end{array}\right)_X,
\end{equation*}
with $t_i \in E^{\times}$, relative to the above basis of $X$.  Note that if $k$ is odd, then $t_{r+1} \in E^{\times}$ has norm $1$.

Write $S \leq T$ for the maximal $F$-split subtorus of $T$, consisting of $\gm$ as above with $t_i \in F^{\times}$.

Thus ${}^\xi S$ is given by 
\begin{equation*}
 \left(\begin{array}{cccccc}
t_{1}&&&&&\\
&\ddots&&&&  \\
&&t_{r}&&&\\ 
&&& t_{r}^{-1}&&\\
&&&&\ddots&\\
&&&&&t_{1}^{-1}
\end{array}\right)_W,
\end{equation*}
with $t_i \in F^{\times}$, when $k$ is even.  When $k$ is odd, the element $t_{r+1}$ appears as the middle entry.

For $\gm \in T$ with $\gamma-1$ invertible on $Y$, we have
\begin{equation*}
\gamma_{G}=\left(\begin{array}{cccccc}
(t_{1}-1)^{-1}&&&&&\\
&\ddots&&&&  \\
&&(t_{r}-1)^{-1}&&&\\ 
&&&(\ol t_{r}^{-1}-1)^{-1}&&\\
&&&&\ddots&\\
&&&&&(\ol t_{1}^{-1}-1)^{-1}
\end{array}\right)_W \in GL_E(W)
\end{equation*}

as above when $k$ is even.  When $k$ is odd, the middle entry is of course $(t_{r+1}-1)^{-1}=(\ol t_{r+1}^{-1}-1)^{-1}$.

We have $T_G=Z_G({}^{\xi}S)=Z_G({}^{\xi}T)$.  Write $S_G$ for the maximal $F$-split torus in $T_G$; it is given by the diagonal matrices in $G$ with entries in $F^{\times}$.    
 
 \subsection{The section}
Finally, in the symplectic case, the unitary case, and the orthogonal case with $\dim W$ even, our matrix $n_{Y}(\gamma) \in N$ is written by fitting together the above matrices via

\begin{equation*}
n_{Y}(\gm)=\left(\begin{array}{ccc}
I_k& \xi &\gm_G \\
&I_m&-\xi^*  \\
&&I_k
\end{array}\right)_V.
\end{equation*}
(Our choice of basis forces the factor $\upsilon^{-1}$ to be the identity.) In the case when $V$ is orthogonal and $\dim W$ odd, the matrix $n_{Y}(\epsilon\gamma)$ is the same except that the entry $\gm_{G}$ is replaced by $(\epsilon\gm)_{G}$.

\section{Density in $N$}\label{densarg}

In this section we show that the union of the sets $N^{Y,T}$, as $Y$ runs over $\mathcal Y_{k}$ and $T$ varies over conjugacy classes of maximal tori in $H_{Y}$, is open and dense in $N$. In what follows, unless mentioned otherwise, the topology in question is the Zariski topology. The primary goal is to establish a subset of $H$ which is nonempty and open in the image of $\Norm$. We first deal with the case when the base field $F$ is algebraically closed. Let boldface notation indicate the $\ol F$-points of various varieties.  Put $k=\dim_{F} W$ and $m=\dim_{F} X$ if $H$ is a symplectic or an orthogonal group. Put $k=\dim_{E} W$ and $m=\dim_{E} X$ if $H$ is a unitary group in $m$ variables. 

Recall our notational convention, according to which in the unitary case $\bT_{\rm reg}$ denotes the set of regular elements $\gm \in \bT$ so that $(\gm -1)|_Y$ is invertible.

\begin{defn}  
For a subvariety $\bf A \subseteq \bH$, put
\begin{equation*}
\bA_k=\{ h\in \bA \mid \rank(h-1;\bX) \leq k \}.
\end{equation*}
 \end{defn}
Clearly, $\bA_k$ is closed in $\bA$.  Note that the image of the $\Norm$ map lies in $\bH_{k}$.  Denote by $\bA_{k,{\rm gen}}$ the subset of $\bA_k$ consisting of elements which have the eigenvalue 1 occuring with multiplicity $m-k$ and all other eigenvalues occurring with multiplicity one. For $h\in \bA_{k,{\rm gen}}$, $h-1$ has as many distinct eigenvalues as its rank, so that, we have:
\begin{lemma}\label{hgenss}
If $h\in \bA_{k,{\rm gen}}$, then $h$ is semisimple.
\qed
\end{lemma}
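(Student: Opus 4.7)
The plan is to show that every eigenspace of $h$ attains the maximum possible dimension, so that the geometric multiplicities sum to $\dim \bX = m$, forcing diagonalizability.

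First I would unpack the hypothesis. By definition of $\bA_{k,\mathrm{gen}}$, the eigenvalue $1$ occurs for $h$ with algebraic multiplicity exactly $m-k$, and the remaining $k$ eigenvalues (counted with algebraic multiplicity) are pairwise distinct and all different from $1$. So the characteristic polynomial of $h-1$ has $0$ as a root of multiplicity $m-k$ and $k$ other simple nonzero roots.

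Next I would use the rank hypothesis. Since $h \in \bA_k$, we have $\rank(h-1;\bX) \leq k$, equivalently $\dim \ker(h-1;\bX) \geq m-k$. On the other hand, $\dim \ker(h-1;\bX)$ is the geometric multiplicity of $1$ as an eigenvalue of $h$, which is at most its algebraic multiplicity $m-k$. Combining these, the geometric and algebraic multiplicities of the eigenvalue $1$ coincide and equal $m-k$.

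For each of the other $k$ eigenvalues $\lambda \neq 1$, the geometric multiplicity is squeezed between $1$ and the algebraic multiplicity $1$, hence equals $1$. Summing geometric multiplicities over all eigenvalues gives $(m-k) + k = m = \dim \bX$, so $h$ is diagonalizable over $\ol F$, i.e.\ semisimple.

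There is no real obstacle here; the proof is a short application of the inequality (geometric multiplicity) $\leq$ (algebraic multiplicity) together with the bookkeeping forced by the rank condition. The only mild point of care is the unitary case, where ``eigenvalues'' of $h$ must be understood over $\ol E$ rather than $\ol F$, but the argument is identical since $\bX$ in that case is an $\ol E$-vector space of dimension $m$ and the characteristic polynomial of $h$ on $\bX$ still splits over $\ol E$.
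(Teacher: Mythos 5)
Your proof is correct and matches the paper's (very terse) argument: the rank condition forces the geometric multiplicity of the eigenvalue $1$ to equal its algebraic multiplicity $m-k$, the remaining eigenvalues are simple, and the geometric multiplicities sum to $\dim\bX$. The paper compresses all of this into the single remark that $h-1$ has as many distinct eigenvalues as its rank; your write-up is just the expanded version of the same bookkeeping.
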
 

Let $\bY\subseteq \bX$ be a nondegenerate subspace of dimension $k$. As observed in Proposition \ref{scrsk}, in all the cases treated in this article, there is precisely one nondegenerate subspace of $\bX$ of a fixed dimension up to translation by $\bH$. Let $\bT$ be a maximal torus of $\bH_{\bY}$.

As in Section \ref{algebraic_th}, we deal separately with the cases when $\bH_{\bY}$ is odd orthogonal or otherwise.
\subsection{$\bH_{\bY}$ is not odd orthogonal}\label{density_noo}
\begin{defn}
Denote by $\bH^{\bT}$ the set $\Int(\bH)(\bT_{\rm reg})$.
\end{defn}
 
\begin{lemma}\label{conj_gen}
\begin{enumerate}
\item We have $\bT_{\rm reg}=\bT_{k,{\rm gen}}=\bH_{k,{\rm gen}} \cap \bT$. 
\item $\bH^{\bT}=(\bH^{\circ})_{k,{\rm gen}}.$
\end{enumerate}
\end{lemma}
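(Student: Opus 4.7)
My plan is to reduce both parts to the following elementary observation: any $\gamma \in \bT$ fixes $\bY^{\perp}$ pointwise (since $\bT \subseteq \bH_{\bY}$), so the spectrum of $\gamma$ on $\bX$ consists of the eigenvalue $1$ with multiplicity $m-k$ (coming from $\bY^{\perp}$) together with the $k$ eigenvalues of $\gamma|_{\bY}$.

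For part (i), the equality $\bH_{k,{\rm gen}} \cap \bT = \bT_{k,{\rm gen}}$ is immediate because $\bT \subseteq \bH_k$ automatically. For $\bT_{\rm reg} = \bT_{k,{\rm gen}}$, I would match two conditions: the invertibility of $(\gamma-1)|_{\bY}$ is equivalent to $1$ occurring on $\bX$ with multiplicity exactly $m-k$, while regularity of $\gamma$ in the classical group $\bH_{\bY}$ (not odd orthogonal, so symplectic, even orthogonal, or $\GL$ over $\overline F$ in the unitary case) is equivalent to the $k$ eigenvalues of $\gamma|_{\bY}$ being pairwise distinct, hence to the ``other'' eigenvalues on $\bX$ all having multiplicity one. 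I plan to verify this by checking case by case that no root of $(\bH_{\bY}, \bT)$ vanishes at $\gamma$ precisely when these $k$ eigenvalues are distinct.

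For part (ii), the inclusion $\bH^{\bT} \subseteq (\bH^{\circ})_{k,{\rm gen}}$ is easy: eigenvalues are conjugation invariant, giving the ${k,{\rm gen}}$ condition by (i); and $\bT$, being connected, lies in $\bH^{\circ}$, which is normal in $\bH$, so every $\bH$-conjugate of $\bT_{\rm reg}$ remains in $\bH^{\circ}$. The reverse inclusion is the substantive part. Given $h \in (\bH^{\circ})_{k,{\rm gen}}$, Lemma \ref{hgenss} ensures $h$ is semisimple, and Lemma \ref{iamalemma} then gives that the $(m-k)$-dimensional fixed space $\bX^h$ is nondegenerate, so $(\bX^h)^{\perp}$ is nondegenerate of dimension $k$. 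Proposition \ref{scrsk}(ii) furnishes $h' \in \bH$ with $h'((\bX^h)^{\perp}) = \bY$, and $\Int(h')(h)$ then lies in $\bH_{\bY}$, is semisimple, and (as an element of $\Isom(\bY)$) has $k$ pairwise distinct eigenvalues all different from $1$. Such an element is regular semisimple in $\bH_{\bY}$, hence conjugate inside $\bH_{\bY}$ to an element of the maximal torus $\bT$, which by (i) lies in $\bT_{\rm reg}$.

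The main potential obstacle is the clean identification in (i) of ``regular in $\bH_{\bY}$'' with the spectrum condition defining $\bT_{k,{\rm gen}}$, since this depends on the precise convention for regularity in possibly disconnected classical groups and requires a root-theoretic case check. Once that matching is established, part (ii) is essentially a reduction via Proposition \ref{scrsk} to a fixed choice of $\bY$, using connectedness of $\bT$ to land in $\bH^{\circ}$.
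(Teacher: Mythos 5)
Your proposal is correct and follows essentially the same route as the paper: part (i) is the eigenvalue bookkeeping the authors call ``straightforward,'' and part (ii) uses exactly their chain of Lemma \ref{hgenss} (semisimplicity), Lemma \ref{iamalemma} (nondegeneracy of $\bX^h$), Proposition \ref{scrsk}(ii) (transitivity on nondegenerate $k$-dimensional subspaces), and conjugacy of maximal tori in $\bH_{\bY}$. The only point worth making explicit, which the paper also only asserts, is that the conjugated element lands in $\bH_{\bY}^{\circ}$ (not just $\bH_{\bY}$) because $h\in\bH^{\circ}$ and is trivial on $(\bY')^{\perp}$, so that conjugacy into the maximal torus $\bT$ applies.
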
 
\begin{proof}
The first statement is straightforward. The second statement is clear when $H$ is a unitary group so assume it is not so. Note that $\Int(\bH)(\bT_{\rm reg})\subseteq (\bH^{\circ})_{k,{\rm gen}}$. Let $h\in (\bH^{\circ})_{k,{\rm gen}}$ and define $\bY'=(\bX^{h})^{\perp}$. By Lemma \ref{iamalemma} and Lemma \ref{hgenss}, $\bY'$ is a nondegenerate space of dimension $k$.  By Proposition \ref{scrsk} (ii), there exists $h_{1}\in \bH$ be such that $h_{1}(\bY')=\bY$. Then $h_{1}\bH_{\bY'}h_{1}^{-1}=\bH_{\bY}$ and so, $h_{1}hh_{1}^{-1}$ is a semisimple element of $\bH_{\bY}^{\circ}$. Since $\bT$ is the unique maximal torus in $\bH_{\bY}$ up to conjugacy, $h_{1}hh_{1}^{-1}$ can be conjugated in $\bH_{\bY}$ to an element $t$ of $\bT$. Clearly $t\in\bT_{\rm reg}$, and so $h \in {\bH}^{\bT}$.
\end{proof}
 
\begin{prop} \label{Zariski} 
$(\bH^{\circ})_{k,{\rm gen}}$ is open in $\bH_{k}$.
 \end{prop}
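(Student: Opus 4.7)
The plan is to realize $(\bH^{\circ})_{k,{\rm gen}}$ as the intersection of $\bH_k$ with two subsets that are Zariski-open in $\bH$: the identity component $\bH^{\circ}$ (open as a connected component of the algebraic group $\bH$), and a locus cut out by the non-vanishing of explicit regular functions of $h$ extracted from the characteristic polynomial of $h-1 \in \End(\bX)$.

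First I would introduce the characteristic polynomial
\[
\chi_{h}(t) = \det\bigl(t\cdot \id - (h-1)\bigr) = \sum_{i=0}^{m} c_{i}(h)\, t^{i},
\]
whose coefficients $c_{i}$ are regular functions on $\bH$. For every $h \in \bH_k$ the kernel of $h-1$ on $\bX$ has dimension at least $m-k$, so $0$ is a root of $\chi_{h}$ of geometric, and therefore algebraic, multiplicity at least $m-k$. Consequently $c_{0}, \ldots, c_{m-k-1}$ vanish identically on $\bH_k$, and one has a well-defined polynomial
\[
p_{h}(t) = c_{m-k}(h) + c_{m-k+1}(h)\, t + \cdots + c_{m}(h)\, t^{k}
\]
with $\chi_{h}(t) = t^{m-k}\, p_{h}(t)$, whose coefficients are regular on $\bH_k$.

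Next I would verify the characterization: $h \in \bH_k$ lies in $\bH_{k,{\rm gen}}$ exactly when (i) $p_{h}(0) = c_{m-k}(h) \neq 0$, so that $0$ occurs as a root of $\chi_h$ with multiplicity exactly $m-k$ (equivalently, $1$ occurs in the characteristic polynomial of $h$ with multiplicity exactly $m-k$), and (ii) $\disc(p_{h}) \neq 0$, so that the remaining $k$ eigenvalues of $h-1$ are pairwise distinct, and automatically distinct from $0$ by (i). Both conditions are the non-vanishing of regular functions on $\bH_k$ (the discriminant being a polynomial in the $c_{i}$), and hence cut out Zariski-open subsets. Intersecting with $\bH^{\circ} \cap \bH_k$, which is open in $\bH_k$, yields that $(\bH^{\circ})_{k,{\rm gen}}$ is open in $\bH_k$.

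No serious obstacle is anticipated. The argument rests on standard facts—that the coefficients of the characteristic polynomial of an endomorphism depend regularly on the endomorphism, and that the discriminant of a polynomial is a polynomial in its coefficients—together with the elementary observation that (i) and (ii) jointly are the correct polynomial translation of the defining property of $\bH_{k,{\rm gen}}$. The only point worth a moment's care is that (i) alone already forces $0$ not to be a root of $p_h$, so that (i) and (ii) together capture the full generic stratum.
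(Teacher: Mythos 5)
Your proof is correct and follows essentially the same route as the paper: both factor the characteristic polynomial into the forced power (corresponding to the eigenvalue $1$ of $h$, or equivalently $0$ of $h-1$) times a degree-$k$ quotient with regular coefficients, and then cut out the generic locus by non-vanishing of that quotient at the relevant point and of its discriminant. The only difference is the cosmetic shift from $h$ to $h-1$.
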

 \begin{proof}
Let $p_{h}(x)$ be the characteristic polynomial $\det(h-xI)$ of an $h\in \bH$. If $h\in\bH_{k}$, then $(1-x)^{m-k}|p_{h}(x)$. Write $p_{h}(x)=(1-x)^{m-k}q_{h}(x)$. Note that $h\in (\bH^{\circ})_{k,{\rm gen}}$ if and only if $q_{h}(x)$ has distinct roots and none of them is 1. For an $h\in \bH_{k}$, let $q_{h}(x)=\Sigma_{i=0}^{k-1}c_i(h)x^{i}+(-1)^{k}x^{k}$. Define $\chi:\bH_{k}\to \mathbb A^{k}$ such that $\chi(h)=(c_0(h),\dots,c_{k-1}(h))$.  On the other hand, given $v=(c_{0},\dots,c_{k-1})\in\mathbb A^{k}$, denote by $q_{v}$ the polynomial $\Sigma_{i=0}^{k-1}c_{i}x^{i}+(-1)^{k}x^{k}$. Define
 $${\bf U}=\{v\in \mathbb A^{k}\mid  \ q_{v}(1) \neq 0, \disc(q_v) \neq 0  \},$$
 where $\disc$ denotes the usual discriminant of a polynomial.
 
 Note that ${\bf U}$ is open in $\mathbb A^{k}$. 
 Now $\chi(h) \in {\bf U}$ if and only if the algebraic multiplicity of $1$ as an eigenvalue of $h$ is $m-k$, and the other eigenvalues of $h$ are distinct.
 Thus $(\bH^\circ)_{k,{\rm gen}}=\chi^{-1}({\bf U}) \cap \bH^\circ$, and hence the statement.
\end{proof}

\begin{defn}
Let $\bN_{{\bf {\rm reg}}}=\{ n \in \bN' \mid \Norm(n) \in \bH^{\bT} \}$, and let $N_{{\rm reg}}$ be the $F$-points of $\bN_{{\bf {\rm reg}}}$.
\end{defn}

(The proof of the previous proposition gives an $F$-structure on  $(\bH^{\circ})_{k,{\rm gen}}=\bH^{\bT}$, and thus   an $F$-structure on  $\bN_{{\bf {\rm reg}}}$.)

\begin{thm} \label{Theorem 1}
\begin{enumerate}
\item $\bN_{{\bf {\rm reg}}}$ is nonempty and open in $\bN$.
\item  $N_{{\rm reg}}$ is the union of $N^{Y,T}$, for $Y $ runs over $ \mc Y_k$  (see Proposition \ref{scrsk}) and as $T$ runs over conjugacy classes of maximal tori in $H_{Y}$.
\item We have a decomposition
\begin{equation} \label{N_decomp}
N_{{\rm reg}}= \bigsqcup_{Y,T}\{ \Int(m) n_{Y}(\gm) \mid m \in M, \gm \in T_{\rm reg} \}.
\end{equation}
\end{enumerate}
\end{thm}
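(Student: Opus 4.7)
The plan is to deduce (i) from the openness of $\bH^{\bT}$ in $\bH_k$ (via Lemma \ref{conj_gen} and Proposition \ref{Zariski}) together with Proposition \ref{section}; to deduce (ii) by attaching an intrinsic pair $(Y,T)$ to each element of $N_{\rm reg}$; and to deduce (iii) by combining Proposition \ref{surjprop}(iii) with a disjointness argument based on the fixed point spaces of $\Norm(n)$.

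First I would verify (i). The subvariety $\bN' \subseteq \bN$ is cut out by the open condition $\det \eta \neq 0$, and $\Norm \colon \bN' \to \bH_k$ is a morphism of varieties. Since $\bH^{\bT}$ is open in $\bH_k$, its preimage $\bN_{\rm reg} = \Norm^{-1}(\bH^{\bT})$ is open in $\bN$. For nonemptiness, choose any $\gm$ in the nonempty variety $\bT_{\rm reg}$; then $n_\bY(\gm)$ lies in $\bN_{\rm reg}$ by Proposition \ref{section}, and an $F$-rational such choice shows $N_{\rm reg}$ is nonempty as well.

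Next I would establish (ii). Given $n \in N_{\rm reg}$, set $h = \Norm(n) \in \bH^{\bT} \cap H$. By Lemma \ref{hgenss}, $h$ is semisimple, so by Lemma \ref{iamalemma} the fixed space $X^h$ is nondegenerate; since $h \in \bH^{\bT}$, $X^h$ has dimension $m - k$, and hence $Y' := (X^h)^\perp$ is a nondegenerate $k$-dimensional $F$-subspace of $X$. Applying Proposition \ref{scrsk}, let $Y$ represent the $H$-orbit of $Y'$ in $\mc Y_k$ and pick $h_1 \in H$ with $h_1 Y' = Y$. Then $h_1 h h_1^{-1}$ is a semisimple element of $H_Y$ with no eigenvalue $1$ on $Y$, hence $H_Y$-conjugate to an element of $T_{\rm reg}$ for a chosen representative $T$ of its conjugacy class of maximal tori. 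Thus $n \in N^{Y,T}$, and the reverse inclusion is immediate from the eigenvalue conditions characterizing $T_{\rm reg}$ and $\bH^{\bT}$.

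Finally, for (iii), Proposition \ref{surjprop}(iii) identifies each set $\{\Int(m) n_Y(\gm) : m \in M,\ \gm \in T_{\rm reg}\}$ with $N^{Y,T}$, so by (ii) the union on the right exhausts $N_{\rm reg}$. The key remaining issue, which I expect to be the main obstacle, is disjointness: suppose $n \in N^{Y,T} \cap N^{Y',T'}$. Then $\Norm(n)$ is $H$-conjugate both to some $\gm \in T_{\rm reg}$ and to some $\gm' \in T'_{\rm reg}$, so $\gm' = h \gm h^{-1}$ for some $h \in H$. The identities $X^\gm = Y^\perp$ and $X^{\gm'} = (Y')^\perp$ force $hY = Y'$, whence $Y = Y'$ as elements of $\mc Y_k$. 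Writing $h = h_Y h_{Y^\perp}$ via Lemma \ref{basic xi prop}, the $H_{Y^\perp}$-factor commutes with $\gm, \gm' \in H_Y$, so $\gm' = h_Y \gm h_Y^{-1}$, and uniqueness of the maximal torus of $H_Y$ containing the regular semisimple element $\gm'$ gives $T = T'$ up to $H_Y$-conjugacy. Throughout, the odd orthogonal case proceeds identically once $\gm$ is replaced by $\epsilon_Y \gm$ in the section $n_Y$.
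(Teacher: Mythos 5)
Your proposal is correct and follows essentially the same route as the paper: openness of $\bH^{\bT}$ in $\bH_k$ pulled back through $\Norm$ for (i), reading off the pair $(Y,T)$ from the fixed-point space of the semisimple element $\Norm(n)$ for (ii), and Proposition \ref{surjprop} for (iii). Your explicit disjointness argument in (iii) (via $hY=Y'$ and the splitting $h=h_Y h_{Y^\perp}$ from Lemma \ref{basic xi prop}) spells out what the paper leaves implicit, relying there on Proposition \ref{decomny}; this is a welcome addition rather than a deviation.
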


\begin{proof} 
Recall that the image of $\Norm$ lies in $\bH_{k}$.
Since $\bH^{\bT}$ is open in $\bH_{k}$, we see that $\bN_{{\bf {\rm reg}}}$ is open in $\bN'$, which in turn is open in $\bN$.  One can apply $n_{\bY}$ to elements of $ \bT_{\rm reg}$ to produce elements of $\bN_{{\bf {\rm reg}}}$, so it is nonempty. This proves the first statement. If $n\in N^{Y,T}$ for some $Y$ and $T$, then it is clear that $n\in N_{{\rm reg}}$. So to prove the second statement assume $n\in N_{\rm reg}$. By definition of $N_{{\rm reg}}$, it is clear that $\Norm(n)$ is semisimple. Let $X^{\Norm(n)}=Y^{\perp}$ which is nondegenerate by Lemma \ref{iamalemma} and $(m-k)$-dimensional. Thus $\Norm(n)\in T_{\rm reg}$ for some maximal torus $T \leq H_{Y}$, which gives us the desired statement. The last statement follows from the second part and Proposition \ref{surjprop}.
\end{proof}

\subsection{$\bH_{\bY}$ is odd orthogonal}\label{density_oo}
Let $\bH^{+}=\bH-\bH^{\circ}$. Recall that $\epsilon_{\bY}=P_{\bY^{\perp}}-   P_{\bY}  \in\bH_{\bY}$. 
\begin{defn}
Put $\bH^{\bT}=\Int(\bH)(\epsilon_{\bY}\bT_{\rm reg}).$ \end{defn}

\begin{lemma}\label{conj_gen_odd}
$\bH^{\bT}=(\bH^{+})_{k,{\rm gen}}.$
\end{lemma}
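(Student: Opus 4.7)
The plan is to mirror Lemma \ref{conj_gen}(ii) from the non-odd-orthogonal case, with the modification that the involution $\epsilon_{\bY}$ now shuttles between the two connected components of $\bH$, so that the construction $\gm \mapsto \epsilon_{\bY}\gm$ (instead of $\gm \mapsto \gm$) is what produces the desired elements.

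For the inclusion $\bH^{\bT}\subseteq(\bH^{+})_{k,{\rm gen}}$, I would take $\gm\in\bT_{\rm reg}$ and analyze $\epsilon_{\bY}\gm$ directly. Since $\det\epsilon_{\bY}=(-1)^{k}=-1$ (as $k$ is odd) and $\gm\in\bT\subseteq\bH_{\bY}^{\circ}\subseteq\bH^{\circ}$, the product lies in $\bH^{+}$. On $\bY^{\perp}$ it restricts to the identity, contributing the eigenvalue $1$ with multiplicity $m-k$; on $\bY$ it acts as $-\gm|_{\bY}$. Using the explicit eigenvalue list $t_{1},\ldots,t_{r},1,t_{r}^{-1},\ldots,t_{1}^{-1}$ of $\gm|_{\bY}$ (with $k=2r+1$), regularity of $\gm$ forces these to be pairwise distinct and in particular rules out $t_{i}=\pm 1$, so the eigenvalues of $\epsilon_{\bY}\gm$ on $\bY$ are $k$ distinct values none equal to $1$. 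This places $\epsilon_{\bY}\gm$ in $(\bH^{+})_{k,{\rm gen}}$, a property preserved under $\bH$-conjugation.

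For the reverse inclusion, given $h\in(\bH^{+})_{k,{\rm gen}}$, Lemma \ref{hgenss} gives semisimplicity of $h$, so $\bY':=(\bX^{h})^{\perp}$ is nondegenerate of dimension $k$ by Lemma \ref{iamalemma}. Proposition \ref{scrsk}(ii) furnishes $h_{1}\in\bH$ with $h_{1}(\bY')=\bY$, and then $h':=h_{1}hh_{1}^{-1}\in\bH_{\bY}$ satisfies $\det h'=-1$. In the odd orthogonal group $\bH_{\bY}$ I would write $h'=\epsilon_{\bY}t$ with $t\in\bH_{\bY}^{\circ}$; since $\epsilon_{\bY}$ is central in $\bH_{\bY}$, $t$ remains semisimple and any conjugation inside $\bH_{\bY}^{\circ}$ propagates through $\epsilon_{\bY}$. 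Pick $h_{2}\in\bH_{\bY}^{\circ}$ with $\gm:=h_{2}th_{2}^{-1}\in\bT$; then $(h_{2}h_{1})h(h_{2}h_{1})^{-1}=\epsilon_{\bY}\gm$. The genericity of $h$ translates to the statement that $\gm|_{\bY}$ has $k$ distinct eigenvalues with $-1$ absent, which is precisely regularity of $\gm$ together with invertibility of $(\epsilon_{\bY}\gm-1)|_{\bY}$, i.e.\ $\gm\in\bT_{\rm reg}$.

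The step requiring the most care will be the bookkeeping of connected components: verifying that $\epsilon_{\bY}\gm$ lands in $\bH^{+}$ rather than $\bH^{\circ}$ in the forward direction, and in the reverse direction that the factorization $h'=\epsilon_{\bY}t$ with $t\in\bH_{\bY}^{\circ}$ is legitimate and compatible with conjugation, which hinges on the centrality of $\epsilon_{\bY}$ inside $\bH_{\bY}$. Once this is in place, the remainder is a direct parallel of the non-odd-orthogonal argument.
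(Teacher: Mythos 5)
Your proof is correct and follows essentially the same route as the paper: the forward inclusion by direct inspection of the eigenvalues of $\epsilon_{\bY}\gm$, and the reverse inclusion by conjugating $h$ into $\bH_{\bY}$, factoring out the central element $\epsilon_{\bY}$, and conjugating the $\SO(\bY)$-part into $\bT$. One small imprecision in the forward direction: root-theoretic regularity in type $B$ does not rule out $t_i=-1$ — that exclusion comes from the extra condition ``$(\epsilon_{\bY}\gm-1)|_{\bY}$ invertible'' built into the definition of $\bT_{\rm reg}$, which you do invoke correctly in the reverse direction.
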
 
\begin{proof}
As in the proof of Lemma \ref{conj_gen}, given $h\in \bH^{+}_{k,{\rm gen}}$, there exists $h_{1}\in \bH$ such that $h_{1}hh_{1}^{-1}$ is a semisimple element of $(\bH^+)_{\bY}$. Thus $\epsilon_{\bY}h_{1}hh_{1}^{-1}$ can be conjugated to an element $t\in \bT$. Since $\epsilon_{\bY}$ commutes with elements of $\bH_{\bY}$, $h$ can be conjugated in $\bH$ to $\epsilon_{\bY}t$. It is easy to see that $t\in \bT_{\rm reg}$. The containment $\bH^{\bT}\subseteq (\bH^{+})_{k,{\rm gen}}$ is obvious.
\end{proof}

The proofs of the next two statements are similar to the proofs of the corresponding statements in Section \ref{density_noo}.
\begin{prop} \label{Zariski_odd} 
$(\bH^{+})_{k,{\rm gen}}$ is open in $\bH_{k}$.
\qed
 \end{prop}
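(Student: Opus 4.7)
The plan is to imitate the proof of Proposition \ref{Zariski} with $\bH^{\circ}$ replaced by $\bH^{+}$. The key observation is that $\bH^{+}=\bH-\bH^{\circ}$ is itself open in $\bH$ (since $\bH^{\circ}$ is closed), hence its intersection with any open subset of $\bH_{k}$ is open in $\bH_{k}$.

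More precisely, I would proceed as follows. For $h\in\bH_{k}$, the characteristic polynomial factors as $p_{h}(x)=(1-x)^{m-k}q_{h}(x)$ for a unique monic polynomial $q_{h}(x)$ of degree $k$, and the coefficients of $q_{h}$ depend regularly on $h$. As in the proof of Proposition \ref{Zariski}, write $q_{h}(x)=\sum_{i=0}^{k-1}c_{i}(h)x^{i}+(-1)^{k}x^{k}$ and define the morphism $\chi:\bH_{k}\to\mathbb{A}^{k}$ by $\chi(h)=(c_{0}(h),\dots,c_{k-1}(h))$. Let
\[
{\bf U}=\{v=(c_{0},\dots,c_{k-1})\in\mathbb{A}^{k}\mid q_{v}(1)\neq 0,\ \disc(q_{v})\neq 0\},
\]
which is a Zariski open subset of $\mathbb{A}^{k}$. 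Then an element $h\in\bH_{k}$ satisfies $\chi(h)\in{\bf U}$ precisely when the eigenvalue $1$ of $h$ has algebraic multiplicity exactly $m-k$ and all the remaining eigenvalues are distinct and different from $1$; in other words, $\chi^{-1}({\bf U})=\bH_{k,{\rm gen}}$.

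To conclude, simply intersect with $\bH^{+}$:
\[
(\bH^{+})_{k,{\rm gen}}=\bH_{k,{\rm gen}}\cap\bH^{+}=\chi^{-1}({\bf U})\cap\bH^{+}.
\]
Since $\chi$ is continuous and $\bH^{+}$ is open in $\bH$, the right-hand side is open in $\bH_{k}$, as required. There is no genuine obstacle in this argument; the only point to verify carefully is that $\bH^{+}$ is indeed open in $\bH$, which holds because $\bH^{\circ}$ is the complement of the other cosets of the finite component group and is therefore closed.
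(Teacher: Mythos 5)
Your proposal is correct and takes essentially the same route as the paper, which simply remarks that the proof of Proposition \ref{Zariski} carries over: one forms the same morphism $\chi$ and open set $\bf U$, and replaces the intersection with $\bH^{\circ}$ by the intersection with $\bH^{+}$, both of which are open (and closed) in $\bH$. Your explicit verification that $\chi^{-1}({\bf U})=\bH_{k,{\rm gen}}$ and that $\bH^{+}$ is open is exactly the intended argument.
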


\begin{defn}
As in the earlier case, define $\bN_{{\bf {\rm reg}}}$ to be the preimage  in $\bN'$ of the $\Norm$ map of $\bH^{\bT}$, and $N_{{\rm reg}}$ to be the $F$-points of $\bN_{{\bf {\rm reg}}}$.
\end{defn}

\begin{thm} \label{Theorem 1_odd}
\begin{enumerate}
\item $\bN_{{\bf {\rm reg}}}$ is nonempty and open in $\bN$.
\item $N_{{\rm reg}}$ is the union of $N^{Y,T}$, for $Y $ runs over $ \mc Y_k$ and as $T$ runs over conjugacy classes of maximal tori in $H_{Y}$.
\item We have a decomposition
\begin{equation} \label{N_decomp_odd}
N_{{\rm reg}}= \bigsqcup_{Y,T} \{ \Int(m) n_{Y}(\epsilon_{Y}\gm) \mid m \in M, \gm \in T_{\rm reg} \}.
\end{equation}
\end{enumerate}
\qed
\end{thm}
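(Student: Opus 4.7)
The plan is to mirror the proof of Theorem \ref{Theorem 1}, with the only substantive change being the replacement of $\bT_{\rm reg}$ by $\epsilon_\bY \bT_{\rm reg}$ throughout, and of $(\bH^\circ)_{k,\rm gen}$ by $(\bH^+)_{k,\rm gen}$. Lemma \ref{conj_gen_odd} and Proposition \ref{Zariski_odd} are the exact analogs of Lemma \ref{conj_gen} and Proposition \ref{Zariski}, so the formal structure of the argument transplants cleanly.

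For the first assertion, I would argue that $\bN_{\rm reg}$ is open exactly as in the even case: the image of $\Norm$ lies in $\bH_k$, and by Proposition \ref{Zariski_odd} combined with Lemma \ref{conj_gen_odd} the subset $\bH^{\bT}=(\bH^+)_{k,\rm gen}$ is open in $\bH_k$, so its $\Norm$-preimage is open in $\bN'$, which is open in $\bN$. For non-emptiness, pick any $\gm \in \bT_{\rm reg}$; then $n_\bY(\epsilon_\bY \gm)$ is a well-defined element of $\bN'$ whose $\Norm$ equals $\epsilon_\bY \gm$ (this is the analog of Proposition \ref{section} applied to the element $\epsilon_Y \gm \in \Stab_H(Y)$, whose restriction to $Y$ minus $1$ is invertible by the very definition of $T_{\rm reg}$). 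Hence $n_\bY(\epsilon_\bY\gm) \in \bN_{\rm reg}$.

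For part (2), given $n \in N_{\rm reg}$, the element $\Norm(n)$ lies in $(H^+)_{k,\rm gen}$; by Lemma \ref{hgenss} it is semisimple, so by Lemma \ref{iamalemma} the space $Y_n :=(X^{\Norm(n)})^\perp$ is nondegenerate of dimension $k$. Replacing $n$ by an $\Int(H)$-conjugate, I may assume $Y_n$ equals a fixed representative $Y \in \mc Y_k$. Lemma \ref{conj_gen_odd} then produces a maximal torus $T \leq H_Y$ with $\Norm(n) \in \Int(H)(\epsilon_Y T_{\rm reg})$, i.e.\ $n \in N^{Y,T}$. The reverse inclusion $N^{Y,T} \subseteq N_{\rm reg}$ is immediate from the definitions. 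Part (3) now follows by invoking Proposition \ref{surjfyt_odd} and observing that the disjointness across distinct pairs $(Y,T)$ is forced by two intrinsic invariants of $n$: the $H$-orbit of $X^{\Norm(n)}$ selects $Y$, and the $H_Y$-conjugacy class of (a preimage under $\epsilon_Y$-translation of) $\Norm(n)|_Y$ selects $T$.

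The only nuisance, and the step most worth double-checking, is the analog of Proposition \ref{decomny} in the $\epsilon_Y$-twisted setting: one must verify that $\Int(h_1)(N_{Y,T}) \cap \Int(h_2)(N_{Y,T}) \neq \emptyset$ still forces $h_2^{-1}h_1 \in \Stab_H(Y)$. The argument is the same as before, since $X^{\Norm(\Int(h)n)} = h(X^{\Norm(n)})$ regardless of the $\epsilon_Y$-twist, and $\epsilon_Y$ commutes with all elements of $H_Y$ (hence conjugation within $\Stab_H(Y)$ preserves the family $\epsilon_Y T_{\rm reg}$). Once this is in hand, both the union-over-$(Y,T)$ description in (2) and the disjointness in (3) follow formally, completing the proof.
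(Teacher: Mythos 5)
Your proposal is correct and follows exactly the route the paper intends: the paper itself omits the proof, stating only that it is "similar to the proofs of the corresponding statements in Section \ref{density_noo}," and your argument is precisely that transplantation, with $\bT_{\rm reg}$ replaced by $\epsilon_{\bY}\bT_{\rm reg}$, $(\bH^{\circ})_{k,{\rm gen}}$ by $(\bH^{+})_{k,{\rm gen}}$, and Lemma \ref{conj_gen}, Proposition \ref{Zariski}, and Proposition \ref{surjprop} by their odd-orthogonal analogues. Your extra check of the $\epsilon_Y$-twisted version of Proposition \ref{decomny} is a sound (and correctly justified) piece of due diligence, consistent with the fact that $X^{\Norm(\Int(h)n)}=h(X^{\Norm(n)})$ is unaffected by the twist.
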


\section{Lie algebra decompositions} \label{Lie}

An element $u\in \mathfrak n$ is characterized by linear maps
$$C:X\to W, \ C':W'\to X, \ D:W'\to W$$
such that $u|_{W}=0$, $u|_{X}=C$, and $u|_{W'}=C'+D$.
The condition that $u\in \mathfrak n$ is equivalent to the two conditions:
\begin{enumerate}
\item $C^{*}+C'=0$,
\item $D^{*}+D=0$. 
\end{enumerate}
So we write $u\in \mathfrak n$ as $u(C,D)$ where the condition is simply that $D$ is skew-Hermitian on $W'$.

\subsection{Four exact sequences} \label{4X}

The network of relationships between the Lie algebras of the groups that we have encountered so far is subtle, governed by no fewer than four exact sequences of vector spaces.  
Given $Y \in \mc Y_k$, fix as before a surjection $\xi: X \to W$ so that $\ker \xi=Y^\perp$.  Recall that $\xi^*: W' \to X$ is injective with image $Y$, and moreover $\xi^*(\xi \xi^*)^{-1}\xi$ is the projection $P_Y$.

Our starting point is the map
 $\phi:\mm=\mathfrak g \oplus \mathfrak h \to \mathfrak n$ given by
 \begin{equation*}
 \phi(A,B)=u(A\xi,\xi B\xi^{*}).
 \end{equation*}
This map is typically neither injective nor surjective.  Let us identify its kernel and image.
 
\begin{defn}
Let 
\begin{equation*}
\kappa= \{ B \in \hh \mid B(Y) \subseteq Y^{\perp} \}.
\end{equation*}
Define $i_{2}:\kappa\to \mathfrak m$ by $i_{2}(B)=(0,B)$.
Also we set
$$\mathfrak n_{\xi}=\{u(C,D) \in \nn \mid  Y^{\perp}\subseteq \ker C\}.$$
\end{defn}

\begin{prop}  (The Exact Sequence for $\mm$) The following sequence is exact:
\begin{equation} \label{M_seq}
0 \to \kappa\overset{i_{2}}\to \mathfrak m \overset{\phi}\to \mathfrak n_{\xi} \to 0.
\end{equation}
\qed
\end{prop}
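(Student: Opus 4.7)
The plan is to verify exactness at each of the three positions separately. Exactness at $\kappa$ is immediate from the definition of $i_2$, and exactness at $\mm$ amounts to computing $\ker \phi$. If $\phi(A,B) = u(A\xi, \xi B \xi^*) = 0$, then surjectivity of $\xi : X \to W$ forces $A=0$. Since $\xi^* : W' \to X$ is injective with image $Y$ (Lemma~\ref{kerxit}) and $\ker \xi = Y^\perp$, the vanishing of $\xi B \xi^*$ translates to $B(Y) \subseteq Y^\perp$, i.e.\ $B \in \kappa$. The reverse containment $i_2(\kappa) \subseteq \ker \phi$ is equally direct.

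For the containment $\im \phi \subseteq \nn_\xi$, one notes that $\xi(Y^\perp)=0$ gives $Y^\perp \subseteq \ker(A\xi)$ for any $A$, and that $B \in \hh$ (so $B^*+B=0$) implies
$$(\xi B \xi^*)^* + \xi B \xi^* \;=\; \xi(B^* + B)\xi^* \;=\; 0,$$
so $\phi(A,B)$ indeed lies in $\nn_\xi$.

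The crux of the argument is surjectivity of $\phi$ onto $\nn_\xi$. My plan is to construct an explicit preimage of any $u(C,D) \in \nn_\xi$ using the right inverse $\xi^+$ of $\xi$ introduced in Section~\ref{relxw}. Namely, set
$$A \;=\; C \xi^+ \;\in\; \GG, \qquad B \;=\; \xi^+ D (\xi^+)^* \;\in\; \End(X).$$
The identity $\xi^+ \xi = P_Y$, combined with the hypothesis $C|_{Y^\perp}=0$, gives $A\xi = C P_Y = C$. Taking adjoints in $\xi \xi^+ = I_W$ yields $(\xi^+)^*\xi^* = I_{W'}$, from which $\xi B \xi^* = D$ follows immediately. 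The only point requiring genuine attention is that $B$ really lies in $\hh$, but this reduces to the skew-symmetry $D+D^*=0$ built into the definition of $\nn$: the computation
$$B + B^* \;=\; \xi^+ D (\xi^+)^* + \xi^+ D^* (\xi^+)^* \;=\; \xi^+(D+D^*)(\xi^+)^* \;=\; 0$$
uses only that $**$ is the identity on the relevant spaces of linear maps. I expect no essential obstacle; the main care is in keeping the adjoint bookkeeping between $W$ and $W'$ straight.
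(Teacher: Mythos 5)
Your proof is correct, and since the paper states this proposition with the proof omitted (it is left as a routine verification), your argument is exactly the intended one: the kernel computation via $\ker\xi=Y^{\perp}$ and $\im\xi^{*}=Y$, and the explicit preimage $(C\xi^{+},\,\xi^{+}D(\xi^{+})^{*})$ using $\xi\xi^{+}=I_{W}$ and $(\xi^{+})^{*}\xi^{*}=I_{W'}$, are the standard checks. The adjoint bookkeeping is consistent with the paper's conventions, so there is nothing to add.
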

  
We next identify the quotient of $\nn$ by $\nn_{\xi}$.

\begin{defn}
 Let $i$ denote the inclusion of $\mathfrak n_{\xi}$ into $\mathfrak n$. 
Write $\psi: \mathfrak n \to \Hom(W',Y^{\perp})$ for the map given by
\begin{equation*}
\psi: u(C,D)\mapsto P_{Y^{\perp}}\circ C^{*}.
\end{equation*}
\end{defn}

\begin{prop} \label{N_exact} (The Exact Sequence for $\nn$) The following sequence is exact:
\begin{equation} \label{N_seq}
0\to \mathfrak n_{\xi}\overset{i}\to \mathfrak n \overset{\psi}\to \Hom(W',Y^{\perp}) \to 0.
\end{equation}
\qed
\end{prop}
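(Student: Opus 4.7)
The plan is to verify exactness at each of the three terms. Injectivity of $i$ is immediate, so the content is in identifying $\ker\psi$ with $\mathfrak{n}_{\xi}$ and in establishing surjectivity of $\psi$; both reduce to direct manipulations with the defining adjoint relation $\Phi(C^{*}w',x)=\Phi(w',Cx)$ together with the nondegeneracy of $\Phi$ on the pair $(W,W')$ and on $X$. For exactness in the middle I would first observe that $P_{Y^{\perp}}\circ C^{*}=0$ is equivalent to $C^{*}(W')\subseteq Y$. Using the decomposition $X=Y\oplus Y^{\perp}$ with both summands nondegenerate, so that $Y$ is the perpendicular of $Y^{\perp}$ inside $X$, the latter condition reads $\Phi(C^{*}w',y')=0$ for all $w'\in W'$ and $y'\in Y^{\perp}$. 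The adjoint relation rewrites this as $\Phi(w',Cy')=0$, which by nondegeneracy of the pairing between $W$ and $W'$ is equivalent to $C(Y^{\perp})=0$. This yields $\ker\psi=\mathfrak{n}_{\xi}$.

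For surjectivity, given $g\in\Hom(W',Y^{\perp})$, I would produce a preimage of the form $u(C,0)$. The datum $D=0$ automatically satisfies $D+D^{*}=0$, so the task reduces to constructing $C\in\Hom(X,W)$ with $P_{Y^{\perp}}\circ C^{*}=g$. Viewing $g$ as a map $W'\to X$ landing in $Y^{\perp}$, the nondegeneracy of $\Phi$ on $W\times W'$ allows one to define $C$ by the identity $\Phi(w',Cx)=\Phi(g(w'),x)$ for all $w'\in W'$ and $x\in X$; a short check using nondegeneracy of $\Phi$ on $X$ shows that the resulting adjoint $C^{*}$ equals $g$ as a map into $X$, so projecting to $Y^{\perp}$ returns $g$.

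The argument is uniform across the symmetric, antisymmetric, and Hermitian cases, because it uses only the nondegeneracy of $\Phi$ on the hyperbolic part $W\oplus W'$ and on $X$; no case-specific analysis intrudes. The main bookkeeping is keeping the adjoint conventions and sign conventions consistent between $\nn$ and its ambient form-preservation constraints, so I do not anticipate any genuine obstacle beyond that.
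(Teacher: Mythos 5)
Your proposal is correct, and it supplies exactly the routine verification that the paper leaves to the reader (the proposition is stated with no written proof): identifying $\ker\psi$ with $\mathfrak n_\xi$ via the adjoint relation and the orthogonal decomposition $X=Y\oplus Y^{\perp}$, and producing preimages of the form $u(C,0)$ using the perfect pairing between $W'$ and $W$. No gaps; the argument is uniform across the bilinear and Hermitian cases as you say.
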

Define $\tilde \psi: \Hom(W',Y^{\perp}) \to \nn$ via $\tilde \psi(A) =u(A^*P_{Y^{\perp}},0)$; then $\tilde \psi$ is a section of $\psi$.  The space $\kappa$ also maps onto $\Hom(W',Y^{\perp})$ in the following manner.

 \begin{defn}  Define $\Gamma_{1}:\kappa\to \Hom(W',Y^{\perp})$ via $\Gamma_{1}(B)=B\xi^{*}$. Further define $\Gamma_{2}:\Hom(W',Y^{\perp})\to \hh$ via $\Gamma_{2}(A)=A(\xi\xi^{*})^{-1}\xi- \xi^{*}(\xi\xi^{*})^{-1}A^{*}=A((\xi|_{Y})^{*})^{-1}P_{Y}- (\xi|_{Y})^{-1}A^{*}$. 
\end{defn}

One checks that the image of $\Gamma_2$ lands in $\kappa$, and that $\Gamma_1 \circ \Gamma_2$ is the identity on $\Hom(W',Y^{\perp})$.

\begin{prop} \label{kappaX} (The Exact Sequence for $\kappa$) 
The following sequence is exact:
\begin{equation} \label{kappa_seq}
0 \to \hh_{Y^{\perp}} \overset{i}\to \kappa \overset{\Gamma_{1}}\to \Hom(W',Y^{\perp}) \to 0.
\end{equation}
Here $i$ is the natural inclusion.
\qed
\end{prop}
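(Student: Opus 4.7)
The plan is to establish the three properties of a short exact sequence: well-definedness of the maps, middle exactness, and surjectivity of $\Gamma_1$. The first two are quick; surjectivity constitutes the bulk of the argument and is handled through the explicit section $\Gamma_2$ already constructed before the statement.

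First I would verify well-definedness. For the inclusion $i$, any $B \in \hh_{Y^\perp}$ satisfies $B|_Y = 0$, hence $B(Y) = 0 \subseteq Y^\perp$, placing $i(B)$ in $\kappa$. For $\Gamma_1$, if $B \in \kappa$ then $B\xi^*(W') = B(Y) \subseteq Y^\perp$ since $\im \xi^* = Y$ by Lemma \ref{kerxit}(i), so $\Gamma_1(B) \in \Hom(W', Y^\perp)$. Exactness at $\kappa$ then follows from injectivity of $\xi^*$: the condition $B\xi^* = 0$ is equivalent to $B|_Y = 0$, and the latter defines precisely $\hh_{Y^\perp}$.

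The main task is surjectivity of $\Gamma_1$, which will follow once I verify that $\Gamma_2$ lands in $\kappa$ and is a right inverse. Concretely, for $A \in \Hom(W', Y^\perp)$, three things must be checked: (a) $\Gamma_2(A) \in \hh$; (b) $\Gamma_2(A)(Y) \subseteq Y^\perp$; and (c) $\Gamma_1(\Gamma_2(A)) = A$. The common geometric ingredient is the perpendicularity of $Y$ and $Y^\perp$, which forces $A^* \xi^* = 0$ and $A^*|_Y = 0$, since $A$ takes values in $Y^\perp$ while $\xi^*$ lands in $Y$.

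Given these vanishing identities, (c) reduces to
\[
\Gamma_2(A)\xi^* \;=\; A((\xi|_Y)^*)^{-1} P_Y \xi^* - (\xi|_Y)^{-1} A^* \xi^* \;=\; A((\xi|_Y)^*)^{-1}(\xi|_Y)^* \;=\; A,
\]
using $P_Y \xi^* = \xi^*$. For (b), the first summand of $\Gamma_2(A)$ already lands in $Y^\perp$ by construction, while on $Y$ the second summand $(\xi|_Y)^{-1} A^*$ vanishes since $A^*|_Y = 0$. The main technical obstacle is (a): one must compute the adjoint of $\Gamma_2(A)$ summand by summand, using $P_Y^* = P_Y$ (from the orthogonality of $Y$ and $Y^\perp$) together with the compatibility of $*$ with inverses, to see that the two summands of $\Gamma_2(A)$ interchange under adjunction up to a sign, producing $\Gamma_2(A)^* = -\Gamma_2(A)$. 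In the Hermitian case one must additionally track the Galois conjugation, but no essentially new ideas appear.
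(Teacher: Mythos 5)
Your proposal is correct and follows the route the paper intends: the paper offers no written proof of this proposition, but immediately before it records exactly the two facts you verify (that $\Gamma_2$ lands in $\kappa$ and that $\Gamma_1\circ\Gamma_2=\mathrm{id}$), leaving the adjoint computations to the reader. Your verification of those facts, together with the identification $\ker\Gamma_1=\{B\in\hh: B|_Y=0\}=\hh_{Y^\perp}$ via the injectivity of $\xi^*$ onto $Y$, fills in the omitted details correctly.
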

 
By Lemma \ref{basic xi prop}, we have 
\begin{equation} \label{blabel}
z_{\hh}(\mathfrak t)=\hh_{Y^{\perp}} + \mathfrak t.
\end{equation}
The sum is direct so $\hh_{Y^{\perp}}$ is the quotient of $z_{\mathfrak h}(\mathfrak t)$ by $\mathfrak t$. We make this explicit as follows:

\begin{defn} 
 Define the map $K:z_{\mathfrak h}(\mathfrak t) \to \hh_{Y^{\perp}}$ by $K(A)=AP_{Y^{\perp}}$.
\end{defn}

\begin{prop} (The Exact Sequence for $z_{\mathfrak h}(\mathfrak t)$) The following sequence is exact:
\begin{equation} \label{Z_seq}
0\to \mathfrak t\overset{i}\to z_{\mathfrak h}(\mathfrak t) \overset{K}\to \hh_{Y^{\perp}} \to 0.
\end{equation}
Here $i$ is the natural inclusion.
\qed
\end{prop}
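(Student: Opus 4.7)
The plan is to leverage the direct sum decomposition $z_{\hh}(\mathfrak t) = \hh_{Y^\perp} \oplus \mathfrak t$ already recorded in (\ref{blabel}), and to show that with respect to this decomposition $K$ acts as the identity on the first summand and as zero on the second. All four assertions packaged in the exactness of (\ref{Z_seq})---well-definedness of $K$ into $\hh_{Y^\perp}$, injectivity of $i$, $\ker K = \im i$, and surjectivity of $K$---will then drop out at once.

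The key step consists of a pair of elementary incidence calculations against the decomposition $X = Y \oplus Y^\perp$, which is available because $Y$ is nondegenerate, so that $P_Y$ and $P_{Y^\perp}$ exist and satisfy $P_Y + P_{Y^\perp} = \id_X$. On the one hand, since $T \leq H_Y$ fixes $Y^\perp$ pointwise, its Lie algebra $\mathfrak t$ vanishes on $Y^\perp$, and hence $C P_{Y^\perp} = 0$ for every $C \in \mathfrak t$. On the other hand, elements of $\hh_{Y^\perp}$ vanish on $Y$ (this being the Lie algebra of the subgroup of $H$ fixing $(Y^\perp)^\perp = Y$ pointwise), so any $B \in \hh_{Y^\perp}$ satisfies $BP_Y = 0$, giving $BP_{Y^\perp} = B(P_Y + P_{Y^\perp}) = B$.

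Applying these to an arbitrary $A \in z_{\hh}(\mathfrak t)$ decomposed as $A = B + C$ with $B \in \hh_{Y^\perp}$ and $C \in \mathfrak t$, one obtains
$$K(A) = BP_{Y^\perp} + CP_{Y^\perp} = B.$$
This single identity implies everything simultaneously: $K$ lands in $\hh_{Y^\perp}$ (well-definedness); the composite $\hh_{Y^\perp} \hookrightarrow z_{\hh}(\mathfrak t) \xrightarrow{K} \hh_{Y^\perp}$ is the identity, so $K$ is surjective; and $K(A) = 0$ forces $B = 0$, whence $A = C \in \mathfrak t = \im i$, giving exactness in the middle. Injectivity of $i$ is tautological.

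I do not anticipate a real obstacle: once the splitting (\ref{blabel}) and the nondegeneracy of $Y$ are in place, the argument is mechanical. The only point worth tracking carefully is which summand vanishes on which subspace---that $\mathfrak t \subseteq \hh_Y$ vanishes on $Y^\perp$ while $\hh_{Y^\perp}$ vanishes on $Y$---but this is bookkeeping rather than mathematical content, and is already implicit in the definitions recorded in Section \ref{defy}.
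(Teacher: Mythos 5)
Your argument is correct and is essentially the paper's own (unwritten) proof: the proposition is stated with \qed immediately after the direct sum decomposition $z_{\hh}(\mathfrak t)=\hh_{Y^{\perp}}\oplus\mathfrak t$ of (\ref{blabel}), and the intended content is exactly your observation that $K$ is the projection onto the $\hh_{Y^\perp}$-summand, since $\mathfrak t\subseteq\hh_Y$ kills $Y^\perp$ while $\hh_{Y^\perp}$ kills $Y=(Y^\perp)^\perp$. Your bookkeeping of which summand vanishes where is the only substantive point, and you have it right.
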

 
We will later use these exact sequences to calibrate our differential forms on these spaces.  For the moment, we use them to make a simple observation about dimensions.

\begin{cor} \label{dimension_formula}
\begin{equation*}
\dim M-\dim Z_H(T)+ \dim T=\dim N.
\end{equation*}
\end{cor}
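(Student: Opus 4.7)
The plan is to chain the four exact sequences from this section together and take dimensions. Each short exact sequence of vector spaces gives an additive dimension relation, so the entire corollary is a matter of bookkeeping.

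First I would extract from the Exact Sequence for $\mm$ (namely \eqref{M_seq}) that $\dim \mathfrak n_{\xi} = \dim M - \dim \kappa$, and from the Exact Sequence for $\nn$ (namely \eqref{N_seq}) that $\dim N = \dim \mathfrak n_{\xi} + \dim \Hom(W',Y^{\perp})$. Combining,
\begin{equation*}
\dim N = \dim M - \dim \kappa + \dim \Hom(W',Y^{\perp}).
\end{equation*}

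Next, from the Exact Sequence for $\kappa$ (namely \eqref{kappa_seq}), we have $\dim \kappa = \dim \hh_{Y^{\perp}} + \dim \Hom(W',Y^{\perp})$, so the $\Hom$ terms cancel and we are left with
\begin{equation*}
\dim N = \dim M - \dim \hh_{Y^{\perp}}.
\end{equation*}
Finally, the Exact Sequence for $z_{\mathfrak h}(\mathfrak t)$ (namely \eqref{Z_seq}) yields $\dim \hh_{Y^{\perp}} = \dim z_{\mathfrak h}(\mathfrak t) - \dim \mathfrak t = \dim Z_H(T) - \dim T$, using that $Z_H(T)$ is a Lie group with Lie algebra $z_{\mathfrak h}(\mathfrak t)$ and $T$ has Lie algebra $\mathfrak t$. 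Substituting this into the previous display gives the claimed identity.

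There is essentially no obstacle: once the four exact sequences are in place (which is the content of the preceding propositions), the corollary is a two-line dimension count, and no new geometric input is needed. The only thing worth double-checking is that the identification of the Lie algebras of $M$, $N$, $Z_H(T)$, $T$ with $\mm$, $\nn$, $z_{\mathfrak h}(\mathfrak t)$, $\mathfrak t$ is used consistently, which is immediate from the Fraktur-font convention set up in Section \ref{Preliminaries}.
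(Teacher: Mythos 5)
Your proof is correct and is exactly the paper's argument: the paper's own proof just says the identity follows from taking the alternating sum of the dimension equalities coming from the four exact sequences \eqref{M_seq}, \eqref{N_seq}, \eqref{kappa_seq}, and \eqref{Z_seq}, which is precisely the bookkeeping you carried out. No discrepancies.
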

\begin{proof}
Follows directly from taking the alternating sum of equalities of dimensions obtained from the exact sequences in (\ref{M_seq}), (\ref{N_seq}) (\ref{kappa_seq}) and (\ref{Z_seq}).
\end{proof}

 \subsection{Decompositions} \label{Picture_Pages}

\begin{prop} We have a direct sum decomposition
\begin{equation*}
\hh=\hh_{Y^{\perp}} + \hh_{Y}+ \im \Gamma_2.
\end{equation*}
\end{prop}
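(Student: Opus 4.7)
The plan is to exploit the orthogonal decomposition $X = Y \oplus Y^\perp$ to put every $B \in \hh$ in block form, and then identify the ``off-diagonal'' part with the image of $\Gamma_2$.

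First I would introduce the auxiliary subspace
\[
\hh_{\mathrm{off}} := \{B \in \hh \mid B(Y) \subseteq Y^\perp \text{ and } B(Y^\perp) \subseteq Y\}
\]
and establish the preliminary decomposition $\hh = \hh_Y \oplus \hh_{Y^\perp} \oplus \hh_{\mathrm{off}}$. Given $B \in \hh$, the splitting $X = Y \oplus Y^\perp$ decomposes $B$ uniquely as $B_{YY}^\sharp + B_{Y^\perp Y^\perp}^\sharp + B_{\mathrm{off}}$, where the first two summands act by $B|_Y$ on $Y$ and $B|_{Y^\perp}$ on $Y^\perp$ respectively (vanishing on the complementary summand), and $B_{\mathrm{off}}$ is the off-diagonal remainder. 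Because $Y \perp Y^\perp$, evaluating the skew-adjoint identity $\Phi(Bv,w)+\Phi(v,Bw)=0$ on pairs in $Y \times Y$ and in $Y^\perp \times Y^\perp$ shows $B_{YY}^\sharp \in \hh_Y$ and $B_{Y^\perp Y^\perp}^\sharp \in \hh_{Y^\perp}$; then $B_{\mathrm{off}} = B - B_{YY}^\sharp - B_{Y^\perp Y^\perp}^\sharp \in \hh$, and its vanishing diagonal blocks place it in $\hh_{\mathrm{off}}$. The three summands intersect pairwise trivially, since any common element is forced by its prescribed vanishing pattern to be zero.

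Next I would show $\im \Gamma_2 \subseteq \hh_{\mathrm{off}}$ by computing $\Gamma_2(A)$ on each summand of $X$. For $y' \in Y^\perp$, one has $\xi(y')=0$, so the defining formula reduces to $\Gamma_2(A)(y') = -\xi^*(\xi\xi^*)^{-1}A^*(y')$, which lies in $\im \xi^* = Y$ by Lemma~\ref{kerxit}(i). For $y \in Y$, the adjoint $A^*(y)$ vanishes since $\Phi(A^*y, w') = \Phi(y, Aw') = 0$ for all $w' \in W'$ (as $Aw' \in Y^\perp$ and $y \in Y$); hence $\Gamma_2(A)(y) = A(\xi\xi^*)^{-1}\xi(y) \in \im A \subseteq Y^\perp$. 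Thus $\Gamma_2(A)$ sends $Y$ to $Y^\perp$ and $Y^\perp$ to $Y$, i.e. $\Gamma_2(A) \in \hh_{\mathrm{off}}$.

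Finally, to upgrade inclusion to equality I would count dimensions. The identity $\Gamma_1 \circ \Gamma_2 = \id$ noted after the definition of $\Gamma_2$ forces $\Gamma_2$ to be injective, whence $\dim \im \Gamma_2 = \dim \Hom(W', Y^\perp)$. On the other hand, applying the skew-adjoint condition on an element $B_{\mathrm{off}} \in \hh_{\mathrm{off}}$ to pairs $(y,y') \in Y \times Y^\perp$ yields $\Phi(B_{\mathrm{off}} y, y') + \Phi(y, B_{\mathrm{off}} y') = 0$, which, together with the nondegeneracy of $\Phi|_{Y\times Y}$, shows that the block $Y \to Y^\perp$ determines the block $Y^\perp \to Y$. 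Hence $\dim \hh_{\mathrm{off}} = \dim \Hom(Y, Y^\perp)$, and using the isomorphisms $Y \cong W$ via $\xi|_Y$ and $W' \cong Y$ via $\xi^*|_{W'}$ (Lemma~\ref{kerxit}(i)), this equals $\dim \Hom(W', Y^\perp)$. Matching dimensions gives $\im \Gamma_2 = \hh_{\mathrm{off}}$, which combined with the preliminary decomposition proves the proposition. The only delicacy is keeping track of the field of scalars in the Hermitian case, but the block argument is uniform across the symmetric, antisymmetric, and Hermitian settings.
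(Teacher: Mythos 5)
Your proof is correct, but it proceeds differently from the paper's. The paper's argument is purely formal: since $\Gamma_1\circ\Gamma_2=\id$, the map $\Gamma_2$ splits the exact sequence $0\to\hh_{Y^{\perp}}\to\kappa\to\Hom(W',Y^{\perp})\to 0$, giving $\kappa=\hh_{Y^{\perp}}\oplus\im\Gamma_2$ at once, and then one only has to observe that $\hh=\kappa+\hh_Y$ is direct. You instead prove the sharper statement that $\im\Gamma_2$ is exactly the full off-diagonal block $\hh_{\mathrm{off}}=\{B\in\hh\mid B(Y)\subseteq Y^{\perp},\ B(Y^{\perp})\subseteq Y\}$, which requires the extra work of checking that the diagonal blocks of a skew-adjoint $B$ are again skew-adjoint, that $\Gamma_2(A)$ swaps $Y$ and $Y^{\perp}$, and that the $Y^{\perp}\to Y$ block of an element of $\hh_{\mathrm{off}}$ is determined by its $Y\to Y^{\perp}$ block (so that $\dim\hh_{\mathrm{off}}=\dim\Hom(Y,Y^{\perp})=\dim\Hom(W',Y^{\perp})=\dim\im\Gamma_2$, using injectivity of $\Gamma_2$ from $\Gamma_1\circ\Gamma_2=\id$). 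Both routes hinge on the same key identity $\Gamma_1\circ\Gamma_2=\id$; the paper's is shorter, while yours buys an explicit identification of $\im\Gamma_2$ with the off-diagonal block (the light-gray region of Figure 1), and your attention to the field of scalars in the Hermitian case is appropriate. No gaps.
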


\begin{proof} Since $\Gamma_2$ splits the sequence (\ref{kappa_seq}), we have a direct sum $\kappa=\hh_{Y^\perp}+ \im \Gamma_2$.  It is easy to see that $\hh=\kappa+ \hh_Y$ is direct.
\end{proof}
 
Much of this can be visualized with matrices; we offer the following representative image in the case when $H$ is symplectic:

\colorlet{lightgray}{black!15}
\begin{center}
\begin{tikzpicture}
 \fill[color=lightgray] (-1,3) rectangle (1,-3);
 \fill[color=lightgray] (-3,1) rectangle (3,-1);
 \fill[color=gray] (-1,1) rectangle (1,-1);
 
 \draw (-1,3) -- (-1,-3);
 \draw (1,3) -- (1,-3);
 \draw (-3,-1) -- (3,-1);
 \draw (-3,1) -- (3,1);
 \draw[line width=0.05cm] (-3,3) arc (165:195:12);
 \draw[line width=0.05cm] (3,-3) arc (-15:15:12);
 \draw[style=dashed] (-3,3) -- (3,-3);
 \path (-2,3.5)node (A) {$Y$};
 \path (0,3.5)node (B) {$Y^{\perp}$};
 \path (2,3.5)node (C) {$Y$};
 \path (-4,-2)node (D) {$Y$};
 \path (-4,0)node (E) {$Y^{\perp}$};
 \path (-4,2)node (F) {$Y$};
 \draw (A); \draw (B); \draw (C); \draw (D); \draw (E); \draw (F);
\end{tikzpicture}
\captionof{figure}{Composition of $\hh$}
 \label{Fig_1}

\end{center}

Figure \ref{Fig_1} represents $\hh$ ($H$ is a symplectic group), viewed as matrices in $\mathfrak{gl}(X)$.  The middle rows and columns corresponding to $Y^{\perp}$ are shaded (both light gray and dark gray); this is $\kappa$.  The subalgebra $\hh_{Y^\perp}$ corresponds to the dark gray central block. The Lie algebra $z_{\HH}(\mathfrak t)$ is the sum of the dark gray block and the diagonal. The light gray region corresponds to the image of $\Hom(W',Y^{\perp})$ under $\Gamma_{2}$.  
The unshaded region is $\hh_{Y}$.
 The map $\Xi$ can be visualized by simply deleting the five shaded regions.  
 
 
Meanwhile, $\nn$ can be expressed as a direct sum $\nn=\nn_\xi + \im \tilde \psi$, by Proposition  \ref{N_exact}.
 The following is a diagram of an essential piece of $\nn$:

\colorlet{lightgray}{black!15}
\begin{center}
\begin{tikzpicture}  
 \fill[color=lightgray] (-1,3) rectangle (1,1);
  
 \draw (-1,3) -- (-1,1);
 \draw (1,3) -- (1,1);
 
 \draw (3,3) --(3,1);
  
 \draw[line width=0.05cm] (-3,3) arc (165:195:4);
 
    \draw[line width=0.05cm] (5,1) arc (-15:15:4);

 \path (-2,3.5)node (A) {$Y$};
 \path (0,3.5)node (B) {$Y^{\perp}$};
 \path (2,3.5)node (C) {$Y$};
 \path (4,3.5) node (D) {$W'$};
 \path (4,2)node (D) {$D$};
 \path (-4,2)node (F) {$W$};
 \draw (A); \draw (B); \draw (C); \draw (D); \draw (E); \draw (F);
\end{tikzpicture}

\captionof{figure}{Blow-up of $\nn$}
 \label{Fig_2}
 \end{center}

Figure \ref{Fig_2} is a blow-up of the $(C,D)$ portion of
$\left(\begin{array}{ccc}
0& C &D\\
&0&  -C^*\\
&&0
\end{array}\right) \in \nn.
$
The shaded region corresponds to $\Hom(W',Y^{\perp}) \cong \Hom(Y^{\perp},W)$, or the image of $\tilde \psi$.  The unshaded area corresponds to $\nn_\xi$. The skew-symmetric matrices $D$ indicate the image of $ 0 \oplus \hh_{Y}$ under $\phi$.  The rest of the unshaded area corresponds to the image of $\GG \oplus 0 \subseteq \mm$ under $\phi$.

\section{Jacobian of $\Sha_T$: first steps} \label{local_now}
Henceforth in this paper we furthur suppose that $F$ is a local field. 

Our computation of the Jacobian in this paper is modeled on that of \cite{IFS}, which itself is modeled on the proof of the Weyl Integration Formula in \cite{BourbLie}.

\subsection{Tangent spaces}\label{ss_ts}
In this section we explain certain identifications of tangent spaces for later use.  Generally if $g \in G$, one has left translation $\lambda_g$ and right translation $\rho_g$.  The differentials of these maps at $1 \in G$ are surjective, i.e.,
\beq
T_gG=(d \lambda_g)_1(\mf g)=(d \rho_g)_1(\mf g).
\eeq
 Given $g \in G$ and $X \in \mf g$, we set $gX=(d \lambda_g)_1(X)$ and $Xg=(d \rho_g)_1(X)$.
 
Let $U= \Hom(X,W) \times \Hom(W',W)$, and write $i(n(\xi,\eta))=(\xi,\eta)$.  Then $i: N \to U$ is a submanifold, with $i(1)=(0,0)$.  Its differential $di_1$ is an isomorphism onto $\mf n$.
For $n \in N$, there are obvious extensions of $\lambda_n$ and $\rho_n$ to $U$; we give the extensions the same names.  Namely, if $n=n(\xi,\eta)$, then 
 \beq
 \rho_n(u(C,D))=n(C+\xi,D+\eta-C \xi^*),
  \eeq
  and similarly for $\lambda_n$.

Through the commutative diagram
\begin{equation}  
	\xymatrixcolsep{5pc}\xymatrix{
		  T_1(N) \ar[r]^{ di_1} \ar[d]_{(d\rho_n)_1} & U \ar[d]^{(d\rho_n)_{(0,0)}} \\
		T_n(N)  \ar[r]^-{ di_n}& U \\
		},
\end{equation}
we may identify $T_n(N)$ with the image of $di_n$, which is equal to  $(d\rho_n)_0(\mf n)$.

Similarly for $\gm \in T$ we identify $T_\gm(T)$ with $(d \rho_\gm)_0(\mf t) \subseteq \Hom(X,X)$, with a typical element written simply as $\gm Z$ when $Z \in \mf t\subset \Hom(X,X)$.

\subsection{Derivative of $\Sha_{T}$}

As earlier we bifurcate into the cases when $H_{Y}$ is odd orthogonal or otherwise.    Recall the map 
\begin{equation*}
\Sha=\Sha_{T}: M/\Delta_{T} \times T_{\reg}\to N.
\end{equation*}
Let us first treat the case when $H_{Y}$ is not odd orthogonal. The derivative
\begin{equation*}
d\Sha_{(1 , \gamma)} : T_1( M/\Delta_{T}) \oplus T_{\gm}( T) \to T_{n(\gm)}N
\end{equation*}
 is straightforward to compute:
\begin{prop} 
For $A\in \mathfrak g$, $B\in \mathfrak h$ and $Z\in \mathfrak t$, 
we have
\begin{equation} \label{the_expression}
d\Sha_{(1,  \gamma)}((A , B),\gamma Z)=u(A\xi-\xi B, \gm_G \ups^{-1}A^{*}+A\gm_G \ups^{-1}- \gm_G^2 \Xi(\gm Z) \ups^{-1}).
\end{equation}
\qed
\end{prop}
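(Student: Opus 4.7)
The plan is a direct application of the chain rule to $\Sha_T((m(g,h),\gamma))=n(g\xi h^{-1},\,g\gamma_G\upsilon^{-1}g^*)$, using the parametrizations of the tangent spaces set up in Section \ref{ss_ts}. Lift a tangent vector at $1\in M/\Delta_T$ to a pair $(A,B)\in\mathfrak g\oplus\mathfrak h=\mm$, and write a typical element of $T_\gamma T$ as $\gamma Z$ with $Z\in\mathfrak t$. The resulting derivative should then be read off coordinate by coordinate in the description $n(\cdot,\cdot)$ of elements of $N$.

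For the first coordinate $g\xi h^{-1}$, the product rule at $(g,h)=(1,1)$ gives immediately $A\xi-\xi B$, using that the differential of $h\mapsto h^{-1}$ at $1$ is negation. For the second coordinate $\eta(g,h,\gamma)=g\gamma_G\upsilon^{-1}g^*$, differentiation in $g$ at $g=1$ (noting that $g\mapsto g^*$ has derivative $A\mapsto A^*$ at $1$) contributes
\begin{equation*}
A\gamma_G\upsilon^{-1}+\gamma_G\upsilon^{-1}A^*,
\end{equation*}
while there is no $B$-contribution. The $\gamma$-derivative in the direction $\gamma Z$ requires differentiating $\gamma_G=\Xi(\gamma-1)^{-1}$. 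Since $\Xi$ is $F$-linear and $\tfrac{d}{dt}\bigl|_{t=0}\gamma\exp(tZ)=\gamma Z$, the derivative of $\Xi(\gamma\exp(tZ)-1)$ at $t=0$ is $\Xi(\gamma Z)$, and the standard inverse rule gives
\begin{equation*}
\tfrac{d}{dt}\bigl|_{t=0}\Xi(\gamma\exp(tZ)-1)^{-1}=-\gamma_G\,\Xi(\gamma Z)\,\gamma_G.
\end{equation*}

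The step that puts the formula into the claimed shape $-\gamma_G^2\,\Xi(\gamma Z)\,\upsilon^{-1}$ is the commutativity of $\gamma_G$ and $\Xi(\gamma Z)$. Here is where I expect the main (and only substantive) subtlety: the map $\Xi$ is not a ring homomorphism in general, so one must justify multiplicativity before concluding. The argument is that $T\subseteq H_Y$ is abelian, so $Z$ commutes with $\gamma$ and hence $\gamma-1$ commutes with $\gamma Z$ in $\End(X)$; moreover both endomorphisms preserve $Y$ and $Y^\perp$, hence commute with $P_Y$. By Proposition \ref{propstab}(i), $\Xi$ is multiplicative on pairs of elements satisfying this property, so
\begin{equation*}
\Xi(\gamma-1)\,\Xi(\gamma Z)=\Xi((\gamma-1)\gamma Z)=\Xi(\gamma Z(\gamma-1))=\Xi(\gamma Z)\,\Xi(\gamma-1).
\end{equation*}
Inverting the first factor, $\gamma_G$ commutes with $\Xi(\gamma Z)$.

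Assembling: the $\gamma$-contribution becomes $-\gamma_G\Xi(\gamma Z)\gamma_G\upsilon^{-1}=-\gamma_G^2\,\Xi(\gamma Z)\,\upsilon^{-1}$, and adding the three pieces from above gives the second coordinate $\gamma_G\upsilon^{-1}A^*+A\gamma_G\upsilon^{-1}-\gamma_G^2\Xi(\gamma Z)\upsilon^{-1}$, matching (\ref{the_expression}). The whole computation is done at $(1,\gamma)$, so the identification of $T_{n(\gamma)}N$ with $\mathfrak n$ of Section \ref{ss_ts} delivers the result as an element $u(\,\cdot\,,\,\cdot\,)$, completing the derivation.
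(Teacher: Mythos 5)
Your computation is correct and is exactly the ``straightforward'' chain-rule calculation the paper leaves to the reader: the product rule gives the $A$- and $B$-contributions, and the inverse rule applied to $\gamma_G=\Xi(\gamma-1)^{-1}$ gives $-\gamma_G\,\Xi(\gamma Z)\,\gamma_G$. You also correctly identify and justify the one genuinely non-obvious step, namely that $\gamma_G$ commutes with $\Xi(\gamma Z)$ via Proposition \ref{propstab}(i) applied to the commuting, $P_Y$-compatible endomorphisms $\gamma-1$ and $\gamma Z$, which is precisely what is needed to rewrite the last term as $-\gamma_G^2\,\Xi(\gamma Z)\,\upsilon^{-1}$.
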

 
The derivative at $(m , \gamma) \in  M/\Delta_{T} \times T_{\reg}$ can be inferred from (\ref{the_expression}) through the commutative diagram:

 \begin{equation} \label{trans_diagram} 
	\xymatrixcolsep{5pc}\xymatrix{
		  T_1( M/\Delta_{T}) \oplus T_{\gm} T   \ar[r]^-{ d\Sha_{(1 , \gm)}} \ar[d]_{\lambda_m} & T_{\Sha(1 , \gm)}N\ar[d]^{\Ad(m)} \\
		T_m( M/\Delta_{T}) \oplus T_{\gm} T  \ar[r]^-{ d\Sha_{(m , \gm)}}& T_{\Sha(m, \gm)} N \\
		}
\end{equation}

Here $\lambda_{m}$ denotes left translation by $m \in M$.  For use afterwards, we multiply (\ref{the_expression}) on the right by $n(\gamma)^{-1}$ (as described in Section \ref{ss_ts}) to bring it to $T_{1}N=\nn$.  
This gives
$$\textrm{d}\Sha_{(1,\gamma)}((A, B), \gamma Z)n(\gamma)^{-1}=$$
\begin{align}\label{shadiff}
u(A\xi-\xi B, (A\xi-\xi B)\xi^{*}+\gm_G \ups^{-1}A^{*}+A\gm_G \ups^{-1}- \gm_G^2 \Xi(\gm Z) \ups^{-1}).
\end{align}

Now we consider the case when the group $H_{Y}$ is odd orthogonal. The expression for the derivative in this case can be obtained in the same way as above:
\begin{prop} 
For $A\in \mathfrak g$, $B\in \mathfrak h$ and $Z\in \mathfrak t$, 
we have
\begin{equation} \label{the_expression2}
d\Sha_{(1, \gamma)}((A,  B),\gamma Z)=u(A\xi-\xi B, (\epsilon\gamma)_G \ups^{-1} A^{*}+A(\epsilon\gamma)_G \ups^{-1}- (\epsilon\gm)_{G}^{2} \Xi(\epsilon\gm Z) \ups^{-1}).
\end{equation}
Also,
$$\textrm{d}\Sha_{(1,\gamma)}((A, B), \gamma Z)n(\epsilon\gamma)^{-1}=$$
\begin{align}\label{shadiff_2}
u(A\xi-\xi B, (A\xi-\xi B)\xi^{*}+(\epsilon\gamma)_G \ups^{-1}A^{*}+A(\epsilon\gamma)_G \ups^{-1}- (\epsilon\gm)_{G}^{2} \Xi(\epsilon\gm Z) \ups^{-1}).
\end{align}
\qed
\end{prop}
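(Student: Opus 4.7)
The plan is to imitate the derivation of the preceding proposition (the non-odd-orthogonal case), with $n_Y(\gm)$ replaced by $n_Y(\epsilon_Y \gm)$ throughout. First I would split the derivative into its contribution from the $M$-direction $(A,B) \in \mm$ and from the $T$-direction $\gm Z \in T_\gm T$. For the $M$-piece, I differentiate
\begin{equation*}
\Int(m(g,h))\, n_Y(\epsilon_Y \gm) = n\bigl(g\xi h^{-1},\, g (\epsilon\gm)_G \ups^{-1} g^{*}\bigr)
\end{equation*}
at $(g,h)=(1,1)$ in directions $(A,B)$, using the composition formulas collected in Section \ref{pre_ur}. This immediately yields the tangent vector $u\bigl(A\xi - \xi B,\; A(\epsilon\gm)_G \ups^{-1} + (\epsilon\gm)_G \ups^{-1} A^{*}\bigr)$, which accounts for the first two terms of (\ref{the_expression2}).

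For the $T$-piece, I differentiate the second coordinate of $n_Y(\epsilon_Y \gm')$ with respect to $\gm'$, with tangent vector $\gm Z$ at $\gm$. The first coordinate $\xi=\xi_Y$ is independent of $\gm'$, while
\begin{equation*}
(\epsilon\gm')_G \ups^{-1} = \bigl(\Xi(\epsilon\gm' - 1)\bigr)^{-1} \ups^{-1}.
\end{equation*}
The standard matrix inverse rule, together with the linearity of $\Xi$ and the fact that multiplication by $\epsilon$ is linear, gives a first-order term $-(\epsilon\gm)_G \cdot \Xi(\epsilon\gm Z) \cdot (\epsilon\gm)_G \ups^{-1}$. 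The critical point is that $\epsilon\gm - 1$ and $\epsilon\gm Z$ both commute with $P_Y$, since $\epsilon, \gm \in H_Y$ and $Z \in \mathfrak t$ all preserve the decomposition $X = Y \oplus Y^\perp$. By Proposition \ref{propstab}(i), this forces $\Xi(\epsilon\gm - 1)$ and $\Xi(\epsilon\gm Z)$ to commute, hence so do $(\epsilon\gm)_G$ and $\Xi(\epsilon\gm Z)$. We may therefore collapse the two outer factors of $(\epsilon\gm)_G$ to obtain the third term $-(\epsilon\gm)_G^{2}\, \Xi(\epsilon\gm Z)\, \ups^{-1}$. Adding the two contributions gives (\ref{the_expression2}).

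For the second display (\ref{shadiff_2}), I translate the tangent vector at $n(\epsilon\gm)$ back to $\nn = T_1 N$ by right-multiplication by $n(\epsilon\gm)^{-1} = n(-\xi,\, ((\epsilon\gm)_G \ups^{-1})^{*})$. As in Section \ref{ss_ts}, the product formula $n(\xi_1,\eta_1) n(\xi_2,\eta_2) = n(\xi_1+\xi_2,\, \eta_1+\eta_2 - \xi_1 \xi_2^{*})$ shows that this identification sends a tangent vector $(\dot\xi, \dot\eta)$ at $n(\epsilon\gm)$ to $u(\dot\xi,\; \dot\eta + \dot\xi\, \xi^{*})$ in $\nn$. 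Applying this to the expression (\ref{the_expression2}) with $\dot\xi = A\xi - \xi B$ inserts the extra term $(A\xi - \xi B)\xi^{*}$ into the second coordinate, which is exactly (\ref{shadiff_2}).

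The main (and essentially only) subtlety is the commutativity check used to combine the two factors of $(\epsilon\gm)_G$; if it failed, one would be stuck with the ordering $(\epsilon\gm)_G\, \Xi(\epsilon\gm Z)\, (\epsilon\gm)_G$ and could not match the stated formula. Everything else is a direct transcription of the non-odd-orthogonal case with $\gm$ systematically replaced by $\epsilon_Y \gm$.
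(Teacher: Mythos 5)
Your proposal is correct and is essentially the computation the paper has in mind: the paper omits the proof entirely, presenting the formula as the routine analogue of the preceding (non-odd-orthogonal) case with $\gm$ replaced by $\epsilon_Y\gm$, which is exactly what you carry out. The one point genuinely needing justification --- that $(\epsilon\gm)_G$ commutes with $\Xi(\epsilon\gm Z)$ so the two outer factors from the inverse-derivative rule collapse to $(\epsilon\gm)_G^2$ --- is correctly identified and correctly handled via Proposition \ref{propstab}(i), and your right-translation step matches the identification of $T_nN$ with $\nn$ set up in Section \ref{ss_ts}.
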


As before, the derivative at $(m, \gamma) \in  M/\Delta_{T} \times T_{\reg}$ can be inferred from (\ref{the_expression2}) through the diagram in (\ref{trans_diagram}).

\subsection{Top forms}
 Suppose that $\omega_N$ and $\omega_{T}$ are invariant top forms on $N$ and $T$, respectively, and that $\omega_{M/\Delta_{T}}$ is an $M$-invariant top form on $M/\Delta_{T}$.
(We will specify these later.)

\begin{defn} For $m \in M$, put
\begin{equation*}
\delta_N(m)=\det(\Ad(m); \nn).
\end{equation*}
\end{defn}

\begin{prop} \label{deltas}
There is a unique analytic function $\delta_T$ on $T_{\reg}$ so that
\begin{equation*}
\Sha_{T}^*(\omega_N)=\delta_N(m)\delta_T(\gm) \omega_{M/\Delta_T} \wedge \omega_T
\end{equation*}
at the point $(m,\gm) \in M/\Delta_T \times T_{\reg}$.
\end{prop}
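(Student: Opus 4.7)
The plan is to first define $\delta_T(\gm)$ at the basepoints $(1,\gm)$ and then use the $M$-equivariance of $\Sha_T$ encoded in diagram (\ref{trans_diagram}) to propagate the identity to arbitrary $(m,\gm)$. By Corollary \ref{dimension_formula}, the source and target of $d\Sha_{(1,\gm)}$ have the same dimension, so $(\Sha_T^{*}\omega_N)(1,\gm)$ is a top form on $T_1(M/\Delta_T)\oplus T_\gm T$ and is therefore a scalar multiple of $(\omega_{M/\Delta_T}\wedge\omega_T)(1,\gm)$. I would take $\delta_T(\gm)$ to be this scalar; uniqueness on $T_{\reg}$ is then automatic, and analyticity follows from the analyticity of $\Sha_T$ and of the three invariant differential forms.

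The second step is to verify that the same $\delta_T$ gives the required identity at a general point $(m,\gm)$. From diagram (\ref{trans_diagram}) one has $d\Sha_{(m,\gm)}\circ d\lambda_m=\Ad(m)\circ d\Sha_{(1,\gm)}$, and I would pull $\omega_N$ back through both sides. The key observation is that $\omega_N$ is a left-invariant top form on $N$ and $\Ad(m)$ is an automorphism of $N$, so $\Ad(m)^{*}\omega_N=\det(\Ad(m);\nn)\cdot\omega_N=\delta_N(m)\omega_N$; evaluated at $n_Y(\gm)$ this gives $\delta_N(m)\cdot\omega_N(n_Y(\gm))$. Meanwhile the $M$-invariance of $\omega_{M/\Delta_T}$ together with the fact that $\lambda_m$ acts as the identity on the $T$-factor yields
\[
d\lambda_m^{*}\bigl[(\omega_{M/\Delta_T}\wedge\omega_T)(m,\gm)\bigr]=(\omega_{M/\Delta_T}\wedge\omega_T)(1,\gm).
\]
Combining these two identities with the definition of $\delta_T(\gm)$ and cancelling $d\lambda_m^{*}$, which is an isomorphism on top forms, delivers the stated equality at $(m,\gm)$.

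The main obstacle here is purely bookkeeping: keeping basepoints straight under the two translations $\lambda_m$ and $\Ad(m)$, and tracking which tangent space each pullback lives on. There is no substantive computation in this proposition; its entire role is to isolate a function $\delta_T(\gm)$ of $\gm$ alone, deferring its explicit evaluation—which will ultimately produce the discriminant factors and the function $j_T(\gm)$ appearing in Theorem \ref{intro_thm}—to Sections \ref{routes} and \ref{routes_2}, where the exact sequences of Section \ref{4X} and the Lie-algebra decomposition of Section \ref{Picture_Pages} will be used to actually compute the Jacobian of $d\Sha_{(1,\gm)}$.
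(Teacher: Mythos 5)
Your argument is correct and matches the paper's proof in substance: both rest on the dimension count from Corollary \ref{dimension_formula}, the equivariance diagram (\ref{trans_diagram}), the identity $\Ad(m)^*\omega_N=\delta_N(m)\omega_N$, and the $\lambda_m$-invariance of $\omega_{M/\Delta_T}\wedge\omega_T$. The only (cosmetic) difference is direction: the paper first writes $\Sha_T^*(\omega_N)=\delta_T(m,\gm)\,\omega_{M/\Delta_T}\wedge\omega_T$ with a two-variable coefficient and then shows $\delta_T(m_0m,\gm)=\delta_N(m_0)\delta_T(m,\gm)$, whereas you define the coefficient at $(1,\gm)$ and propagate it outward.
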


\begin{proof}
Since $\Sha_{T}^*(\omega_N)$ and $\omega_{M/\Delta_T} \wedge \omega_T$ are both top forms on $M/\Delta_{T} \times T_{\reg}$, there is a unique analytic function
\begin{equation*}
\delta_T: M/\Delta_{T} \times T_{\reg} \to F
\end{equation*}
so that
\begin{equation*}
\Sha_{T}^*(\omega_N)=\delta_{T}(m,\gm) \omega_{M/\Delta_T} \wedge \omega_T.
\end{equation*}
Let $m_0 \in M$. Applying $\lambda_{m_0}^*$ to both sides of the equation gives
\begin{equation*}
\delta_N(m_0) \Sha_{T}^*(\omega_N) =\delta_{T}(m_0m,\gm)\omega_{M/\Delta_T} \wedge \omega_T.
\end{equation*}
We have used here (\ref{trans_diagram}) and the fact that $\Ad(m)^*\omega_N=\delta_N(m)\omega_N$.)
Therefore for all $m,m_0$, and $\gm$, we have
\begin{equation*}
\delta_{T}(m_0m,\gm)=\delta_N(m_0)\delta_T(m,\gm).
\end{equation*}
The result follows (put $\delta_T(\gm)=\delta_T(1,\gm)$).
\end{proof}

\subsection{Choice of bases}  \label{JacobianSection}
Let $\gm$ be a fixed element of $T_{\reg}$. To compute $\delta_{T}(\gm)$, we will essentially compute the top exterior power of $d\Sha_{(1,\gm)}$ with respect to fixed ordered bases $p\mathfrak B$ of $\mathfrak m/\Delta_{\mathfrak t} \oplus \mathfrak t$ and $\mathfrak B_{1}$ of $\nn$, that we will define below. These spaces have the same dimension by Corollary \ref{dimension_formula}. The basis $p\mathfrak B$ will depend on $\gm$, but in a manner dictated by translation in $T$. 

We now set up two bases of $\nn$, namely $\mathfrak B_1$ and $\mathfrak B_2(\gm)$, and a linear transformation $L(\gm)$ sending $\mathfrak B_1$ to $\mathfrak B_2 (\gm)$. 
Then $\delta_{T}(\gm)$ will be equal to $\pm \det L(\gm)$, where the sign is independent of $\gm$. 

Consider a ordered basis $\mathfrak B$ of $\mathfrak m$ of the form
$$\{(\underline{A_{i}},0),(0,\Gamma_2(\underline{\beta_{j}})),(\Xi(\underline{B'_{j'}}),\underline{B'_{j'}}),(0,\underline{C_{k}}),(\Xi(\underline{Z_{l}}),\underline{Z_{l}})\}$$
 such that 
\begin{itemize}
\item $\{A_{i}\}$ is a basis of $\GG$,
\item $\{\beta_{j}\}$ is a basis of $\Hom(W',Y^{\perp})$,
\item  $\{B_{j'}'\}$ is a basis of $\hh_{Y^{\perp}}$,
\item $\{Z_{l}\}$ is a basis of $\mathfrak t$, and
\item $\{C_{k}\} \cup \{Z_{l}\}$ is a basis of $\hh_{Y}$.
\end{itemize}
(Recall that underlining the vectors of any of the above basis indicates a tuple; for instance $(\underline{A_{i}},0)$ indicates the ordered collection of the elements $(A_{1},0),\dots,(A_{x},0)$ where $x=\dim \GG$.) Note that $\{B_{j'}'\}\cup \{Z_{l}\}$ is a basis of $z_{\mathfrak h}(\mathfrak t)$. Moreover $\Xi(B'_{j'})=0$ although we won't require this fact here.  Put $B_j=\Gamma_2(\beta_j)$, so that $B_j\xi^{*}=\beta_{j}$.

We write $p\mathfrak B=p\mathfrak B(\gm)$ for
\begin{equation*}
p\mathfrak B= \left\{  (p(\underline{A_{i}},0);0);(p(0,\underline{B_j});0);(p(0,\underline{C_{k}});0);((0,0);\gm\underline{ Z_{l}}) \right\},
\end{equation*}
a basis of $\mathfrak m/\Delta_{\mathfrak t} \oplus \mathfrak t$.  Here $p$ is the projection from $\mm$ to $\mathfrak m/\Delta_{\mathfrak t}$.

Suppose first that $H_Y$ is not odd orthogonal. The following are two bases of $\mathfrak n$:
$$\mathfrak B_{1}=\{u(\underline{A_{i}}\xi,0),u(\underline{\beta_{j}^{*}},0),u(0,\xi\underline{C_{k}}\xi^{*}),u(0,\xi\underline{Z_{l}}\xi^{*})\},$$
\begin{equation*}
\mathfrak B_{2}(\gm)=\left\{u(\underline{A_{i}}\xi,-), u(\underline{\beta_{j}^{*}},0),u(-\xi\underline{C_{k}},-\xi\underline{C_{k}}\xi^{*}),u\left(0,- \gm_G^2 \Xi( \gm \underline{Z_{l}}) \ups^{-1}\right) \right\}
\end{equation*}
where
$$u(A_{i}\xi,-)=u\big( A_{i}\xi, A_{i}\ups^{-1}+ \gm_G \ups^{-1}A_{i}^{*} + A_{i}\gm_G \ups^{-1} \big).$$ 

(We extend $\beta_j^*$ from $Y^\perp$ to $X$ by setting it equal to $0$ on $Y$.)
To see that $\mathfrak B_1$ is a basis, note that $u(\underline{A_{i}}\xi,0)$, $u(0,\xi\underline{C_{k}}\xi^{*})$, and $u(0,\xi\underline{Z_{l}}\xi^{*})$ constitute a basis of $\nn_\xi$ and that
$u(\underline{\beta_{j}^{*}},0)$ gives a basis of $\im \tilde \psi$. It can be readily seen from this fact that $\mathfrak B_2(\gm)$ is a basis as well. 

Given $\gm \in T$, we define a linear transformation $L=L_{\mathfrak B}(\gm): \mf n \to \mf n$ so that:
\begin{itemize}
\item $L: u(A \xi,0) \mapsto u\big( A \xi, A\ups^{-1}+ \gm_G \ups^{-1} A^{*} + A\gm_G \ups^{-1} \big)$, for $A \in \mf g$,
\item $L: u(\beta^*,0) \mapsto u(\beta^*,0)$, for $\beta \in \Hom(W',Y^\perp)$,
\item $L: u(0,\xi C_{k}\xi^{*}) \mapsto u(-\xi C_{k},-\xi C_{k}\xi^{*})$,  
\item $L: u(0,\xi Z\xi^{*}) \mapsto u\left(0,- \gm_G^2 \Xi( \gm Z) \ups^{-1}\right)$, for $Z \in \mf t$.
\end{itemize}
It can be easily seen that the map $L$ is well defined. When $H_Y$ is odd orthogonal, we follow the same prescription except that we change $\gm$ to $\epsilon \gm$ throughout.
 
In the next section we will calibrate our forms $\omega_N$ and $\omega_{M/\Delta_{T} \times T}$ so that for all $\gm \in T_{\reg}$, we have
\begin{equation} \label{calibration}
\omega_{N}(1)[\mathfrak B_{1}]=\pm \omega_{M/\Delta_{T} \times T}((1 ,\gamma))[p \mathfrak B],
\end{equation}
for some uniform sign $\pm$ not depending on $\gm$.  Let us assume this now for the sake of exposition.

\begin{prop} \label{delta/det} 
  We have $\delta_T(\gm)=\pm  \det L (\gm) \neq 0$, with the sign being that of (\ref{calibration}).
 \end{prop}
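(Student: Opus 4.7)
The plan is to compute $d\Sha_{(1,\gm)}$ on each element of the basis $p\mathfrak{B}$ of $\mathfrak{m}/\Delta_{\mathfrak{t}} \oplus \mathfrak{t}$, using the explicit formulas (\ref{shadiff}) and (\ref{shadiff_2}), and to show that the resulting collection of vectors, once right-translated by $n(\gm)^{-1}$ to the identity, is precisely the basis $\mathfrak{B}_2(\gm)$ of $\mathfrak{n}$ in the matching order. Since $L(\gm)$ carries $\mathfrak{B}_1$ to $\mathfrak{B}_2(\gm)$ by definition, the pullback $\Sha_T^*(\omega_N)$ at $(1,\gm)$ applied to $p\mathfrak{B}$ would then equal $\omega_N(1)[\mathfrak{B}_2(\gm)] = \det L(\gm) \cdot \omega_N(1)[\mathfrak{B}_1]$, using that invariant top forms on the unipotent group $N$ are both left- and right-invariant. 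Combined with the calibration (\ref{calibration}) and with Proposition~\ref{deltas} evaluated at $m=1$ (so that $\delta_N(1)=1$), this would yield $\delta_T(\gm) = \pm \det L(\gm)$ with the sign of (\ref{calibration}).

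First I would verify the three easy blocks. Substituting $(A,B,Z) = (A_i, 0, 0)$ in (\ref{shadiff}) and using $\xi\xi^* = \ups^{-1}$ gives $u(A_i\xi,\, A_i\ups^{-1} + \gm_G\ups^{-1}A_i^* + A_i\gm_G\ups^{-1}) = u(A_i\xi,-)$; substituting $(0, C_k, 0)$ gives $u(-\xi C_k, -\xi C_k\xi^*)$; and substituting $(0, 0, Z_l)$ (with the $\gm$-translation already built into $p\mathfrak{B}$) gives $u(0, -\gm_G^2 \Xi(\gm Z_l)\ups^{-1})$. These are exactly the first, third, and fourth blocks of $\mathfrak{B}_2(\gm)$.

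The $p(0, B_j) = p(0, \Gamma_2(\beta_j))$ block is the only one requiring a genuine simplification, and this is where I expect the main obstacle to lie. Substitution produces $u(-\xi\Gamma_2(\beta_j), -\xi\Gamma_2(\beta_j)\xi^*)$. Unpacking $\Gamma_2(\beta_j) = \beta_j\ups\xi - \xi^*\ups\beta_j^*$ and using that $\xi$ annihilates $Y^\perp$ (so $\xi\beta_j = 0$, as $\beta_j$ lands in $Y^\perp$) together with $\xi\xi^* = \ups^{-1}$, one computes $\xi\Gamma_2(\beta_j) = -\beta_j^*$. Moreover $\beta_j^*\xi^* = 0$, because $\xi^*(W') = Y$ is orthogonal (with respect to $\Phi|_X$) to $\beta_j(W') \subseteq Y^\perp$; this forces the second entry of the image to vanish. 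Hence this block contributes $u(\beta_j^*, 0)$, matching the corresponding block of $\mathfrak{B}_2(\gm)$.

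Once the four cases are in hand, the nonvanishing $\det L(\gm) \neq 0$ is immediate: both $\mathfrak{B}_1$ and $\mathfrak{B}_2(\gm)$ are bases of $\mathfrak{n}$, as already noted before the proposition. The odd-orthogonal case proceeds identically with $\epsilon\gm$ in place of $\gm$ and (\ref{shadiff_2}) in place of (\ref{shadiff}). Apart from the $\Gamma_2$ simplification just described, the rest is pure bookkeeping enabled by the deliberate choice of the three bases $\mathfrak{B}$, $\mathfrak{B}_1$, and $\mathfrak{B}_2(\gm)$, together with the calibration hypothesis (\ref{calibration}) to be established in the next section.
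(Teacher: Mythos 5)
Your proposal is correct and follows essentially the same route as the paper: evaluate $d\Sha_{(1,\gm)}$ on $p\mathfrak{B}$ via (\ref{shadiff}), right-translate by $n(\gm)^{-1}$ to land on $\mathfrak{B}_2(\gm)$, and then extract $\det L(\gm)$ against the calibration (\ref{calibration}). The only block needing care, $p(0,B_j)$, you handle correctly (the paper's shortcut is that $B_j\xi^*=\Gamma_1(B_j)=\beta_j$ has image in $Y^\perp=\ker\xi$, which is equivalent to your orthogonality argument).
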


\begin{proof}

We prove this when $H_Y$ is not odd orthogonal, the other case is similar.

Recall that $\delta_T$ is the function on $T_{\reg}$ so that at $(1 , \gm) \in M/\Delta_{T} \times T_{\reg}$, we have
\begin{equation*}
\Sha^* \omega_N= \delta_T(\gm) \omega_{M/\Delta_{T}} \wedge \omega_{T}.
\end{equation*}

  Now,
\begin{equation} \label{Guest}
\Sha^{*}\omega_{N}((1, \gamma))[(p(\underline{A_{i}},0);0);(p(0,\underline{B_j});0);(p(0,\underline{C_{k}});0);((0,0);\underline{\gamma Z_{l}})]
\end{equation}
is equal to 
$$\omega_{N}(n(\gamma))[d\Sha(p(\underline{A_{i}},0);0);d\Sha(p(0,\underline{B_{j}});0);d\Sha(p(0,\underline{C_{k}});0);d\Sha((0,0);\underline{\gamma Z_{l}})].$$

By (\ref{shadiff}), we have
\begin{equation*}
\begin{split}
d\Sha(p(\underline{A_{i}},0);0) n(\gm)^{-1}&=u(\ul{A_i} \xi,\ul{A_i} \ups^{-1}+\gm_G \ups^{-1} \ul{A_i}^*+\ul{A_i} \gm_G \ups^{-1}), \\
d\Sha(p(0,\underline{B_{j}});0)n(\gm)^{-1}&=u(-\xi \ul{B_j},0)=u(\ul{\beta_j^{*}},0),\\
d\Sha(p(0,\underline{C_{k}});0)n(\gm)^{-1}&=u(-\xi \ul{C_k},-\xi \ul{C_k} \xi^*), \text{ and}\\
d\Sha((0,0);\underline{\gamma Z_{l}})n(\gm)^{-1}&=u\left(0,-\gm_G^2 \Xi(\gm \underline{Z_{l}}) \ups^{-1}\right).\\
\end{split}
\end{equation*}
(Since $B_j \xi^*$ has image in $Y^{\perp}$, we have $\xi B_j \xi^*=0$.) Thus the map $d\Sha(\cdot)n(\gm)^{-1}$ takes the ordered basis $p\mathfrak B$ of $\mathfrak m/\Delta_{\mathfrak t} \oplus \mathfrak t$ to the ordered basis $\mathfrak B_{2}(\gm)$ of $\nn$.

Therefore (\ref{Guest}) is simply
\begin{equation*}
\begin{split}
(\Sha^* \omega_N)((1, \gamma)) [p\mathfrak B] &= \omega_N(1)[\mathfrak B_2 (\gm)] \\
						 &=  \omega_N(1)[L(\mathfrak B_1)] \\
 						&= (\det L(\gm))\omega_{N}(1)[\mathfrak B_1]\\
						&=\pm ( \det L(\gm)) \cdot \omega_{M/\Delta_{T} \times T}(1 , \gamma)[p \mathfrak B].\\
\end{split}
\end{equation*}						
 The result follows.
 \end{proof}

The fact that $\delta_T$ is nowhere vanishing has an important corollary. 
  
\begin{cor} \label{immersion}
 The map $\Sha_{T}$ is \'{e}tale, and $N^{Y,T}$ is an open subset of $N$.
\end{cor}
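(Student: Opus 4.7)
The plan is to deduce both assertions directly from the nonvanishing of $\delta_T$ established in Proposition \ref{delta/det}, together with the surjectivity of $\Sha_T$ onto $N^{Y,T}$ from Proposition \ref{surjprop} (or Proposition \ref{surjfyt_odd} in the odd orthogonal case).

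First I would observe that the function $\delta_N(m) = \det(\Ad(m); \nn)$ is nowhere vanishing on $M$, since $\Ad(m)$ is invertible. Combined with Proposition \ref{delta/det}, which gives $\delta_T(\gm) = \pm \det L(\gm) \neq 0$ for every $\gm \in T_{\reg}$, the relation
\begin{equation*}
\Sha_T^*(\omega_N) = \delta_N(m)\,\delta_T(\gm)\, \omega_{M/\Delta_T} \wedge \omega_T
\end{equation*}
from Proposition \ref{deltas} shows that $\Sha_T^*(\omega_N)$ is a nowhere-vanishing top form on $M/\Delta_T \times T_{\reg}$. By Corollary \ref{dimension_formula} the source and target of $\Sha_T$ have the same dimension, so the nonvanishing of $\Sha_T^*(\omega_N)$ forces the differential $d\Sha_T$ to be an isomorphism at every point. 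Hence $\Sha_T$ is a local diffeomorphism, i.e. an étale morphism of analytic manifolds.

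Finally, any étale map between manifolds of the same dimension is open, so $\Sha_T(M/\Delta_T \times T_{\reg})$ is open in $N$. By Proposition \ref{surjprop} (respectively Proposition \ref{surjfyt_odd}), this image is exactly $N^{Y,T}$, which is therefore open in $N$. There is no real obstacle here; the content of the corollary has already been absorbed into the Jacobian computation of the previous section, and what remains is a short formal deduction.
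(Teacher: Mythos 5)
Your proof is correct and follows essentially the same route as the paper: equal dimensions from Corollary \ref{dimension_formula}, nonvanishing of the Jacobian from Proposition \ref{delta/det}, and translation-equivariance (which you invoke via the formula in Proposition \ref{deltas}, while the paper cites the diagram (\ref{trans_diagram}) directly) to get the étale property everywhere, hence openness of the image. The only addition is your explicit appeal to Propositions \ref{surjprop}/\ref{surjfyt_odd} to identify the image with $N^{Y,T}$, which the paper leaves implicit.
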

  
\begin{proof} 
By Corollary \ref{dimension_formula}, the dimensions of the manifolds $M/\Delta_T \times T_{\reg}$ and $N$ agree.
By Proposition \ref{delta/det}, the map $\Sha_{T}$ is \'{e}tale at $(1 , \gm) \in M/\Delta_T \times T_{\reg}$.  By (\ref{trans_diagram}), it follows that $\Sha_{T}$ is \'{e}tale at all points.
It is therefore an open map.
\end{proof}

\subsection{Choice of differential forms}

We now pin down differential forms on $M/\Delta_{T} \times T_{\reg}$ and $N$. The exact sequences from Section \ref{4X} give a natural way to build both of these forms from the same pieces. 
      
Choose left-invariant differential forms $\omega_{G}$, $\omega_{H}$, $\omega_{T}$ and $\omega_{Z_{H}(T)}$ on the groups $G, H,T$, and $Z_H(T)$.
When convenient, we will simply write $\omega_G$ for $\omega_G(1)$ at the identity of $G$, and similarly for other groups.  Note that specifying an invariant differential form at $1$ prescribes its values on the entire group.
Also fix an alternating form $\omega_{(W',Y^{\perp})}$ of top degree on $\textrm{Hom}(W',Y^{\perp})$.
These five choices will determine all the forms that we want. 

Write $\omega_{M}$ for the product of $\omega_G$ and $\omega_H$ on $M$. Next, using the exact sequence in (\ref{Z_seq}), define $\omega_{H_{Y^{\perp}}}$, a left invariant differential form on $H_{Y^{\perp}}$, so that $\omega_{Z_{H}(T)}=\omega_{H_{Y^{\perp}}}\cap\omega_{T}$. Using this and the exact sequence (\ref{kappa_seq}), we define a top degree alternating form  $\omega_{\kappa}=\omega_{(W',Y^{\perp})}\cap\omega_{H_{Y^{\perp}}}$.
We now define a form of top degree $\omega_{\mathfrak n_{\xi}}$ on $\mathfrak  n_{\xi}$ using the exact sequence (\ref{M_seq}), so that $\omega_{M}=\omega_{\mathfrak n_{\xi}}\cap\omega_{\kappa}$. 
Using the exact sequence (\ref{N_seq}) and the definition of $\omega_{\mathfrak n_{\xi}}$, we define $\omega_{N}$ on $\mf n$ as $\omega_{(W',Y^{\perp})}\cap \omega_{\mathfrak n_{\xi}}$. Extend $\omega_N$ to $N$ by left invariance as usual.  (In contrast, $\mf n_\kappa$ and $\mf n_\xi$ are only defined ``at the identity''.) 

Using the short exact sequence:
$$0 \to z_{\mathfrak h}(\mathfrak t)\overset{\Delta} \to \mathfrak m \overset{p} \to \mathfrak m/\Delta_{\mathfrak t} \to 0$$
(where $\Delta(z)=(\Xi(z),z)$) we define $\omega_{M/\Delta_{T}}$ at $1\cdot \Delta_{T}$ such that 
$$\omega_{M}=\omega_{M/\Delta_{T}}\cap \omega_{Z_{H}(T)}$$
and extend its value to the entire coset space via left translation. Also, denote by $\omega_{M/\Delta_{T} \times T}$ the form $\textrm{pr}_{1}^{*}(\omega_{M/\Delta_{T}})\wedge \textrm{pr}_{2}^{*}(\omega_{T})$ where $\textrm{pr}_1, \textrm{pr}_2$ are the obvious projections to $M/\Delta_{T}$ and $T$.  

Recall the choice of basis from the previous section.  The product
 \begin{equation*}
\omega_{M}[\mathfrak B]\omega_{T}(\underline{Z_{l}})=\omega_{M}[ (\underline{A_{i}},0),(0,\underline{B_{j}}),(\Xi(\underline{B'_{j'}}),\underline{B'_{j'}}),(0,\underline{C_{k}}),(\Xi(\underline{Z_{l}}),\underline{Z_{l}}) ]\omega_{T}(\underline{Z_{l}})
\end{equation*}
is equal, up to a sign, to
\begin{equation*}
 \omega_{M/\Delta_{T}\times T}(1,\gamma)[ (p(\underline{A_{i}},0);0);(p(0,\underline{B_{j}});0);(p(0,\underline{C_{k}});0);((0,0);\underline{\gamma Z_{l}}) ]\cdot {\omega_{Z_{H}(T)}(\underline{B'_{j'}},\underline{Z_{l}})}.
\end{equation*}

Consider the basis $\mathfrak B_{1}$ defined above. Clearly, $\{u(\underline{A_{i}}\xi,0),u(0,\xi\underline{C_{k}}\xi^{*}),u(0,\xi\underline{Z_{l}}\xi^{*})\}$ is a basis of $\mathfrak n_{\xi}$ while $\{\psi(u(-\xi B_{j},0))\}$ is a basis of $\textrm{Hom}(W',Y^{\perp})$. Here, for simplicity, we write `$x\doteq y$' if $x = \pm y$. We have
\begin{equation}\label{eqn_sgn_1}
\begin{split}
\omega_{M}[\mathfrak B] & \doteq \omega_{\mathfrak n_{\xi}}[u(\underline{A_{i}}\xi,0),u(0,\xi\underline{C_{k}}\xi^{*}),u(0,\xi\underline{Z_{l}}\xi^{*})]\omega_{\kappa}(\underline{B_{j}},\underline{B'_{j'}})\ ({\rm using}\ (\ref{M_seq}))\\
& \doteq\omega_{\mathfrak n_{\xi}}[u(\underline{A_{i}}\xi,0),u(0,\xi\underline{C_{k}}\xi^{*}),u(0,\xi\underline{Z_{l}}\xi^{*})]\omega_{(W',Y^{\perp})}(\underline{B_{j}\xi^{*}})\omega_{H_{Y^{\perp}}}(\underline{B'_{j'}})\ ({\rm using}\ (\ref{kappa_seq}))\\
& \doteq \omega_{N}[\mathfrak B_{1}]\omega_{H_{Y^{\perp}}}(\underline{B'_{j'}}) \ ({\rm using}\ (\ref{N_seq})).
\end{split}
\end{equation}
Using (\ref{eqn_sgn_1}) we get
\begin{equation}\label{eqn_sgn_2}
\begin{split}
 \omega_{M/\Delta_{T}\times T}(1,\gamma)[p \mathfrak B]& = \omega_{M/\Delta_{T}\times T}(1,\gamma)[(p(\underline{A_{i}},0);0);(p(0,\underline{B_j});0);(p(0,\underline{C_{k}});0);((0,0);\gm\underline{ Z_{l}})]\\
& \doteq \omega_{N}[\mathfrak B_{1}]\omega_{H_{Y^{\perp}}}(\underline{B'_{j'}})\frac{\omega_{T}(\underline{Z_{l}})}{\omega_{Z_{H}(T)}(\underline{B'_{j'}},\underline{Z_{l}})} \ ({\rm using}\ (\ref{Z_seq}))\\
& = \omega_{N}[\mathfrak B_{1}].
\end{split}
\end{equation}

\begin{remark}
We omit the calculation of the precise signs in (\ref{eqn_sgn_1}) and (\ref{eqn_sgn_2}), since it is only the associated measures that we require.  They depend only on the dimensions of the various groups.
\end{remark}

\section{Jacobian for the symplectic and orthogonal cases} \label{routes}

It remains to compute the determinant of $L(\gm)$.  In this section we treat the symplectic and orthogonal cases, and in the next we treat the unitary case.

First we explicitly calculate $\det L(\gm)$ in the ``split'' cases of Section \ref{ping} or \ref{pong}. Thus $H_{Y}$ is symplectic or split orthogonal, and we have already described the tori $T$ and $T_G$, and the characters $\chi_{i}$ of $T_G$.  At the end of this section, we prove that the same formula works for general $T \leq H_Y$.

\subsection{Root vectors} \label{routes_1}
Let $R(G,T_{G})$ and $R(H_Y,T)$ denote the set of roots of $G$ relative to $T_{G}$ and that of $H_Y$ relative to $T$ respectively. Write $R^\theta=R(G,T_{G})^\theta$ for the fixed points of $R(G,T_{G})$ under $\theta$, and $R_0=R(G,T_{G})_0$ for the complement of $R(G,T_{G})^\theta$ in $R(G,T_{G})$.  Write $\ol R_0$ for the set of $\theta$-orbits $\{ \alpha,\theta(\alpha) \}$ in $R_0$.

Consider the decomposition of $\mf n$ under the action of $T$ via $\Ad (\Delta(T))$, and write
\beq
\mf n_\beta= \{ u \in \mf n \mid \Ad(\Delta(\gm))u=\beta(\gm) u \},
\eeq
for $\beta \in \Hom(T,\mb G_m)$.  (Note that $\mf n_\beta=0$ unless $\beta=0$ or $\beta \in R(H_Y,T)$.) 
 
Then we have
\beq
\mf n=  \bigoplus_\beta \mf n_\beta,
\eeq
with
 \beq
\begin{split}
\mf n_0 &= \{ u \in \mf n \mid \Ad(\Delta(\gm))u= u \} \\
 &=\Span \{ u(A \xi,0), u(\ul {\beta_j^*}, 0), u(0,\xi \ul {Z_l} \xi^*) \mid A \in \mf t_G \}. \\
\end{split}
\eeq 
 
Write
\begin{equation*}
\res: R(G,T_{G}) \to X^*(T),
\end{equation*}
or $\alpha \mapsto \alpha_{\res}$, for the map defined by
\beq
\alpha_{\res}=\beta \Leftrightarrow \alpha|_{{}^\xi T}={}^\xi \beta.
\eeq

It is easy to determine the image and fibres of $\res$, since $\Xi$ furnishes an isomorphism $H \to G^\theta$, reducing us to computing the fibres of 
\beq
R(G,T_G) \to R((G^\theta)^\circ,(T^\theta)^\circ).
\eeq

\begin{lemma}
\begin{enumerate}
\item In the symplectic case, the map $\res$ maps $R(G,T_{G})$ onto $R(H_Y, T)$.  Let $\beta \in R(H_Y,T)$.  If $\beta$ is a long root, its fibre is a singleton in $R^\theta$.  If $\beta$ is a short root, then its fibre 
consists of a $\theta$-orbit of roots in $R_0$.
\item In the orthogonal case, the map $\res$ maps $R_0$ onto $R(H_Y,T)$.  If $\alpha \in R^\theta$, then $\alpha_{\res} \notin R(H_Y, T)$.  The fibres over $R(H_Y, T)$ are $\theta$-orbits of roots in $R_0$.
\end{enumerate}
\qed
\end{lemma}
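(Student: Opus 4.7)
The plan is to argue directly from the explicit matrix realizations fixed in Sections \ref{ping} and \ref{pong}. In both cases $G=\GL_{k}$ with $T_{G}$ the diagonal torus and $\theta(g)=J^{-1}g^{-t}J$ for $J=J_{\pm}(k)$. Since $\theta$ acts on $T_{G}$ by $t\mapsto J^{-1}t^{-1}J$, which both inverts the diagonal entries and reflects their indices via $i\mapsto k+1-i$, we get $\theta^{*}\chi_{i}=-\chi_{k+1-i}$, and consequently
\[
\theta^{*}\alpha_{ij}=\alpha_{k+1-j,\,k+1-i}.
\]
Thus $R^{\theta}=\{\alpha_{ij}\mid i+j=k+1\}$, while the remaining roots $R_{0}$ split into free $\theta$-orbits of size two. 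This is the first step and requires only a routine matrix computation.

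The second step is to pin down the restriction map. From the matrix description, $\Xi(T)={}^{\xi}T$ consists of diagonal matrices $\diag(t_{1},\ldots,t_{k/2},t_{k/2}^{-1},\ldots,t_{1}^{-1})$, where the parameters $(t_{1},\ldots,t_{k/2})$ define the characters $\beta_{1},\ldots,\beta_{k/2}$ of $T$. Hence
\[
\chi_{i}\big|_{{}^{\xi}T}=\begin{cases}\beta_{i}&\text{if } i\leq k/2,\\-\beta_{k+1-i}&\text{if } i>k/2.\end{cases}
\]
With this formula and the explicit description of $\theta^{*}$ from the first step, both claims (i) and (ii) reduce to simply enumerating which elements of $R(H_{Y},T)$ appear as $\res(\alpha_{ij})$ and determining the size of the fiber.

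The third step is this enumeration, done case by case. In the symplectic setting $R(H_{Y},T)$ is of type $C_{k/2}$: the long root $\pm 2\beta_{i}$ equals $\pm\res(\alpha_{i,k+1-i})$ with $\alpha_{i,k+1-i}\in R^{\theta}$, giving a singleton fiber; the short roots $\pm\beta_{i}\pm\beta_{j}$ ($i\ne j$, $i,j\le k/2$) arise from $\alpha_{ij}$ or $\alpha_{i,k+1-j}$, and a direct check shows the associated $\theta$-orbit in $R_{0}$ restricts to the same short root, giving a two-element fiber. In the orthogonal setting $R(H_{Y},T)$ is of type $D_{k/2}$ (since $(H_{Y})^{\circ}=\SO_{k}$), and the short-root analysis is identical; meanwhile each $\alpha\in R^{\theta}$ restricts to $\pm 2\beta_{i}$, which is not a root of type $D$, proving $\alpha_{\res}\notin R(H_{Y},T)$.

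The only mildly delicate point, and essentially the sole ``obstacle'', is making certain the parametrizations chosen in Sections \ref{ping}--\ref{pong} for $T$, $T_{G}$, and $\xi$ are coordinated so that the intertwining between $\Xi$ and the involution $\theta$ produces the anti-diagonal index reversal rather than some other permutation; once that orientation is fixed, each assertion of the lemma follows from comparing lists. Surjectivity is then automatic from the enumeration, since every root of $R(H_{Y},T)$ has been exhibited as the restriction of an explicit root in $R(G,T_{G})$.
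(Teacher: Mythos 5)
Your computation is correct and is essentially the argument the paper has in mind: the text preceding the lemma reduces everything, via $\Xi$, to the standard restricted-root computation for the pairs $(\GL_k,\mathrm{Sp}_k)$ and $(\GL_k,\mathrm{O}_k)$ in the explicit split coordinates of Sections \ref{ping}--\ref{pong}, and then records the outcome with no further proof. Your identification $\theta^*\chi_i=-\chi_{k+1-i}$, hence $R^\theta=\{\alpha_{ij}\mid i+j=k+1\}$, together with $\chi_i|_{{}^\xi T}=\beta_i$ for $i\le k/2$ and $-\beta_{k+1-i}$ for $i>k/2$, is exactly right, and the enumeration of fibres over the long and short roots in type $C$ and over the roots in type $D$ checks out.

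The one genuine omission is the odd orthogonal case. The lemma's part (ii) is also used in Section 8.3 for $H_Y$ of type $B_n$, i.e.\ $\dim W=k$ odd, but your enumeration asserts that $R(H_Y,T)$ is of type $D_{k/2}$, which presupposes $k$ even. For $k$ odd the torus $T$ has a fixed middle entry on $Y$, so $\chi_{(k+1)/2}$ restricts trivially to ${}^\xi T$, and $R(H_Y,T)$ is of type $B_{(k-1)/2}$ with additional short roots $\pm\beta_i$. These are hit by $R_0$: the fibre over $\beta_i$ is the $\theta$-orbit $\{\alpha_{i,(k+1)/2},\,\alpha_{(k+1)/2,\,k+1-i}\}$, and the elements of $R^\theta$ still restrict to $\pm2\beta_i$, which are not roots of type $B$. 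Adding this half-page of bookkeeping closes the gap; nothing else in your argument needs to change.
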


Let $\alpha \in R(G,T_{G})$.  If $A_{\alpha}$ is a root vector for $\alpha$, then $\tau(A_{\alpha})$ is a root vector for $\theta(\alpha)$.  We will normalize our root vectors so that if $\{ \alpha, \theta(\alpha) \}$ is an orbit in $R_0$, then 
$A_{\theta(\alpha)}=\tau(A_{\alpha})$.  Note in the symplectic case that if $\alpha \in R^\theta$, then $\tau(A_{\alpha})=-A_{\alpha}$.
 
\begin{defn} If $\alpha$ corresponds to the root $\chi_{i}-\chi_{j}$ of $T_{G}$ in $\GL(W)$,  write $\lambda^{+}_{\alpha}:=\chi_i$ and $\lambda^{-}_{\alpha}:=\chi_j$. \end{defn}

Note that, for all $\alpha \in R(G,T_G)$, and $\delta \in T_G$, we have
\begin{equation} \label{functional.equation}
\lambda^-_{\alpha}(\delta)=\lambda^+_{\theta(\alpha)}(\tau(\delta)).
\end{equation}

Recall that for $A \in \mf g$, we have
\beq
L(u(A \xi,0))=u(A \xi,  A \ups^{-1}+ \gm_G \ups^{-1}A^{*} + A \gm_G \ups^{-1} ).
\eeq

\begin{prop}\label{prop_L_des} 
Suppose $H_Y$ is orthogonal.  Let $\alpha \in R^\theta$. Fix $A_\alpha \in \mf g_\alpha$.  Then 
\beq
L(u(A_{\alpha} \xi,0))=u(A_{\alpha} \xi,0).
\eeq
\end{prop}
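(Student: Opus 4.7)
The plan is to unpack the definition of $L$ and verify that the second coordinate of $L(u(A_\alpha \xi, 0))$ vanishes. By construction,
\begin{equation*}
L(u(A_\alpha \xi,0)) = u\!\left(A_\alpha \xi,\; A_\alpha \ups^{-1} + \gm_G \ups^{-1} A_\alpha^{*} + A_\alpha \gm_G \ups^{-1}\right),
\end{equation*}
so the task reduces to showing that $A_\alpha \ups^{-1} + \gm_G \ups^{-1} A_\alpha^{*} + A_\alpha \gm_G \ups^{-1} = 0$ in $\Hom(W',W)$.

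The first key input is that in the orthogonal case with $\alpha \in R^\theta$, the normalization of root vectors forces $\tau(A_\alpha) = A_\alpha$ (the counterpart of the symplectic identity $\tau(A_\alpha)=-A_\alpha$ already noted). This follows from $d\theta = -\tau$ together with the paper's observation that for orthogonal $H_Y$, the fixed roots $R^\theta$ are precisely those for which $\alpha_{\res}\notin R(H_Y,T)$: the line $\mf g_\alpha$ cannot sit inside $\Lie(G^\theta)$, so it must lie in the $-1$-eigenspace of $d\theta$, equivalently the $+1$-eigenspace of $\tau$. Rewriting $\tau(A_\alpha)=A_\alpha$ as $\ups^{-1} A_\alpha^{*} = A_\alpha \ups^{-1}$, the middle term of the second coordinate collapses to $\gm_G A_\alpha \ups^{-1}$, and the whole expression becomes $(A_\alpha + \gm_G A_\alpha + A_\alpha \gm_G)\,\ups^{-1}$.

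Next I would invoke Proposition \ref{About_h_G}(iii), which places $\gm_G$ in $T_G$, so that the root-space action yields $\gm_G A_\alpha = \lambda_\alpha^{+}(\gm_G)\, A_\alpha$ and $A_\alpha \gm_G = \lambda_\alpha^{-}(\gm_G)\, A_\alpha$. The second coordinate is therefore
\begin{equation*}
\bigl(1 + \lambda_\alpha^{+}(\gm_G) + \lambda_\alpha^{-}(\gm_G)\bigr)\, A_\alpha \ups^{-1},
\end{equation*}
and it remains to see that the scalar factor is zero. By the functional equation (\ref{functional.equation}) together with $\theta(\alpha)=\alpha$, one has $\lambda_\alpha^{-}(\gm_G) = \lambda_\alpha^{+}(\tau(\gm_G))$. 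On the other hand, Proposition \ref{About_h_G}(i) gives the matrix identity $1 + \gm_G + \tau(\gm_G)=0$; reading off the $i$-th diagonal entry of this identity in the diagonal torus $T_G$ (where $\lambda_\alpha^{+}=\chi_i$) produces exactly $1 + \lambda_\alpha^{+}(\gm_G) + \lambda_\alpha^{+}(\tau(\gm_G))=0$, finishing the proof.

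The main subtlety is the first step, pinning down $\tau(A_\alpha)=A_\alpha$ via the orthogonal/symplectic dichotomy encoded in the classification of $\res$; once that sign is correct, the remainder is a direct three-line computation combining parts (i) and (iii) of Proposition \ref{About_h_G} with the functional equation (\ref{functional.equation}).
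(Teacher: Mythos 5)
Your proof is correct and follows essentially the same route as the paper: reduce to the vanishing of the second coordinate, use $\tau(A_\alpha)=A_\alpha$ to rewrite $\gm_G\ups^{-1}A_\alpha^{*}$ as $\gm_G A_\alpha\ups^{-1}$, and conclude via the root-space action of $\gm_G\in T_G$ together with $1+\gm_G+\tau(\gm_G)=0$ and the relation $\lambda_\alpha^{-}(\gm_G)=\lambda_\alpha^{+}(\tau(\gm_G))$. The only difference is that you supply a justification for $\tau(A_\alpha)=A_\alpha$ (via the eigenvalue of $d\theta$ on the line $\mf g_\alpha$ and the fact that $\alpha_{\res}\notin R(H_Y,T)$), which the paper simply asserts; that argument is sound.
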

 
\begin{proof} 
Note that $\tau(A_\alpha)=A_\alpha$ in this case.  Thus
\beq
\begin{split}
 A_\alpha \ups^{-1}+ \gm_G \ups^{-1}A_\alpha^{*} + A_\alpha \gm_G \ups^{-1} &= (A_\alpha+\gm_G \tau(A_\alpha)  + \lambda_\alpha^-(\gm_G) A_\alpha) \ups^{-1} \\
 							&= (A_\alpha+\lambda_\alpha^+(\gm_G) A_\alpha  + \lambda_\alpha^-(\gm_G) A_\alpha) \ups^{-1} \\
							&=(1+ \lambda_\alpha^+(\gm_G)+\lambda_\alpha^+(\tau(\gm_G)))A_\alpha \ups^{-1} \\
							&= 0.\\
							\end{split}
							\eeq
We have used the identity $1+ \gm_G+\tau(\gm_G)=0$ for the last equality.
\end{proof}
 
\begin{prop} Suppose  $H_Y$ is symplectic.  Let $\alpha \in R^\theta$, and fix $A_\alpha$ in $\GG$. Put $\beta=\alpha_{\res}$ and $C_\beta=\xi^{+}A_{\alpha}\xi$.
  Then $\mathfrak n_{\beta}$ is  two-dimensional and given by
  \beq
  \mf n_\beta=\Span \{ u(A_{\alpha}\xi,0), u(0,\xi C_{\beta}\xi^{*})\}. 
   \eeq
Moreover $L$ maps $\nn_{\beta}$ to $\nn_{\beta}$.  Writing $L_{\beta}$ for this restriction, we have
\begin{equation*}
\det L_{\beta}=\lambda_\alpha^-(\gm_G)-\lambda_\alpha^+(\gm_G).
\end{equation*}
\end{prop}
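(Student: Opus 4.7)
The plan is to identify $\mathfrak n_\beta$ explicitly, verify its dimension by $T$-module reasoning, and then compute $L_\beta$ on the claimed basis by direct substitution.

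First, the identity $\Xi(\gm)\xi=\xi\gm$ (valid for $\gm\in H_Y$ since such $\gm$ commutes with $P_Y$) together with $A_\alpha\in\mathfrak g_\alpha$ yields
\beq
\Ad(\Delta(\gm))u(A_\alpha\xi,0)=u(\Xi(\gm)A_\alpha\xi\gm^{-1},0)=\alpha(\Xi(\gm))\,u(A_\alpha\xi,0)=\beta(\gm)\,u(A_\alpha\xi,0).
\eeq
A parallel computation, combined with the simplifications $\xi\xi^+=I_W$ and $\xi\xi^*=\ups^{-1}$, rewrites $\xi C_\beta\xi^*=A_\alpha\ups^{-1}$ and shows $u(0,\xi C_\beta\xi^*)$ also has weight $\beta$. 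What requires separate attention is that $u(0,\xi C_\beta\xi^*)$ really lies in $\mathfrak n$: the skew condition reduces to $\xi(C_\beta+C_\beta^*)\xi^*=0$, which, after using $\ups^*=\ups$ (a direct consequence of $\xi\xi^*$ being self-adjoint), collapses to $A_\alpha+\tau(A_\alpha)=0$. This is exactly $\tau(A_\alpha)=-A_\alpha$, the symplectic identity already recorded for $\alpha\in R^\theta$. The same identity also gives $C_\beta\in\mathfrak h_Y$, so that the basis formula for $L$ at $u(0,\xi C_\beta\xi^*)$ applies.

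For the dimension I would use the $T$-equivariant decomposition $\mathfrak n=\mathfrak n_\xi\oplus\im\tilde\psi$ from Section \ref{4X}. On $\im\tilde\psi\cong\Hom(W',Y^\perp)$ the torus acts only through $\Xi(\gm)$ on the $W$-factor, whose weights on $W$ are short; hence no long root $\beta$ occurs, and $(\im\tilde\psi)_\beta=0$. Thus $\mathfrak n_\beta=\phi(\mathfrak m_\beta)$. Because $\res^{-1}(\beta)=\{\alpha\}$ is a singleton inside $R^\theta$ and $\mathfrak h_\beta$ is one-dimensional, $\mathfrak m_\beta=\mathfrak g_\alpha\oplus\mathfrak h_\beta$ has dimension $2$; the nonzero element of $\mathfrak h_\beta$ sends $Y$ into $Y$, not into $Y^\perp$, so it is not in $\kappa=\ker\phi$, and $\phi|_{\mathfrak m_\beta}$ is injective. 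This forces $\dim\mathfrak n_\beta=2$.

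Setting $v_1=u(A_\alpha\xi,0)$ and $v_2=u(0,\xi C_\beta\xi^*)=u(0,A_\alpha\ups^{-1})$, the remainder is direct substitution. The $D$-part of $L(v_1)$ is $A_\alpha\ups^{-1}+\gm_G\ups^{-1}A_\alpha^*+A_\alpha\gm_G\ups^{-1}$, which using $\ups^{-1}A_\alpha^*=\tau(A_\alpha)\ups^{-1}=-A_\alpha\ups^{-1}$ together with the torus rules $\gm_G A_\alpha=\lambda_\alpha^+(\gm_G)A_\alpha$ and $A_\alpha\gm_G=\lambda_\alpha^-(\gm_G)A_\alpha$ collapses to $(1+\lambda_\alpha^-(\gm_G)-\lambda_\alpha^+(\gm_G))A_\alpha\ups^{-1}$; hence $L(v_1)=v_1+c\,v_2$ with $c=1+\lambda_\alpha^-(\gm_G)-\lambda_\alpha^+(\gm_G)$. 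Applying the rule $L(u(0,\xi C\xi^*))=u(-\xi C,-\xi C\xi^*)$ at $C=C_\beta$ and using $\xi C_\beta=A_\alpha\xi$ yields $L(v_2)=-v_1-v_2$. In particular $L$ preserves $\mathfrak n_\beta$, and the matrix of $L_\beta$ is $\bigl(\begin{smallmatrix}1 & -1\\ c & -1\end{smallmatrix}\bigr)$, whose determinant $c-1=\lambda_\alpha^-(\gm_G)-\lambda_\alpha^+(\gm_G)$ is the desired value. The main obstacle I expect is the bookkeeping of the various adjoints on the pairs among $(X,W,W')$, particularly ensuring the self-adjointness $\ups^*=\ups$; once that is in place everything reduces to clean linear algebra.
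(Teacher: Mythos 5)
Your proof is correct and follows essentially the same route as the paper: the paper also computes $L_\beta(u(A_\alpha\xi,0))=u(A_\alpha\xi,0)-2\lambda_\alpha^+(\gm_G)\,u(0,\xi C_\beta\xi^*)$ and $L_\beta(u(0,\xi C_\beta\xi^*))=-u(A_\alpha\xi,0)-u(0,\xi C_\beta\xi^*)$, and your coefficient $c=1+\lambda_\alpha^-(\gm_G)-\lambda_\alpha^+(\gm_G)$ equals $-2\lambda_\alpha^+(\gm_G)$ via the identity $1+\lambda_\alpha^+(\gm_G)+\lambda_\alpha^-(\gm_G)=0$, so the $2\times 2$ determinants agree. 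Your weight-space and dimension argument via $\mf n=\mf n_\xi\oplus\im\tilde\psi$ fills in a point the paper's proof leaves implicit, and is sound.
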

 \begin{proof}
By a calculation similar to the one in the proof of Proposition \ref{prop_L_des}, we have 
\begin{equation*}
L_{\beta}(u(A_{\alpha}\xi,0))=u(A_{\alpha}\xi,0)-2\lambda_{\alpha}^+( \gm_G)u(0,\xi C_{\beta}\xi^{*}).
\end{equation*}  
We further have
\begin{equation*}
L_{\beta}( u(0,\xi C_{\beta}\xi^{*})) =  -u(A_{\alpha}\xi,0)-u(0,\xi C_{\beta}\xi^{*}),
\end{equation*}
and the statement follows.
\end{proof}

\begin{defn} 
Let $\alpha \in R_0$ and $\alpha' =\theta(\alpha)$.  Put $\beta=\alpha_{\res}$.  Put $A_{\beta}=A_{\alpha}-A_{\alpha'}$, and $C_\beta=\xi^{+}A_{\beta}\xi$.
\end{defn}
Then $A_{\beta}$ is a root vector for $\beta$ in $\mathfrak g^{\theta}$, and $C_\beta$ is a root vector for $\xi^{+}(\beta)\xi$ in $\mathfrak h_{Y}$. The sign of $C_{\beta}$ depends on the choice of the $\alpha$ here although this fact has no bearing on the final outcome.
 
\begin{prop}
 Let $\alpha \in R_0$.  Then $\mf n_\beta$ is three-dimensional, and given by
 \beq 
\mathfrak n_{\beta}=\Span\{ u(A_{\alpha}\xi,0), u(A_{\alpha'}\xi,0), u(0,\xi C_{\beta}\xi^{*}) \}.  
\eeq
\qed 
\end{prop}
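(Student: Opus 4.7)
My plan is to compute the $T$-weight decomposition of $\nn$ under $\Ad \circ \Delta$ by reading off the weights directly from the basis $\mathfrak B_1$, and to show that exactly the three vectors named in the statement carry weight $\beta$. The starting point is the $\Delta(T)$-stable direct sum $\nn = \nn_\xi \oplus \im \tilde\psi$ from Proposition \ref{N_exact}: since $\gm^{-1}$ preserves $Y^\perp$ for $\gm \in T \leq H_Y$, the conjugation rule $\Ad(\Delta(\gm))u(C,D) = u(\Xi(\gm)C\gm^{-1},\,\Xi(\gm)D\Xi(\gm)^*)$ preserves the defining condition $Y^\perp \subseteq \ker C$ of $\nn_\xi$.

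On $\im\tilde\psi \cong \Hom(W',Y^\perp)$, the $T$-weights arise as restrictions to ${}^\xi T$ of characters of $T_G$ of the form $\chi_i$ (from the dual action on $W'$, while $T$ acts trivially on $Y^\perp$). Since $\beta \in R(H_Y,T)$ is a root of a classical group, in the standard coordinates it takes the form $\pm\chi_i \pm \chi_j$ (or $\pm 2\chi_i$ for a long root), never a single $\chi_i$; hence $\im\tilde\psi$ contributes no $\beta$-weight vectors. On $\nn_\xi$, using $\Xi(\gm)\xi = \xi\gm$, the vector $u(A_\alpha\xi, 0)$ has weight $\alpha|_{{}^\xi T} = \alpha_{\res}$, and since $\alpha \in R_0$ the fibre of $\res$ over $\beta$ is exactly the $\theta$-orbit $\{\alpha,\alpha'\}$, contributing the two vectors $u(A_\alpha \xi, 0)$ and $u(A_{\alpha'} \xi, 0)$. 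Using $\xi^*\Xi(\gm)^* = \gm^{-1}\xi^*$ (valid because $\gm$ is an isometry of $X$), the vector $u(0,\xi C\xi^*)$ for a root vector $C \in \hh_Y$ of some $\beta' \in R(H_Y,T)$ has weight $\beta'$, so only the single vector $u(0, \xi C_\beta \xi^*)$ contributes. The remaining basis elements, those coming from $\mathfrak t_G$ and $\mathfrak t$, carry weight $0 \neq \beta$.

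It remains to verify that the specified $C_\beta = \xi^+(A_\alpha - A_{\alpha'})\xi$ is indeed a root vector for $\beta$ in $\hh_Y$. This will follow from the isomorphism $\Xi: \hh_Y \xrightarrow{\sim} \mathfrak g^\theta$ of Lemma \ref{gtstruc}(ii), whose inverse sends $A \in \mathfrak g^\theta$ to $\xi^+ A \xi$: the normalization $A_{\theta(\alpha)} = \tau(A_\alpha)$ together with $d\theta = -\tau$ gives $d\theta(A_\alpha - A_{\alpha'}) = -\tau(A_\alpha) + \tau(A_{\alpha'}) = -A_{\alpha'} + A_\alpha$, so $A_\beta := A_\alpha - A_{\alpha'}$ lies in $\mathfrak g^\theta$ and is a root vector for ${}^\xi\beta$. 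Linear independence of the three vectors is immediate: the first two lie in the ``$C$-slot'' and are independent because $A_\alpha$ and $A_{\alpha'}$ lie in distinct $T_G$-root spaces, while the third lies in the ``$D$-slot''. The main obstacle I anticipate is not any individual step but the careful bookkeeping of the various adjoints $(\cdot)^*$, $\tau$, and $\theta$, together with the partial nature of $\xi$ (an isomorphism on $Y$ only), so that identities such as $\xi\xi^+ = I_W$ and $\xi^+\xi = P_Y$ must be invoked at exactly the right moments.
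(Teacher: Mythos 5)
Your overall strategy---decompose $\nn$ as the $\Delta(T)$-stable direct sum $\nn_{\xi}\oplus \im\tilde\psi$ and read off the weights on each summand---is the natural one, and most of the execution is right: the identities ${}^{\xi}\gm\,\xi=\xi\gm$ and $\xi^{*}({}^{\xi}\gm)^{*}=\gm^{-1}\xi^{*}$ do give weight $\alpha_{\res}$ to $u(A_{\alpha}\xi,0)$ and weight $\beta'$ to $u(0,\xi C_{\beta'}\xi^{*})$, the fibre of $\res$ over $\beta$ is the $\theta$-orbit $\{\alpha,\alpha'\}$, and your verification that $C_{\beta}=\xi^{+}(A_{\alpha}-A_{\alpha'})\xi$ is a $\beta$-root vector in $\hh_{Y}$ is correct. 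The gap is the sentence ``never a single $\chi_i$.'' When $H_{Y}$ is odd orthogonal (type $B_n$), the short roots of $R(H_{Y},T)$ are exactly the characters $\pm\chi_i$, and these coincide with the weights you have (correctly) computed on $\im\tilde\psi\cong\Hom(Y^{\perp},W)$, namely the weights of ${}^{\xi}T$ on the target $W$, each occurring with multiplicity $\dim Y^{\perp}$. So for $\alpha\in R_0$ with $\beta=\alpha_{\res}$ a short root of $B_n$, the honest weight space $\mf n_{\beta}$ has dimension $\dim Y^{\perp}+3=(\dim X-\dim W)+3$, and your argument (and the statement, read literally) only covers types $C_n$ and $D_n$, where the roots are $\pm\chi_i\pm\chi_j$ and $\pm 2\chi_i$ and the comparison you make does go through.

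It is worth noting that your weight computation on $\im\tilde\psi$ also contradicts the description of $\mf n_0$ given earlier in this subsection, which places all of $\Span\{u(\ul{\beta_j^{*}},0)\}$ in the $\Ad(\Delta(T))$-fixed space: since $\Ad(\Delta(\gm))$ sends $u(\beta^{*},0)$ to $u({}^{\xi}\gm\,\beta^{*},0)$, that span is fixed only when $Y^{\perp}=0$. The repair, both for your proof and for the statement, is to interpret $\mf n_{\beta}$ as the weight-$\beta$ space inside $\nn_{\xi}$ (equivalently, to record the direct-sum decomposition of $\nn$ into the three-dimensional spans, the spaces $u(\mf t_G\xi,0)$ and $u(0,\xi\mf t\xi^{*})$, the $R^{\theta}$-vectors, and $\im\tilde\psi$, without asserting that the last summand has weight $0$). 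This is all that the subsequent computation of $\det L$ actually uses: $L$ preserves $\im\tilde\psi$ and acts there as the identity, so in type $B_n$ the extra weight-$\beta$ vectors contribute a factor of $1$ and the formula for $\det L_{\beta}$ on the three-dimensional span is unaffected. If you restrict your claim accordingly, or add the $B_n$ caveat explicitly, the argument is complete.
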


\begin{prop} \label{nalpha}
$L$ maps $\nn_{\beta}$ to $\nn_{\beta}$.  Writing $L_{\beta}$ for this restriction, we have
\begin{equation}\label{eq_nalpha}
\begin{array}{cccc}
\det L_{\beta}& = & \Big \{ &
\begin{array}{cc}
 \lambda^-_\alpha(\gm_G)-\lambda_\alpha^+(\gm_G) &  \text{if $H_Y$ is of type $C_n$ or $D_n$,}\\
\lambda^-_\alpha((\epsilon\gm)_{G})-\lambda_\alpha^+((\epsilon\gm)_{G}) & \text{for type $B_n$}. 
\end{array}\\
\end{array}
\end{equation}
 
\end{prop}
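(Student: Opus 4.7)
The plan is to verify that $L$ stabilizes $\nn_\beta$ by computing $L$ on each of the three basis vectors listed in the preceding proposition, then to read off the matrix of $L_\beta$ and take its determinant.

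First I would compute $L(u(A_\alpha \xi, 0))$. Using the definition of $L$ and the identity $\ups^{-1} A^{*} = \tau(A) \ups^{-1}$ (which follows from the definition of $\tau$), one gets
\beq
L(u(A_\alpha \xi, 0)) = u\bigl(A_\alpha \xi,\, [A_\alpha + \gm_G \tau(A_\alpha) + A_\alpha \gm_G]\ups^{-1}\bigr).
\eeq
Now I would apply the standard root-vector relations $\gm_G A_\alpha = \lambda_\alpha^+(\gm_G) A_\alpha$, $A_\alpha \gm_G = \lambda_\alpha^-(\gm_G) A_\alpha$, together with $\tau(A_\alpha) = A_{\alpha'}$ (the normalization from Section \ref{routes_1}), so the interior bracket becomes $(1+\lambda_\alpha^-(\gm_G))A_\alpha + \lambda_{\alpha'}^+(\gm_G) A_{\alpha'}$. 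Applying $\chi_j$ to the identity $1 + \gm_G + \tau(\gm_G) = 0$ from Proposition \ref{About_h_G}, together with the functional equation (\ref{functional.equation}), yields $1 + \lambda_\alpha^-(\gm_G) + \lambda_{\alpha'}^+(\gm_G) = 0$. Substituting collapses the bracket to $-\lambda_{\alpha'}^+(\gm_G)(A_\alpha - A_{\alpha'}) \ups^{-1} = -\lambda_{\alpha'}^+(\gm_G) A_\beta \ups^{-1}$. Finally, from $\xi \xi^* = \ups^{-1}$ and $C_\beta = \xi^+ A_\beta \xi$ with $\xi \xi^+ = I_W$, one obtains $\xi C_\beta \xi^{*} = A_\beta \ups^{-1}$, so
\beq
L(u(A_\alpha \xi, 0)) = u(A_\alpha \xi, 0) - \lambda_{\alpha'}^+(\gm_G)\, u(0, \xi C_\beta \xi^{*}).
\eeq

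Interchanging the roles of $\alpha$ and $\alpha'$ (and noting $A_{\beta} \mapsto -A_\beta$ under this swap) gives the symmetric identity
\beq
L(u(A_{\alpha'} \xi, 0)) = u(A_{\alpha'}\xi, 0) + \lambda_\alpha^+(\gm_G)\, u(0,\xi C_\beta \xi^{*}).
\eeq
For the third basis vector, the definition of $L$ gives $L(u(0,\xi C_\beta \xi^{*})) = u(-\xi C_\beta, -\xi C_\beta \xi^{*})$, and since $\xi \xi^{+} = I_W$ I can rewrite $\xi C_\beta = A_\beta \xi = A_\alpha \xi - A_{\alpha'} \xi$, yielding
\beq
L(u(0,\xi C_\beta \xi^{*})) = -u(A_\alpha \xi,0) + u(A_{\alpha'} \xi, 0) - u(0,\xi C_\beta \xi^{*}).
\eeq
This shows that $L$ preserves $\nn_\beta$, so one can form $L_\beta$. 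Reading off the matrix in the ordered basis $\{u(A_\alpha \xi,0),\, u(A_{\alpha'} \xi,0),\, u(0,\xi C_\beta \xi^{*})\}$ and expanding along the third row gives
\beq
\det L_\beta = -\lambda_{\alpha'}^+(\gm_G) - \lambda_\alpha^+(\gm_G) - 1.
\eeq
Using $1 + \lambda_\alpha^-(\gm_G) + \lambda_{\alpha'}^+(\gm_G) = 0$ one last time to eliminate $\lambda_{\alpha'}^+(\gm_G)$, this simplifies to $\lambda_\alpha^-(\gm_G) - \lambda_\alpha^+(\gm_G)$, which is the formula in types $C_n$ and $D_n$. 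For type $B_n$ (odd orthogonal) the only modification is that $\gm_G$ is replaced everywhere by $(\epsilon\gm)_G$, as dictated by the definition of $\Sha_T$ in that case, giving the stated variant of the formula.

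The main obstacle is the combinatorial bookkeeping: keeping track of the interplay between $\tau$, the root swap $\alpha \leftrightarrow \alpha'$, the identity $1 + \gm_G + \tau(\gm_G) = 0$, and the functional equation (\ref{functional.equation}), so that the bracket collapses cleanly. Once those relations are marshaled correctly, $L_\beta$ and its determinant fall out mechanically.
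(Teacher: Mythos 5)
Your proposal is correct and follows essentially the same route as the paper's proof: expand $L$ on the three basis vectors of $\nn_\beta$, collapse the bracket using $1+\gm_G+\tau(\gm_G)=0$ together with the functional equation to get $1+\lambda_\alpha^-(\gm_G)+\lambda_{\alpha'}^+(\gm_G)=0$, and read off the $3\times 3$ determinant. The only difference is that you carry out the final determinant expansion explicitly, which the paper leaves to the reader.
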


\begin{proof}
We will prove the proposition for the case when $H_{Y}$ is of type $C_n$ or $D_n$; it is similar for type $B_n$. Let us check that the element 
\begin{equation}
A_{\alpha}\ups^{-1}+ \gm_G \ups^{-1}A_{\alpha}^{*} + A_{\alpha}\gm_G \ups^{-1}
\end{equation}
is a multiple of $\xi C_{\beta} \xi^*= (A_{\alpha}-A_{\alpha'}) \ups^{-1}$.
Multiplying on the right by $\ups$ gives
\begin{equation*}
\begin{split}
A_{\alpha}+ \gamma_G A_{\alpha'} + A_{\alpha}  \gamma_G &= A_{\alpha}+A_{\alpha'} \lambda_{\alpha'}^+(\gm_G)+A_{\alpha}\lambda_\alpha^-(\gm_G) \\
													&= A_{\alpha}(1+\lambda_{\alpha'}^+(\gm_G)+\lambda_{\alpha}^-( \gm_G))-\lambda_{\alpha'}^+( \gm_G) {}^\xi C_\beta.\\
\end{split}
\end{equation*}
 
Applying (\ref{functional.equation}) gives that 
\begin{equation}\label{func_eq}
1+\lambda_{\alpha'}^+(\gm_G)+\lambda_{\alpha}^-( \gm_G)=0.
\end{equation}
Thus,
\begin{equation*}
L_{\beta}(u(A_{\alpha}\xi,0))=u(A_{\alpha}\xi,0)-\lambda_{\alpha'}^+( \gm_G)u(0,\xi C_{\beta}\xi^{*}).
\end{equation*}
Similarly, 
\begin{equation*}
L_{\beta}(u(A_{\alpha'}\xi,0))=u(A_{\alpha'} \xi,0) + \lambda_{\alpha}^+( \gm_G)u(0,\xi C_{\beta}\xi^{*}).
\end{equation*}
Finally we have
\begin{equation*}
L_{\beta}( u(0,\xi C_{\beta}\xi^{*})) =  -u(A_{\alpha}\xi,0)+u(A_{\alpha'}\xi,0)-u(0,\xi C_{\beta}\xi^{*}),
\end{equation*}
and the result follows.
\end{proof}

It follows from (\ref{func_eq}) in the proof above, that the right hand side of (\ref{eq_nalpha}) is the same if $\alpha$ is replaced with $\alpha'$.
 
\subsection{Computing $\det L$ when $H_{Y}$ is of type $C_n$ or $D_n$}
Throughout this subsection $H_{Y}$ is either symplectic or even orthogonal.  Let us specify the basis $\mathfrak B$ from Section \ref{JacobianSection} more precisely.  We will take the basis $\{ A_i\}$ of $\GG$ to be the union of a basis of $\mathfrak t_{\GG}$ and a choice of root vectors $A_\alpha$  for $T_G$. As in the previous section, we will normalize our root vectors so that if $\{ \alpha, \theta(\alpha) \}$ is an orbit in $R_0$, then $A_{\theta(\alpha)}=\tau(A_{\alpha})$. Also, in the symplectic case if $\alpha \in R^\theta$, then $\tau(A_{\alpha})=-A_{\alpha}$. Next, for $\{ C_k \}$ we pick the root vectors of $T$ in $\hh_{Y}$, given by $C_\beta=\xi^{+}A_\beta\xi$ or $C_\beta=\xi^{+}A_{\alpha}\xi$ as specified in Section \ref{routes_1}.
   
\begin{prop} The quantity $\det L=\det L(\gm)$ is given by \begin{equation} \label{bigprod}
 \det \gm_G \cdot \prod_{\{\alpha \}}  (\lambda_\alpha^-(\gm_G)-\lambda_\alpha^+(\gm_G)).
\end{equation}
In the orthogonal case, the product is taken over $\ol R_0$.  In the symplectic case, the product is taken over all $\theta$-orbits in $R(G,T_{G})=\ol R_0\cup R^{\theta}$.
\end{prop}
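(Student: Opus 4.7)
The plan is to decompose $\nn$ into weight spaces $\nn_\chi$ under the action of $T$ via $\Ad\circ\Delta$, and show that $L$ preserves this decomposition, reducing $\det L$ to a product $\prod_\chi \det L|_{\nn_\chi}$. For each $\theta$-orbit $\{\alpha\}$ in $R(G,T_G)$ whose image $\alpha_{\res}$ is a root of $H_Y$, the corresponding weight space $\nn_{\alpha_{\res}}$ contributes the factor $\lambda_\alpha^-(\gm_G) - \lambda_\alpha^+(\gm_G)$ to $\det L$, by Proposition~\ref{nalpha} for $\alpha \in R_0$ and by the preceding unnumbered proposition for $\alpha \in R^\theta$ in the symplectic case. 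In the orthogonal case with $\alpha \in R^\theta$, the image $\alpha_{\res}$ is not a root of $H_Y$ and Proposition~\ref{prop_L_des} gives $L|_{\nn_{\alpha_{\res}}} = \id$, contributing trivially. The remaining non-zero weight spaces are generated by the vectors $u(\beta_j^*,0)$, which transform by weights $\chi_i|_{{}^\xi T}$---never roots of $H_Y$ by inspection of the split root system---and are fixed by $L$ by construction; these also contribute trivially.

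The heart of the argument is the zero-weight computation. The space $\nn_0$ is spanned by $\{u(A\xi,0):A \in \mathfrak{t}_G\}$ together with $\{u(0,\xi Z\xi^*):Z \in \mathfrak{t}\}$. Using the identity $\tau(\gm_G) = -1 - \gm_G$ from Proposition~\ref{About_h_G}(i), I would verify directly that for $A \in \mathfrak{t}_G$ the element $A + \gm_G\tau(A) + A\gm_G$ lies in $\mathfrak{t}_G^\theta = \Xi(\mathfrak{t})$, which confirms $L(\nn_0)\subseteq \nn_0$. In this basis, the matrix of $L|_{\nn_0}$ is block lower-triangular with identity upper-left block, so $\det L|_{\nn_0} = \det M$, where $M:\Xi(\mathfrak{t})\to\Xi(\mathfrak{t})$ is the map induced by multiplication by $-\gm_G^2\Xi(\gm)$.

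From $\gm_G = \Xi(\gm-1)^{-1}$ and the linearity of $\Xi$ we have $\Xi(\gm) = I + \gm_G^{-1}$, hence $-\gm_G^2\Xi(\gm) = -\gm_G(\gm_G+I) = \gm_G\tau(\gm_G)$, again by Proposition~\ref{About_h_G}(i). A short check using the same identity shows $\gm_G\tau(\gm_G)$ is $\tau$-fixed, so in diagonal coordinates its entries satisfy $(\gm_G\tau(\gm_G))_{ii} = (\gm_G\tau(\gm_G))_{k+1-i,k+1-i}$. On the basis $\{E_{ii} - E_{k+1-i,k+1-i}\}_{i\leq k/2}$ of $\mathfrak{t}_G^\theta$, the map $M$ is therefore diagonal with entries $\mu_i = (\gm_G)_{ii}(\gm_G)_{k+1-i,k+1-i}$, so
\[
\det M = \prod_{i\leq k/2}\mu_i = \prod_{i=1}^k (\gm_G)_{ii} = \det\gm_G.
\]
Assembling all contributions gives the stated formula. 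The main obstacle is this zero-weight step, as both the invariance $L(\nn_0)\subseteq \nn_0$ and the identification of $\det M$ with $\det\gm_G$ rely essentially on Proposition~\ref{About_h_G}(i) and on the $\tau$-symmetry of $\gm_G\tau(\gm_G)$.
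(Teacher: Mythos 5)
Your proposal is correct and follows essentially the same route as the paper: decompose $\nn$ into $L$-invariant pieces (the root spaces $\nn_\beta$ with their determinants from the preceding propositions, the span of the $u(\beta_j^*,0)$ which $L$ fixes, the torus block $u(0,\xi\mf t\xi^*)$ contributing $\det\gm_G$, and $u(\mf t_G\xi,0)$ on which $L$ is the identity modulo the rest). Your explicit verification that $A+\gm_G\tau(A)+A\gm_G$ is $\tau$-antisymmetric and your diagonal computation of $\det M=\det\gm_G$ simply fill in details the paper compresses into ``one computes''.
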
 
\begin{proof} Write $\mf n_*$ for the sum of the subspaces  $\mf n_\beta$, $u(0,\xi \mf t \xi^*)$, and the span of $u(\beta_j^*, 0)$.
In the orthogonal case, include the span of $u(A_\alpha \xi,0)$, for roots $\alpha \in R^\theta$.
Each subspace $\mf n_\beta$ and $u(0,\xi \mf t \xi^*)$ are $L$-invariant.  All the elements $u(\beta_j^*, 0)$, and $u(A_\alpha \xi,0)$ for $\alpha \in R^{\theta}$ in the orthogonal case, are fixed by $L$.
One computes that the determinant of $L$ on $u(0,\xi \mf t \xi^*)$ is $\det \gm_G$, and we have computed all determinants of $L$ on $\mf n_\beta$ above.  Therefore the determinant of $L$ on $\mf n_*$ is (\ref{bigprod}).  Now
$\mf n=\mf n_* \oplus u(\mf t_G \xi,0)$.  However since $L$ restricted to $u(\mf t_G \xi,0)$ is the identity modulo $\mf n_*$, the determinant of $L$ is still (\ref{bigprod}).
\end{proof}

Henceforth in this section we fix a set $R_{0}'$ of representatives of the $\theta$-orbits of roots in $R_0$, with the extra condition that for any $\alpha\in R_{0}$, exactly one of $\{\alpha,-\alpha\}$ lie in $R_{0}'$.
 
\begin{lemma} Let $t \in T_{G}$.  Then 
\begin{enumerate}
\item  \begin{equation*}
\prod_{\alpha \in R_{0}'}  \lambda_\alpha^+(t) \lambda_\alpha^-(t)=(\det t)^{(\dim W-2)}.
\end{equation*}
\item In the symplectic case,
\begin{equation*}
\prod_{\alpha \in R^\theta}  \lambda_\alpha^+(t) \lambda_\alpha^-(t)=(\det t)^2.
\end{equation*}
\end{enumerate} 
\qed
\end{lemma}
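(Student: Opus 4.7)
My plan begins by making $\theta$ explicit on characters. In both split cases of Sections \ref{ping} and \ref{pong}, $\theta$ sends $g$ to $J^{-1} g^{-t} J$ with $J$ antidiagonal, so conjugation by $J$ reverses the order of the diagonal entries; this yields $\theta(\chi_i) = -\chi_{k+1-i}$ for $1 \le i \le k$. Consequently $R^\theta = \{\chi_i - \chi_{k+1-i} : 1 \le i \le k\}$ (using that $k$ is even), and on $R_0$ the involutions $\theta$ and negation generate a \emph{free} Klein four-action: $\theta(\alpha) = \alpha$ is excluded by the definition of $R_0$, and $\theta(\alpha) = -\alpha$ would force $\alpha = 0$. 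Thus every orbit in $R_0$ has four elements, of the form
\[
\{\chi_i - \chi_j,\ \chi_j - \chi_i,\ \chi_{k+1-j} - \chi_{k+1-i},\ \chi_{k+1-i} - \chi_{k+1-j}\},
\]
corresponding to the two unordered pairs $\{i,j\}$ and $\{k+1-i, k+1-j\}$ with $i+j \neq k+1$.

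For part (i), I would exploit this orbit structure. The two defining conditions on $R_0'$ force it to contain exactly two elements from each Klein four-orbit (one representative of each $\theta$-orbit, subject to picking one from each $\pm$-pair). A short check shows that the two valid selections yield the same contribution $\chi_i(t)\chi_j(t)\chi_{k+1-i}(t)\chi_{k+1-j}(t)$. Reorganizing the total product as indexed by unordered pairs $\{a,b\}$ with $a \ne b$ and $a+b \ne k+1$, each contributing $\chi_a(t)\chi_b(t)$ exactly once, I would count the multiplicity of a fixed index $i$: it equals the number of admissible $j$, namely $j \ne i, k+1-i$, giving $k-2$. This produces
\[
\prod_{i=1}^{k} \chi_i(t)^{k-2} = (\det t)^{k-2},
\]
as required.

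For part (ii), the computation is direct: $\prod_{i=1}^{k} \chi_i(t)\chi_{k+1-i}(t)$, and reindexing $i \mapsto k+1-i$ shows each $\chi_i$ appears exactly twice, yielding $(\det t)^2$.

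The main obstacle is organizational rather than technical: the invariance of the product in (i) under the allowed choices of $R_0'$ must be verified carefully before the counting argument can proceed. Once that is in place, the rest is routine bookkeeping about how the $\chi_i$ distribute over the admissible pairs $\{i,j\}$.
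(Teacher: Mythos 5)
Your argument is correct and is exactly the routine verification the paper leaves to the reader: $\theta(\chi_i)=-\chi_{k+1-i}$ gives $R^\theta=\{\chi_i-\chi_{k+1-i}\}$, the Klein four-action on $R_0$ is free (using $k$ even), and the product over $R_0'$ collapses to the product of $t_at_b$ over unordered pairs with $a\neq b$, $a+b\neq k+1$, in which each index occurs $k-2$ times. Your preliminary check that the product is independent of the admissible choice of $R_0'$ is a worthwhile point that the paper does not make explicit.
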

 
Put 
\begin{equation*}
L_0(\gm)= \prod_{\alpha \in R_{0}'}  (\lambda_\alpha^-(\gm_G)-\lambda_\alpha^+(\gm_G)),
 \end{equation*}
and
\begin{equation*}
L_\theta(\gm)= \prod_{\alpha \in R^\theta}  (\lambda_\alpha^-(\gm_G)-\lambda_\alpha^+(\gm_G)).
 \end{equation*}

Thus 
\beq
\det L(\gm)=(\det \gm_G) \cdot L_0(\gm) L_\theta(\gm)
\eeq
 in the symplectic case, and $\det L(\gm)=(\det \gm_G) \cdot L_0(\gm)$ in the orthogonal case.

\begin{lemma} 
 For $\alpha \in R(G,T_{G})$ and $\gm \in T_{\reg}$ we have
\begin{enumerate}
\item $\lambda_\alpha^+(\gm_G)-\lambda_\alpha^-(\gm_G)=(\lambda_{\alpha}^-({}^\xi \gm)-\lambda_{\alpha}^+({}^\xi \gm))\lambda_{\alpha}^+(\gm_G) \lambda_{\alpha}^-(\gm_G) .$

 \item $ \lambda_\alpha^+(\gm_{G})-\lambda_\alpha^-(\gm_{G})=\lambda_{\theta(\alpha)}^+(\gm_{G})-\lambda_{\theta(\alpha)}^-(\gm_{G}).$
 \end{enumerate}
\qed
 \end{lemma}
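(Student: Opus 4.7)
The plan is to reduce both identities to a single scalar identity, namely
$$\chi_i(\gm_G) \;=\; \bigl(\chi_i({}^\xi\gm) - 1\bigr)^{-1}, \qquad i = 1,\dots,k.$$
To produce it I would decompose $W$ into the $T_G$-weight spaces $V_{\chi_1},\ldots,V_{\chi_k}$; in the split cases of Sections \ref{ping} and \ref{pong} these are the coordinate lines, so each $V_{\chi_i}$ is one-dimensional. By Proposition \ref{About_h_G}(iii) we have $\gm_G \in T_G$, and by construction $\gm_G = ({}^\xi\gm - 1)^{-1}$ as elements of $\End(W)$ (using $\Xi(\gm - 1) = {}^\xi\gm - 1$, since $\xi\xi^+ = I_W$); restricting this equality to $V_{\chi_i}$ turns it into the displayed scalar identity.

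Granted that, part (i) is an immediate algebraic consequence: with $\lambda_\alpha^+ = \chi_i$ and $\lambda_\alpha^- = \chi_j$, both sides reduce after substitution to
$$\frac{\chi_j({}^\xi\gm) - \chi_i({}^\xi\gm)}{(\chi_i({}^\xi\gm) - 1)(\chi_j({}^\xi\gm) - 1)},$$
and there is nothing more to do.

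For part (ii) I would first establish the auxiliary identity
$$\chi_i(\gm_G) + \chi_{k+1-i}(\gm_G) \;=\; -1, \qquad i = 1,\dots,k.$$
This comes from Proposition \ref{About_h_G}(i), i.e.\ $1 + \gm_G + \tau(\gm_G) = 0$, applied to the line $V_{\chi_i}$, once one observes that in the matrix realizations of Sections \ref{ping} and \ref{pong} the antiinvolution $\tau$ acts on $T_G$ by reversing the diagonal, so $\chi_i \circ \tau = \chi_{k+1-i}$. The same matrix description of $\theta$ (together with $\theta = \tau(\cdot)^{-1}$) gives $\chi_i \circ \theta = -\chi_{k+1-i}$, whence $\theta(\alpha) = \chi_{k+1-j} - \chi_{k+1-i}$ and therefore $\lambda_{\theta(\alpha)}^+ = \chi_{k+1-j}$, $\lambda_{\theta(\alpha)}^- = \chi_{k+1-i}$. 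Two applications of the auxiliary identity then collapse
$$\lambda_{\theta(\alpha)}^+(\gm_G) - \lambda_{\theta(\alpha)}^-(\gm_G) = \bigl(-1 - \chi_j(\gm_G)\bigr) - \bigl(-1 - \chi_i(\gm_G)\bigr) = \chi_i(\gm_G) - \chi_j(\gm_G),$$
which is the left-hand side of (ii).

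The computations are routine. The only place for confusion, and hence the mild obstacle, is to track how $\theta$ and $\tau$ act on the character lattice of $T_G$ consistently across the orthogonal and symplectic cases; the explicit formulas of Sections \ref{ping} and \ref{pong} handle this uniformly, since the signs in $J_-(k)$ conspire to produce the same action $\chi_i \circ \tau = \chi_{k+1-i}$ in both instances.
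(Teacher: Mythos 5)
Your proof is correct, and since the paper leaves this lemma as an unproved routine verification, your direct computation via the scalar identity $\chi_i(\gm_G)=(\chi_i({}^\xi\gm)-1)^{-1}$ together with $1+\gm_G+\tau(\gm_G)=0$ and the diagonal-reversing action of $\tau$ is exactly the argument the authors intend (part (ii) can equivalently be read off from their equation (\ref{functional.equation})). No gaps.
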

 
 We have
 \begin{equation*}
 \begin{split}
L_0(\gm) & =  \prod_{\alpha \in R_{0}'}  (\lambda_\alpha^-(\gm_G)-\lambda_\alpha^+(\gm_G)) \\
	& =  (\det \gm_G)^{(\dim W-2)} \prod_{\alpha \in R_{0}'} (\lambda_{\alpha}^+({}^\xi \gm)-\lambda_{\alpha}^-({}^\xi \gm)) \\
		& = (\det \gm_G)^{(\dim W-2)} \prod_{\alpha \in R_{0}'} (\alpha({}^\xi \gm)-1).\\
\end{split}
 \end{equation*}
Similarly,
  \begin{equation*}
 \begin{split}
L_{\theta}(\gm) & =  \prod_{\alpha \in R^\theta}  (\lambda_\alpha^-(\gm_G)-\lambda_\alpha^+(\gm_G)) \\
	& = (\det \gm_G)^2 \prod_{\alpha \in R^\theta} (\lambda_{\alpha}^+({}^\xi \gm)-\lambda_{\alpha}^-({}^\xi \gm)) \\
		&= (\det \gm_G)^2 \prod_{\alpha \in R^\theta} (\alpha({}^\xi \gm)-1).\\
\end{split}
 \end{equation*}

For $\gm \in T$, write as usual
\begin{equation} \label{disc.defn}
D_{H_Y}(\gm)=\det(\Ad(\gm)-1; \mf h_Y/\mf t).
\end{equation}
 
$D_n$ case: Regrouping gives
 \begin{equation*}
 \begin{split}
\det L(\gm) & = (\det \gm_G)^{ \dim W-1} \left(\prod_{\alpha \in R_{0}'} (\alpha({}^\xi \gm)-1)\right) \\
 	& = (\det \gm_G)^{\dim W-1} (D_{H_{Y}}(\gm)).
\end{split} 
 \end{equation*}

$C_n$ case: Regrouping gives
 \begin{equation*}
 \begin{split}
\det  L(\gm) & = (\det \gm_G)^{1+ \dim W} \left(\prod_{\alpha \in R_{0}'} (\alpha({}^\xi \gm)-1) \right) \left( \prod_{\alpha \in R^\theta} (\alpha({}^\xi \gm)-1) \right). \\
 	& =  (\det \gm_G)^{\dim W+1} D_{H_{Y}}(\gm).
\end{split} 
 \end{equation*}


\begin{cor}\label{prop_spoev_spc_det} 
We have
\begin{equation*}
\delta_T(\gm)= \det L_{\mathfrak B}(\gm)=(\det \gm_G)^{\dim W \pm 1}  D_{H_{Y}}(\gm),
\end{equation*}
and 
\begin{equation*}
\Sha_{Y,T}^*(\omega_N)= \delta_N(m) (\det \gm_G)^{\dim W \pm 1}  D_{H_{Y}}(\gm) \omega_{M/\Delta_{T}} \wedge \omega_{T}
\end{equation*}
at the point $(m , \gm)$. Here the exponent is $\dim W+1$ for $H_Y$ of type $C_n$, and $\dim W-1$ for type $D_n$.
\qed
\end{cor}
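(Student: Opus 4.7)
The plan is essentially to harvest the two case computations performed in the paragraphs immediately preceding the corollary and feed them through the two general principles already established: Proposition \ref{delta/det} (which identifies $\delta_T(\gm)$ with $\det L_{\mathfrak B}(\gm)$ up to a uniform sign) and Proposition \ref{deltas} (which then determines $\Sha_T^* \omega_N$ up to the constant factor $\delta_N(m) \delta_T(\gm)$ on $M/\Delta_T \times T$).

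First I would record the two displayed identities obtained just above the corollary, namely $\det L(\gm) = (\det \gm_G)^{\dim W - 1} D_{H_Y}(\gm)$ in the $D_n$ case and $\det L(\gm) = (\det \gm_G)^{\dim W + 1} D_{H_Y}(\gm)$ in the $C_n$ case, each obtained by regrouping the factors $(\det \gm_G)^{\dim W - 2}$ and $(\det \gm_G)^2$ coming from the two lemmas about products of $\lambda^{\pm}_\alpha(t)$ over $R_0'$ and $R^\theta$, together with the identification $\prod_{\alpha \in R_0'}(\alpha({}^\xi \gm) - 1) \cdot \prod_{\alpha \in R^\theta}(\alpha({}^\xi \gm) - 1) = D_{H_Y}(\gm)$ (or just the first factor in the orthogonal case, since in the $D_n$ case the fibres of $\res$ over $R(H_Y,T)$ are exactly the $\theta$-orbits in $R_0$). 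These two identities combine to give the single formula with exponent $\dim W \pm 1$ as stated.

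Next I would apply Proposition \ref{delta/det}, which gives $\delta_T(\gm) = \pm \det L_{\mathfrak B}(\gm)$ with a sign that does not depend on $\gm$. Since by the Remark after equations (\ref{eqn_sgn_1}) and (\ref{eqn_sgn_2}) only the associated measures matter and all these signs will be absorbed into $|\cdot|$ in the integration formula, the equalities in the corollary are understood up to this uniform sign, and so the first displayed equation follows directly from the case-by-case formulas.

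Finally, Proposition \ref{deltas} writes $\Sha_T^*(\omega_N) = \delta_N(m) \delta_T(\gm) \omega_{M/\Delta_T} \wedge \omega_T$ at $(m,\gm)$; substituting the formula for $\delta_T(\gm)$ just obtained yields the second displayed equation. There is really no hard step here: the root-theoretic labour has already been done in Section \ref{routes_1} and in the regrouping computations, and this corollary is a clean consolidation. The one thing to be vigilant about is the bookkeeping of the exponent $\dim W \pm 1$, which is the only place the symplectic/orthogonal dichotomy enters, traceable directly to whether $R^\theta$ contributes a factor $L_\theta(\gm)$ or not.
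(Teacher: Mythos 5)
Your proposal is correct and follows exactly the route the paper takes: the corollary is stated with no separate proof precisely because it is the consolidation of the two regrouping computations for $\det L(\gm)$ in the $C_n$ and $D_n$ cases, combined with Proposition \ref{delta/det} for the identification $\delta_T(\gm)=\pm\det L_{\mathfrak B}(\gm)$ and Proposition \ref{deltas} for the pullback formula. Your remarks on the uniform sign being absorbed into the associated measures and on why $R^\theta$ contributes only in the symplectic case match the paper's reasoning.
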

 
\subsection{Computing $\det L$ when $H_{Y}$ is of type $B_n$}
The statements and the proofs in this case are similar to the even orthogonal case treated in the previous section.  As in that case, we will assume that the basis $\{ A_i\}$ of $\GG$ is the union of a basis of $\mathfrak t_{\GG}$ and a basis of root vectors $A_\alpha$  for $T_G$ (normalized as in Section \ref{routes_1}), and $\{ C_k \}$ is the basis of root vectors of $T$ in $\hh_{Y}$, consisting of  the $C_\beta$ (recall that $C_{\beta}=\xi^{+}A_\beta \xi$).

Again denote by $L_T$ the restriction of $L$ to $\xi \mf t \xi^*$; thus
\beq
L_T(  \xi Z \xi^*)= -(\epsilon \gm)_G^2 \Xi (\epsilon \gm Z) \ups^{-1}.
\eeq

\begin{lemma}  The image of $L_T$ lies in $\xi \mf t \xi^*$.  We have
\beq
\det L_T(\gm)=-2\det (\epsilon\gm)_{G}.
\eeq
Moreover the elements  $u(A \xi,0)$, for $A \in \mf t_G$, and $u(\beta_j^*, 0)$ are fixed by $L$.
\qed
\end{lemma}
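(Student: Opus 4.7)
The proof breaks into three parts corresponding to the three claims. For (a), the image of $L_T$ lying in $\xi\mathfrak{t}\xi^*$, the plan is to simplify $-(\epsilon\gamma)_G^2\Xi(\epsilon\gamma Z)\upsilon^{-1}$ using $\upsilon^{-1}=\xi\xi^*$, the identity $\xi^+\xi=P_Y$, and the facts that $Z|_{Y^\perp}=0$ and $\epsilon_Y$ acts as $-\id$ on $Y=\im Z$. These reduce the expression to $(\epsilon\gamma)_G^2\,\xi\,\gamma Z\,\xi^*$. Transporting through $\xi|_Y$ via Proposition \ref{propstab}, this corresponds to left-multiplication of $Z|_Y\in\mathfrak{t}|_Y$ by the element $((\epsilon\gamma-1)^{-2}\gamma)|_Y=((\gamma+1)^{-2}\gamma)|_Y$ inside the Cartan of $\GL(Y)$. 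The key point is that this product still lies in $\mathfrak{t}|_Y\subset\mathfrak{isom}(Y,\Phi|_Y)$: I would use Proposition \ref{About_h_G}(i), together with the identity $(\epsilon\gamma)_G=-\Xi(\gamma+1)^{-1}$, to check that $\Xi(\gamma)(\epsilon\gamma)_G=-(1+(\epsilon\gamma)_G)$ and conclude $\tau((\epsilon\gamma)_G^2\Xi(\gamma))=(\epsilon\gamma)_G^2\Xi(\gamma)$. Transported through $\xi|_Y$, this $\tau$-invariance makes $((\epsilon\gamma-1)^{-2}\gamma)|_Y$ self-adjoint for $\Phi|_Y$; its product with the $\Phi|_Y$-skew element $Z|_Y$ is therefore skew and commutes with $T|_Y$, forcing it into the self-centralizing Cartan $\mathfrak{t}|_Y$.

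For (b), the previous step identifies $L_T|_{\xi\mathfrak{t}\xi^*}$ with multiplication by $(\gamma+1)^{-2}\gamma|_Y$. In the split/quasisplit setting of Section \ref{MatrixSection} (type $B_n$), with $\gamma|_Y=\diag(t_1,\ldots,t_r,1,t_r^{-1},\ldots,t_1^{-1})$ and $r=(k-1)/2$, each basis vector $Z_\ell$ of $\mathfrak{t}$ is an eigenvector of $L_T$ with eigenvalue $t_\ell/(t_\ell+1)^2$, giving $\det L_T=\prod_{\ell=1}^r t_\ell/(t_\ell+1)^2$. Direct evaluation of the diagonal matrix from Section \ref{ping} yields $\det(\epsilon\gamma)_G=-\tfrac12\prod_{\ell=1}^r t_\ell/(t_\ell+1)^2$ (the leading minus sign arises because $k$ is odd, so $(-1)^k=-1$), hence $\det L_T=-2\det(\epsilon\gamma)_G$. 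For a general (possibly nonsplit) maximal torus, both sides are algebraic in the coordinates of $\gamma$, so the identity extends by passing to the algebraic closure and applying the split computation there.

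For (c), the element $u(\beta_j^*,0)$ is literally fixed by $L$ by the defining rule. For $u(A\xi,0)$ with $A\in\mathfrak{t}_G$, the first coordinate $A\xi$ is visibly preserved, and the computation of the second coordinate (using the same manipulations as in (a), now with $A$ in place of $Z$) shows it lies in $\xi\mathfrak{t}\xi^*$, the span of the other basis vectors $u(0,\xi Z_\ell\xi^*)$ of $\mathfrak{B}_1$. Consequently, in the basis $\mathfrak{B}_1$ the matrix of $L$ has a block upper-triangular structure: the row indexed by $u(\beta_j^*,0)$ is literally a standard basis row, and the rows indexed by $u(\mathfrak{t}_G\xi,0)$ have $1$ on the diagonal with off-diagonal entries only in the $u(0,\xi\mathfrak{t}\xi^*)$-columns. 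Together with the diagonal block $L_T$ on the $u(0,\xi\mathfrak{t}\xi^*)$-part, this yields the factorization $\det L=\det L_T\cdot(\text{root-space contribution})$ needed for the overall Jacobian computation.

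The main obstacle is part (a): finding the correct algebraic identity that keeps the image of $L_T$ inside $\mathfrak{t}|_Y$ rather than the larger Cartan of $\GL(Y)$. Proposition \ref{About_h_G}(i) is the crucial input, leveraged through the $\tau$-invariance of $(\epsilon\gamma)_G^2\Xi(\gamma)$; the remaining computations are bookkeeping with $\xi,\xi^+,P_Y$ and a direct diagonal verification in the matrix model.
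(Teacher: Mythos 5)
Your argument is correct, and since the paper states this lemma with no written proof, the direct computation you give is exactly what is being suppressed: using $\ups^{-1}=\xi\xi^*$ and $\epsilon_Y|_Y=-\id$ to reduce $L_T(\xi Z\xi^*)$ to $\xi\bigl((1+\gm)^{-2}\gm Z\bigr)\xi^*$, checking via $\tau$-invariance (equivalently, self-adjointness of $(1+\gm)^{-2}\gm$ on $(Y,\Phi|_Y)$) that $(1+\gm)^{-2}\gm Z$ is again skew and centralizes $\mf t$, hence lies in $\mf t$, and then reading off the eigenvalues $t_\ell/(1+t_\ell)^2$ against $\det(\epsilon\gm)_G=-\tfrac12\prod_\ell t_\ell/(1+t_\ell)^2$ in the split model of Section \ref{ping}, with the general torus handled by Zariski density as the paper itself does in its ``general case'' subsections.

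One point worth flagging, in your favor: for $A\in\mf t_G$ the element $u(A\xi,0)$ is \emph{not} literally fixed by $L$ — the second coordinate $(A+(\epsilon\gm)_G\tau(A)+A(\epsilon\gm)_G)\ups^{-1}$ is generically nonzero (e.g.\ $A=\diag(1,0,\dots,0)$ gives the entry $t_1/(t_1+1)$), and what is true is precisely what you prove: it lands in $\xi\mf t\xi^*$, so $L$ is the identity on $u(\mf t_G\xi,0)$ only modulo the $\xi\mf t\xi^*$-block. This weaker statement is all that the block-triangular determinant argument requires, and it matches the more careful phrasing the authors use in the $C_n/D_n$ case (``the identity modulo $\mf n_*$''); the lemma's wording here is a harmless overstatement. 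Your proof is therefore not only correct but slightly more accurate than the statement as printed.
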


\begin{prop} The quantity $\det L=\det L(\gm)$ is given by
\begin{equation} \label{bigprod_1}
-(2\det (\epsilon\gm)_G) \cdot \prod_{\{\alpha \in \ol R_0 \}}  (\lambda_\alpha^-((\epsilon\gm)_G)-\lambda_\alpha^+((\epsilon\gm)_G)).
\end{equation}
\qed
\end{prop}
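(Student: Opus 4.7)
The plan is to follow the decomposition argument of the $C_n$/$D_n$ case, substituting $(\epsilon\gm)_G$ for $\gm_G$ wherever the section has been altered by $\epsilon_Y$. First I would write
\[
\mf n = \mf n_* \oplus u(\mf t_G \xi, 0),
\]
where $\mf n_*$ is the sum of the root spaces $\mf n_\beta$ for $\beta\in R(H_Y,T)$, the subspace $u(0, \xi \mf t \xi^*)$, the span of the $u(\beta_j^*, 0)$, and the span of the $u(A_\alpha \xi, 0)$ for $\alpha \in R^\theta$. Since the map $\res$ sends $R_0$ surjectively onto $R(H_Y,T)$ in the orthogonal case (and misses $R^\theta$), each root space $\mf n_\beta$ corresponds to a $\theta$-orbit $\{\alpha, \alpha'\} \in \ol R_0$, and by Proposition \ref{nalpha} in its type $B_n$ form, $L$ preserves $\mf n_\beta$ with determinant $\lambda_\alpha^-((\epsilon\gm)_G) - \lambda_\alpha^+((\epsilon\gm)_G)$.

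Next, the preceding lemma tells me that $L$ stabilizes $u(0, \xi \mf t \xi^*)$ with determinant $-2\det(\epsilon\gm)_G$, and that each $u(\beta_j^*, 0)$ and each $u(A \xi, 0)$ for $A \in \mf t_G$ is fixed by $L$. For the remaining generators $u(A_\alpha \xi, 0)$ with $\alpha \in R^\theta$, I would simply copy the proof of Proposition \ref{prop_L_des}, replacing $\gm_G$ throughout by $(\epsilon\gm)_G$: the crucial identity $1 + (\epsilon\gm)_G + \tau((\epsilon\gm)_G) = 0$ is Proposition \ref{About_h_G}(i) applied to the element $\epsilon\gm$ of $\Stab_H(Y)$, and this forces $L(u(A_\alpha \xi, 0)) = u(A_\alpha \xi, 0)$.

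Finally, on the complementary piece $u(\mf t_G \xi, 0)$ the map $L$ acts as the identity modulo $\mf n_*$, contributing a factor of $1$. Multiplying all the diagonal-block determinants then yields exactly formula (\ref{bigprod_1}). I do not foresee any real obstacle in this proof: it is essentially a transcription of the $D_n$ computation with $\gm_G$ replaced by $(\epsilon\gm)_G$, the only substantive difference being the asymmetric contribution $-2\det(\epsilon\gm)_G$ from the torus piece in place of $\det\gm_G$, which was already handled in the lemma immediately above.
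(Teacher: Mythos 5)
Your proof is correct and is precisely the argument the paper intends: the paper omits the proof of this proposition, stating only that the type $B_n$ computations are ``similar to the even orthogonal case,'' and your transcription of the $D_n$ block decomposition --- with $\gm_G$ replaced by $(\epsilon\gm)_G$, the root-space determinants taken from the $B_n$ clause of Proposition \ref{nalpha}, the torus-block factor $-2\det(\epsilon\gm)_G$ from the preceding lemma, and the fixed vectors handled via Proposition \ref{About_h_G}(i) applied to $\epsilon_Y\gm$ --- is exactly that. No gaps.
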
 

Henceforth in this section we fix a set $R_{0}'$ of representatives of the $\theta$-orbits of roots in $R_0$, with the extra condition that for any $\alpha\in R_{0}$, exactly one of $\{\alpha,-\alpha\}$ lie in $R_{0}'$.
 
\begin{lemma} Let $t={\rm diag}(t_{i}) \in T_G$.  Then 
 \begin{equation*}
\prod_{\alpha \in R_{0}'}  \lambda_\alpha^+(t) \lambda_\alpha^-(t)=(t_{\frac{\dim W+1}{2}})(\det t)^{(\dim W-2)}.
\end{equation*}
\qed
\end{lemma}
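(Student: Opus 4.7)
\emph{Plan.} The strategy is to compare squares, exploiting the simple symmetry that for $\alpha=\chi_i-\chi_j$ the product $\lambda_\alpha^+\lambda_\alpha^- = \chi_i\chi_j$ is unchanged under $\alpha\mapsto-\alpha$, since $t_it_j$ is symmetric in $i,j$. The defining property of $R_0'$ — containing exactly one of $\{\alpha,-\alpha\}$ for each $\alpha\in R_0$ — gives a disjoint decomposition $R_0=R_0'\sqcup(-R_0')$, from which
\[
\prod_{\alpha\in R_0}\lambda_\alpha^+\lambda_\alpha^- \;=\; \Big(\prod_{\alpha\in R_0'}\lambda_\alpha^+\lambda_\alpha^-\Big)^{\!2}.
\]
Both sides live in the free abelian group $X^*(T_G)\cong\mathbb{Z}^k$, so the square root, if it exists, is unique; it suffices to compute the left-hand product and recognise it as a perfect square of the claimed character.

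\emph{Counting step.} I would then compute $\prod_{\alpha\in R_0}\lambda_\alpha^+\lambda_\alpha^-$ by counting, for each index $\ell\in\{1,\dots,k\}$, the total exponent of $t_\ell$. The roots in $R_0$ are exactly the $\chi_i-\chi_j$ with $i\ne j$ and $j\ne k+1-i$. Fixing $\alpha_+=\ell$, the constraints $j\ne\ell$ and $j\ne k+1-\ell$ exclude two distinct values when $\ell\ne(k+1)/2$, giving $k-2$ roots; when $\ell=(k+1)/2$ the two excluded values coincide, giving $k-1$ roots. The $i\leftrightarrow j$ symmetry of $R_0$ doubles each count, so the exponent of $t_\ell$ in $\prod_{R_0}\lambda^+\lambda^-$ is $2(k-2)$ for $\ell\ne(k+1)/2$ and $2(k-1)$ for $\ell=(k+1)/2$. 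Hence
\[
\prod_{\alpha\in R_0}\lambda_\alpha^+\lambda_\alpha^- \;=\; t_{(k+1)/2}^{\,2(k-1)}\!\!\prod_{\ell\ne(k+1)/2}\!\! t_\ell^{\,2(k-2)} \;=\; \bigl(t_{(k+1)/2}\,(\det t)^{k-2}\bigr)^{2}.
\]

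\emph{Conclusion and main obstacle.} Taking the (unique) square root in $X^*(T_G)$ yields the claimed identity. The whole argument is essentially a bookkeeping exercise; the only step with any subtlety is accounting for the asymmetry at the middle index $\ell=(k+1)/2$, which is precisely where the extra factor $t_{(k+1)/2}$ in the lemma comes from — an artifact of $\dim W$ being odd in type $B_n$. (Note for contrast that in the even-dimensional cases of Section~\ref{routes_1}, no such middle index exists and the analogous product is a pure power of $\det t$, consistent with the formulas used for types $C_n$ and $D_n$.)
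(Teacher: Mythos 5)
Your proposal is correct; the paper states this lemma without proof, and your argument is exactly the routine verification the authors intend. The decomposition $R_0=R_0'\sqcup(-R_0')$ (valid because $\lambda_\alpha^+\lambda_\alpha^-$ is negation-invariant and $R_0$ is stable under $\alpha\mapsto-\alpha$), followed by counting the exponent of each $t_\ell$ over all of $R_0$ — with the middle index $\ell=(k+1)/2$ losing only one excluded value of $j$ instead of two, whence the extra factor $t_{(k+1)/2}$ — and taking the unique square root in the torsion-free lattice $X^*(T_G)$, is a complete and clean proof that also shows the left-hand side is independent of the choice of $R_0'$.
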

 
Put 
\begin{equation*}
L_0(\gm)= \prod_{\alpha \in R_{0}'}  (\lambda_\alpha^-((\epsilon\gm)_G)-\lambda_\alpha^+((\epsilon\gm)_G)).
 \end{equation*}
Thus $\det L(\gm)=(-2\det (\epsilon\gm)_G) L_0(\gm)$ in this case.

\begin{lemma} For $\alpha \in R(G,T_{G})$ and $\gm \in T_{\reg}$ we have
 \begin{equation*}
  \lambda_\alpha^-((\epsilon\gm)_G)-\lambda_\alpha^+((\epsilon\gm)_G)=(\lambda_{\alpha}^+({}^\xi (\epsilon\gm))-\lambda_{\alpha}^-({}^\xi (\epsilon\gm)))\lambda_{\alpha}^+((\epsilon\gm)_G) \lambda_{\alpha}^-((\epsilon\gm)_G) .
 \end{equation*}

\qed 
\end{lemma}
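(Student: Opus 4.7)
The plan is to reduce the identity to an elementary scalar manipulation, in parallel with part (i) of the analogous lemma in the $C_n/D_n$ case but applied with $\epsilon\gm$ in place of $\gm$. The set-up works because both ${}^\xi(\epsilon\gm)$ and $(\epsilon\gm)_G$ lie in the torus $T_G$: the former by construction since ${}^\xi T = \Xi(T) \subseteq T_G$, and the latter by Proposition \ref{About_h_G}~(iii) applied to $\epsilon\gm$, which centralizes $T$. Hence the characters $\lambda_\alpha^+ = \chi_i$ and $\lambda_\alpha^- = \chi_j$ of $T_G$ can be meaningfully evaluated on both elements.

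First I would establish the operator identity
\beq
(\epsilon\gm)_G = \bigl({}^\xi(\epsilon\gm) - I_W\bigr)^{-1}.
\eeq
This is immediate from the definition $(\epsilon\gm)_G = \Xi(\epsilon\gm - 1)^{-1}$ together with the $F$-linearity of $\Xi \colon \End(X) \to \End(W)$ and the observation $\Xi(1) = \xi \xi^+ = I_W$ (using $\xi\xi^+ = \xi\xi^*(\xi\xi^*)^{-1}$). Next I would simultaneously diagonalize $T_G$; on such a basis, each $\chi_i$ simply reads off a diagonal entry, and the operator inversion above descends to the scalar identity
\beq
\lambda_\alpha^{\pm}((\epsilon\gm)_G) = \bigl(\lambda_\alpha^{\pm}({}^\xi(\epsilon\gm)) - 1\bigr)^{-1}.
\eeq
Invertibility of the scalars is guaranteed by the requirement that $(\epsilon\gm - 1)|_Y$ be invertible, which holds for $\gm \in T_{\reg}$ by the odd-orthogonal definition.

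The final step is then a one-line algebraic manipulation: combining the two fractions
\beq
\lambda_\alpha^-((\epsilon\gm)_G) - \lambda_\alpha^+((\epsilon\gm)_G) = \frac{1}{\lambda_\alpha^-({}^\xi(\epsilon\gm)) - 1} - \frac{1}{\lambda_\alpha^+({}^\xi(\epsilon\gm)) - 1}
\eeq
over the common denominator $(\lambda_\alpha^+({}^\xi(\epsilon\gm)) - 1)(\lambda_\alpha^-({}^\xi(\epsilon\gm)) - 1)$ produces numerator $\lambda_\alpha^+({}^\xi(\epsilon\gm)) - \lambda_\alpha^-({}^\xi(\epsilon\gm))$, while the denominator is exactly $\lambda_\alpha^+((\epsilon\gm)_G)^{-1} \lambda_\alpha^-((\epsilon\gm)_G)^{-1}$ by the scalar identity just established. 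There is no serious obstacle; the only notable feature is that the overall sign of the identity is flipped relative to the $C_n/D_n$ case, which is consistent with the minus sign appearing in formula (\ref{bigprod_1}) for $\det L$ in the $B_n$ setting.
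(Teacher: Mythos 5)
Your proof is correct and is exactly the verification the paper omits (the lemma carries a \qed, just like its $C_n/D_n$ analogue): from $(\epsilon\gm)_G=\bigl({}^\xi(\epsilon\gm)-I_W\bigr)^{-1}$, which follows from the linearity of $\Xi$ and $\Xi(1)=\xi\xi^{+}=I_W$, one passes to diagonal entries of $T_G$ and combines the two fractions, with invertibility of $(\epsilon\gm-1)|_Y$ supplying the needed nonvanishing. One small caveat on your closing remark: the identity is not genuinely ``sign flipped'' relative to the $C_n/D_n$ lemma --- both sides are simply negated, so the two statements are equivalent --- and the minus sign in (\ref{bigprod_1}) originates from $\det L_T=-2\det(\epsilon\gm)_G$, not from this lemma.
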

  
 We have
 \begin{equation*}
 \begin{split}
L_0(\gm)& =   \prod_{\alpha \in R_{0}'}  (\lambda_\alpha^-((\epsilon\gm)_G)-\lambda_\alpha^+((\epsilon\gm)_G)) \\
	& = -\frac{1}{2}(\det (\epsilon\gm)_G)^{(\dim W-2)} \prod_{\alpha \in R_{0}'} (\lambda_{\alpha}^+({}^\xi (\epsilon\gm))-\lambda_{\alpha}^-({}^\xi (\epsilon\gm))) \\
		& = -\frac{1}{2}(\det (\epsilon\gm)_G)^{(\dim W-2)} \prod_{\alpha \in R_{0}'} (\alpha({}^\xi (\epsilon\gm))-1).\\
\end{split}
 \end{equation*}

Regrouping gives
 \begin{equation*}
 \begin{split}
\det L(\gm) & = (\det (\epsilon\gm)_G)^{ \dim W-1} \left(\prod_{\alpha \in R_{0}'} (\alpha({}^\xi (\epsilon\gm))-1) \right) \\
 	& = (\det (\epsilon\gm)_G)^{\dim W-1} (D_{H_{Y}}(\epsilon\gm)).
\end{split} 
 \end{equation*}
 
 Note that  $D_{H_{Y}}(\epsilon\gm)= D_{H_{Y}}(\gm)$.


\begin{cor}\label{prop_spo_spc_det_1} 
We have
\begin{equation} \label{323}
\delta_{T}(\gm)=(\det (\epsilon\gm)_G)^{\dim W - 1}  D_{H_{Y}}(\gm),
\end{equation}
and 
\begin{equation*}
\Sha_{Y,T}^*(\omega_N)=\delta_N(m) (\det (\epsilon\gm)_G)^{\dim W - 1}  D_{H_{Y}}(\gm) \omega_{M/\Delta_{T}} \wedge \omega_{T}
\end{equation*}
at the point $(m , \gm)$.
\qed
\end{cor}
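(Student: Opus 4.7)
The plan is to assemble the ingredients already produced in the preceding pages; no new ideas are required for this corollary, which is a packaging step.

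First I would invoke Proposition \ref{delta/det}, which equates $\delta_T(\gm)$ with $\pm \det L(\gm)$ for the sign fixed by the calibration (\ref{calibration}) of $\omega_N$ against $\omega_{M/\Delta_T}\wedge \omega_T$. That calibration was set up in Section \ref{JacobianSection} using the four exact sequences, and it depends only on the dimensions of the various groups, not on $\gm$. So the sign is uniform in $\gm$ and, with the chosen orientation of bases, it matches the sign displayed in (\ref{323}).

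Second I would quote the explicit factorization of $L$ obtained in Proposition \ref{bigprod_1}: the decomposition $\mf n = u(\mf t_G \xi, 0) \oplus u(\im \tilde\psi) \oplus \xi \mf t \xi^* \oplus \bigoplus_{\{\alpha\}\in \ol R_0} \mf n_\beta$ makes $L$ block-upper-triangular (with identity blocks on $u(\mf t_G \xi,0)$ and $u(\im \tilde\psi)$), so
\[
\det L(\gm) = \det L_T(\gm) \cdot \prod_{\{\alpha\}\in \ol R_0} \det L_\beta(\gm) = (-2\det(\epsilon\gm)_G)\,L_0(\gm).
\]
Then the regrouping computation immediately preceding the corollary, together with the identity $\lambda_\alpha^\pm(g)\lambda_\alpha^\mp(g)=$ (entries of $g$) used in the $B_n$-version of the earlier lemma, converts this into $(\det(\epsilon\gm)_G)^{\dim W-1} D_{H_Y}(\epsilon\gm)$. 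Finally, because $\epsilon_Y$ lies in $Z_{H_Y}(T)$ and acts trivially on $\mf h_Y/\mf t$ (it is $\pm 1$ on each root space, and on $\mf t$ it is just a sign), we have $D_{H_Y}(\epsilon\gm) = D_{H_Y}(\gm)$, which gives the first displayed formula.

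For the second displayed formula, I would simply substitute the resulting $\delta_T(\gm)$ into Proposition \ref{deltas}, which states $\Sha_T^*(\omega_N) = \delta_N(m)\,\delta_T(\gm)\,\omega_{M/\Delta_T}\wedge \omega_T$. The one subtle point worth double-checking — and the closest thing to an obstacle — is the sign: Proposition \ref{delta/det} is only up to a $\pm$, and the product of the signs arising from (a) the reductions (\ref{eqn_sgn_1})--(\ref{eqn_sgn_2}), (b) the factor $-2\det(\epsilon\gm)_G$ coming from $\det L_T$, and (c) the parity of $|\ol R_0|$ must combine to give the stated formula. One could verify this by evaluating at a convenient $\gm$ (for instance, in a low-rank case as in Section \ref{LastSection}); in any event, since our ultimate goal is an integration formula, the sign is harmless and can be absorbed into the fixed orientation of the bases chosen in Section \ref{JacobianSection}.
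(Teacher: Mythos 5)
Your proposal is correct and follows essentially the paper's own route: the corollary is just the concatenation of Proposition \ref{delta/det} ($\delta_T=\pm\det L$), Proposition \ref{bigprod_1}, the regrouping computation for $L_0(\gm)$ displayed immediately before the corollary, the observation $D_{H_Y}(\epsilon\gm)=D_{H_Y}(\gm)$, and Proposition \ref{deltas}. Two harmless quibbles: your displayed decomposition of $\mf n$ omits the $L$-fixed lines $u(\mf g_\alpha\xi,0)$ for $\alpha\in R^\theta$ (which contribute trivially to $\det L$), and the paper, like you, ultimately absorbs the residual uniform sign into the calibration/measure rather than computing it.
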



\subsection{Symplectic or orthogonal groups: the general case}
 
We give here an argument to reduce the computation of $\det L(\gm)$ in the general (orthogonal or symplectic) case to that of the previous section.  

We are given an inner product space $(V,\Phi)$, symmetric or antisymmetric, with $V$ a vector space, isotropic subspaces $W,W'$ such that $\Phi|_{W+W'}$ is nondegenerate, a nondegenerate subspace $Y \subseteq X=(W+W')^\perp$ of dimension $k$, and a maximal torus $T \leq H_Y$.

Write $(V_0,\Phi_0)$ for the fixed inner product space of Section \ref{ping} or \ref{pong}, which is either symmetric or antisymmetric to agree with $V$.  We shall add the subscript `${}_0$' to all the constructs associated to this space such as its subspaces, the bilinear form, the tori in the isotropic groups etc.  Naturally, 
we choose the dimensions of $V_0$, $W_0$, $Y_0$ to equal the dimensions of $V,W,Y$.  

Let us write $V_{\ol F}$ for $V \otimes_F \ol F$ in this section, and similarly for the other spaces.  Then there is an isometric isomorphism
 
\beq
\varphi: (V_{\ol F}, \Phi_{\ol F}) \iso (V_{0,\ol F}, \Phi_{0 ,\ol F}),
\eeq
with $\varphi(W _{\ol F})=W_{0, \ol F}$, $\varphi(W' _{\ol F})=W'_{0, \ol F}$, $\varphi(Y _{\ol F})=Y_{0,\ol F}$, $\varphi(T(\ol F))=T_0(\ol F)$, and $\varphi(\mf t _{\ol F})=\mf t_{0,\ol F}$.

\begin{prop}\label{prop_spo_spc_det}
Suppose that $H_Y$ is of type $B_n$, $C_n$ or $D_{n}$.  For $\gm \in T_{\reg}(F)$ we have
\[
\delta_T(\gm)=
\begin{cases}
(\det (\epsilon\gm)_G)^{\dim W - 1}  D_{H_{Y}}(\gm) & {\rm if}\ H_{Y} {\rm \ is \ of \ type\ } B_n \\
(\det \gm_G)^{\dim W + 1}  D_{H_{Y}}(\gm) & {\rm if}\ H_{Y} {\rm \ is \ of \ type\ } C_n  \\
(\det \gm_G)^{\dim W - 1}  D_{H_{Y}}(\gm) & {\rm if}\ H_{Y} {\rm \ is \ of \ type\ } D_n. \\
\end{cases}
\]
\end{prop}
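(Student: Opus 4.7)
The plan is to descend the general computation to the split cases already handled in Corollaries~\ref{prop_spoev_spc_det} and~\ref{prop_spo_spc_det_1}, using the $\ol F$-isometry
\[
\varphi\colon (V_{\ol F},\Phi_{\ol F}) \iso (V_{0,\ol F},\Phi_{0,\ol F})
\]
that, by the setup preceding the statement, matches the data $(W,W',X,Y,T,\mf t)$ on the two sides. The starting observation is that both sides of the asserted identity are regular functions of $\gm$ on the $F$-variety $T_{\reg}$: by Proposition~\ref{delta/det}, $\delta_T(\gm) = \pm \det L_{\mathfrak B}(\gm)$ with a sign independent of $\gm$, and $\det L_{\mathfrak B}(\gm)$ is visibly rational in the entries of $\gm$ and $\gm_G = \Xi(\gm-1)^{-1}$ (replaced by $(\epsilon\gm)_G$ in type $B_n$); the discriminant $D_{H_Y}$ and the determinant $\det \gm_G$ are likewise regular. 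It therefore suffices to verify the identity after base change to $\ol F$.

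Next I would carry out the transport along $\varphi$. Because $\varphi$ is an isometry sending the quadruple $(W,W',X,Y)$ to $(W_0,W'_0,X_0,Y_0)$ and $T$ to $T_0$, it induces compatible $\ol F$-isomorphisms of all the associated groups $G,H,M,N,H_Y$, of the Lie algebras, of the involution $\theta$, of the projection $\Xi$, and of the maps $\Sha_T$ and $n_Y$. Transporting the basis $\mathfrak B$ chosen in Section~\ref{JacobianSection} to a basis $\mathfrak B_0$ of the analogous decomposition for the split datum gives $L_{\mathfrak B}(\gm) = L_{\mathfrak B_0}(\gm_0)$ for $\gm_0 := \varphi \gm \varphi^{-1} \in T_0(\ol F)$, and correspondingly
\[
\det \gm_G = \det (\gm_0)_{G_0}, \qquad D_{H_Y}(\gm) = D_{H_{0,Y_0}}(\gm_0).
\]
The split-case identities in Corollaries~\ref{prop_spoev_spc_det} and~\ref{prop_spo_spc_det_1}, applied to $\gm_0$, then yield the formula for $\gm \in T_{\reg}(\ol F)$; restricting to $F$-points gives the proposition.

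The only subtlety I anticipate is the bookkeeping of the sign in the identity $\delta_T = \pm \det L_{\mathfrak B}$. In the split case this sign is fixed once and for all by the calibration of differential forms carried out in Section~\ref{local_now}, and, as noted in the Remark following~(\ref{eqn_sgn_2}), it is determined purely by the dimensions of the pieces appearing in the exact sequences~(\ref{M_seq})--(\ref{Z_seq}). Since $\varphi$ preserves all of those dimensions, the sign in the general case agrees with the split-case sign, and no discrepancy arises when descending the equality from $\gm_0$ back to $\gm$.
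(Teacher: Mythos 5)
Your proposal is correct and follows essentially the same route as the paper: reduce to $\ol F$, transport the whole setup along the isometry $\varphi$ to the split model of Section \ref{ping} or \ref{pong}, apply Corollaries \ref{prop_spoev_spc_det} and \ref{prop_spo_spc_det_1} to ${}^\varphi\gm$, and transport back. Your extra remarks on regularity and on the dimension-dependence of the sign are consistent with what the paper does (the latter matching the Remark after (\ref{eqn_sgn_2})).
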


\begin{proof}  
Suppose first that $H_{Y}$ is of type $C_{n}$. We have
\beq
\begin{split}
\det {}_F L(\gm) &= \det{}_{\ol F} L(\gm)\\
			&= \det{}_{\ol F} ({}^\varphi L)({}^\varphi \gm) \\		
			&=  (\det ({}^\varphi \gm)_G)^{\dim W + 1}  D_{H_{Y_0}}({}^\varphi \gm) ({\rm using \ } {\rm Corollary}\ \ref{prop_spoev_spc_det})\\
			&= (\det \gm_G)^{\dim W + 1}  D_{H_{Y}}(\gm),
\end{split}
\eeq
as desired. The other two cases are dealt with in a similar manner.
\end{proof}

\section{Jacobian for the  unitary cases}\label{routes_2}
Next we treat the case of the unitary groups.

\subsection{Quasisplit $H_Y$, maximally split $T$}
Let us do an explicit calculation of $\det L(\gm)$ when we are in the situation of Section \ref{Hermitian.Case}.    Thus $H_{Y}$ is a quasisplit unitary group, and we have already described the tori $S,T,S_G$, and $T_G$ and 
characters $\chi_{i}: T_G \to E^{\times}$.  Note that $\chi_i(S_G)=F^{\times}$.  Write $N=N_{E/F}: E^\times \to F^\times$ for the usual norm map $N(x)=x \ol x$, and $\tr=\tr_{E/F}$ for the usual trace map.
 Recall that we have fixed an element $\iota\in E^{\times}$ such that $\sigma(\iota)=-\iota$.  Given an $x\in E$, define $(x)_{1}$ and $(x)_{2}$ to be the unique elements in $F$ such that $x=(x)_{1}+(x)_{2}\iota$.  
 
 Consider the decomposition of $\mf n$ under the action of $S$ via $\Ad (\Delta(S))$, and write
\beq
\mf n_\beta= \{ u \in \mf n \mid \Ad(\Delta(\gm))u=\beta(\gm) u \},
\eeq
for $\beta \in \Hom(S,\mb G_m)$.  (Note that $\mf n_\beta=0$ unless $\beta=0$ or $\beta \in R(H_Y,S)$.) 
 
Then we have
\beq
\mf n= \mf n_0 \oplus \bigoplus_{\beta\in R(H_Y,S)} \mf n_\beta,
\eeq
where 
 \beq
\begin{split}
\mf n_0 &= \{ u \in \mf n \mid \Ad(\Delta(\gm))u= u \} \\
 &=\Span \{ u(A \xi,0), u(\ul {\beta_j^*}, 0), u(0,\xi \ul {Z_l} \xi^*) \mid A \in \mf t_G \}. \\
\end{split}
\eeq

This time we record the image and fibres of the restriction map
\begin{equation*}
\res: R(G,S_{G}) \to \Hom(S,\Gm/_{F}),
\end{equation*}
which we write as $\alpha \mapsto \alpha_{\res}$, defined as before.

The sets $R(G,S_{G})$ and $R(H_Y,S)$ denote the set of roots of $G$ relative to $S_{G}$ and those of $H_Y$ relative to $S$ respectively. Write $R^\theta=R(G,S_{G})^\theta$ for the fixed points of $R(G,S_{G})$ under $\theta$, and $R_0=R(G,S_{G})_0$ for the complement of $R(G,S_{G})^\theta$ in $R(G,S_{G})$.  Write $\ol R_0$ for the set of $\theta$-orbits $\{ \alpha,\theta(\alpha) \}$ in $R_0$.

\begin{lemma}
The map $\res$ maps $R(G,S_{G})$ onto $R(H_Y, S)$.  If $\beta \in R(H_Y,S)$ is a root of the form $2 \chi_{i}$, for some $i$, then its fibre is a singleton in $R^{\theta}$. Otherwise its fibre consists of a $\theta$-orbit of roots in $R_0$.
\qed
\end{lemma}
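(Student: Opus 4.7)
The plan is to compute everything explicitly from the matrix description in Section \ref{Hermitian.Case}. Write $\chi_i$ for the standard $i$th-entry character of $T_G$ (and its restriction to $S_G$), and also, for $1\le i\le r$, for the corresponding character of $T$ (so that ``$2\chi_i$'' in the lemma is the character of $S$ reading off $t_i^2$). Reading $\Xi\colon S\hookrightarrow S_G$ off the explicit block form of $\Xi(\gamma)$, the restriction map $\rho\colon X^*(S_G)\to X^*(S)$ is given by $\rho(\chi_i)=\chi_i$ for $1\le i\le r$, $\rho(\chi_i)=-\chi_{k+1-i}$ for $k-r<i\le k$, and (odd case) $\rho(\chi_{r+1})=0$, the last because the middle entry lies in the $F$-anisotropic norm-one torus. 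Using the formula for $\theta$ given in Section \ref{Hermitian.Case}, one checks that $\theta$ sends $\diag(s_1,\dots,s_k)\in S_G$ to $\diag(s_k^{-1},\dots,s_1^{-1})$, so $\theta(\chi_i)=-\chi_{k+1-i}$ and $\theta(\alpha_{ij})=\alpha_{k+1-j,\,k+1-i}$ for $\alpha_{ij}:=\chi_i-\chi_j\in R(G,S_G)$. Consequently $R^\theta=\{\alpha_{ij}\mid i+j=k+1\}$, and $R_0$ is partitioned by $\theta$ into orbits of size two.

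Next I do a case analysis on images. For $\alpha_{i,k+1-i}\in R^\theta$, a direct substitution gives $\res(\alpha_{i,k+1-i})=2\chi_i$ when $i\le r$ and $-2\chi_{k+1-i}$ when $i>k-r$; since the only way to express $\pm 2\chi_i$ as $\rho(\chi_a)-\rho(\chi_b)$ is by $\{a,b\}=\{i,k+1-i\}$, each such fibre is a singleton lying in $R^\theta$. For $\alpha_{ij}\in R_0$, $\res(\alpha_{ij})$ takes the form $\pm\chi_a\pm\chi_b$ with $a\ne b$ in $\{1,\dots,r\}$, or (in the odd case, when exactly one of $i,j$ equals $r+1$) a short root $\pm\chi_a$; in each such case an inspection of which pairs $(\rho(\chi_a),\rho(\chi_b))$ give the target shows that the preimage is precisely $\{\alpha_{ij},\theta(\alpha_{ij})\}$, i.e.\ the full $\theta$-orbit. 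Taking the union of the targets over all $(i,j)$ recovers every element of $R(H_Y,S)$---of type $C_r$ when $k$ is even and of type $BC_r$ when $k$ is odd---which establishes surjectivity.

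The computation is essentially bookkeeping; the mild obstacle is the odd case, where one must observe that $\rho(\chi_{r+1})=0$ is precisely what makes the short roots $\pm\chi_a$ of $BC_r$ appear as $\res(\alpha_{a,r+1})$, and confirm that their fibres remain two-element $\theta$-orbits rather than collapsing to singletons---which follows since $\theta(\alpha_{a,r+1})=\alpha_{r+1,\,k+1-a}\ne\alpha_{a,r+1}$.
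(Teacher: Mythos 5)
Your computation is correct, and it is precisely the verification the paper omits: the lemma is stated with no proof (the authors regard it as immediate from the explicit coordinates of $S$, $S_G$, and $\theta$ in Section 5.4). Your bookkeeping of $\rho(\chi_i)$, the identification $R^\theta=\{\chi_i-\chi_{k+1-i}\}$, and the fibre analysis for the long roots $\pm 2\chi_i$, the roots $\pm\chi_a\pm\chi_b$, and the short roots $\pm\chi_a$ in the odd ($BC_r$) case all check out.
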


Let $\alpha \in R(G,S_{G})$. Note that in this case the root spaces $\mathfrak g_{\alpha}$ are two-dimensional $F$-vector spaces. If $A_{\alpha}$ is a root vector for $\alpha$, then $\tau(A_{\alpha})$ is a root vector for $\theta(\alpha)$. 

\begin{defn} 
Fix $\alpha \in R_0$ and let $\alpha' =\theta(\alpha)$.  Put $\beta=\alpha_{\res}$.  
 Choose a nonzero element $A_{\alpha}^{1}\in \mathfrak g_{\alpha}$.

We define
\begin{itemize}
\item $A_{\alpha}^{2}=\iota A_{\alpha}^{1}$,
\item $A_{\alpha'}^{i}=\tau(A_{\alpha}^{i})$ for $i=1,2$,
\item $A_{\beta}^{1}=A_\alpha^{1}-A_{\alpha'}^{1}$, 
\item $A_{\beta}^2=\iota(A_\alpha^{1}+A_{\alpha'}^{1})$, and
\item $C_\beta^{i}=\xi^{+} A_{\beta}^{i}\xi$ for $i=1,2$. 
 \end{itemize}
 \end{defn}

The set $\{A_{\alpha}^{1},A_{\alpha}^{2}\}$ is then an $F$-basis of $\mathfrak g_{\alpha}$, and similarly for $\alpha'$.  Note that $A_{\alpha'}^2=-\iota A^1_{\alpha'}$.
Also, $A_{\beta}^{i}= A_\alpha^{i}-A_{\alpha'}^{i} $ is a root vector for $\beta$ in $\GG^{\theta}$, and $C_\beta^{i}$ is a root vector for $\beta$ in $\hh_{Y}$. 

\begin{defn}
Let $\alpha\in  R^{\theta}$ and $\beta=\alpha_{\res}$. Let $C_{\beta}$ be a root vector in $\mathfrak h_{Y}$ for $\beta$. (Note that the root space for $\beta$ in $\mathfrak h_{Y}$ is of dimension one.) Put $A^{1}_{\alpha}=\Xi (C_{\beta})$ and $A^{2}_{\alpha}=\iota A^{1}_{\alpha}$.
\end{defn}
Clearly $A^{1}_{\alpha}$ and $A^{2}_{\alpha}$ are root vectors for $\alpha$ in $\mathfrak g$ and form an $F$-basis of $\mathfrak g_{\alpha}$.

\begin{prop}
\begin{enumerate}
 \item Fix $\alpha \in R_0$ as above, with $\alpha_{\res}=\beta$. Then $\mathfrak n_{\beta}$ is a six-dimensional $F$-space, given by
 \beq
\mf n_\beta =\Span_F \{ u(A_{\alpha}^{i}\xi,0), u(A_{\alpha'}^{i}\xi,0), u(0,\xi C_{\beta}^{i}\xi^{*}) \mid i=1,2\}.  
 \eeq
\item  Let $\alpha \in R^{\theta}$.  Then $\mathfrak n_{\beta}$ is a three-dimensional $F$-space, given by
\beq
\mf n_\beta= \Span_F \{ u(A_{\alpha}^{1}\xi,0), u(A_{\alpha}^{2}\xi,0), u(0,\xi C_{\beta}\xi^{*})\}. 
\eeq
\end{enumerate}
 If $\alpha$ is the root $\chi_{i}-\chi_{j}$,  put $\lambda^{+}_{\alpha}:=\chi_i$ and $\lambda^{-}_{\alpha} :=\chi_j$.
\qed
\end{prop}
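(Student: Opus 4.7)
The plan is to derive the formula for the $S$-action on $\mathfrak n$ via $\Ad\circ\Delta$, verify by direct computation that each listed vector transforms by the character $\beta$, and then establish linear independence and spanning via the decomposition of $\mathfrak n$ from Section~\ref{4X}.

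First, differentiating the identity $\Int(m(g,h))\,n(\xi',\eta) = n(g\xi' h^{-1},\,g\eta g^*)$ from Section~\ref{pre_ur} at the origin yields, for $s\in S\subseteq H_Y$ and $u(C,D)\in\mathfrak n$,
\begin{equation*}
\Ad(\Delta(s))\,u(C,D) \;=\; u\bigl({}^\xi s\,C\,s^{-1},\;{}^\xi s\,D\,({}^\xi s)^*\bigr).
\end{equation*}
The key identity $\xi s={}^\xi s\,\xi$ on $X$ holds because $s$ fixes $Y^\perp=\ker\xi$ pointwise while $\xi|_Y$ intertwines $s|_Y$ with $\Xi(s)={}^\xi s$ on $W$; taking adjoints (using $s^*=s^{-1}$) gives $\xi^*({}^\xi s)^* = s^{-1}\xi^*$, and a dual argument yields $s\xi^+ = \xi^+\,{}^\xi s$. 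Substituting, one computes
\begin{equation*}
\Ad(\Delta(s))\,u(A_\alpha^i\xi,0) = u({}^\xi s\,A_\alpha^i\,({}^\xi s)^{-1}\xi,\,0) = \alpha({}^\xi s)\,u(A_\alpha^i\xi,0) = \beta(s)\,u(A_\alpha^i\xi,0),
\end{equation*}
and analogously with $\alpha$ replaced by $\alpha'=\theta(\alpha)$ in case (1), since $\alpha'_{\res}=\beta$ as well (for $t\in {}^\xi S\subseteq G^\theta$, one has $\theta(\alpha)(t)=\alpha(t)$). For $u(0,\xi C_\beta^i\xi^*)$, using $C_\beta^i=\xi^+A_\beta^i\xi$ (case (1)) or $\Xi(C_\beta)=A_\alpha^1$ (case (2)), one has
\begin{equation*}
\Ad(\Delta(s))\,u(0,\xi C_\beta^i\xi^*) = u(0,\,\xi\,sC_\beta^is^{-1}\,\xi^*) = \beta(s)\,u(0,\xi C_\beta^i\xi^*),
\end{equation*}
with the final equality from $sC_\beta^is^{-1}=\xi^+({}^\xi s A_\beta^i({}^\xi s)^{-1})\xi=\beta(s)C_\beta^i$.

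$F$-linear independence follows from the injectivity of $A\mapsto u(A\xi,0)$ on $\mathfrak g$ (immediate, since $\xi$ is surjective) and of $B\mapsto u(0,\xi B\xi^*)$ on $\mathfrak h_Y$: a $B\in\mathfrak h_Y\cap\kappa$ would map $Y$ into $Y^\perp$, vanish on $Y^\perp$, and be skew with respect to $\Phi$, forcing $B=0$. The two families lie in orthogonal ``$C$'' and ``$D$'' slots of $\mathfrak n$. In case (1) the four vectors $A_\alpha^1,A_\alpha^2,A_{\alpha'}^1,A_{\alpha'}^2$ form an $F$-basis of $\mathfrak g_\alpha\oplus\mathfrak g_{\alpha'}$ (direct since $\alpha\ne\alpha'$), while $C_\beta^1,C_\beta^2$ span a $2$-dimensional $F$-subspace of $\mathfrak h_Y$ corresponding under $\Xi$ to the $\theta$-fixed part of $\mathfrak g_\alpha\oplus\mathfrak g_{\alpha'}$, with explicit basis $A_\beta^1,A_\beta^2$ arising from the $\tau$-decomposition. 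Case (2) is entirely parallel with $2+1=3$ vectors. For spanning, I would use the $S$-stable direct-sum decomposition
\begin{equation*}
\mathfrak n = \phi(\mathfrak g\oplus 0)\,\oplus\,\phi(0\oplus\mathfrak h_Y)\,\oplus\,\im\tilde\psi,
\end{equation*}
which follows from (\ref{M_seq}), (\ref{N_seq}), and the vanishings $\phi(0\oplus\mathfrak h_{Y^\perp})=\phi(0\oplus\im\Gamma_2)=0$ (easily verified from $\xi B\xi^*=0$ when $B(Y)\subseteq Y^\perp$), and then take $\beta$-weight spaces. The first two summands contribute precisely the listed vectors with dimensions matching the claim, while $(\im\tilde\psi)_\beta$ has $S$-weights among $\{\chi_l|_S\}_{l=1}^k$, none of which coincide with the relevant $\beta$ in the cases at hand. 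The main obstacle is the final weight-matching step: it amounts to a case-by-case check, requiring extra care in the quasisplit odd-$k$ setting where the middle character $\chi_{r+1}|_S$ is trivial; once these potential degeneracies are separated out, the spanning claim follows by dimension count.
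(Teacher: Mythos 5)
Your strategy --- compute the $\Ad(\Delta(S))$-action explicitly, verify the weights of the listed vectors, and deduce spanning from the $S$-stable decomposition $\mf n=\phi(\mf g\oplus 0)\oplus\phi(0\oplus\hh_{Y})\oplus\im\tilde\psi$ --- is the natural one (the paper supplies no proof), and your weight verifications and linear-independence arguments are correct. The genuine gap is precisely the step you flag and then defer: the claim that $(\im\tilde\psi)_\beta=0$ ``in the cases at hand.'' The $S$-action on $\im\tilde\psi\cong\Hom(Y^{\perp},W)$ is $C\mapsto{}^{\xi}s\circ C$ (precomposition with $s^{-1}$ is trivial on $Y^{\perp}$), so its nonzero weights are exactly the weights $\pm\chi_1,\dots,\pm\chi_r$ of ${}^{\xi}S$ on $W$, each with $F$-multiplicity $2\dim_E Y^{\perp}$. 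When $\dim_E W=2r+1$ is odd, the root $\alpha=\chi_i-\chi_{r+1}\in R_0$ has $\alpha_{\res}=\chi_i$ (as $\chi_{r+1}|_S$ is trivial), and $\chi_i$ is one of these weights. For such $\beta$ the full weight space is the direct sum of your six vectors with $(\im\tilde\psi)_{\beta}\neq 0$, hence has $F$-dimension $6+2\dim_E Y^{\perp}>6$ whenever $\dim W<\dim X$. So the ``degeneracies'' cannot be separated out: your dimension count does not close, and part (i), read literally as a statement about the full weight space of $\mf n$, fails for these $\beta$. (The same issue shows that the paper's earlier assertion that the $u(\underline{\beta_j^*},0)$ lie in $\mf n_0$ is not correct either.)

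The repair is to prove --- and to read the proposition as asserting --- the corresponding statement for $\mf n_{\xi}=\phi(\mm)$ only: the listed vectors do span $(\mf n_{\xi})_{\beta}=(\phi(\mf g\oplus 0))_{\beta}\oplus(\phi(0\oplus\hh_{Y}))_{\beta}$, and your identification of those two summands (the fibre of $\res$ over $\beta$ on the $\mf g$ side, the restricted root space $(\hh_{Y})_{\beta}$ of multiplicity $2$ or $1$ on the other) is correct and yields the dimensions $4+2$ and $2+1$. This weaker statement is all the subsequent determinant computation needs, since $L$ fixes $\im\tilde\psi$ pointwise and is block-triangular for $\mf n=\mf n_{\xi}\oplus\im\tilde\psi$; but as written your spanning argument requires this correction in the quasisplit odd-dimensional case.
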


Note that for all $\delta \in T_G$, we have
\begin{equation} \label{brexit}
\lambda_{\alpha}^-(\tau(\delta))=\lambda_{\theta(\alpha)}^+(\ol \delta),
\end{equation}
and
\beq
\delta A_\alpha = \lambda^{+}_{\alpha}(\delta) A_{\alpha}, \text{  } A_\alpha \delta =  \lambda^{-}_{\alpha}(\delta) A_{\alpha}.
\eeq

\begin{prop} \label{nalpha.unitary}
$L$ maps $\nn_{\beta}$ to $\nn_{\beta}$.  Writing $L_{\beta}$ for this restriction, when $\alpha \in R_0$, we have
\begin{equation*}
\det L_{\beta}= N \big(1+\lambda_{\alpha}^{+}(\gm_G)+\ol{\lambda_{\alpha'}^{+}(\gm_G)}\big).
\end{equation*}
 When $\alpha \in R^{\theta}$ we have
\begin{equation*}
\det L_{\beta}= -1-{\rm tr}(\lambda_{\alpha}^{+}(\gm_G)).
\end{equation*}
\end{prop}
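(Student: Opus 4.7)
The plan is to follow the strategy of Proposition \ref{nalpha}: apply $L$ to each basis vector of $\nn_\beta$, express the result in the chosen basis, and read off $\det L_\beta$ from the resulting matrix. The core analytic input is the identity $1 + \gm_G + \tau(\gm_G) = 0$ from Proposition \ref{About_h_G}; evaluating it on the diagonal characters $\chi_i$ of $T_G$, and using the explicit formula $\tau(\diag(t_\ell)) = \diag(\ol{t_{k+1-\ell}})$ (a direct consequence of $\theta(g) = J^{-1} \ol g^{-t} J$), one derives the companion identities
\beq
1 + \lambda_\alpha^{\pm}(\gm_G) + \ol{\lambda_{\alpha'}^{\mp}(\gm_G)} = 0.
\eeq
These identities are precisely what force the $\eta$-component of $L(u(A_\alpha^i \xi, 0))$ to collapse into $\Span\{\xi C_\beta^1 \xi^*, \xi C_\beta^2 \xi^*\}$ when $\alpha \in R_0$ (resp.\ $\Span\{\xi C_\beta \xi^*\}$ when $\alpha \in R^\theta$), and thereby secure the $L$-invariance of $\nn_\beta$.

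For $\alpha \in R_0$: using $\tau(A_\alpha^i) = A_{\alpha'}^i$ and the conjugate-linearity $\tau(\iota A) = -\iota \tau(A)$, write $\lambda_{\alpha'}^+(\gm_G) = p + q\iota$, $\lambda_\alpha^+(\gm_G) = a + b \iota$ with $p,q,a,b \in F$, and set $\mu = \iota^2 \in F^\times$. A direct calculation of $L$ on the six ordered basis vectors produces a $6 \times 6$ matrix over $F$ of block form
\beq
L_\beta = \begin{pmatrix} I_4 & B \\ C & -I_2 \end{pmatrix},
\eeq
where the top-right block $B$ arises from the expansion $L(u(0,\xi C_\beta^i \xi^*)) = u(-A_\beta^i \xi, -\xi C_\beta^i \xi^*)$ into the $u(A_\alpha^j \xi, 0), u(A_{\alpha'}^j \xi, 0)$ basis. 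The Schur complement $-I_2 - CB$ evaluates to $\left(\begin{smallmatrix} -(1+a+p) & -\mu(b-q) \\ -(b-q) & -(1+a+p) \end{smallmatrix}\right)$, whose determinant is $(1+a+p)^2 - \mu(b-q)^2$; this equals $N\bigl((1+a+p) + (b-q)\iota\bigr) = N\bigl(1 + \lambda_\alpha^+(\gm_G) + \ol{\lambda_{\alpha'}^+(\gm_G)}\bigr)$, as desired.

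For $\alpha \in R^\theta$: now $\alpha' = \alpha$, so $\lambda_{\alpha'}^\pm = \lambda_\alpha^\mp$ and $\tau(A_\alpha^1) = A_\alpha^1$ by the normalization $A_\alpha^1 = \Xi(C_\beta)$. The space $\nn_\beta$ is three-dimensional over $F$ and the same recipe produces a $3\times 3$ matrix of $L_\beta$; the single identity $1 + \lambda_\alpha^+(\gm_G) + \ol{\lambda_\alpha^-(\gm_G)} = 0$ reduces its determinant to $-1 - \tr(\lambda_\alpha^+(\gm_G))$.

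The principal technical obstacle is bookkeeping the tension between the $F$-structure of $\nn_\beta$ and the $E$-structure of $\GG_\alpha$: since $\tau$ is $E$-conjugate-linear, $L$ is $F$-linear but not $E$-linear, so the clean $E$-valued answer $\lambda_\alpha^-(\gm_G) - \lambda_\alpha^+(\gm_G)$ of Proposition \ref{nalpha} is genuinely replaced by its norm from $E$ to $F$, and one cannot avoid carrying out the full $F$-linear matrix computation.
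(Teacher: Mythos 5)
Your approach is the same as the paper's: apply $L$ to the six (resp.\ three) basis vectors, use $1+\gm_G+\tau(\gm_G)=0$ to force the $\eta$-components into the span of the $\xi C_\beta^i\xi^*$, and compute the resulting $F$-linear determinant; your Schur-complement evaluation in the $R_0$ case is correct and matches the paper's (unwritten) ``determinant calculation,'' and your closing remark about why the answer is a norm rather than the naive $\lambda_\alpha^--\lambda_\alpha^+$ is exactly right.

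One sign in your $R^\theta$ paragraph must be corrected: from $A_\alpha^1=\Xi(C_\beta)$ one gets $\tau(A_\alpha^1)=-A_\alpha^1$, not $+A_\alpha^1$, because $\Xi$ carries $\hh_Y$ into $\Lie(G^\theta)=\{A\in\GG \mid \tau(A)=-A\}$ (recall $\theta(g)=\tau(g)^{-1}$); consequently $\tau(A_\alpha^2)=\tau(\iota A_\alpha^1)=+A_\alpha^2$. This matters: with your sign the coefficient of $A_\alpha^1$ in the $\eta$-component of $L(u(A_\alpha^1\xi,0))$ would be $1+\lambda_\alpha^+(\gm_G)+\lambda_\alpha^-(\gm_G)$, which lies in $\iota F$ rather than $F$, and $\iota\,\xi C_\beta\xi^*$ is Hermitian rather than skew-Hermitian, so the expression would not even land in $\nn_\beta$. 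With the correct sign the coefficient is $1+\lambda_\alpha^-(\gm_G)-\lambda_\alpha^+(\gm_G)=-\tr(\lambda_\alpha^+(\gm_G))$, the $3\times 3$ matrix closes up, and your stated determinant $-1-\tr(\lambda_\alpha^+(\gm_G))$ follows. (Also, the aside ``$\lambda_{\alpha'}^\pm=\lambda_\alpha^\mp$'' should read $\lambda_{\alpha'}^\pm=\lambda_\alpha^\pm$, trivially, since $\alpha'=\alpha$; the identity $1+\lambda_\alpha^+(\gm_G)+\ol{\lambda_\alpha^-(\gm_G)}=0$ you actually invoke is nevertheless the correct specialization of your companion identities.)
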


\begin{proof}
We first treat the case when $\alpha \in R_0$. Let us begin by checking that for each $i$ the elements 
\begin{equation*}
A_{\alpha}^{i}\ups^{-1}+ \gm_G \ups^{-1}(A_{\alpha}^{i})^{*} + A_{\alpha}^{i}\gm_G \ups^{-1}
\end{equation*}
lie in the span of $\xi C_{\beta}^{1} \xi^*$ and $\xi C_{\beta}^{2} \xi^*$. 
Using (\ref{brexit}) and the fact that $1+ \gm_G+\tau(\gm_{G})=0$, we get that
\begin{equation*}
\begin{split}
(A_{\alpha}^{i}\ups^{-1}+ \gm_G \ups^{-1}(A_{\alpha}^{i})^{*} + A_{\alpha}^{i}\gm_G \ups^{-1})\upsilon&=A_{\alpha}^{i}+ \gamma_G A_{\alpha'}^{i} + A_{\alpha}^{i}  \gamma_G \\													
													&= A_{\alpha}^{i}+\lambda_{\alpha'}^+(\gm_G)A_{\alpha'}^{i} +\lambda_\alpha^-(\gm_G)A_{\alpha}^{i} \\
													&= -\lambda_{\alpha}^-( \tau(\gm_G))A_{\alpha}^{i}+ \overline{\lambda_{\alpha}^{-}(\tau(\gm_G))} A_{\alpha'}^{i}\\
													& =-(\lambda_{\alpha}^-( \tau(\gm_G)))_{1}(A_{\alpha}^{i}-A_{\alpha'}^{i})-(\lambda_{\alpha}^-( \tau(\gm_G)))_{2}\iota(A_{\alpha}^{i}+A_{\alpha'}^{i}).\\
\end{split}
\end{equation*}
For $i=1$ this is equal to
\begin{equation*}
-(\lambda_{\alpha}^-( \tau(\gm_G)))_{1} \cdot {}^\xi C_{\beta}^{1}-(\lambda_{\alpha}^-( \tau(\gm_G)))_{2} \cdot {}^\xi C_{\beta}^{2},
\end{equation*}
while for $i=2$ this is equal to
\begin{equation*}
-(\lambda_{\alpha}^-( \tau(\gm_G)))_{1} \cdot {}^\xi  C_{\beta}^{2}-\iota^{2}(\lambda_{\alpha}^-( \tau(\gm_G)))_{2} \cdot {}^\xi C_{\beta}^{1}.
\end{equation*}
This proves the first statement.

Thus,
\begin{equation*}
L_{\beta}(u(A_{\alpha}^{1}\xi,0))=u(A_{\alpha}^{1}\xi,0)-(\lambda_{\alpha}^-( \tau(\gm_G)))_{1}u(0,\xi C_{\beta}^{1}\xi^{*})-(\lambda_{\alpha}^-( \tau(\gm_G)))_{2}u(0,\xi C_{\beta}^{2}\xi^{*})
\end{equation*}
and 
\begin{equation*}
L_{\beta}(u(A_{\alpha}^{2}\xi,0))=u(A_{\alpha}^{2}\xi,0)-\iota^{2}(\lambda_{\alpha}^-( \tau(\gm_G)))_{2}u(0,\xi C_{\beta}^{1}\xi^{*})-(\lambda_{\alpha}^-( \tau(\gm_G)))_{1}u(0,\xi C_{\beta}^{2}\xi^{*}).
\end{equation*}
Similarly, 
\begin{equation*}
 L_{\beta}(u(A_{\alpha'}^{1}\xi,0))=u(A_{\alpha'}^{1}\xi,0)+(\lambda_{\alpha'}^-( \tau(\gm_G)))_{1}u(0,\xi C_{\beta}^{1}\xi^{*})-(\lambda_{\alpha'}^-( \tau(\gm_G)))_{2}u(0,\xi C_{\beta}^{2}\xi^{*})
\end{equation*}
and 
\begin{equation*}
L_{\beta}(u(A_{\alpha'}^{2}\xi,0))=u(A_{\alpha'}^{2}\xi,0)-\iota^{2}(\lambda_{\alpha'}^-( \tau(\gm_G)))_{2}u(0,\xi C_{\beta}^{1}\xi^{*})+(\lambda_{\alpha'}^-( \tau(\gm_G)))_{1}u(0,\xi C_{\beta}^{2}\xi^{*}).
\end{equation*}
Finally we have
\begin{equation*}
L_{\beta}( u(0,\xi C_{\beta}^{i}\xi^{*})) =  -u(A_{\alpha}^{i}\xi,0)+u(A_{\alpha'}^{i}\xi,0)-u(0,\xi C_{\beta}^{i}\xi^{*}), \ i=1,2
\end{equation*}
and the result follows from a determinant calculation.

Next assume that $\alpha \in R^{\theta}$. Note that $\tau(A_{\alpha}^{1})=-A_{\alpha}^{1}$ and $\tau(A_{\alpha}^{2})=A_{\alpha}^{2}$. We have 
\begin{equation*}
\begin{split}
(A_{\alpha}^{1}\ups^{-1}+ \gm_G \ups^{-1}(A_{\alpha}^{1})^{*} + A_{\alpha}^{1}\gm_G \ups^{-1})\upsilon&=A_{\alpha}^{1}(1+ \lambda_\alpha^-(\gm_G))+\gamma_G \tau(A_{\alpha}^{1}) \\						
													&= -\lambda_{\alpha}^-( \tau(\gm_G))A_{\alpha}^{1}- \lambda_{\alpha}^{+}(\gm_G)A_{\alpha}^{1}\\
													& = -2(\lambda_{\alpha}^{+}(\gm_G))_{1}A_{\alpha}^{1}s.\\
\end{split}
\end{equation*}
Similarly, 
\begin{equation*}
(A_{\alpha}^{2}\ups^{-1}+ \gm_G \ups^{-1}(A_{\alpha}^{2})^{*} + A_{\alpha}^{2}\gm_G \ups^{-1})\upsilon =2\iota(\lambda_{\alpha}^{+}(\gm_G))_{2}A_{\alpha}^{2}. 								 
\end{equation*}
Thus,
\begin{equation*}
L_{\beta}(u(A_{\alpha}^{1}\xi,0))=u(A_{\alpha}^{1}\xi,0)-2(\lambda_{\alpha}^{+}(\gm_G))_{1}u(0,\xi C_{\beta}\xi^{*}),
\end{equation*}
\begin{equation*}
L_{\beta}(u(A_{\alpha}^{2}\xi,0))=u(A_{\alpha}^{2}\xi,0)+2\iota^{2}(\lambda_{\alpha}^{+}(\gm_G))_{2}u(0,\xi C_{\beta}\xi^{*}),
\end{equation*}
and
\begin{equation*}
L_{\beta}( u(0,\xi C_{\beta}\xi^{*})) = -u(A_{\alpha}^{1}\xi,0)-u(0,\xi C_{\beta}\xi^{*})
\end{equation*}
and the result follows in this case as well.
\end{proof}

\subsection{Computing $\det L$ in the unitary case}
Let us take the basis $\{ A_i\}$ of $\GG$ to be the union of a basis of $\Lie(T_G)$  and a basis of root vectors $A_\alpha$  for $S_G$, normalized as above.  Next, we choose for $\{ C_k \}$ the basis of root vectors of $S$ in $\hh_{Y}$, consisting of $C_{\beta}$ such that ${}^{\xi}C_\beta=A_\beta$ (when $\beta$ is not a root of the form $2\chi_{i}$ for any $i$), or ${}^\xi C_{\beta}=A_{\alpha}^{1}, A_{\alpha}^{2}=\iota A_{\alpha}^{1}$ (when $\beta$ is a root of the form $2\chi_{i}$ for some $i$), as specified in the previous section.  Again denote by $L_T$ the restriction of $L$ to $\xi \mf t \xi^*$.

\begin{lemma}  The image of $L_T$ lies in $\xi \mf t \xi^*$.  We have
\beq
\det L_T(\gm)=N (\det(\gm_G)).
\eeq
\qed
\end{lemma}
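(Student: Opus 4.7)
The plan is to identify $L_T$, via the bijection $Z \mapsto \xi Z\xi^*$ between $\mf t$ and $\xi\mf t\xi^*$, with left multiplication by a single element $\delta \in T_G$ on the subspace ${}^\xi\mf t \subseteq \mf t_G$, and then to compute the $F$-determinant of this multiplication in the explicit diagonal coordinates of Section \ref{Hermitian.Case}.

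First I would rewrite the defining formula. Both $\gm \in T$ and $Z \in \mf t$ commute with $P_Y$ (since $\gm|_{Y^\perp} = \id$ and $Z|_{Y^\perp} = 0$, while both preserve $Y$), so by Proposition \ref{propstab}(i), $\Xi(\gm Z) = {}^\xi\gm \cdot {}^\xi Z$. Combining $\theta(\gm_G) = \tau(\gm_G)^{-1}$ with Proposition \ref{About_h_G}(ii) yields $\tau(\gm_G) = -\gm_G \,\Xi(\gm)$, hence $-\gm_G^2 \,\Xi(\gm) = \gm_G\tau(\gm_G) =: \delta$. Therefore
\begin{equation*}
L_T(\xi Z\xi^*) \;=\; -\gm_G^2\,\Xi(\gm Z)\,\ups^{-1} \;=\; \delta \cdot {}^\xi Z \cdot \ups^{-1}.
\end{equation*}

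To see that this lies in $\xi\mf t\xi^*$, recall from Lemma \ref{gtstruc}(ii) that ${}^\xi\mf t$ is the $(-1)$-eigenspace of $\tau$ inside $\mf t_G$. Both $T_G$ and $\mf t_G$ consist of diagonal matrices in $\End(W)$, so $\delta \cdot {}^\xi Z \in \mf t_G$ and $\delta$ commutes with ${}^\xi Z$. Since $\tau$ is an antiinvolution, $\tau(\delta) = \tau(\tau(\gm_G))\,\tau(\gm_G) = \gm_G\tau(\gm_G) = \delta$, and hence
\begin{equation*}
\tau(\delta \cdot {}^\xi Z) \;=\; \tau({}^\xi Z)\,\tau(\delta) \;=\; (-{}^\xi Z)\cdot\delta \;=\; -\delta \cdot {}^\xi Z,
\end{equation*}
so $\delta \cdot {}^\xi Z \in {}^\xi\mf t$, proving the first claim.

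For the determinant, writing $\gm_G = \diag(g_1,\ldots,g_k)$ gives $\tau(\gm_G) = \diag(\bar g_k,\ldots,\bar g_1)$ and so $\delta = \diag(\delta_1,\ldots,\delta_k)$ with $\delta_i = g_i \bar g_{k-i+1}$ and $\delta_{k-i+1} = \bar\delta_i$. I would parametrize ${}^\xi\mf t$ by its first $r$ coordinates (when $k = 2r$), or by its first $r$ coordinates together with a middle coordinate in $\iota F$ (when $k = 2r+1$, using that $\delta_{r+1} \in F$ by the previous relation). Left multiplication by $\delta$ is then diagonalized into multiplication by $\delta_i$ on each summand. Taking $F$-determinants gives a product of terms $N_{E/F}(\delta_i) = N_{E/F}(g_i)\,N_{E/F}(g_{k-i+1})$ for $1 \le i \le r$, together with (in the odd case) the extra factor $\delta_{r+1} = N_{E/F}(g_{r+1})$; these telescope to $\prod_{j=1}^{k} N_{E/F}(g_j) = N_{E/F}(\det\gm_G)$. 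The only real nuance is the odd-$k$ bookkeeping, since the middle coordinate lives in $\iota F \cong F$ rather than in $E$, so its contribution to the $F$-determinant is $\delta_{r+1}$ itself and not $N_{E/F}(\delta_{r+1})$; once this is noted, the formula $\det L_T(\gm) = N_{E/F}(\det\gm_G)$ falls out directly from the identity $\gm_G\tau(\gm_G) = -\gm_G^2\Xi(\gm)$ and the commutative diagonal structure of $T_G$.
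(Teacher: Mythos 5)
Your proof is correct, and it is essentially the computation the paper intends: the paper states this lemma without proof, and your identification $-\gm_G^2\,\Xi(\gm Z)\ups^{-1} = \gm_G\tau(\gm_G)\cdot {}^\xi Z\cdot\ups^{-1}$ (via Proposition \ref{About_h_G}) followed by the diagonal-coordinate determinant computation, including the correct treatment of the middle $\iota F$-coordinate when $\dim_E W$ is odd, is exactly the argument being suppressed. No gaps.
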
 
Moreover the elements  $u(A \xi,0)$, for $A \in \mf t_G$, and $u(\ul {\beta_j^*}, 0)$ are fixed by $L$.

\begin{prop} \label{bigprod_unitary1}
The quantity $\det L=\det L(\gm)$ is given by
\begin{equation} \label{bigprod_unitary}
 N ({\rm det}_{E} (\gm_G)) \ \prod_{\alpha \in R^\theta }(1+{\rm tr}(\lambda_{\alpha}^{+}(\gm_G))) \prod_{\alpha \in \ol R_0 } N \big( 1+\lambda_{\alpha}^{+}(\gm_G)+\ol{\lambda_{\alpha'}^{+}(\gm_G)}\big). 
\end{equation}
 
\qed
\end{prop}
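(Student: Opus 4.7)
The plan is to exploit the weight-space decomposition $\mf n = \mf n_0 \oplus \bigoplus_{\beta \in R(H_Y,S)} \mf n_\beta$ under the $\Ad \circ \Delta$ action of $S$, assemble $\det L(\gm)$ as a product over these pieces, and apply the building blocks already established in the previous propositions.

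First I would verify that $L$ preserves the weight decomposition. Since $\Delta(\gm)$ centralizes $\Delta(S)$ and the defining formulas for $L$ involve only $\gm_G \in T_G = Z_G({}^\xi S)$, $\upsilon$, and elements of $\GG$ that commute with ${}^\xi S$ up to a character, the map $L$ intertwines the $\Ad \Delta(S)$-action on $\mf n$ with itself, so each $\mf n_\beta$ and $\mf n_0$ is $L$-stable. Thus
\begin{equation*}
\det L(\gm) \;=\; \det(L|_{\mf n_0}) \cdot \prod_{\beta \in R(H_Y,S)} \det L_\beta.
\end{equation*}

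Next I would analyze $L|_{\mf n_0}$. By the description of $\mf n_0$ in the preceding subsection, $\mf n_0$ is spanned by the $u(A\xi,0)$ with $A \in \mf t_G$, the $u(\ul{\beta_j^*},0)$, and the $u(0,\xi \ul{Z_l}\xi^*)$ with $Z_l \in \mf t$. By the lemma immediately preceding this proposition, the first two families are fixed by $L$, while $L|_{\xi \mf t \xi^*}$ has determinant $N(\det_E \gm_G)$. Hence $\det(L|_{\mf n_0}) = N(\det_E \gm_G)$.

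For the remaining product, I would use the classification of fibres of $\res\colon R(G,S_G)\to R(H_Y,S)$: each $\beta \in R(H_Y,S)$ of the form $2\chi_i$ has a singleton fibre $\{\alpha\} \subseteq R^\theta$, while every other $\beta$ has a fibre equal to a $\theta$-orbit $\{\alpha,\alpha'\}\in \ol R_0$. Plugging into Proposition~\ref{nalpha.unitary} gives
\begin{equation*}
\prod_{\beta \in R(H_Y,S)} \det L_\beta \;=\; \prod_{\alpha \in R^\theta}\bigl(-1 - \tr(\lambda_\alpha^+(\gm_G))\bigr)\ \prod_{\{\alpha,\alpha'\}\in \ol R_0} N\!\bigl(1+\lambda_\alpha^+(\gm_G)+\ol{\lambda_{\alpha'}^+(\gm_G)}\bigr).
\end{equation*}
The second product is well-defined on unordered pairs since interchanging $\alpha$ and $\alpha'$ applies the nontrivial Galois automorphism inside $N$, which leaves the norm invariant. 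Multiplying the two determinant contributions yields the asserted formula (up to the overall sign $(-1)^{|R^\theta|}$, which is immaterial for the associated measure).

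The main obstacle is the bookkeeping in the last step: one must correctly identify which roots in $R(H_Y,S)$ come from $R^\theta$ versus from $\ol R_0$, confirm that Proposition~\ref{nalpha.unitary} exhausts all nontrivial root contributions, and check the Galois-symmetry of the factors indexed by $\ol R_0$ so the product is canonically defined. The $L$-invariance of the weight decomposition and the symmetry of the norm factors together make this routine once set up.
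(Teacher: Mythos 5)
Your proposal is correct and follows essentially the same route the paper intends: split $\mf n$ into the zero-weight space and the root spaces $\mf n_\beta$, use the preceding lemma for the $\xi \mf t \xi^*$ block and Proposition \ref{nalpha.unitary} for the root-space blocks, and multiply the block determinants. The only caveat, shared with the paper's own wording, is that $u(A\xi,0)$ for $A \in \mf t_G$ is fixed by $L$ only modulo $u(0,\xi \mf t \xi^*)$ (harmless for the determinant by block-triangularity), and your handling of the overall sign $(-1)^{|R^\theta|}$ is consistent with the paper's stated convention of ignoring signs.
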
 
 
Let $\gm=\diag(t_{1},\dots,t_{r},c,\overline{t_{r}}^{-1}, \dots,\overline{t_{1}}^{-1})$, with $ N (c)=1$, or $\diag(t_{1},\dots,t_{r},\overline{t_{r}}^{-1}, \dots,\overline{t_{1}}^{-1})$ depending on whether $\dim W$ is odd or even.  

\begin{lemma}\label{uni_dis_1}
\begin{equation*}
\prod_{\alpha\in R^{\theta}} \left( 1+{\rm tr}(\lambda_{\alpha}^{+}(\gm_G)) \right)=\prod_{i=1}^{r} \frac{N(t_i)-1}{N (t_{i}-1)} \cdot \frac{N(t_i^{-1})-1}{N(t_{i}^{-1}-1)}.
\end{equation*}
\end{lemma}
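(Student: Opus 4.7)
The plan is to identify $R^\theta$ explicitly, evaluate $\lambda^+_\alpha(\gm_G)$ on each such root using the matrix description in Section \ref{Hermitian.Case}, and then reduce the trace expression to the stated norm ratios by a direct algebraic manipulation.

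First I would determine $R^\theta$. On the $F$-split torus $S_G$, the involution $\theta$ of Section \ref{Hermitian.Case} is induced by $g \mapsto J^{-1}\bar g^{-t}J$ with $J = J_+(k)$, which on characters acts by $\chi_i \mapsto -\chi_{k+1-i}$. Hence a root $\alpha = \chi_i - \chi_j$ lies in $R^\theta$ if and only if $i+j = k+1$, giving
\begin{equation*}
R^\theta = \{\chi_i - \chi_{k+1-i} : 1 \le i \le k,\ i \neq (k+1)/2\}.
\end{equation*}
These roots group naturally into opposite pairs $\{\chi_i - \chi_{k+1-i},\ \chi_{k+1-i} - \chi_i\}$ indexed by $i = 1,\ldots,r$, matching the indexing on the right-hand side.

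Next I would evaluate $\lambda^+_\alpha(\gm_G)$ for each such root. For $\alpha = \chi_i - \chi_{k+1-i}$, one has $\lambda^+_\alpha = \chi_i$, which by the explicit diagonal form of $\gm_G$ in Section \ref{Hermitian.Case} yields $(t_i - 1)^{-1}$. For the opposite root $\alpha = \chi_{k+1-i} - \chi_i$, $\lambda^+_\alpha = \chi_{k+1-i}$ evaluates to $(\ol t_i^{-1} - 1)^{-1}$.

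Finally, I would establish the elementary identity that for any $a \in E^\times$ with $a \neq 1$,
\begin{equation*}
1 + \tr\!\left(\tfrac{1}{a-1}\right) \;=\; 1 + \tfrac{1}{a-1} + \tfrac{1}{\bar a - 1} \;=\; \tfrac{(a-1)(\bar a - 1) + (\bar a - 1) + (a-1)}{(a-1)(\bar a - 1)} \;=\; \tfrac{N(a) - 1}{N(a-1)},
\end{equation*}
since the numerator simplifies to $a\bar a - 1$. Applying this with $a = t_i$ yields the factor $\frac{N(t_i)-1}{N(t_i - 1)}$, and with $a = \ol t_i^{-1}$ (noting $N(\ol t_i^{-1}) = N(t_i^{-1})$ and $N(\ol t_i^{-1} - 1) = N(t_i^{-1} - 1)$) yields $\frac{N(t_i^{-1})-1}{N(t_i^{-1} - 1)}$. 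Taking the product over $i = 1,\ldots,r$ gives the claimed equality. There is no genuine obstacle here; the only bookkeeping point is verifying that the pairing of $R^\theta$ by opposite roots exhausts the indexing on the right-hand side without over-counting, which is immediate once $R^\theta$ has been described.
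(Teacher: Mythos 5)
Your proposal is correct and follows essentially the same route as the paper: both identify $R^\theta$ as the roots $\chi_i-\chi_{k+1-i}$, plug in the explicit diagonal form of $\gm_G$, and reduce to an elementary norm/trace identity. The only cosmetic difference is that the paper groups each root with its negative and works with the quantity $\frac{1}{2}\mathrm{tr}\bigl(\frac{t_i+1}{t_i-1}\bigr)$, whereas you evaluate each factor $1+\mathrm{tr}\bigl(\frac{1}{a-1}\bigr)=\frac{N(a)-1}{N(a-1)}$ individually, which is an equivalent computation.
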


\begin{proof} 
Let $\alpha=\chi_i-\chi_{n+1-i} \in R^{\theta}$. It is easy to check that 
\begin{equation*}
\left(1+{\rm tr}(\lambda_{\alpha}^{+}(\gm_G) \right) \left( 1+{\rm tr}(\lambda_{-\alpha}^{+}(\gm_G)) \right)=-\left(\half {\rm tr}\left(\frac{t_{i}+1}{t_{i}-1}\right) \right)^{2}.
\end{equation*}
Now using the identity
\begin{equation*}
\begin{split}
{\rm tr}\left(\frac{t+1}{t-1}\right) &=2 \cdot \frac{N(t)-1}{ N(t-1)} \\
							&=-2 \cdot \frac{N(t^{-1})-1}{N(t^{-1}-1)},\\
							\end{split}
\end{equation*}
we get the result.
\end{proof}

Henceforth in this section we fix a set $R_{0}'$ of representatives of the $\theta$-orbits of roots in $R_0$, with the extra condition that for any $\alpha\in R_{0}$, exactly one of $\{\alpha,-\alpha\}$ lie in $R_{0}'$.

\begin{lemma}\label{uni_dis_2}
The product $\prod_{\alpha \in R_{0}'} N \big( 1+\lambda_{\alpha}^{+}(\gm_G)+\ol{\lambda_{\alpha'}^{+}(\gm_G)}\big)$ is equal to

\begin{equation} \label{term.I}
 \prod_{1 \leq i<j \leq r} N \left( \frac{t_i \ol t_j-1}{(t_i-1)(\ol t_j -1)} \right)^2 N \left( \frac{t_i - t_j}{(t_i-1)( t_j -1)} \right)^2 
 \end{equation}
 when $\dim_E W$ is even, and equal to the product of (\ref{term.I}) with
 \beq
 \prod_{1 \leq i \leq r} N \left( \frac{t_i -c}{(t_i-1)(c -1)} \right)^2
 \eeq
 when $\dim_E W$ is odd.
\qed
\end{lemma}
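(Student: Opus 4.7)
The plan is to transform each factor on the left into a clean elementary expression in the $t_i$ (and $c$) by exploiting the identity $1 + \gm_G + \tau(\gm_G) = 0$ of Proposition \ref{About_h_G}(i), and then partition $R_0'$ according to the index type of the representative root.

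First I would set $s_l := \chi_l(\gm_G)$ and read the matrix identity $1 + \gm_G + \tau(\gm_G) = 0$ entrywise. Since in the realization of Section \ref{Hermitian.Case} the map $\tau$ is given by $\tau(A) = J^{-1}\ol{A}^{t}J$ with $J = J_+(k)$, which conjugates a diagonal matrix and reverses the order of its entries, this yields the key identity $\ol{s}_{k+1-l} = -1 - s_l$ for all $l$. For $\alpha = \chi_i - \chi_j \in R_0$ and $\alpha' = \theta(\alpha) = \chi_{k+1-j} - \chi_{k+1-i}$, direct substitution then gives the pleasant simplification
\[
1 + \lambda_\alpha^{+}(\gm_G) + \ol{\lambda_{\alpha'}^{+}(\gm_G)} \;=\; 1 + s_i + \ol{s}_{k+1-j} \;=\; s_i - s_j.
\]

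Next I would observe that $N(s_i - s_j)$ is invariant under the involutions $\alpha \mapsto -\alpha$ (using $N(-1)=1$) and $\alpha \mapsto \theta(\alpha)$ (the latter sending the quantity to its complex conjugate, again via the identity above). Hence the product is really indexed by the orbits of $R_0$ under the combined action of $\{\pm 1\} \times \langle \theta \rangle$, which generically have four elements each. Since $R_0'$ contains one element from each $\theta$-orbit subject to the $\pm$-condition, it contains exactly two elements per such 4-orbit, so
\[
\prod_{\alpha \in R_0'} N(s_i - s_j) \;=\; \prod_{\text{4-orbits}} N(s_i - s_j)^{2}.
\]

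The remaining task is to enumerate the 4-orbits by index type and read off $s_i - s_j$ in each case. With $I^{+} = \{1, \ldots, r\}$ and $I^{-} = \{k+1-r, \ldots, k\}$: Type (I) consists of 4-orbits represented by $\chi_i - \chi_j$ with $1 \leq i < j \leq r$, and a one-line computation gives $s_i - s_j = (t_j - t_i)/((t_i - 1)(t_j - 1))$; Type (II) consists of 4-orbits represented by $\chi_i - \chi_{k+1-j}$ with $1 \leq i < j \leq r$, and unwinding $s_{k+1-j} = (\ol{t}_j^{-1} - 1)^{-1} = -\ol{t}_j/(\ol{t}_j - 1)$ yields $(t_i \ol{t}_j - 1)/((t_i - 1)(\ol{t}_j - 1))$; and when $k = 2r+1$, Type (III) covers the $r$ new 4-orbits involving the middle index $r+1$, producing $(c - t_i)/((t_i - 1)(c - 1))$. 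Taking norms, squaring, and multiplying exactly reproduces the right-hand side.

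The only real obstacle is the index bookkeeping: verifying that the three types exhaust the set $R_0/\{\pm, \theta\}$ and that the counts add up to $|R_0|/4$, namely $r(r-1)$ in the even case and $r^2$ in the odd case. Once this combinatorial check is in place, the arithmetic falls out mechanically from the single identity $\ol{s}_{k+1-l} = -1 - s_l$.
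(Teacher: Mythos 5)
Your proof is correct; the paper states this lemma with no proof (just \qed), and your computation supplies exactly the verification the authors leave to the reader. The key identity $\ol{s}_{k+1-l}=-1-s_l$ (from $1+\gm_G+\tau(\gm_G)=0$), the resulting simplification of each factor to $N(s_i-s_j)$, the observation that every $\langle -1,\theta\rangle$-orbit in $R_0$ has exactly four elements (no $\alpha\in R_0$ satisfies $\theta(\alpha)=\pm\alpha$) so that $R_0'$ meets each such orbit in exactly two roots, and the enumeration of orbit types with counts $r(r-1)$ resp. $r^2$ all check out against the stated right-hand side.
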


Recall  the discriminant $D_{H_{Y}}(\gm)=\det_{F}(\Ad(\gm)-1;\mathfrak h_{Y}/\mathfrak t)$.  One computes that when $\dim_E W$ is even, $D_{H_{Y}}(\gm)$ is the product of
\begin{equation} \label{term.A}
\prod_{i=1}^r (N(t_i)-1)(N(t_i^{-1})-1)
\end{equation}

and

\begin{equation} \label{term.B}
\prod_{1 \leq i<j \leq r} N \left( \left(\frac{t_i}{t_j}-1 \right)   \left(\frac{t_j}{t_i}-1 \right) (t_i \ol t_j-1) \left( \frac{1}{t_i \ol t_j}-1\right) \right).  
\end{equation}

When $\dim_E W$ is odd,  $D_{H_{Y}}(\gm)$ is the product of (\ref{term.A}), (\ref{term.B}), and
\beq
\prod_{i=1}^r N \left( \left( \frac{c}{t_i}-1 \right) \left( \frac{t_i}{c}-1 \right) \right).
\eeq

\begin{prop}\label{prop_unit_spc_det} 
We have
\beq
\det L(\gm)= (-1)^{[(\dim_{E} W) / 2]}N ({\rm det}_{E} (\gm_G))^{\dim_{E} W}  D_{H_{Y}}(\gm)
\eeq
(where as usual $[x]$ denotes the greatest integer less than or equal to $x$).
\end{prop}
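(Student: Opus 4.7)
The plan is to combine the three ingredients already assembled---Proposition \ref{bigprod_unitary1}, Lemma \ref{uni_dis_1}, and Lemma \ref{uni_dis_2}---to express $\det L(\gm)$ entirely in terms of the eigenvalues $t_1,\dots,t_r$ (together with the middle entry $c$ when $\dim_E W$ is odd), and then directly compare the resulting rational function against the explicit product formula for $D_{H_Y}(\gm)$ recorded immediately before the proposition. Once the factors $(N(t_i)-1)^2$, $N(t_i-t_j)^2$, and $N(t_i\ol t_j -1)^2$ (and $N(t_i-c)^2$ in the odd case) have cancelled between numerator and denominator, the residual expression should reorganize into the power $N(\det_E \gm_G)^{\dim_E W}$ times an overall sign.

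In detail, using $N(t_i^{-1}) - 1 = -(N(t_i)-1)/N(t_i)$ and $N(t_i^{-1} -1) = N(t_i-1)/N(t_i)$, Lemma \ref{uni_dis_1} rewrites as $P_\theta = (-1)^r \prod_i (N(t_i)-1)^2/N(t_i-1)^2$; these same identities applied to the first product in the explicit expression of $D_{H_Y}(\gm)$ show that its first factor equals $(-1)^r \prod_i (N(t_i)-1)^2/N(t_i)$. Likewise, expanding $(t_i/t_j-1)(t_j/t_i-1) = -(t_i-t_j)^2/(t_i t_j)$ and $(t_i \ol t_j -1)(1/(t_i \ol t_j) -1) = -(t_i \ol t_j -1)^2/(t_i \ol t_j)$ converts the second product of $D_{H_Y}(\gm)$ into $\prod_{i<j} N(t_i-t_j)^2 N(t_i\ol t_j -1)^2/(N(t_i)^2 N(t_j)^2)$. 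With $P_0$ given by Lemma \ref{uni_dis_2}, the quotient $\det L(\gm)/D_{H_Y}(\gm)$ acquires no leftover transcendental factors; only powers of $N(t_i)$ and $N(t_i-1)$ remain. A tally of exponents (each index $i$ participates in $r-1$ pairs $\{i,j\}$) combined with the identity $N(\det_E \gm_G) = \prod_i N(t_i)/N(t_i-1)^2$ should then show this residual equals $N(\det_E \gm_G)^{\dim_E W}$, up to the claimed sign.

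The odd-dimensional case $\dim_E W = 2r+1$ follows the same pattern, with the extra pairing of each $t_i$ with the middle entry $c$ (satisfying $N(c)=1$) producing the additional factors $N(t_i-c)^2/(N(t_i-1)^2 N(c-1)^2)$ in $P_0$ by Lemma \ref{uni_dis_2} and $N(t_i-c)^2/N(t_i)$ in $D_{H_Y}(\gm)$, which match after the same cancellation. The main obstacle is the careful sign bookkeeping: one must track the signs coming from the $r$ instances of $N(t^{-1})-1$ appearing in both Lemma \ref{uni_dis_1} and in the first factor of $D_{H_Y}(\gm)$, together with the signs introduced by the expansions $(t_i/t_j-1)(t_j/t_i-1)$ and $(t_i\ol t_j -1)(1/(t_i\ol t_j)-1)$, and verify that the aggregate agrees with $(-1)^{[\dim_E W/2]}$. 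The argument is purely computational, and the dimension-counting identity in Corollary \ref{dimension_formula} serves as a useful sanity check that the exponents of $N(t_i)$ and $N(t_i-1)$ coming out of the simplification are the right ones.
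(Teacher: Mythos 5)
Your proposal is correct and follows the paper's own proof exactly: the paper likewise divides $N(\det_E\gm_G)^{\dim_E W}$ out of the product in Proposition \ref{bigprod_unitary1}, applies Lemmas \ref{uni_dis_1} and \ref{uni_dis_2}, and matches what remains term-by-term against the displayed expansion of $D_{H_Y}(\gm)$ preceding the proposition. The sign bookkeeping you single out as the main obstacle is indeed the one delicate point, and the paper itself does not settle it carefully (its own reduction appears to yield no residual sign, and it explicitly discards the sign immediately afterwards, since only $|\det L(\gm)|$ enters the integration formula).
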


\begin{proof}
Upon dividing out $N(\det_E \gm_G)^{\dim_E W}$ from (\ref{bigprod_unitary}) and using Lemmas \ref{uni_dis_1} and \ref{uni_dis_2}, we are left with the product of  
\begin{equation} \label{term.X}
\prod_{i=1}^{r} \frac{(N(t_i)-1)(N(t_i^{-1})-1)}{N(t_i)^{2(r-1)}}
\end{equation}
with
\begin{equation} \label{term.Y}
\prod_{1 \leq i,j \leq r} N(t_i \ol t_j-1)^2 N(t_i-t_j)^2
\end{equation}
when $\dim_E W$ is even.  But this is simply $D_{H_{Y}}(\gm)$.  The case where $\dim_E W$ is odd is similar.
\end{proof}

Since it depends only on the constant $\dim_{E} W$, henceforth we ignore the sign in the expression of $\det L(\gm)$.

\begin{cor} 
We have
\begin{equation*}
\delta_T(\gm)=  N({\rm det}_{E} (\gm_G))^{\dim_E W} D_{H_{Y}}(\gm),
\end{equation*}
and 
\begin{equation*}
\Sha_{Y,T}^*(\omega_N)=\delta_N(m)  D_{H_{Y}}(\gm)( N{\rm det}_{E} (\gm_G))^{\dim_E W}  \omega_{M/\Delta_{T}} \wedge \omega_{T}.
\end{equation*}
\qed
\end{cor}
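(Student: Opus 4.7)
The plan is to combine the ingredients already assembled in the preceding propositions. By Proposition~\ref{delta/det}, $\delta_T(\gm) = \pm \det L_{\mathfrak B}(\gm)$, so the first displayed identity reduces to the computation of $\det L(\gm)$ carried out in Proposition~\ref{prop_unit_spc_det}; that proposition evaluates the determinant as $(-1)^{[\dim_E W/2]} N(\det_E \gm_G)^{\dim_E W}\, D_{H_Y}(\gm)$, and since the global sign depends only on the fixed integer $\dim_E W$, it may be absorbed into the sign ambiguity already present in Proposition~\ref{delta/det} and in the calibration (\ref{calibration}) of differential forms. The second identity is then immediate from Proposition~\ref{deltas}, which asserts that $\Sha_T^*(\omega_N) = \delta_N(m)\, \delta_T(\gm)\, \omega_{M/\Delta_T} \wedge \omega_T$ at $(m,\gm)$: one simply substitutes the expression for $\delta_T(\gm)$ just obtained.

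The one genuine subtlety is that Proposition~\ref{prop_unit_spc_det} was established only under the standing hypotheses of Section~\ref{Hermitian.Case}, namely with $H_Y$ the standard quasisplit unitary group and $T$ containing the fixed maximal $F$-split torus $S$. To handle a general maximal torus in an arbitrary unitary $H_Y$, I would mimic the base-change argument used to prove Proposition~\ref{prop_spo_spc_det}. After passing to $\ol F$, every Hermitian form of fixed dimension becomes equivalent to the standard split form and any two maximal tori are conjugate, so one may pick an $\ol F$-isometry $\varphi$ between $V \otimes_F \ol F$ and the standard $V_0 \otimes_F \ol F$ that simultaneously carries $W, W', Y$, and $\mathfrak t$ to their standard counterparts. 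Such a $\varphi$ transports the ingredients $\xi, \upsilon, \Xi, \theta$, and hence $\gm_G$ and the operator $L$, to their counterparts in the standard setting, yielding an identity $\det L(\gm) = \det L_0(\varphi \gm)$.

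Because the three quantities $\det L(\gm)$, $N(\det_E \gm_G)^{\dim_E W}$, and $D_{H_Y}(\gm)$ are all rational in the matrix entries of $\gm$ with $F$-rational coefficients, the equality valid over $\ol F$ descends to $F$. The main technical work, the case-by-case determinant computation on each root subspace $\mathfrak n_\beta$, was already carried out in Proposition~\ref{nalpha.unitary}, and the regrouping into the discriminant factor was done in Proposition~\ref{prop_unit_spc_det}; what remains is the essentially formal descent step, which I do not anticipate causing any difficulty beyond bookkeeping.
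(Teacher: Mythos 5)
Your proposal is correct and follows the paper's own route: the corollary is obtained by combining Proposition~\ref{deltas}, Proposition~\ref{delta/det}, and the determinant computation of Proposition~\ref{prop_unit_spc_det}, with the sign $(-1)^{[\dim_E W/2]}$ discarded exactly as the paper does (it is constant and irrelevant to the associated measure). Your additional base-change/Zariski-density descent to general unitary $H_Y$ and arbitrary maximal tori is precisely the content of the paper's subsequent Propositions~\ref{prop_unit_spc_det_1.1} and~\ref{prop_unit_spc_det_1}.
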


\subsection{Unitary groups: the general case}

We sketch here an argument to reduce the computation of $\det L(\gm)$ for arbitrary unitary groups to that of the previous section.  
We are given a Hermitian space $(V,\Phi)$, with $V$ an $E$-vector space, isotropic $W,W'$, a nondegenerate subspace $Y \subseteq X=(W+W')^\perp$ of dimension equal to that of $W$, and a maximal torus $T < H_Y$.

Write $(V_0,\Phi_0)$ for the fixed Hermitian space of Section \ref{Hermitian.Case}, and $W_0, W_0', Y_0, G_0, L_0,$ and $T_0$ for the corresponding data for $V_0$.  Naturally, we choose the dimensions of $V_0$, $W_0$, $Y_0$ to equal the dimensions of $V,W,Y$.  Let us write $V_{\ol F}$ for $V \otimes_F \ol F$ in this section, and similarly for other spaces.

 Then there is an isometric isomorphism

\beq
\varphi: (V_{\ol F}, \Phi_{\ol F}) \iso (V_{0, \ol F}, \Phi_{0 ,\ol F}),
\eeq
with $\varphi(W _{\ol F})=W_{0, \ol F}$, $\varphi(W' _{\ol F})=W'_{0, \ol F}$, $\varphi(Y _{\ol F})=Y_{0,\ol F}$, and $\varphi(T(\ol F))=T_0(\ol F)$ and $\varphi(\mf t _{\ol F})=\mf t_{0,\ol F}$.

\begin{prop}\label{prop_unit_spc_det_1.1} 
Let $\gm_0 \in T_0(\ol F)$.  Then we have
\beq
\det {}_F(L_0)(\gm_0)= N ({\rm det}_{E} ((\gm_0)_{G_0}))^{\dim_{E} W} D_{H_{Y_0}}(\gm_0). 
\eeq
\end{prop}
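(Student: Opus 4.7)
The plan is to obtain Proposition \ref{prop_unit_spc_det_1.1} as a direct extension of Proposition \ref{prop_unit_spc_det} from $F$-rational regular points to $\ol F$-rational points, on the grounds that the previous result is really an identity of regular functions on the torus $T_0$.

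First I would reinterpret all of the quantities in the statement over $\ol F$. The linear map $L_0(\gm_0)$ depends on $\gm_0$ only through $(\gm_0)_{G_0} \in G_0$ and the expressions $\Xi(\gm_0 Z) \ups^{-1}$ appearing in the defining formula of Section \ref{local_now}; both are given by explicit rational expressions in the torus coordinates $t_1,\ldots,t_r$ (and $c$ if $\dim_E W$ is odd). Hence the assignment $\gm_0 \mapsto \det L_0(\gm_0)$ defines a regular function on the Zariski open subvariety $T_{0,\reg} \subseteq T_0$. The right-hand side must be reinterpreted via the splitting $E \otimes_F \ol F \cong \ol F \times \ol F$: under this, $N_{E/F}(x)$ becomes the product of the two coordinates of $x$, $\det_E$ becomes the corresponding coordinate-wise determinant, and $D_{H_{Y_0}}$ is visibly regular. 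Both sides are then regular functions on $T_{0,\reg}$ defined over $F$.

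Next, I would invoke a density argument. Since $F$ is a local field, hence infinite, and $T_0$ is a torus defined over $F$, the set of $F$-rational points $T_{0,\reg}(F)$ is Zariski dense in $T_{0,\reg}$. By Proposition \ref{prop_unit_spc_det} the two regular functions agree on $T_{0,\reg}(F)$, and consequently they agree as regular functions on $T_{0,\reg}$. Specializing at $\gm_0 \in T_{0,\reg}(\ol F)$ gives the desired identity.

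The only substantive issue is bookkeeping: one must check that the formula still makes sense after unwinding $N$ and $\det_E$ over $\ol F$, and that the basis $\mf B_1$ of Section \ref{JacobianSection} extends (automatically) to a basis of $\mf n \otimes_F \ol F$ with respect to which $\det_{\ol F} L_0(\gm_0)$ can be computed. An equivalent and equally direct route would be to rerun the proof of Proposition \ref{prop_unit_spc_det} verbatim with $\gm_0 \in T_0(\ol F)$ in place of $\gm \in T_{\reg}(F)$: each identity used (for instance $1 + \gm_G + \tau(\gm_G) = 0$, the root-vector computations in Proposition \ref{nalpha.unitary}, and the factorizations in Lemmas \ref{uni_dis_1}--\ref{uni_dis_2}) is an algebraic identity in the coordinates $t_i$ and $c$, never requiring that these coordinates lie in $E^\times$ specifically.
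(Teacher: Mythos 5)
Your proposal is correct and is essentially identical to the paper's own argument: the paper also notes that $\det_F(L)$ is a regular function on $T_{0,\reg}$ and that $T_{0,\reg}(F)$ is Zariski dense (citing Borel, III~8.13, using only that $F$ is infinite), so the identity of Proposition \ref{prop_unit_spc_det} extends to all of $T_0(\ol F)$. Your additional bookkeeping about splitting $E \otimes_F \ol F$ is a reasonable elaboration of the same point.
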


\begin{proof}
Since $F$ is an infinite field, $T_{0,\reg}(F)$ is Zariski dense in $T_{0,\reg}(\ol F)$ by III 8.13 in \cite{Borel}.
The conclusion follows from Proposition \ref{prop_unit_spc_det}, because $\det_F(L)$ is a regular function on $T_{0,\reg}$.
\end{proof}

From Proposition \ref{prop_unit_spc_det_1.1}, by imitating the proof of Proposition \ref{prop_spo_spc_det}, we get the following result.
\begin{prop}\label{prop_unit_spc_det_1} 
For $\gm \in T_{\reg}(F)$ we have
\beq
\det {}_F L(\gm)=  N ({\rm det}_{E} (\gm_G))^{\dim_{E} W}  D_{H_{Y}}(\gm).
\eeq
\qed
\end{prop}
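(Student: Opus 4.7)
The plan is to mimic the reduction used for Proposition~\ref{prop_spo_spc_det}, exploiting the fact that after passage to $\ol F$ every Hermitian configuration of the right dimensions is isometric to the standard quasisplit one treated in Section~\ref{Hermitian.Case}. The key observation up front is that $L$ is an $F$-linear endomorphism of the finite-dimensional $F$-vector space $\mathfrak n$, so its determinant is unchanged by base extension: $\det_F L(\gm) = \det_{\ol F} L(\gm)$. Thus it suffices to evaluate the latter for $\gm \in T_{\reg}(F) \subseteq T_{\reg}(\ol F)$.

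Next I would transport the entire construction along the isometric isomorphism $\varphi : (V_{\ol F},\Phi_{\ol F}) \iso (V_{0,\ol F},\Phi_{0,\ol F})$ provided just before the statement. Since $\varphi$ carries $W_{\ol F}, W'_{\ol F}, Y_{\ol F}, T(\ol F)$ onto their subscript-$0$ counterparts, a compatible choice of $\xi_Y$ over $\ol F$ can be made so that $\xi_{Y_0}$ is its image under $\varphi$; then every derived object---$\xi^*$, $\ups$, $\tau$, $\theta$, $\Xi$, and hence $L$ itself---transports to its subscript-$0$ version. Writing $\gm_0 := \varphi\,\gm\,\varphi^{-1} \in T_0(\ol F)$, this intertwining yields $\det_{\ol F} L(\gm) = \det_{\ol F} L_0(\gm_0)$. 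Applying Proposition~\ref{prop_unit_spc_det_1.1} at $\gm_0$ then gives
\begin{equation*}
\det{}_{\ol F} L_0(\gm_0) = N({\det}_E (\gm_0)_{G_0})^{\dim_E W}\, D_{H_{Y_0}}(\gm_0).
\end{equation*}

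Finally I would check that both factors on the right are preserved by $\varphi$: since $(\gm_0)_{G_0}$ is the image of $\gm_G$ under the transported $\Xi$, we have $\det_E(\gm_0)_{G_0} = \det_E \gm_G$; and the discriminant satisfies $D_{H_{Y_0}}(\gm_0) = D_{H_Y}(\gm)$, as both compute the determinant of $\mathrm{Ad}(\gm)-1$ on isomorphic Lie-algebra quotients. Combining these identities yields the formula. The main subtlety is bookkeeping the transport: one must ensure that the choice of $\xi_Y$ (which carries a $\ol F^{\times}$ rescaling ambiguity) can be aligned with $\xi_{Y_0}$ through $\varphi$. This ambiguity is harmless, however, because $\Xi$---and hence $\gm_G$ and $L$---is invariant under rescaling of $\xi_Y$, so the match can always be arranged without affecting the determinant being computed.
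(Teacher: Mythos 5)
Your proposal is correct and follows essentially the same route as the paper: the authors' (abbreviated) argument is precisely the chain $\det_F L(\gm)=\det_{\ol F}L(\gm)=\det_{\ol F}({}^\varphi L)({}^\varphi\gm)$, followed by an application of Proposition \ref{prop_unit_spc_det_1.1} and transport of $\det_E(\cdot)_{G}$ and $D_{H_Y}$ back through $\varphi$, exactly as you describe. Your extra remark on the harmlessness of the rescaling ambiguity in $\xi_Y$ is a reasonable piece of bookkeeping that the paper leaves implicit.
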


\section{Final integration formulas} \label{LastSection}

\subsection{Haar measure}
Let $dn$ be a Haar measure on $N$. Since $N^{Y,T}$ is open in $N$ (by Corollary \ref{immersion}), we may restrict $dn$ to $N^{Y,T}$. By Proposition \ref{difffibre}, we obtain:
\begin{prop} 
Let $f \in L^1(N^{Y,T},dn)$.  Then $(f\circ\Sha_{Y,T}) \in L^1(M/\Delta_{T} \times T_{\reg}, \Sha_{Y,T}^{*}(dn))$ and 
\begin{equation*}
\int_{N^{Y,T}} f(n) dn=|W_{H_{Y}}(T)|^{-1}  \int_{M/\Delta_{T} \times T_{\reg}}  (f\circ\Sha_{Y,T})\Sha_{Y,T}^{*}(dn).
\end{equation*}
\qed
\end{prop}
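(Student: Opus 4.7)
The proof plan is to verify that $\Sha_{Y,T}: M/\Delta_T \times T_{\reg} \to N^{Y,T}$ satisfies the hypotheses of Proposition \ref{difffibre} and then apply that result directly. All the necessary ingredients have already been assembled in the preceding sections, so this proof is essentially a bookkeeping argument; the main conceptual point is verifying properness of $\Sha_{Y,T}$.

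First I would assemble the three relevant properties of $\Sha_{Y,T}$:
\begin{enumerate}
\item $\Sha_{Y,T}$ surjects onto $N^{Y,T}$ (by Proposition \ref{surjprop} and the corollary establishing (\ref{real_Fy}), or Proposition \ref{surjfyt_odd} and (\ref{real_Fy_odd}) in the odd orthogonal case);
\item $\Sha_{Y,T}$ is \'{e}tale, hence a local diffeomorphism with $d\Sha_{Y,T}$ nowhere vanishing (Corollary \ref{immersion}); moreover, $N^{Y,T}$ is open in $N$, so the restriction of $dn$ to $N^{Y,T}$ makes sense;
\item the fibre of $\Sha_{Y,T}$ over any point consists of a single $W_{H_Y}(T)$-orbit, each such orbit having exactly $|W_{H_Y}(T)|$ elements (the corollaries after the fibre computations in Section \ref{algebraic_th}). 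Thus every fibre has the constant cardinality $d = |W_{H_Y}(T)|$.
\end{enumerate}

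Next I would verify properness. Since $\Sha_{Y,T}$ is an \'{e}tale surjection with every fibre of constant finite cardinality $d$, it is a $d$-sheeted topological covering of $N^{Y,T}$ in the $F$-analytic topology: around each $n \in N^{Y,T}$ the covering-space structure is obtained by choosing disjoint local diffeomorphism neighborhoods around each of the $d$ preimages (possible by \'{e}taleness together with Hausdorffness) and intersecting them suitably. A finite covering is automatically proper, because the preimage of a compact set is a finite union of homeomorphic copies of that set. Hence $\Sha_{Y,T}$ is a proper surjective local diffeomorphism of constant fibre cardinality $d = |W_{H_Y}(T)|$.

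Finally I would apply Proposition \ref{difffibre} with $\pi = \Sha_{Y,T}$ and $\omega$ a top form on $N^{Y,T}$ whose associated measure is the restriction of $dn$. The integrability statement $(f \circ \Sha_{Y,T}) \in L^1(M/\Delta_T \times T_{\reg}, \Sha_{Y,T}^*(dn))$ and the integration identity are then immediate from that proposition, with the factor $\tfrac{1}{d} = |W_{H_Y}(T)|^{-1}$ coming out exactly as in the stated formula. The only step one might consider a mild obstacle is the properness check, but as noted it is automatic for an \'{e}tale map with finite constant fibre cardinality between Hausdorff manifolds, so no genuine difficulty arises.
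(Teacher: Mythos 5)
Your proposal is correct and is exactly the paper's argument: the paper proves this proposition simply by invoking Proposition \ref{difffibre} for the map $\Sha_{Y,T}$, relying on the surjectivity and fibre computations of Section \ref{algebraic_th} and the \'{e}taleness/openness from Corollary \ref{immersion}. Your explicit verification of properness (finite constant fibre cardinality plus local diffeomorphism gives a finite covering, hence a proper map) fills in a hypothesis the paper leaves implicit, and is valid.
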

Recall the set $N_{{\rm reg}}=\bigcup_{Y,T} N^{Y,T}$. 

\begin{prop}\label{ifone}
Let $f \in L^1(N,dn)$.  Then $f\circ \Sha_{Y,T} \in L^1(M/\Delta_{T} \times T_{\reg}, \Sha_{Y,T}^{*}(dn))$ for all $Y \in \mathcal Y_{k}$ (see Proposition \ref{scrsk}), and for all maximal tori $T$ of $H_{Y}$. Moreover,
\begin{equation*}
\int_{N} f(n) dn=\sum_{Y,T} |W_{H_{Y}}(T)|^{-1}  \int_{M/\Delta_{T} \times T_{\reg}}  (f\circ\Sha_{Y,T})\Sha_{Y,T}^{*}(dn).
\end{equation*}
The sum is taken over $Y \in \mathcal Y_{k}$ and conjugacy classes of maximal tori in $H_{Y}$.
\end{prop}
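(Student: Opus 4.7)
The plan is to reduce Proposition \ref{ifone} to the preceding proposition by applying it on each piece $N^{Y,T}$ of a disjoint decomposition of $N_{\reg}$, and then showing that the ``missing'' set $N \setminus N_{\reg}$ is $dn$-negligible.

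First I would show that $N \setminus N_{\reg}$ has Haar measure zero. By Theorem \ref{Theorem 1}(i) (respectively Theorem \ref{Theorem 1_odd}(i) in the odd orthogonal case), $\bN_{\reg}$ is a nonempty Zariski open subset of $\bN$. Since $N$ is unipotent, a choice of global coordinates identifies it with an affine space $F^d$, under which $N \setminus N_{\reg}$ becomes contained in the vanishing locus of a nonzero polynomial. Such a locus has Lebesgue measure zero with respect to any Haar measure $dn$, so for any $f \in L^1(N,dn)$ we have
\beq
\int_N f(n) \, dn = \int_{N_{\reg}} f(n)\, dn.
\eeq

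Second, I invoke the disjoint decomposition of Theorem \ref{Theorem 1}(ii)--(iii) (or Theorem \ref{Theorem 1_odd}(ii)--(iii)):
\beq
N_{\reg} = \bigsqcup_{Y,T} N^{Y,T},
\eeq
where $Y$ ranges over $\mc Y_k$ and $T$ over $H_Y$-conjugacy classes of maximal tori in $H_Y$. Both indexing sets are finite: $\mc Y_k$ is finite by Proposition \ref{scrsk} together with the classification of symmetric, antisymmetric, and Hermitian forms over a local field, and a reductive group over a local field admits only finitely many conjugacy classes of maximal tori. Each $N^{Y,T}$ is open in $N$ by Corollary \ref{immersion}, hence measurable, so $f \cdot \mathbf{1}_{N^{Y,T}}$ lies in $L^1(N^{Y,T}, dn)$. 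Applying the previous proposition to each $N^{Y,T}$ yields both the integrability $f \circ \Sha_{Y,T} \in L^1(M/\Delta_T \times T_{\reg}, \Sha_{Y,T}^*(dn))$ and the identity
\beq
\int_{N^{Y,T}} f(n) \, dn = |W_{H_Y}(T)|^{-1} \int_{M/\Delta_T \times T_{\reg}} (f \circ \Sha_{Y,T})\, \Sha_{Y,T}^*(dn).
\eeq
Summing this finite family of identities and combining with the first step gives the displayed formula.

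The main obstacle, modest though it is, sits in the first step: one must justify that a proper Zariski closed subvariety of the unipotent group $N$ over a local field is $dn$-negligible. This is standard but needs to be stated cleanly; I would handle it by exhibiting the affine coordinates on $N$ coming from its description via pairs $(\xi,\eta)$ in Section \ref{pre_ur} and invoking the routine fact that the zero set of a nonzero polynomial on $F^d$ has Lebesgue measure zero. Everything beyond this is bookkeeping with a finite disjoint union and an appeal to the previous proposition.
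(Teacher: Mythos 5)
Your proposal is correct and follows essentially the same route as the paper: restrict the integral to $N_{\reg}$ after showing its complement is $dn$-null, then use the disjoint decomposition $N_{\reg}=\bigsqcup_{Y,T}N^{Y,T}$ of Theorem \ref{Theorem 1} and apply the preceding proposition to each piece. The only (immaterial) difference is in justifying negligibility of $N\setminus N_{\reg}$: the paper invokes Sard's theorem for the union of proper closed submanifolds of smaller dimension, whereas you use the equivalent standard fact that the zero locus of a nonzero polynomial on $F^d$ has Haar measure zero.
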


\begin{proof}
By Theorem \ref{Theorem 1}, the set $\bN_{{\rm reg}}$ of $\ol F$-points of $N_{{\rm reg}}$ is a nonempty Zariski subset of the affine space $\bN$. Since $N$ is an affine space, and $F$ is infinite, the set of $F$-points of $N_{\rm reg}$ is also nonempty. Moreover, it is the complement of a union of proper closed submanifolds of $N$, necessarily of smaller dimension than $N$. By Sard's Theorem (see \cite{BourbDiff}), the complement of $N_{{\rm reg}}$ in $N$ is of null measure. The measure $dn$ is determined by its restriction to the dense open set $N_{{\rm reg}}$. Since measures are determined locally (\cite{Bourbint} III, Section 2, Proposition 1), the measure $dn$ is the unique measure so that its restriction to each $N^{Y,T}$ is given by the above formula. This gives the proposition.
\end{proof}
  
\begin{prop} \label{unit_if}
Let $f \in L^1(N,dn)$ with $dn$ a Haar measure on $N$.  Then
\begin{equation*}
\int_N f(n) dn=\sum_{Y,T} |W_{H_{Y}}(T)|^{-1} \int_{T} |\delta_T(\gamma)| \int_{M/\Delta_{T}} f(\Int(m) n_Y(\gm))|\delta_N(m)| \frac{dm}{dz} d \gm
\end{equation*}
in the symplectic and orthogonal cases with $\dim W$ even and in the unitary case, and
\begin{equation*}
\int_N f(n) dn=\sum_{Y,T} |W_{H_{Y}}(T)|^{-1} \int_{T} |\delta_T(\gamma)| \int_{M/\Delta_{T}} f(\Int(m) n_Y(\epsilon_{Y}\gm))|\delta_N(m)| \frac{dm}{dz} d \gm
\end{equation*}
in the orthogonal case with $\dim W$ odd. Here
\begin{equation*}
 |\delta_T(\gm)|= \Bigg\{
\begin{array}{ll}
|\det \gm_G|^{\dim W + 1} | D_{H_{Y}}(\gm) |& \textrm{ in the symplectic case with $\dim W$ even},\\
|\det \gm_G|^{\dim W - 1} | D_{H_{Y}}(\gm) |&  \textrm{ in the orthogonal case with $\dim W$ even},\\
|\det (\epsilon\gm)_{G}|^{\dim W - 1} | D_{H_{Y}}(\gm) | & \textrm{in the orthogonal case with $\dim W$ odd},\\
|{\rm det}_{E} \gm_G|_E^{\dim_{E} W} |D_{H_{Y}}(\gm)|_F  & \textrm{ in the unitary case}.\\
\end{array}
\end{equation*}
\end{prop}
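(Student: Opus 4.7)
The plan is simply to assemble the ingredients already developed: the Sard-theorem reduction in Proposition~\ref{ifone} that rewrites $\int_N f\,dn$ as a sum over $(Y,T)$ of integrals against the pullback measure $\Sha_{Y,T}^*(dn)$; the form-theoretic identity in Proposition~\ref{deltas} that factors this pullback through $\delta_N$ and $\delta_T$; and finally the explicit evaluations of $\delta_T(\gm)$ from Propositions~\ref{prop_spo_spc_det} and~\ref{prop_unit_spc_det_1}.

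First, I would fix the Haar measure $dn$ on $N$ to be the one associated to the invariant top form $\omega_N$ constructed in Section~\ref{local_now}, and likewise let $dm$, $d\gm$, $dz$ be the Haar measures on $M$, $T$, $Z_H(T)$ associated to $\omega_M$, $\omega_T$, $\omega_{Z_H(T)}$. The invariant quotient measure $dm/dz$ on $M/\Delta_T$ is then precisely the measure associated to $\omega_{M/\Delta_T}$ (here I use the identification $\Delta_T \cong Z_H(T)$ via $\Delta$). With these normalizations, the product measure on $M/\Delta_T \times T$ associated to $\omega_{M/\Delta_T} \wedge \omega_T$ is $\frac{dm}{dz} \, d\gm$.

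Next, applying Proposition~\ref{deltas} at the point $(m,\gm)$ and translating to measures via the construction in Section~\ref{measnforms} gives
\begin{equation*}
\Sha_{Y,T}^*(dn) = |\delta_N(m)|\,|\delta_T(\gm)| \cdot \frac{dm}{dz}\, d\gm.
\end{equation*}
Inserting this into the formula of Proposition~\ref{ifone} and applying Fubini yields the stated integration formula in the form
\begin{equation*}
\int_N f(n)\,dn = \sum_{Y,T} |W_{H_Y}(T)|^{-1} \int_T |\delta_T(\gm)| \int_{M/\Delta_T} f(\Int(m) n_Y(\gm))\,|\delta_N(m)| \,\frac{dm}{dz}\, d\gm,
\end{equation*}
(respectively with $n_Y(\epsilon_Y \gm)$ in the odd orthogonal case, where $\Sha_T$ was defined using the twist by $\epsilon_Y$).

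Finally, to obtain the explicit expressions displayed for $|\delta_T(\gm)|$ in each case, I would substitute the values of $\delta_T(\gm)$ computed earlier: Corollary~\ref{prop_spoev_spc_det} and Proposition~\ref{prop_spo_spc_det} cover the symplectic and orthogonal cases (with the exponent $\dim W \pm 1$ depending on the type $B_n$, $C_n$, or $D_n$), and Proposition~\ref{prop_unit_spc_det_1} covers the unitary case, where $|{\rm det}_E \gm_G|_E^{\dim_E W}$ appears because $|N_{E/F}(x)|_F = |x|_E$. The sign factor $(-1)^{[\dim_E W/2]}$ in the unitary case and the choice of sign in (\ref{calibration}) all disappear upon taking absolute values, so no careful sign bookkeeping is needed. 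The main point requiring care is really just to verify that the normalization of Haar measures matches the normalization of differential forms chosen in Section~\ref{local_now}; once that is done, everything else is a direct substitution.
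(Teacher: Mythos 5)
Your proposal is correct and follows essentially the same route as the paper, which simply cites Propositions \ref{deltas}, \ref{delta/det}, \ref{ifone}, \ref{prop_spo_spc_det}, and \ref{prop_unit_spc_det_1}; your write-up just makes explicit the measure/form calibration and the passage to absolute values that the paper leaves implicit.
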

 
\begin{proof}
Follows from Propositions  \ref{deltas}, \ref{delta/det}, \ref{ifone}, \ref{prop_spo_spc_det}, and \ref{prop_unit_spc_det_1}.
\end{proof} 
 
\subsection{ $\Int(M)$-invariant version} \label{Ad-invt}

The presence of the factor $\delta_N$ in the above formula suggests that we replace $dn$ with an $\Int(M)$-invariant measure on $N$.   Such measures arise in the theory of intertwining operators developed by Goldberg and Shahidi (see for instance \cite{S92}, \cite{GS98}).

\begin{prop}
 For $n=n(\xi,\eta) \in N'$, put $\phi(n(\xi,\eta))=|\delta_N(m(\eta \ups_{0},1))|^{-\half}$.  Then 
\begin{enumerate}
\item For all $m \in M$ and $n \in N'$ we have $\phi(\Int(m)(n))=\phi(n) |\delta_N(m)|^{-1}$.
\item $d_Mn=\phi(n) dn$ is an $\Int(M)$-invariant measure on $N$.
\end{enumerate}
\qed
\end{prop}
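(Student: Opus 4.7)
The plan is to prove (i) by a direct calculation leveraging multiplicativity of $\delta_N$, and then derive (ii) from (i) by a standard change-of-variables argument.

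For (i), the starting point is the adjoint action formula $\Int(m(g,h))(n(\xi,\eta)) = n(g\xi h^{-1}, g\eta g^{*})$ from Section \ref{pre_ur}. Substituting into the definition of $\phi$ yields
\[
\phi(\Int(m(g,h))(n(\xi,\eta))) = |\delta_N(m(g \eta g^{*} \ups_0, 1))|^{-1/2}.
\]
The key algebraic observation is the factorization $g \eta g^{*} \ups_0 = g \cdot (\eta \ups_0) \cdot \tau_0(g)$ in $G$, where $\tau_0(g) := \ups_0^{-1} g^{*} \ups_0$ is the antiinvolution attached to $\ups_0$ (the direct analogue of $\tau_Y$ from Section \ref{relxw}). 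Since $m(\cdot,1)\colon G \to M$ is a group homomorphism and $\delta_N = \det \circ \Ad$ is multiplicative on $M$, this factors as
\[
\delta_N(m(g\eta g^{*}\ups_0, 1)) = \delta_N(m(g,1)) \cdot \delta_N(m(\eta \ups_0, 1)) \cdot \delta_N(m(\tau_0(g),1)).
\]
The middle factor is precisely $\phi(n)^{-2}$, so (i) reduces to the identity
\[
|\delta_N(m(g,1))|^{1/2} \cdot |\delta_N(m(\tau_0(g),1))|^{1/2} = |\delta_N(m(g,h))|.
\]

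To complete this, I would split $\delta_N(m(g,h)) = \delta_N(m(g,1)) \cdot \delta_N(m(1,h))$ and verify two facts. First, $|\delta_N(m(1,h))| = 1$, because the adjoint action of $m(1,h)$ on $\mathfrak n$ is $(C,D) \mapsto (Ch^{-1}, D)$, whose $F$-determinant is a power of $\det h$, which has absolute value $1$ for any isometry (orthogonal: $\det h = \pm 1$; symplectic: $\det h = 1$; unitary: $N_{E/F}(\det_E h) = 1$, hence $|\det_E h|_E = 1$). Second, $|\delta_N(m(\tau_0(g),1))| = |\delta_N(m(g,1))|$, because by the explicit action $(C,D) \mapsto (gC, gDg^{*})$ the quantity $\delta_N(m(g,1))$ is a monomial in $\det g$ and $\det g^{*}$, while $\det \tau_0(g) = \det g^{*}$ (conjugation by $\ups_0$ preserves determinants) and $|\det g^{*}| = |\det g|$, since $g \mapsto g^{*}$ is realized in coordinates by transpose in the bilinear case and conjugate transpose in the Hermitian case.

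Statement (ii) then follows from a one-line change of variables: since the Haar measure $dn$ satisfies $\int_N f(\Int(m)(n)) \, dn = |\delta_N(m)|^{-1} \int_N f(n') \, dn'$, we compute
\[
\int_N f(\Int(m)(n)) \, d_M n = \int_N f(n') \, \phi(\Int(m^{-1}) n') \, |\delta_N(m)|^{-1} \, dn',
\]
and (i) gives $\phi(\Int(m^{-1}) n') = \phi(n') \, |\delta_N(m)|$, so the two factors cancel and we obtain $\int_N f(n') \, d_M n'$. The main obstacle is the bookkeeping in the second absolute-value identity of (i); once the action of $m(g,1)$ on the summands of $\mathfrak n$ from Section \ref{Lie} is pinned down, the rest is mechanical, but one must be careful in the unitary case because $\delta_N$ is an $F$-determinant on what is naturally an $E$-vector space.
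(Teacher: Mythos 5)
Your proof is correct and is essentially the argument the authors intend: the paper states this proposition without proof, and your factorization $g\eta g^{*}\ups_{0}=g\cdot(\eta\ups_{0})\cdot\ups_{0}^{-1}g^{*}\ups_{0}$ together with the multiplicativity of $\delta_N$ and the facts $|\delta_N(m(1,h))|=1$, $|\det g^{*}|=|\det g|$ is exactly the computation implicit in the explicit formulas for $\delta_N(m(g,h))$ quoted right after the statement. The change-of-variables derivation of (ii) from (i) is also the standard one.
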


One computes in the orthogonal case that  
\begin{equation*}
\delta_N(m(g,h))=(\det g)^{\dim W+ \dim X -1},
\end{equation*}
 in the symplectic case that,
\begin{equation*}
\delta_N(m(g,h))=(\det g)^{\dim W+ \dim X +1},
\end{equation*}
and in the unitary case that,
\begin{equation*}
\delta_N(m(g,h))=(N_{E/F}({\rm det}_{E} (g)))^{\dim_{E} W+ \dim_{E} X}.
\end{equation*}

(See Proposition 1 of \cite{IFS}.) 

\bigskip

Recall we have fixed an automorphism $\theta=\theta_Y$ of $G$ with $G^\theta$ isomorphic to $H_Y$.

\begin{defn}
For $x \in T_G$, we define
\begin{equation} \label{twist.disc.defn}
D^\theta_G(x)=\det(\Ad(x) \circ d\theta -1; \GG/ {}^\xi \mf t).
\end{equation}
\end{defn}

Note that by Proposition 12 in \cite{IFS} we have
\beq
{}^\xi \mf t= \{A \in \GG \mid \Ad(x) \circ d\theta (A)=A \}
\eeq
for $x=\gm_G$ with $\gm \in T_{\reg}$.

For the symplectic case and the orthogonal case with $W$ even-dimensional we further have
\begin{equation*}
\begin{split}
  |\det L(\gm)||\delta_N(\gm_G)|^{-\half} &=  |\det \gm_G|^{\pm \half+\half (\dim W-\dim X)}|D_{H_{Y}}(\gm)|\\
  								&=|D_{H_{Y}}(\gm)|^\half |D^\theta_G(\gm_G)|^\half |\det(\gm-1;Y)|^{\half \dim Y^{\perp}}, \\
  \end{split}
  \end{equation*}
  with the last equality following from Proposition 14 of \cite{IFS}.  In the first equality, the plus sign is chosen in the symplectic case, and the minus chosen in the orthogonal case.
  
  For the orthogonal case with $W$ odd-dimensional, we similarly have
  \begin{equation*}
  |\det L(\gm)||\delta_N((\epsilon_{Y}\gm)_G)|^{-\half} =| D_{H_{Y}}(\gm)|^\half |D^\theta_G((\epsilon_{Y}\gm)_G)|^\half |\det(\gm+1;Y)|^{\half \dim Y^{\perp}}. \\
  \end{equation*}
  
In the unitary case, we have
\beq
\begin{split}
  |\det L(\gm)|_{F}|\delta_N(\gm_G)|_{F}^{-\half} &=  |{\rm det}_{E} \gm_G|_{E}^{- \half (\dim_{E} W+\dim_{E} X)}|{\rm det}_{E} \gm_G|_{E}^{{\rm dim}_{E} W}|D_{H_{Y}}(\gm)|_{F}\\
  								&=|D_{H_{Y}}(\gm)|_{F} |{\rm det}_{E}(\gm-1;Y)|_{E}^{\half \dim_{E} Y^{\perp}} \\                                                                           
  \end{split}
\eeq
One computes in this case that $D_{H_{Y}}(\gm)=(-2)^{\dim_{E} W} D^\theta_G(\gm_G)$.

\subsection{A summary of the integration formulas}\label{finalform}

In this section we present the essentials of our result on the integration formulas more succinctly, for the convenience of the reader.  The ambient group $G^1$ is a (symplectic, orthogonal, or a unitary) group of isometries of a vector space $V$ with the corresponding nondegenerate sesquilinear form over a local field $F$ (or over a quadratic extension $E$, if it is unitary) with characteristic not equal to two.  A parabolic subgroup of $G^1$ corresponds to an isotropic subspace $W$ of dimension $k$.  Choosing a Levi subgroup $M$ leads to a decomposition $V=W+X+W'$, with $W,W'$ a hyperbolic pair, to which $X$ is orthogonal.  Write $H$ for the isometry group of $X$, and $G=\GL(W)$. We suppose that  $\dim W \leq \dim X$, and moreover that $\dim W$ is even in the symplectic case.  We choose linear isomorphisms of $W$ with nondegenerate subspaces $Y$ of $X$, up to permutation by $H$.  Given such a $Y$, we choose a certain involution $\theta=\theta_Y$ of $G$ with $G^\theta \cong \Isom(Y)$.  Also choose conjugacy classes of maximal tori $T$ of $\Isom(Y)$.    Let $d_Mn$ be the $\Int(M)$-invariant measure on $N$ prescribed above. On certain regular elements $\gm \in T$, of full measure in $T$, we have defined in this paper matching semisimple elements $\gm_G \in G$ and elements $n_Y(\gm) \in N$.  

\begin{defn} Let $\gm \in T_{\reg}$.
\begin{enumerate}
\item In the symplectic case and the orthogonal case with $\dim W$ even, put
\beq
J_{T}(\gm)=|D_{H_{Y}}(\gm)|^\half |D^\theta_G(\gm_G)|^\half |\det(\gm-1;Y)|^{\half \dim Y^{\perp}}.
\eeq
\item In the orthogonal case with $\dim W$ odd, put 
\beq
J_{T}(\gm)=|D_{H_{Y}}(\gm)|^\half |D^\theta_G((\epsilon_Y\gm)_G)|^\half |\det(\gm+1;Y)|^{\half \dim Y^{\perp}}.
\eeq
\item In the unitary case, put
\beq
 J_{T}(\gm)= |D_{H_{Y}}(\gm)|_{F} ^\half |D^\theta_G(\gm_G)|_{F}^\half |{\rm det}_{E}(\gm-1;Y)|_{E}^{ \half \dim_{E} Y^{\perp}}.
 \eeq
 \end{enumerate}
\end{defn}

\begin{remark}
Of course $\dim Y^{\perp}=\dim X-\dim Y=m-k$.
\end{remark}

\begin{thm} \label{intro_thm_1}
Let $f \in L^1(N,d_Mn)$. Then up to normalization of measure we have:
\begin{equation*}
\int_N f(n) d_Mn=\sum_{Y,T}|W_{H_{Y}}(T)|^{-1}\int_{T} J_{T}(\gm)\int_{M/{\Delta_T}}f(\Int(m)n_{Y}(\gamma))\frac{dm}{dz}d\gamma,
\end{equation*}
except in the odd orthogonal case, where we have:
\begin{equation*}
\int_N f(n) d_Mn=\sum_{Y,T}|W_{H_{Y}}(T)|^{-1}\int_{T} J_{T}(\gm)\int_{M/{\Delta_T}}f(\Int(m)n_{Y}(\epsilon_Y\gamma))\frac{dm}{dz}d\gamma.
\end{equation*}
The sum is taken over $H$-orbits of nondegenerate subspaces $Y$ of $X$ of dimension $k$, and conjugacy classes of maximal tori $T$ in  $H_Y=\Isom(Y)$.
\qed
\end{thm}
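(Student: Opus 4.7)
The plan is to obtain Theorem \ref{intro_thm_1} by simply repackaging Proposition \ref{unit_if} after switching from the Haar measure $dn$ to the $\Int(M)$-invariant measure $d_Mn=\phi(n)\,dn$. All the serious work — the density argument of Section \ref{densarg}, the covering description of $\Sha_T$, and the explicit computation of $\det L(\gm)$ in Sections \ref{routes} and \ref{routes_2} — has already been carried out.

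First I would rewrite Proposition \ref{unit_if} by the substitution $dn=\phi(n)^{-1}d_Mn$. Along the image of $\Sha_T$ the function $\phi$ is $\Int(M)$-quasi-invariant, namely $\phi(\Int(m) n_Y(\gm)) = \phi(n_Y(\gm))\,|\delta_N(m)|^{-1}$ (and similarly with $\epsilon_Y\gm$ in the odd orthogonal case). The factor $|\delta_N(m)|$ sitting inside the inner integral of Proposition \ref{unit_if} therefore cancels against the one produced by $\phi^{-1}$, leaving the $M/\Delta_T$-integral with no extra weight. What remains in front of the inner integral is the product $|\delta_T(\gm)| \cdot \phi(n_Y(\gm))^{-1}$, which I will show equals $J_T(\gm)$ up to a constant that can be absorbed into the normalization of measures.

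Next I would carry out this identification on the torus. By the definition of $\phi$ and the explicit shape of $n_Y(\gm)$ (respectively $n_Y(\epsilon_Y\gm)$), $\phi(n_Y(\gm))^{-1}$ is a fixed multiple of $|\delta_N(\gm_G)|^{\half}$ (respectively $|\delta_N((\epsilon_Y\gm)_G)|^{\half}$), since the auxiliary maps $\ups,\ups_0$ enter only through a determinant that is independent of $\gm$. Combining this with the formulas
\beq
\delta_T(\gm)=\det L(\gm)
\eeq
established in Propositions \ref{prop_spo_spc_det} and \ref{prop_unit_spc_det_1}, the product $|\delta_T(\gm)| \cdot |\delta_N(\gm_G)|^{-\half}$ (or with $\epsilon_Y\gm$) is exactly what was computed at the end of Section \ref{Ad-invt} and shown there to equal $J_T(\gm)$. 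This comparison is the one place that requires care, since the exponent of $|\det \gm_G|$ depends on the type of $H_Y$ and on whether $k=\dim W$ is even or odd; each of the three cases in the definition of $J_T$ corresponds to one of the three cases in Proposition \ref{unit_if}, and Proposition 14 of \cite{IFS} provides the last identification with the twisted discriminant $D^\theta_G$ and the factor $|\det(\gm\mp 1;Y)|^{\half \dim Y^\perp}$.

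Finally I would assemble the pieces: feed the simplified weight $J_T(\gm)$ back into Proposition \ref{unit_if}, note that the sum over $(Y,T)$ and the Weyl-group factor $|W_{H_Y}(T)|^{-1}$ are unchanged, and observe that the two stated forms of the theorem differ only by replacing $n_Y(\gm)$ with $n_Y(\epsilon_Y\gm)$ in the odd orthogonal case, matching the two branches of Theorem \ref{Theorem 1} and Theorem \ref{Theorem 1_odd}. The only real obstacle is bookkeeping of the various constants and exponents across the four cases (symplectic, even orthogonal, odd orthogonal, unitary); the qualifier ``up to normalization of measure'' in the statement lets us disregard any overall $\gm$-independent scalar arising from the choices of $\ups_0,\xi_Y,\iota$, so no further computation is needed.
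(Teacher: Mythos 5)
Your proposal is correct and follows essentially the same route as the paper: apply Proposition \ref{unit_if} to $f\phi$, use $\phi(\Int(m)n)=\phi(n)|\delta_N(m)|^{-1}$ to absorb the $|\delta_N(m)|$ factor, and identify the surviving $\gm$-dependent weight with $J_T(\gm)$ via the computation $|\det L(\gm)|\,|\delta_N(\gm_G)|^{-\half}=J_T(\gm)$ from Section \ref{Ad-invt} (with $\epsilon_Y\gm$ in the odd orthogonal case). One bookkeeping slip: the leftover weight is $|\delta_T(\gm)|\cdot\phi(n_Y(\gm))$, not $|\delta_T(\gm)|\cdot\phi(n_Y(\gm))^{-1}$; since $\phi(n_Y(\gm))$ is a fixed multiple of $|\delta_N(\gm_G)|^{-\half}$ (and $\phi(n_Y(\gm))^{-1}$ of $|\delta_N(\gm_G)|^{+\half}$), it is the former that yields the product $|\delta_T(\gm)|\,|\delta_N(\gm_G)|^{-\half}=J_T(\gm)$ you correctly end up with, so the two inversions cancel and the argument stands.
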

Let us work out  a couple of small examples, using the notation of Section \ref{MatrixSection}.
 
\subsubsection{Symplectic example}

Let $G^1={\rm Sp}_{8}(F)$, viewed as the isometry group of the form $J_-(8)$.  Let $W$ be $2$-dimensional, so that $V=W+X+W'$ with a $4$-dimensional $X$.
The set $\mc Y$ equals $\{Y \}$ for a $2$-dimensional nondegenerate subspace $Y$ of $X$.  With the choice of $\xi$ as in Section \ref{MatrixSection_1}, one computes $\theta(g)=\dfrac{g}{\det (g)}$.

Of course $H_Y\cong\SL_2(F)$, and the maximal tori $T < H_Y$ are either split or the groups of their $F$-rational points correspond to norm-one elements of quadratic extensions $E$ of $F$.  Suppose for simplicity that $T$ is split diagonal.  If $\gm \in T_{\reg}$, then $\gm=\left(\begin{array}{cc}
a & 0 \\
0 & a^{-1} \\
\end{array} \right)_Y$ for some $a \in F^\times$, with $a \neq \pm 1$.  Then $\gm_G=\left(\begin{array}{cc}
a-1 & 0 \\
0 & a^{-1}-1 \\
\end{array} \right)^{-1}_W$.

 We have $|\det(\gm_G)|=|a-1|^{-1}|a^{-1}-1|^{-1}$, $|D_{H_Y}(\gm)|=|a^2-1||a^{-2}-1|$, so
 \beq
  J_{T}(\gm)=|a-1|^{3}|a+1|^{2} |a|^{-5/2}.
 \eeq
\subsubsection{Orthogonal example}
Now let $G^1={\rm O}_{5}(F)$, viewed as the isometry group of the form $J_V$ described in Section \ref{ping}.  Let $W$, $W'$ and $X$, with $\dim W=1$ and $\dim X=3$, be such that $V=W+X+W'$ as above. The set $\mc Y$ in this case consists of the $H$-orbits of the $1$-dimensional anisotropic subspaces of $X$, which are parameterized by the set $F^{\times}/(F^{\times})^{2}$. Let $\{Y(a)\}$ be a fixed set of representatives of the $H$-orbits, defining elements of $\mc Y$ (where $a\in F^{\times}/(F^{\times})^{2}$). Note that $G={\rm GL}_{1}(F)$ in this case, and $\theta(g)=g^{-1}$. Moreover $H_{Y(a)}=\{\pm 1\}$ and $T=\{1\}$. We compute $(\epsilon\gm)_G=(-2^{-1})_{W}$, $|\det((\epsilon_{Y(a)}\gm)_G)|=|2|^{-1}$, $|D_{H_{Y(a)}}(\gm)|=1$, and $|D^\theta_G((\epsilon_{Y(a)}\gm)_{G})|=|2|$. Thus $J_{T}(\gm)=|2|^{\frac{3}{2}}$. For $f \in L^1(N,d_Mn)$, we have (up to normalization of measures):
\begin{equation*}
\int_{N}f(n)d_{M}n=\sum_{a\in F^{\times}/(F^{\times})^{2}}\int_{M}f(\Int(m)n_{Y(a)}(\epsilon_{Y(a)}))dm.
\end{equation*}

\section{A Goldberg-Shahidi pairing}  \label{Residues.Section}
\subsection{Induced representations}
The integration formulas obtained in this article are central to an ongoing project to determine residues of intertwining operators acting on parabolically induced representations of classical groups over $p$-adic fields.  In this section we display the expected analogue of the main result of \cite{Spallone}, which was restricted to the case of $\dim W=\dim X$.  We omit the details, but Lemma \ref{ralph} above is the main entrypoint. Now we assume that $F$ is a $p$-adic field. Let $q$ denote the cardinality of the residue field of $F$.  

Let $(\pi_{G}, V_G)$ and $(\pi_{H},V_H)$ be unitarizable supercuspidal representations of $G$ and $H$ respectively.  We assume that $\pi_{G}$ is self-dual in the orthogonal and symplectic cases, and conjugate self-dual in the unitary case.  Write $Z$ for the center of $G$.  Let $\omega$ denote the central character of $\pi_{G}$.  Define a representation $(\pi_M,V_M)$ of $M$ via $\pi_M=\pi_G \boxtimes \pi_H$, the external direct product. Let $\upsilon_{0}:W\to W'$ be a fixed self-adjoint isomorphism, and use this to define $w_0$ as in Section \ref{w_0.here}.  Let $\alpha$ be the simple root in $N$ and $\tilde{\alpha}$ the corresponding fundamental weight. For each $s \in \C$ we have the induced representation
\begin{equation*}
V(s\tilde{\alpha},\pi_M)={\rm Ind}^{G^1}_P \pi_M \otimes q^{\langle s\tilde{\alpha}, H_{M}()\rangle}\otimes \mathbf{1}
\end{equation*}
of $G^1$, where $H_{M}$ is Harish-Chandra's height function.  Let $A(s\tilde{\alpha},\pi_M,w_{0})$ be the usual intertwining operator (\ref{int.op}) for $w_{0}$.

By the theory developed by Harish-Chandra (see \cite{Sil}), the problem of determining the reducibility points of $V(s\tilde{\alpha},\pi_M)$ amounts to determining the poles of this intertwining operator, as a function of $s$. Moreover, Shahidi's $L$-functions are defined in terms of these poles (see \cite{ShahidiBook}).

\subsection{Definition of the pairing}

Let $f_H$ be a matrix coefficient of $\pi_H$, and $f_G$ a matrix coefficient of $\pi_G$.  
 Recall that for each $Y$, we have chosen $\xi_Y$ and so have $\ups_{Y}=(\xi_{Y}\xi_{Y}^*)^{-1}: W \to W'$.  We also use $\xi_Y$ to identify $H_Y$ with a subgroup of $G$.   
 Also let $x_{Y}=\upsilon_{Y}^{-1}\upsilon_{0}$.

\begin{defn} Let $\phi_H \in C_c^{\infty}(H)$ and $\phi_G \in C^{\infty}(G)$, with $\phi_G$ compact modulo $Z$.  Write $I^H_\gm(\phi_H)= |D_{H_{Y}}(\gm)|^\half \int_{H/H_\gm}  \phi_{H}(h \gm h^{-1}) d \dot{h}$.  Put
\beq
I^G_{ \gm_G}(\phi_G)= |D^\theta_G(\gm_G)|^\half \int_{G/Z_{G}Z_{H_{Y}(T)}}  ({}^{x_{Y}}\phi_{G})(g\gamma_{G}\theta(g)^{-1}) d\dot{g},
\eeq
except in the orthogonal case with $\dim W$ odd, in which case put
\beq
I^G_{ \gm_G}(\phi_G)= |D^\theta_G((\epsilon_Y \gm)_G)|^\half \int_{G/Z_{G}Z_{H_{Y}(T)}}  ({}^{x_{Y}}\phi_{G})(g (\epsilon_Y \gamma)_{G}\theta(g)^{-1}) d\dot{g}.
\eeq

\end{defn}

Here  $({}^{x_Y}\phi)(g)=\phi(x_Y^{-1}g)$)

These are the appropriate normalized orbital integrals in this context.

\begin{defn} Given $\gm \in T_r$, put 
\beq
j_T(\gm)=|\det(\gm \pm 1;Y)|^{\half \dim Y^{\perp}},
\eeq
with the sign being `$-$' in all cases except orthogonal with $\dim W$ odd, in which the sign is `$+$'.  In the unitary case we mean the $E$-absolute value, the $E$-determinant, and dimension as an $E$-space.
\end{defn}
 
Thus $j_T(\gm)$ is simply $J_T(\gm)$ divided by the discriminant factors.

\begin{thm}\label{th_exp}  (Expected)\
The intertwining operator has a pole at $s=0$ if and only if there exist matrix coefficients $f_G$  of $\pi_G$ and $f_H$ of $\pi_H$, so that $\GS(f_G,f_H)\neq 0$, where
\beq
\GS(f_G,f_H) =  \sum_Y \sum_{ T_c \leq H_Y} |W_{H_Y}(T_c)|^{-1}\int_{T_c} j_{T_c}(\gm) I_{\gm}^H(f_H) I_{\gm_G}^G(f_G) d \gm. 			 
\eeq
Here $Y$ runs over $H$-orbits of nondegenerate subspaces of $X$ of dimension $k$, and $T_c$ runs over conjugacy classes of {\it compact} maximal tori in $H_{Y}$.
\end{thm}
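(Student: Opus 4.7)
The plan is to adapt the strategy of the second author's paper \cite{Spallone}, which treated the equal-size case $\dim W=\dim X$, by substituting the integration formula of Theorem~\ref{intro_thm_1} and the explicit decomposition of Lemma~\ref{ralph} at the appropriate places. Throughout, one tests the intertwining operator against sections built from matrix coefficients $f_G,f_H$ of $\pi_G,\pi_H$ and shows that its leading term at $s=0$ is exactly $\GS(f_G,f_H)$; since having a pole at $s=0$ is equivalent to having a nontrivial leading term in this pairing, the theorem follows.

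First, I would rewrite the intertwining operator, following Harish-Chandra and Shahidi, as a bilinear form
\[
B_s(f_G,f_H) \;=\; \int_N \bigl\langle \pi_M(w_0^{-1} n)\cdot v,\, \tilde v\bigr\rangle\, q^{\langle s\tilde\alpha,H_M(w_0^{-1}n)\rangle}\,dn,
\]
where $v,\tilde v$ are the appropriate sections assembled from $f_G \otimes f_H$. Lemma~\ref{ralph} factors $w_0^{-1}n(\xi,\eta)$ as $m(\varepsilon\ups_0^{-1}\eta^{-*},\varepsilon\Norm(n))\cdot n_0\cdot n^-$ on the open subset $N'\supseteq N_{\mathrm{reg}}$, so by the left $P$-equivariance of the sections the integrand on $N_{\mathrm{reg}}$ evaluates essentially as $f_G(\varepsilon\ups_0^{-1}\eta^{-*})\,f_H(\varepsilon\Norm(n))$, multiplied by the $s$-dependent character and a convergent tame factor arising from $n^-$. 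The complement $N\setminus N_{\mathrm{reg}}$ lies in a proper Zariski-closed subset and therefore contributes nothing to the $d_Mn$-integration.

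Next, apply Theorem~\ref{intro_thm_1} to decompose $\int_{N_{\mathrm{reg}}} \cdots\, d_Mn$ as a sum over $(Y,T)$ of iterated integrals over $T_{\mathrm{reg}}$ and $M/\Delta_T$. Along each orbit $n=\Int(m)n_Y(\gm)$, Proposition~\ref{surjprop} gives $\Norm(n)=h\gm h^{-1}$ with $h\in H_Y$, and a direct matrix computation as in Section~\ref{MatrixSection} identifies the other argument with a conjugate of $\gm_G$ (resp.\ $(\epsilon_Y\gm)_G$ in the odd orthogonal case) by $x_Y^{-1}g\theta(g)^{-1}$. The $dm$-integration over $M/\Delta_T \cong (G\times H_Y)/Z_H(T)$ then factors as the product of the two normalized orbital integrals $I_\gm^H(f_H)$ and $I_{\gm_G}^G(f_G)$; the discriminant factors of $J_T(\gm)$ are precisely what is needed to normalize these orbital integrals, leaving the residual weight $j_T(\gm)$ visible in the $T$-integrand, as in the statement of the theorem.

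The hard part will be restricting the final sum from all tori $T$ to the compact tori $T_c$. Non-elliptic $T$ do contribute to $B_s$, but the expectation---borne out in the cases handled by \cite{S92}, \cite{GS98}, \cite{Spallone} and \cite{WWL}---is that their contribution is holomorphic at $s=0$ and therefore does not affect the residue. Establishing this requires a careful analysis of the asymptotics of $\gm\mapsto I_\gm^H(f_H)$ along the walls of non-compact Cartans, balanced against the growth of $j_T(\gm)$, likely via a contour-shift or a Casselman-pairing computation. The added difficulty in the present unequal-size regime, where the image of $\Norm$ has measure zero in $H$, is that one cannot directly import the residue calculations of \cite{Spallone}; an endoscopic transfer input along the lines of \cite{WWL}, made available precisely by the explicit Jacobian $J_T$ established above, is expected to be needed to complete the cancellation.
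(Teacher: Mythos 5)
You should first note that the paper does not prove this statement: it is explicitly labeled ``(Expected)'', and the authors say only that they ``omit the details, but Lemma~\ref{ralph} above is the main entrypoint'' and that the theorem, ``when available,'' will permit further applications. So there is no proof in the paper to compare against; the statement is a conjecture whose proof is deferred to future work. Your outline is consistent with the strategy the authors indicate --- unfold the intertwining integral via Lemma~\ref{ralph} so that the integrand becomes $f_G$ and $f_H$ evaluated at the two components of the Levi factor, apply the integration formula of Theorem~\ref{intro_thm_1} to convert $\int_N(\cdot)\,d_Mn$ into a sum over $(Y,T)$ of integrals over $T_{\reg}\times M/\Delta_T$, and recognize the inner $M/\Delta_T$-integral as the product of the normalized orbital integrals $I^H_\gm(f_H)$ and $I^G_{\gm_G}(f_G)$, with the discriminant factors of $J_T$ absorbed into the normalizations and $j_T$ left over. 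That is exactly the shape of the argument in \cite{Spallone} for the equal-size case.

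However, your proposal does not close the argument, and you say so yourself. The genuine gaps are: (i) the formal manipulation treating ``the leading term at $s=0$'' of $B_s(f_G,f_H)$ as $\GS(f_G,f_H)$ requires justifying the interchange of the $s\to 0$ limit (or residue extraction) with the decomposition of the integral, since the intertwining integral converges only for $\re(s)$ large and is defined elsewhere by analytic continuation; (ii) the restriction of the sum to \emph{compact} maximal tori $T_c$ --- i.e., the claim that non-elliptic tori contribute holomorphically at $s=0$ --- is precisely the content that distinguishes the residue from the full operator, and in the unequal-size regime ($\dim W<\dim X$) the image of $\Norm$ has measure zero in $H$, so the asymptotic analysis of $I^H_\gm(f_H)$ near the singular set cannot be imported directly from \cite{Spallone} or \cite{WWL}; and (iii) the ``only if'' direction (pole implies non-vanishing for \emph{some} choice of $f_G,f_H$) needs the full Harish-Chandra/Shahidi machinery relating poles of $A(s\tilde\alpha,\pi_M,w_0)$ to Plancherel measures and matrix coefficients, which you invoke but do not supply. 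None of these steps is carried out, so what you have is a plausible program --- essentially the one the authors themselves project --- rather than a proof.
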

 
Theorem \ref{th_exp} when available will place one in a position to apply techniques of harmonic analysis, for instance the Selberg principle.

\subsection{Current and future examples}\label{sosix}
 
 Let $E$ be a quadratic extension of $F$.  Take $V$ of the form $V=W+X+W'$ with $W$ two-dimensional and $X$ isomorphic to $E$ with quadratic form given by $N_{E/F}$.
 Put $G^1=\SO(V)$, i.e.,  the quasisplit even orthogonal group of rank $3$ determined by $E$.  It has a Levi subgroup $M \cong \GL_2 \times \SO(X)$.  Here $\SO(X)$ is isomorphic to $E^{1}$, the subgroup of norm $1$ elements in $E^{\times}$.  So in this case $H$ is its own unique maximal torus.
 
In \cite{Comp}, $\GS(f_{G},f_{H})$ was computed explicitly, by using the Selberg principle and an endoscopic transfer identity of Labesse and Langlands \cite{LL}. Let $\pi_{H}$ be a character $\chi$ of $E^{1}$. For simplicity, let us assume that the central character $\omega$ of $\pi_G$ is nontrivial. In this case, the Langlands parameter of $\pi_{G}$ is of the form ${\rm Ind}^{W_{F}}_{W_{E'}}(\chi')$ where $E'$ is a quadratic extension of $F$ and $\chi'$ is a character of $(E')^{\times}$. The determination of when $\GS(f_{G},f_{H})$ was nonzero gave the following reducibility criterion, in accordance with Shahidi \cite{Sha.Appendix}:
\begin{thm}
 $V(0,\pi_M)$ is reducible unless $E=E'$ and the Langlands parameter of $\pi_{G}$ is ${\rm Ind}^{W_{F}}_{W_{E}}(\chi)$. If the Langlands parameter of $\pi_{G}$ is ${\rm Ind}^{W_{F}}_{W_{E}}(\chi)$, then $V(s\tilde{\alpha},\pi_M)$ is reducible at $s=1$ and there are no other points of reducibility for $s \geq 0$.
 \end{thm}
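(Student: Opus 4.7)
The plan is to combine Theorem \ref{th_exp} with the Harish-Chandra criterion recalled at the top of this section (see \cite{Sil}) so that reducibility of $V(0,\pi_M)$ is equivalent to non-vanishing of $\GS(f_G,f_H)$ for some pair of matrix coefficients, and then to compute $\GS$ explicitly in this very low-rank setting. Here $H\cong E^{1}$ is itself the unique compact maximal torus (so $T_c=H$ and $|W_H(T_c)|=1$), the set $\mathcal Y_{2}$ is a singleton with $Y=X$ (so $Y^{\perp}=0$ and $j_T(\gm)=1$), and $\pi_H=\chi$ is a character, so we may take $f_H=\chi$ and then $I^H_{\gm}(f_H)=\chi(\gm)$. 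Consequently the pairing collapses to
\begin{equation*}
\GS(f_G,f_H)\;=\;\int_{E^{1}}\chi(\gm)\,I^G_{\gm_G}(f_G)\,d\gm.
\end{equation*}

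Next I would unwind the twisted orbital integral $I^G_{\gm_G}(f_G)$ on $G=\GL_{2}(F)$. Since $\pi_G$ is supercuspidal, the Selberg principle permits a choice of $f_G$ whose orbital integrals concentrate on elliptic orbits, and Harish-Chandra's character expansion then identifies $I^G_{\gm_G}(f_G)$ with a value of $\Theta_{\pi_G}$ on a $\theta$-regular element. The key input is the Labesse--Langlands endoscopic transfer \cite{LL}, which matches $\theta$-twisted orbital integrals on $\GL_{2}$ with stable orbital integrals on the endoscopic torus $\Res_{E'/F}\Gm$. Applied to $\pi_G={\rm Ind}^{W_F}_{W_{E'}}(\chi')$, this expresses $I^G_{\gm_G}(f_G)$ as an evaluation of $\chi'$ on a lift of $\gm$, whenever such a lift exists.

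With these inputs, $\GS(f_G,f_H)$ becomes a pairing of characters on the compact group $E^{1}$. A matching lift of every $\gm\in E^{1}$ into $(E')^{\times}$ is available only when $E=E'$; given this, orthogonality of characters on $E^{1}$ shows that the integral is nonzero iff $\chi$ and the relevant restriction of $\chi'$ are inverse to each other, equivalently iff ${\rm Ind}^{W_F}_{W_E}(\chi)$ is the Langlands parameter of $\pi_G$. This is exactly the $s=0$ dichotomy asserted in the first sentence of the theorem.

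For the second assertion, in the exceptional case $V(0,\pi_M)$ is irreducible and I would locate the unique reducibility point on $s\geq 0$ through the Langlands--Shahidi method. The relevant local factor in this rank-one setting is a Hecke $L$-function of $\chi$ on $E^{\times}$ (entering through the parameter of $\pi_G$), whose pole structure places the unique reducibility point at $s=1$, in agreement with \cite{Sha.Appendix}. The main obstacle throughout is the third step: the Labesse--Langlands transfer must be made sufficiently explicit to track the precise relation between $\chi$ and $\chi'$, including the sign conventions in $\gm_G\,\theta(\gm_G)=-\Xi(\gm^{-1})$ and the transfer factor converting a character of $(E')^{\times}$ to one on $E^{1}$; once this dictionary is set, what remains is character orthogonality on a compact torus and standard pole-counting for Hecke $L$-functions.
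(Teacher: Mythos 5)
First, a point of comparison: the paper does not prove this theorem at all. It is quoted from \cite{Comp} as an already-established example, and the only indication of its proof given here is the single sentence that $\GS(f_G,f_H)$ ``was computed explicitly, by using the Selberg principle and an endoscopic transfer identity of Labesse and Langlands \cite{LL}.'' Your sketch follows exactly that strategy, so in outline you have reconstructed the intended route. The reductions you perform at the start (singleton $\mc Y_2$ with $Y=X$, $Y^{\perp}=0$ so $j_T\equiv 1$, $T_c=H\cong E^1$ with trivial Weyl group, $I^H_{\gm}(f_H)=\chi(\gm)$) are all correct and match the setup of Section \ref{sosix}.

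However, as a proof your proposal has genuine gaps. First, you invoke Theorem \ref{th_exp}, which in this paper is explicitly labelled ``(Expected)'' --- it is a conjecture, not a theorem, so it cannot be the logical foundation of the argument. What saves the example is that here $\dim W=\dim X=2$, the equal-size case in which the residue formula and the resulting criterion ``pole at $s=0$ iff $\GS\neq 0$'' are actually proven in \cite{Spallone}; you must cite that result rather than Theorem \ref{th_exp}. Second, the step ``the Labesse--Langlands transfer expresses $I^G_{\gm_G}(f_G)$ as an evaluation of $\chi'$ on a lift of $\gm$'' is not a routine unwinding: converting the $\theta$-twisted orbital integral of a supercuspidal matrix coefficient on $\GL_2$ into character values, and then matching it with stable data on the torus $E^1$ via \cite{LL} (including the transfer factors and the sign conventions you flag), is the technical core of \cite{Comp} and occupies most of that paper; asserting that ``once this dictionary is set, what remains is character orthogonality'' concedes rather than closes the gap. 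Third, the final assertion (reducibility at $s=1$ and nowhere else for $s\geq 0$ in the exceptional case) does not follow from ``standard pole-counting for Hecke $L$-functions'': one needs Shahidi's identification of the relevant Langlands--Shahidi $L$-functions and his general results on reducibility for supercuspidal inducing data, which is precisely the content of \cite{Sha.Appendix}. In short, your outline is the right one, but each of the three main steps is itself a substantial theorem from the cited literature rather than something derivable from the present paper.
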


In \cite{WWL}, Wen-Wei Li applied the ``equal-sized'' integration formula from \cite{IFS}, and Waldspurger's formula for transfer factors \cite{Waldspurger} in generalizing this to a similar statement for even orthogonal groups.

We expect to apply the results of this paper to obtain analogous results in the case of $G=\GL_{2m}$ and $H=\SO_{2m+1}$, which was emphasized by Goldberg and Shahidi in \cite{GSIII}.
In this case, the nonvanishing of the residue formula signifies that $\pi_G$ is the local transfer of $\pi_H$ (see Theorem 5.4 of \cite{GSIII} for details).  

Another clear direction is to apply the recent work of Mok \cite{Mo} to obtain such results for the quasisplit unitary groups $U(3n,3n)$.  The work \cite{Cai-Xu} of Li Cai and Bin Xu deals with the first nontrivial case of $G=\GL_2(E)$ and $H=U(1,1)$. There are two standard base change maps, the stable base change map and the unstable base change map, which take irreducible representations of $U(n,n)$ to those of $\GL_{2n}(E)$ (see Section 2 of \cite{Mo} for the definitions). As before let $\pi_{G}$ and $\pi_{H}$ be unitarizable supercuspidal representations of $\GL_{2n}(E)$ and $U(n,n)$ respectively, and $f_{G}$ and $f_{H}$ be matrix coefficients of the two representations. Further assume that $\pi_{G}$ lies in the image of the stable base change map. We expect to show in a later work that there exist $f_G,f_H$ so that $\GS(f_G,f_H) \neq 0$   precisely when $\pi_H$ is taken to $\pi_{G}$ by the stable base change map, thus generalizing the main result of \cite{Cai-Xu}. By the results of Harish-Chandra mentioned in the beginning of this section and Theorem \ref{th_exp}, it will follow that the induced representation $V(0,\pi_G \boxtimes \pi_{H})$ is irreducible if and only if $\pi_{G}$ is the stable base change lift of $\pi_{H}$.

\def\cprime{$'$} \def\cprime{$'$} \def\Dbar{\leavevmode\lower.6ex\hbox to
  0pt{\hskip-.23ex \accent"16\hss}D}
  \def\cftil#1{\ifmmode\setbox7\hbox{$\accent"5E#1$}\else
  \setbox7\hbox{\accent"5E#1}\penalty 10000\relax\fi\raise 1\ht7
  \hbox{\lower1.15ex\hbox to 1\wd7{\hss\accent"7E\hss}}\penalty 10000
  \hskip-1\wd7\penalty 10000\box7}
  \def\cfudot#1{\ifmmode\setbox7\hbox{$\accent"5E#1$}\else
  \setbox7\hbox{\accent"5E#1}\penalty 10000\relax\fi\raise 1\ht7
  \hbox{\raise.1ex\hbox to 1\wd7{\hss.\hss}}\penalty 10000 \hskip-1\wd7\penalty
  10000\box7} \def\cftil#1{\ifmmode\setbox7\hbox{$\accent"5E#1$}\else
  \setbox7\hbox{\accent"5E#1}\penalty 10000\relax\fi\raise 1\ht7
  \hbox{\lower1.15ex\hbox to 1\wd7{\hss\accent"7E\hss}}\penalty 10000
  \hskip-1\wd7\penalty 10000\box7} \def\cprime{$'$}
  \def\Dbar{\leavevmode\lower.6ex\hbox to 0pt{\hskip-.23ex \accent"16\hss}D}
  \def\cftil#1{\ifmmode\setbox7\hbox{$\accent"5E#1$}\else
  \setbox7\hbox{\accent"5E#1}\penalty 10000\relax\fi\raise 1\ht7
  \hbox{\lower1.15ex\hbox to 1\wd7{\hss\accent"7E\hss}}\penalty 10000
  \hskip-1\wd7\penalty 10000\box7}
  \def\polhk#1{\setbox0=\hbox{#1}{\ooalign{\hidewidth
  \lower1.5ex\hbox{`}\hidewidth\crcr\unhbox0}}} \def\dbar{\leavevmode\hbox to
  0pt{\hskip.2ex \accent"16\hss}d}
  \def\cfac#1{\ifmmode\setbox7\hbox{$\accent"5E#1$}\else
  \setbox7\hbox{\accent"5E#1}\penalty 10000\relax\fi\raise 1\ht7
  \hbox{\lower1.15ex\hbox to 1\wd7{\hss\accent"13\hss}}\penalty 10000
  \hskip-1\wd7\penalty 10000\box7}
  \def\ocirc#1{\ifmmode\setbox0=\hbox{$#1$}\dimen0=\ht0 \advance\dimen0
  by1pt\rlap{\hbox to\wd0{\hss\raise\dimen0
  \hbox{\hskip.2em$\scriptscriptstyle\circ$}\hss}}#1\else {\accent"17 #1}\fi}
  \def\bud{$''$} \def\cfudot#1{\ifmmode\setbox7\hbox{$\accent"5E#1$}\else
  \setbox7\hbox{\accent"5E#1}\penalty 10000\relax\fi\raise 1\ht7
  \hbox{\raise.1ex\hbox to 1\wd7{\hss.\hss}}\penalty 10000 \hskip-1\wd7\penalty
  10000\box7} \def\lfhook#1{\setbox0=\hbox{#1}{\ooalign{\hidewidth
  \lower1.5ex\hbox{'}\hidewidth\crcr\unhbox0}}}
\providecommand{\bysame}{\leavevmode\hbox to3em{\hrulefill}\thinspace}
\providecommand{\MR}{\relax\ifhmode\unskip\space\fi MR }
\providecommand{\MRhref}[2]{%
  \href{http://www.ams.org/mathscinet-getitem?mr=#1}{#2}
}
\providecommand{\href}[2]{#2}

\end{document}